\DeclareSymbolFontAlphabet{\mathbb}{AMSb}
\DeclareSymbolFontAlphabet{\mathbbl}{bbold}
\def\Sp{\mathsf{Sp}}
\def\O{\mathsf{O}}
\definecolor{darkred}{rgb}{0.7,0,0} 
\newcommand{\defn}[1]{{\color{darkred}\emph{#1}}} 
\numberwithin{equation}{section}
\theoremstyle{definition}
\newtheorem* {theorem*}{Theorem}
\newtheorem* {proposition*}{Proposition}
\newtheorem* {corollary*}{Corollary}
\newtheorem* {conjecture*}{Conjecture}
\newtheorem{theorem}{Theorem}[section]
\theoremstyle{definition}
\newtheorem* {claim}{Claim}
\newtheorem* {example*}{Example}
\newtheorem{lemma}[theorem]{Lemma}
\theoremstyle{definition}
\newtheorem{definition}[theorem]{Definition}
\theoremstyle{definition}
\newtheorem{conjecture}[theorem]{Conjecture}
\newtheorem{proposition}[theorem]{Proposition}
\newtheorem{corollary}[theorem]{Corollary}
\newtheorem{remark}[theorem]{Remark}
\newtheorem*{remark*}{Remark}
\theoremstyle{definition}
\newtheorem {example}[theorem]{Example}
\theoremstyle{definition}
\theoremstyle{definition}
\theoremstyle{definition}
\def\({\left(}
\def\){\right)}
\newcommand{\CC}{\mathbb{C}}
\newcommand{\cR}{\mathcal{R}}
\def\CC{\mathbb{C}}
\def\ZZ{\mathbb{Z}}
\def\GL{\textsf{GL}}
\def\fk{\mathfrak}
\def\barr{\begin{array}}
\def\earr{\end{array}}
\def\ba{\begin{aligned}}
\def\ea{\end{aligned}}
\def\be{\begin{equation}}
\def\ee{\end{equation}}
\def\qquand{\qquad\text{and}\qquad}
\def\quand{\quad\text{and}\quad}
\def\quord{\quad\text{or}\quad}
\def\I{\mathcal{I}}
\def\cH{\mathcal H}
\def\hs{\hspace{0.5mm}}
\def\ben{\begin{enumerate}}
\def\een{\end{enumerate}}
\def\bei{\begin{itemize}}
\def\eei{\end{itemize}}
\def\hs{\hspace{0.5mm}}
\def\fpf{{\textsf {fpf}}}
\def\D{\hat D}
\newcommand{\wfpf}{\Theta}
\def\Des{\mathrm{Des}}
\def\Ifpf{I^{\fpf}}
\newcommand{\Fl}{\textsf{Fl}}
\newcommand{\cA}{\mathcal{A}}
\newcommand{\cB}{\mathcal{B}}
\def\iR{\cR^\O}
\def\cHfpf{\cH_{\textsf{Sp}}}
\def\iH{\cH_{\textsf{O}}}
\def\cF{\mathcal{F}}
\newcommand{\row}{\operatorname{row}}
\def\isim{\mathbin{=_\textsf{O}}}
\def\simA{=_{\textsf{Br}}}
\def\simK{\overset{\textsf{K}}\sim}
\def\simICK{\overset{\textsf{O}}\sim}
\newcommand{\Incr}{\operatorname{Incr}}
\def\row{\textsf{row}}
\def\col{\textsf{col}}
\def\swdiag{\mathfrak{d}_{\textsf{SW}}}
\def\nediag{\mathfrak{d}_{\textsf{NE}}}
\def\path{\mathsf{path}}
\def\iarrow{\xleftarrow{\textsf{O}}}
\newcommand*\circled[1]{\tikz[baseline=(char.base)]{
\node[shape=circle,draw,inner sep=1](char) {#1};}}
\def\whSym
\def\whQSym
\def\D{\textsf{D}}
\def\SD{\textsf{SD}}
\def\GQ{G\hspace{-0.2mm}Q}
\def\GP{G\hspace{-0.2mm}P}
\def\KQ{K\hspace{-0.2mm}Q}
\def\KP{K\hspace{-0.2mm}P}
\def\GO{\GP^{\textsf{O}}}
\def\NN{\mathbb{N}}
\def\EG{\textsf{EG}}
\def\bPO{\widetilde{P}_{\textsf{EG}}^\O}
\def\bQO{\widetilde{Q}_{\textsf{EG}}^\O}
\def\spPHM{P^\Sp_{\textsf{HM}}}
\def\spQHM{Q^\Sp_{\textsf{HM}}}
\def\PHM{P^\O_{\textsf{HM}}}
\def\QHM{Q^\O_{\textsf{HM}}}
\def\spPSW{ P^\Sp_{\textsf{SW}}}
\def\spQSW{ Q^\Sp_{\textsf{SW}}}
\def\tPSW{{\tilde P}^\O_{\textsf{SW}}}
\def\oPSW{P^\O_{\textsf{SW}}}
\def\oQSW{Q^\O_{\textsf{SW}}}
\def\bbPSW{{\widetilde P}^\O_{\textsf{SW}}}
\def\ck{\textsf{ock}}
\def\shk{\mathsf{okn}}
\def\op{\mathsf{spkn}}
\def\simSW{\overset{\textsf{shK}}\sim}
\def\PO{P_{\textsf{EG}}^\O}
\def\QO{Q_{\textsf{EG}}^\O}
\def\PSp{P_{\textsf{EG}}^\Sp}
\def\QSp{Q_{\textsf{EG}}^\Sp}
\newcommand{\ytab}[1]{
\ytableausetup{boxsize = .5cm,aligntableaux=center}
{\small\begin{ytableau}  #1  \end{ytableau}}
}
\newcommand{\smalltab}[1]{
\ytableausetup{boxsize = .44cm,aligntableaux=center}
{\small\begin{ytableau}  #1  \end{ytableau}}
}
\def\fks{{\fk s}}
\def\shword{\textsf{shword}}
\def\T{\mathsf{T}}
\def\cyc{\mathsf{cyc}}
\def\marked{\mathsf{marked}}
\def\unprime{\mathsf{unprime}}
\def\simA{\xleftrightarrow{\ \cR\ }}
\def\isim{\xleftrightarrow{\ \iR\ }}
\def\wfpf{1_{\mathsf{fpf}}}
\def\Ifpf{I^{\mathsf{fpf}}}
\def\iR{\cR_{\mathsf{inv}}}
\def\cA{\mathcal{A}_{\mathsf{inv}}}
\def\fkd{\mathfrak{d}}
\def\primes{\#\mathsf{primes}}
\def\primesdiag{\primes_{\diag}}
\def\entries{\mathsf{entries}}
\def\dbump{\Delta^{\mathsf{bump}}}
\def\tpi{\tau}
\def\double{\mathsf{double}}
\def\SW{\mathsf{SW}}
\def\simA{\approx}
\def\iisim{\equiv}
\def\isim{\mathop{\hat \equiv}}
\def\cA{\mathcal{A}}
\def\Incr{\mathsf{Incr}}
\def\cseq{\mathsf{cseq}}
\def\regionSW{\raisebox{-0.5pt}{\tikz{\filldraw (0.25,0.25) circle (1pt);  \draw (0.25,0.25) -- (.25,0) -- (0,.25) -- (0.25,0.25);}}}
\def\regionNE{\raisebox{-0.5pt}{\tikz{\filldraw (0,0) circle (1pt);  \draw (0,0) -- (.25,0) -- (0,.25) -- (0,0);}}}
\def\wpath{\mathsf{path}^\leq}
\def\spath{\mathsf{path}^<}
\def\rwpath{\mathsf{rpath}^\leq}
\def\rspath{\mathsf{rpath}^<}
\def\cwpath{\mathsf{cpath}^\leq}
\def\cspath{\mathsf{cpath}^<}
\def\diag{\text{\emph{diag}}}
\def\init{\text{\emph{init}}}
\def\entriesdiag{\mathsf{diag}}
\def\free{\text{\emph{free}}}
\lstdefinelanguage{Sage}[]{Python}
{morekeywords={False,sage,True},sensitive=true}
\definecolor{dblackcolor}{rgb}{0.0,0.0,0.0}
\definecolor{dbluecolor}{rgb}{0.01,0.02,0.7}
\definecolor{dgreencolor}{rgb}{0.2,0.4,0.0}
\definecolor{dgraycolor}{rgb}{0.30,0.3,0.30}
\def\biword{compatible sequence}
\def\biwords{compatible sequences}
\def\swdiag{{\mathsf{sw}}}
\def\nediag{\mathsf{ne}}
\begin{document}
\title{Shifted insertion algorithms for primed words}
\author{Eric Marberg  \\ Department of Mathematics \\  Hong Kong University of Science and Technology \\ {\tt emarberg@ust.hk}}
\date{}

\maketitle

\begin{abstract}
This article studies some new insertion algorithms that associate pairs of shifted tableaux  to finite integer sequences in which certain terms may be primed. When primes are ignored in the input word these algorithms reduce to known correspondences, namely, a shifted form of Edelman-Greene insertion, Sagan-Worley insertion, and Haiman's shifted mixed insertion. These maps have the property that when the input word varies such that one output tableau is fixed, the other output tableau ranges over all (semi)standard tableaux of a given shape with no primed diagonal entries. Our algorithms have the same feature, but now with primes allowed on the main diagonal. One application of this is to give another Littlewood-Richardson rule for products of Schur $Q$-functions. It is hoped that there will exist set-valued generalizations of our bijections that can be used to understand products of $K$-theoretic Schur $Q$-functions. 
\end{abstract}

\setcounter{tocdepth}{2}
\tableofcontents

\section{Introduction}

This article studies some new insertion algorithms that generate pairs of shifted tableaux from finite integer sequences 
in which certain terms may be primed.
The first half of this introduction contains a quick summary of our main results. The second half discusses 
some open problems that motivate our constructions.

\subsection{Outline}

Let $S_\ZZ$ be the group of permutations of the integers with finite support,
and set $s_i := (i,i+1) \in S_\ZZ$ for $i \in \ZZ$.
 There is a unique associative product $\circ : S_\ZZ \times S_\ZZ\to S_\ZZ$
such that $\sigma \circ s_i = \sigma$ if $\sigma(i) > \sigma(i+1)$ and $\sigma\circ s_i = \sigma s_i$ if $\sigma(i)<\sigma(i+1)$ for each $i \in \ZZ$ \cite[Thm. 7.1]{Humphreys}. This so-called \defn{Demazure product} may be defined in terms of the \defn{Bruhat order} $\leq $ on $S_\ZZ$
by the set-wise product identity $\{ \sigma \in S_\ZZ : \sigma \leq v\}\{\sigma \in S_\ZZ : \sigma\leq w\} = \{ \sigma \in S_\ZZ : \sigma \leq v\circ w\}$ for $v,w \in S_\ZZ$.

A \defn{reduced word} for $\sigma \in S_\ZZ$
is an integer sequence $a_1a_2\cdots a_n$ of shortest possible length 
with $\sigma=s_{a_1}s_{a_2} \cdots s_{a_n}$, or equivalently with
\[\sigma =  s_{a_1} \circ s_{a_2}\circ \cdots\circ s_{a_n}.\]
Write $\cR(\sigma)$ for the set of reduced words for $\sigma \in S_\ZZ$.
Analogously, an \defn{involution word} for  $z \in S_\ZZ$ is a word $a_1a_2\cdots a_n$
of shortest possible length such that 
\[z=s_{a_n}\circ \cdots   \circ s_{a_2}\circ s_{a_1}  \circ s_{a_2} \circ \cdots\circ s_{a_n}.\]
Write $\iR(z)$ for the set of involution words for $z \in S_\ZZ$.
One can show that this set is nonempty if and only if $z=z^{-1}$ is an involution.
The empty word $\emptyset$ is both the unique reduced word and the unique involution word for $1 \in S_\ZZ$.

Involution words have been studied previously 
in different forms and under various names, for example, in \cite{CJW,HMP1,HanssonHultman,HuZhang1,RichSpring}.
We are concerned here with the following slight generalization.
An index $i \in [n]$ is \defn{commutation} in an involution word $a=a_1a_2\cdots a_n$ 
if $s_{a_i}$ commutes with $s_{a_{i-1}}\circ \cdots   \circ s_{a_2}\circ s_{a_1}   \circ s_{a_2} \circ \cdots\circ s_{a_{i-1}}$.
The index $i=1$ is a commutation whenever the word $a$ is nonempty.
A \defn{primed involution word} for $z=z^{-1}\in S_\ZZ$ is any word
formed by 
adding primes to the entries indexed by a subset of commutations in some $a \in \iR(z)$. Such a word is a sequence of letters in the
\defn{primed alphabet}
$\{ \dots < 1' < 1 < 2' < 2 < \dots\}$.
Write $\iR^+(z)$ for the set of primed involution words for $z$.
As we will explain in Section~\ref{pr-words-sect}, all involution words
for a given $z=z^{-1} \in S_\ZZ$ have the same number $k$ of commutations, so we have $|\iR^+(z)| = 2^{k} |\iR(z)|$.
For example, if $z=321\in S_3 \subset S_\ZZ$,
then 
\[\cR(z) = \{121, 212\},\quad\iR(z) = \{12,21\},\quand\iR^+(z) = \{12, 1'2,21,2'1\}.\]

For any word $a$, let $\Incr_\infty(a)$ denote the set of sequences $(a^1,a^2,a^3,\dots)$
where each $a^i$ is a weakly increasing possibly empty word such that $a=a^1a^2a^3\cdots$.
For a set of words $\cA$, let 
\[\textstyle\Incr_\infty(\cA) = \bigsqcup_{a \in \cA} \Incr_\infty(a).\]
Fix an involution  $z =z^{-1} \in S_\ZZ$. In Section~\ref{main-sect} we describe a specific map \[
a \mapsto (\PO(a),\QO(a))\] that takes an element of 
$\iR^+(z)$ or $\Incr_\infty(\iR^+(z))$ as its input and gives a pair of shifted tableaux as its output.
Our first main result is the following theorem about this operation.

\begin{theorem}[See Theorems~\ref{o-thm1} and \ref{o-summary-thm}]\label{intro-thm1}
The map $a \mapsto (\PO(a),\QO(a))$ is a bijection from $\iR^+(z)$ 
(respectively, $\Incr_\infty(\iR^+(z))$)
 to the set of pairs $(P,Q)$
where $P$ is a shifted tableau with increasing rows and columns,
 no primed entries on the main diagonal, and row reading word  in $\iR^+(z)$,
and $Q$ is a standard (respectively, semistandard) shifted tableau of the same shape.  
\end{theorem}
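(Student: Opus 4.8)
The plan is to build the bijection in two stages, first handling the case of a single primed involution word (the standard $Q$-tableau case) and then bootstrapping to the case of an element of $\Incr_\infty(\iR^+(z))$ (the semistandard case) by a standardization argument. For the first stage, I would proceed by induction on the length $n$ of the input word $a = a_1 a_2 \cdots a_n$. One reads off the last letter $a_n$, applies the appropriate insertion step (a shifted Edelman--Greene-type insertion, possibly with a ``diagonal prime toggle'' when the bumped cell lands on the main diagonal) of $a_n$ into $\PO(a_1 \cdots a_{n-1})$ to obtain $\PO(a)$, and records the newly created box in $\QO(a)$ with label $n$ (primed or unprimed according to whether the new box was added by a row or column bump, matching the convention that forces $\QO$ to be standard in the sense used here). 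The key local fact needed is that each insertion step is reversible: given $\PO(a)$ together with the location and decoration of the last recording box in $\QO(a)$, one can run the insertion backwards to recover both $\PO(a_1\cdots a_{n-1})$ and the letter $a_n$. This is where the restriction to \emph{commutation} indices for priming comes in — priming is only allowed exactly at positions where the Demazure product is unaffected by a local commutation, which is precisely the condition that makes the diagonal prime toggle well-defined and invertible.

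The main structural claim to verify in the first stage is that the image is exactly as described: $\PO(a)$ has strictly increasing rows and columns, no primed entries on the main diagonal, and row reading word lying in $\iR^+(z)$. That the output tableau $\PO(a)$ has no primed diagonal entries is forced by the insertion rule (diagonal prime toggles are set up to deposit unprimed values on the diagonal). That its row reading word lies in $\iR^+(z)$ requires showing (i) the underlying unprimed reading word is an involution word for $z$ — this should follow from the analogue of the Edelman--Greene/Coxeter--Knuth theory for involution words established in the cited background, namely that each insertion step preserves the involution Demazure product $s_{a_n}\hat\circ\cdots\hat\circ s_{a_1}\hat\circ\cdots\hat\circ s_{a_n}$ — and (ii) that the primes appear only at commutation positions of that reading word, which again is exactly what the diagonal-toggle bookkeeping tracks. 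Injectivity and surjectivity then follow from the reversibility of the individual steps: the inverse map uninserts boxes in the order dictated by the entries of $\QO$, from largest to smallest, and the hypotheses on $(P,Q)$ (increasing rows and columns, no primed diagonal in $P$, $Q$ standard shifted of the same shape) are precisely what is needed for each uninsertion step to be legal and to terminate in the empty tableau.

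For the second stage, I would use the standard device of standardization of the semistandard recording tableau. Given $(a^1,a^2,a^3,\dots) \in \Incr_\infty(\iR^+(z))$ with concatenation $a = a^1 a^2 \cdots$, the entries of each weakly increasing block $a^i$ are inserted in left-to-right order, all recording boxes for block $i$ receiving label $i$; ties within a block are broken by the order of insertion (the semistandard recording convention for shifted tableaux, with primed/unprimed labels handled as in the statement). One checks that (a) the insertion $P$-tableau $\PO(a)$ depends only on $a$, not on the chosen factorization, so Theorem on the single-word case applies to give the correct $P$; and (b) the resulting $Q$-tableau is semistandard shifted of the same shape, and conversely every such $Q$ arises from a unique factorization — this is the combinatorial content of the bijection between standard shifted tableaux with a fixed ``descent/block'' composition and semistandard shifted tableaux of the same shape. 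Combining (a) and (b) with the first stage gives the semistandard statement.

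The hard part will be the first stage: pinning down the exact form of the insertion step at diagonal cells — in particular the rule governing when a prime is created, absorbed, or toggled as a bump crosses or lands on the main diagonal — and proving that this rule is simultaneously (1) weight-/word-compatible (the reading word stays in $\iR^+(z)$), (2) shape-correct (exactly one box added, rows and columns stay strictly increasing, no diagonal prime appears), and (3) invertible. The interaction between the ``add a prime'' freedom at commutation indices of the input and the diagonal-prime bookkeeping in the output is the delicate point; everything downstream (injectivity, surjectivity, and the semistandard bootstrap) is comparatively routine once the single insertion/uninsertion step is shown to be a well-defined involution-theoretic analogue of shifted Edelman--Greene insertion.
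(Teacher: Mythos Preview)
Your semistandard bootstrap via standardization matches the paper's proof of Theorem~\ref{o-summary-thm} closely; the ingredient you leave implicit is that $\Des(a)=\Des(\QO(a))$ (Proposition~\ref{o-thm3}), which is what makes the descent-compatible standardization bijection go through.

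For the standard case your route differs from the paper's. You propose bijectivity by exhibiting and analyzing the reverse insertion step directly. The paper acknowledges this is viable (``routine, following \cite[\S3.3]{Marberg2019a} or \cite[\S5.3]{PatPyl2014}'') but instead argues as follows: it invokes the already-known unprimed bijection on $\iR(z)$ from \cite[Thm.~5.19]{HMP4}, and then proves a formula (Proposition~\ref{tau-prop}) expressing the primed off-diagonal positions of $\PO(\hat a)$ and the primed diagonal positions of $\QO(\hat a)$ in terms of the set $\marked(\hat a)\subseteq\cyc(z)$ and a permutation $\tpi(a)$ of $\cyc(z)$ depending only on $a=\unprime(\hat a)$. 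Since both the forgetful map $\hat a\mapsto a$ on words and the map $(P,Q)\mapsto(\unprime(P),\unprime_{\diag}(Q))$ on tableau pairs have fibers of size $2^{|\cyc(z)|}$, this formula produces a unique primed lift and hence the bijection. The paper prefers this route because the $\tpi/\marked$ machinery is reused as the engine for the harder Theorem~\ref{ck-fkd-thm}; your route would give a more self-contained proof of Theorem~\ref{o-thm1} alone.

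One point where your sketch is too optimistic: showing $\row(\PO(a))\in\iR^+(z)$ is not just ``diagonal-toggle bookkeeping.'' In the paper this is Proposition~\ref{o-lem2}, proved by establishing $\row(T)u\simICK\row(T\iarrow u)$ through a careful case analysis of how the row and column reading words evolve during bumping, including the transition from row to column insertion at the diagonal. You need this (or an equivalent argument) regardless of whether bijectivity comes from reverse insertion or from the paper's lifting method.
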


In this context, a \defn{shifted tableau} of a strict partition shape $\lambda = (\lambda_1>\lambda_2>\dots>\lambda_k>0)$
is a filling of the \defn{shifted diagram} 
$\SD_\lambda := \{ (i,i+j-1) \in \ZZ\times \ZZ : 1\leq i \leq k\text{ and }1 \leq j\leq \lambda_i\}$
by elements of $\{ \dots < 1' < 1 < 2' < 2 < \dots\}$. If we draw such a tableau in French notation, 
then its \defn{row reading word} is formed by 
reading each of its rows in the usual way from left to right, starting with the top row.\footnote{Equivalently, if the tableau has entry $T_{ij}$ in box $(i,j)$, then the row reading word is formed by arranging the numbers $T_{ij}$ in the order that makes $(-i,j)$ increase lexicographically, as $(i,j)$ varies over all boxes.}
A shifted tableau is \defn{semistandard} if its entries are positive and its rows and columns are weakly increasing as indices increase,
with no primed number repeated in a row and no unprimed number repeated in a column.
A semistandard shifted tableau with $n$ boxes is \defn{standard} if it contains exactly one of $i$ or $i'$ 
as an entry for each $i=1,2,\dots,n$.

\begin{example}
We present a very simple case of the map $a \mapsto (\PO(a),\QO(a))$ just to illustrate its domain and codomain.
If $z=321\in S_3$ then the elements of $\Incr_\infty(\iR^+(z))$ have one of $6$ forms:
\[
\ba
a &= (\emptyset,\emptyset,\emptyset,\dots,\emptyset, 1\phantom{'}, \emptyset,\emptyset,\emptyset,\dots,\emptyset, 2, \emptyset,\emptyset,\emptyset,\dots),
\\
b &= (\emptyset,\emptyset,\emptyset,\dots,\emptyset, 1', \emptyset,\emptyset,\emptyset,\dots,\emptyset, 2, \emptyset,\emptyset,\emptyset,\dots),
\\
c &= (\emptyset,\emptyset,\emptyset,\dots,\emptyset, 2\phantom{'}, \emptyset,\emptyset,\emptyset,\dots,\emptyset, 1, \emptyset,\emptyset,\emptyset,\dots),
\\
d &= (\underbrace{\emptyset,\emptyset,\emptyset,\dots,\emptyset}_{p-1\text{ terms}}, 2', \underbrace{\emptyset,\emptyset,\emptyset,\dots,\emptyset}_{q-p-1\text{ terms}}, 1, \emptyset,\emptyset,\emptyset,\dots),
\ea
\]
for some integers $0< p< q$,
or
\[
\ba
e &= (\emptyset,\emptyset,\emptyset,\dots,\emptyset,1\phantom{'}2, \emptyset,\emptyset,\emptyset,\dots),
\\
f &= (\underbrace{\emptyset,\emptyset,\emptyset,\dots,\emptyset}_{p-1\text{ terms}}, 1'2,  \emptyset,\emptyset,\emptyset,\dots),
\ea
\]
for some $p>0$. We have $ \PO(a)=\PO(b)=\PO(c)=\PO(d)=\PO(e) = \PO(f) = \ytab{1 & 2}$ as
this is the unique shifted tableau with increasing rows and columns
and no primed entries on the main diagonal
whose row reading word is in $\{12,1'2,21,2'1\}$.
On the other hand,  it will follow from the definitions in Section~\ref{main-sect} that
\[\ba
\QO(a) &= \ytab{p & q},
\quad
\QO(b) = \ytab{p' & q},
\quad
\QO(c) = \ytab{p & q'},
\\
\QO(d)& = \ytab{p' & q'},
\quad
\QO(e) = \ytab{p & p},
\quad
\QO(f) = \ytab{p' & p}.
\ea
\]
As $0<p<q$ vary, these outputs range over all semistandard shifted tableaux of shape $\lambda=(2)$.
\end{example}

Restricting
$a \mapsto (\PO(a),\QO(a))$
 to unprimed words 
gives the map called \defn{involution Coxeter-Knuth insertion} in \cite{HMP4,Marberg2019a}
and \defn{orthogonal Edelman-Greene insertion} in \cite{Marberg2019b}.
The latter, in turn, is
a special case of the \defn{shifted Hecke insertion algorithm} from 
\cite{HKPWZZ,PatPyl2014}. 
Our correspondence  
 is the ``orthogonal'' counterpart
to a ``symplectic'' shifted insertion algorithm
studied in \cite{Hiroshima,Marberg2019a,Marberg2019b}; see Remark~\ref{o-sp-rmk}.

It is an open problem to find a ``primed'' generalization of shifted Hecke insertion
that extends our bijection $a \mapsto (\PO(a),\QO(a))$. 
The image of such a map should consist of pairs of 
shifted tableaux $(P,Q)$ of the same shape, in which $P$ has increasing rows and columns
with no primed entries on the main diagonal,
and $Q$ is an arbitrary \defn{(semistandard) set-valued shifted tableau} in the sense
of \cite[\S9.1]{IkedaNaruse}.
It is less clear what superset of $ \iR^+(z)$ should be the domain 
of such a correspondence.
As discussed in the next section,
generalizing shifted Hecke insertion in this way would have 
 interesting applications.

Besides constructing the map 
$a \mapsto (\PO(a),\QO(a))$, we also seek to 
understand how 
$a$ can vary when $\PO(a)$ is held constant,
and how such changes affect $\QO(a)$.
Our second set of results, sketched below and 
explained more thoroughly in Sections~\ref{ock-sect} and \ref{dual-sect},
fully solves this problem.

\begin{theorem}[See Theorem~\ref{ck-fkd-thm} and Corollary~\ref{o-cor2}]
\label{intro-thm2}
There are explicit operators $\ck_i$ on primed words
which act by changing at most three consecutive letters,
along with operators $\fkd_i$ on standard shifted tableaux 
which act by changing at most three consecutive entries,
such that if $a$ is a primed involution word
then
$\QO(\ck_i(a)) = \fkd_i(\QO(a))$,
and 
if $a$ and $b$ are both primed involution words then
$\PO(a) = \PO(b)$ if and only 
$a = \ck_{i_1} \ck_{i_2} \cdots \ck_{i_k}(b)$
for some sequence $  i_1,i_2,\dots,i_k$.
\end{theorem}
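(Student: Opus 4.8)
The plan is to build the proof in two layers, matching the two assertions. For the first assertion—that $\QO(\ck_i(a)) = \fkd_i(\QO(a))$—I would proceed by a direct analysis of the insertion procedure defining $a \mapsto (\PO(a), \QO(a))$. The operators $\ck_i$ should be the primed analogues of the Coxeter–Knuth (or involution Coxeter–Knuth) relations, augmented by a small number of ``priming'' moves that toggle a prime on a letter when doing so is legal (i.e. when the corresponding index is a commutation, so the resulting word is still a primed involution word). The key observation to isolate first is a \emph{locality} statement: applying $\ck_i$ to $a$ only alters the insertion path at a bounded number of steps, and in fact leaves $\PO$ unchanged while modifying $\QO$ in a controlled way near the cells recording the times $i-1, i, i+1$. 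Concretely, I would check case by case—braid-type moves, commutation-type moves, and the prime-toggle move—that feeding $\ck_i(a)$ into the algorithm produces the same sequence of bumping routes as feeding $a$, so that $\PO$ is literally the same tableau, and that the recording tableau changes exactly by the local operator $\fkd_i$ on the three consecutive entries $i-1, i, i+1$ (with primes tracked correctly). This is essentially the ``recording tableau transforms predictably under elementary transformations of the input word'' phenomenon familiar from RSK and Edelman–Greene theory; the extra bookkeeping here is the prime data, which by construction only lives on commutation indices and hence on cells that the $\fkd_i$ operators are designed to handle.

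For the second assertion—that $\PO(a) = \PO(b)$ if and only if $a$ and $b$ are connected by a sequence of $\ck$-moves—one direction is immediate from the first assertion together with the fact that $\ck_i$ preserves $\PO$: if $a = \ck_{i_1}\cdots\ck_{i_k}(b)$ then $\PO(a) = \PO(b)$. The substantive direction is the converse. Here I would use the bijectivity from Theorem~\ref{intro-thm1}: the fibers of $a \mapsto \PO(a)$ on $\iR^+(z)$ are in bijection, via $Q$, with the set of standard shifted tableaux of the fixed shape $\sh(\PO(a))$, with \emph{arbitrary} primes allowed on the main diagonal. So it suffices to show that the $\fkd_i$ operators act transitively on the set of standard shifted tableaux of a given strict shape $\lambda$ (with main-diagonal primes allowed). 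I would prove this transitivity by induction on $n = |\lambda|$: given two such tableaux $T, T'$, use the $\fkd_i$ operators to move the cell containing (a possibly primed) $n$ in $T$ to the outer corner where $n$ sits in $T'$—this is the standard ``promote the largest entry to a fixed corner'' argument—then toggle primes as needed using the prime-toggle move (legal precisely because a maximal entry in a shifted standard tableau occupies a corner, and on the diagonal a prime is freely allowed), and finally restrict to the sub-tableau on entries $1, \dots, n-1$ and invoke the inductive hypothesis, noting that $\fkd_i$ for $i \le n-1$ acts on this sub-tableau without disturbing the cell holding $n$. Translating transitivity of $\fkd$ on recording tableaux back through the bijection, and using $\QO(\ck_i(a)) = \fkd_i(\QO(a))$ in reverse, yields that any two primed involution words with the same $P$-tableau are $\ck$-connected.

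The main obstacle I anticipate is the bookkeeping in the first layer: verifying, in every case of $\ck_i$, that the insertion paths genuinely coincide step-for-step and that the prime data is carried along exactly as $\fkd_i$ predicts. Shifted insertion has the well-known subtlety that a bump can ``fall off'' the diagonal and switch between row-insertion and column-insertion, and one must confirm that the $\ck$-moves never interact badly with such diagonal crossings—in particular that a commutation index, which is where primes are permitted, behaves compatibly with the diagonal-switching rule. I would handle this by first proving the unprimed statement (recovering the known Coxeter–Knuth result for orthogonal Edelman–Greene / involution Coxeter–Knuth insertion from \cite{HMP4,Marberg2019a,Marberg2019b}) and then treating the prime-toggle move as a genuinely new, but structurally simple, addition: toggling a prime on a commutation letter should commute with the entire insertion process except for flipping exactly one recording-tableau entry's prime, which is exactly what the corresponding $\fkd_i$ does. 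Once the unprimed backbone is in place, the prime layer is a routine-but-careful overlay, and the transitivity argument in the second layer is then essentially formal.
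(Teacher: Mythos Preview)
Your proposal has a genuine gap in the first layer. You write that ``toggling a prime on a commutation letter should commute with the entire insertion process except for flipping exactly one recording-tableau entry's prime,'' and that once the unprimed backbone is in place, ``the prime layer is a routine-but-careful overlay.'' This is precisely where the difficulty lies, and the paper explicitly warns against this expectation: extending the result from unprimed to primed involution words is ``surprisingly involved'' and ``requires a quite different strategy'' from the unprimed case. The operators $\ck_i$ for $i \geq 1$ do not simply toggle a prime in place; moves like $X'YX \leftrightarrow YXY'$ shift the prime between non-adjacent positions, and when such a word is inserted the prime can migrate through the tableau in ways that depend on the entire bumping history, including diagonal crossings. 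The paper handles this not by a direct case analysis of insertion paths but by introducing a permutation $\tpi(a)$ of the $2$-cycles of $z$, computed from the sequence of diagonal entries of the intermediate $P$-tableaux (Sections~\ref{cyc-sect1}--\ref{cyc-sect2}), proving that the primed positions in $\PO(\hat a)$ and the diagonal primed positions in $\QO(\hat a)$ are determined by $\tpi(a)$ together with $\marked(\hat a)$ (Proposition~\ref{tau-prop}), and then establishing via a lengthy series of technical lemmas (Sections~\ref{proof-sect3}--\ref{tech-sect}) that $\tpi(\ck_i(a)) = \tpi(a)$ in the generic cases. A direct ``same bumping routes'' verification does not succeed because the routes genuinely differ at intermediate steps and the primes do not stay put.

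For the second layer, your transitivity argument is a valid alternative in spirit, but the paper takes a shorter route: it proves directly (Proposition~\ref{o-lem2}) that $a \simICK \row(\PO(a))$ for every primed involution word $a$, so if $\PO(a) = \PO(b)$ then $a \simICK \row(\PO(a)) = \row(\PO(b)) \simICK b$ immediately. This avoids having to establish transitivity of the $\fkd_i$ on standard shifted tableaux with arbitrary diagonal primes, which your sketch leaves vague at exactly the delicate point---how one toggles the prime on an arbitrary diagonal box, not just box $(1,1)$, using only the $\fkd_i$.
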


Section~\ref{main-sect} contains these and our other main results, following
some preliminaries in Section~\ref{prelim-sect},
The proof of Theorem~\ref{intro-thm2} 
is unexpectedly difficult and takes up all of Section~\ref{proofs-sect}.
 We use Theorems~\ref{intro-thm1} and \ref{intro-thm2} to derive some 
 additional results in Section~\ref{other-sect}. Specifically, 
 in Section~\ref{modif-sect}, 
we describe a variation of 
\defn{Sagan-Worley insertion} from \cite{Sag87,Worley}
whose domain is the set of all \defn{primed \biwords}.
Then in Section~\ref{mixed-sect} we investigate two related 
extensions of Haiman's \defn{shifted mixed insertion algorithm} from \cite{HaimanMixed}.

\subsection{Motivation}\label{motiv-sect}

We use the second half of this introduction to explain some of our motivations 
for considering  
the insertion algorithm in Theorems~\ref{intro-thm1} and \ref{intro-thm2}.
These motivations are related to the problem of finding a combinatorial rule to multiply certain ``$K$-theoretic'' symmetric functions.

The \defn{Schur $P$-function}  of a strict partition $\lambda$
is the generating function $P_\lambda=\sum_{T} x^T$ for all 
semistandard shifted tableaux $T$ of shape $\lambda$ with no primed entries on the main diagonal;
here one sets $x^T := \prod_i x_i^{m_i}$ where $m_i$ is the number of entries of $T$ equal to $i$ or $i'$.
The \defn{Schur $Q$-function} $Q_\lambda$ is defined in the same way but without excluding primes from the main diagonal,
or more directly as the scalar multiple $Q_\lambda = 2^{\ell(\lambda)} P_\lambda$.
It is well-known that both power series are symmetric functions that are Schur positive,
and that the set of all $P_\lambda$'s (respectively, all $Q_\lambda$'s) is a $\ZZ$-basis for a ring with nonnegative integer structure constants \cite{S28}.

Ikeda and Naruse introduced $K$-theoretic analogues $\GP_\lambda$ and $\GQ_\lambda$
for the Schur $P$-functions and $Q$-functions in \cite{IkedaNaruse}.
These power series are also symmetric, and
may be defined similarly as the generating functions for all \defn{semistandard set-valued shifted tableaux} of a given shape, 
where for $\GP_\lambda$ primed entries are again prohibited from appearing in diagonal positions \cite[\S9.1]{IkedaNaruse}. The precise definition 
involves a bookkeeping parameter $\beta$, which makes both power series homogeneous if one sets $\deg(\beta) = -1$ and $\deg(x_i) =1$.
For simplicity, we take $\beta=1$ in our discussion here.
With this convention, one recovers $P_\lambda$ and $Q_\lambda$ by taking the homogeneous
terms of lowest degree in $\GP_\lambda$ and $\GQ_\lambda$, respectively.

It was conjectured in \cite{IkedaNaruse} that the set of all $\GP_\lambda$'s (respectively, all $\GQ_\lambda$'s) is 
a basis for a ring. For the $\GP_\lambda$'s 
this follows from the main result in \cite{CTY}; other proofs also appear in \cite[\S4]{HKPWZZ} and \cite[\S8]{PechenikYong}. For the $\GQ_\lambda$'s, surprisingly, Ikeda and Naruse's conjecture is technically 
still unresolved, though it is known from \cite{IkedaNaruse} that each product $\GQ_\lambda\GQ_\mu$ is a possibly infinite linear combination of $\GQ_\nu$'s.
However, in general,
it remains to show that this expansion has finitely many terms and to give an interpretation
 of its coefficients.\footnote{There is at least a Pieri rule to expand $\GQ_\lambda\GQ_{\mu}$ into $\GQ_{\nu}$'s  when $\mu=(p)$ has a single part  \cite[Cor. 5.6]{BuchRavikumar}. There is also a formula for the expansion of 
 $\GQ_\lambda\GQ_\mu$ for any strict $\lambda$, $\mu$ into monomials \cite[Cor. 7.8]{MGP}.}
 These difficulties have to do with the fact that $\GQ_\lambda$ is no longer a scalar multiple of $\GP_\lambda$.

There is a bijective approach to
proving that the $K$-theoretic Schur $P$-functions generate a ring,
which we sketch below.
 The results in this article are a first step toward extending this strategy
 to handle the $K$-theoretic Schur $Q$-functions.
 
For each even integer $n>0$, let $\Ifpf_n$ denote the set of fixed-point-free involutions in the symmetric group $S_n := \langle s_1,s_2,\dots,s_{n-1}\rangle$.
Each element $z \in \Ifpf_n$ has an associated set of \defn{symplectic Hecke words} $\cHfpf(z)$ defined in \cite[\S1.3]{Marberg2019a}.
This set is infinite unless $z$ is $ \wfpf := (1,2)(3,4)\cdots (n-1,n)$.
Each word in $\cHfpf(z)$ is a finite integer sequence that does not begin with an odd letter.
The shortest words in $\cHfpf(z)$ are the minimal length sequences $a_1a_2\cdots a_k$ with $z = s_{a_k} \cdots s_{a_2} s_{a_1} \wfpf s_{i_1} s_{a_2}\cdots s_{a_k}$. 
Given $z \in \Ifpf_n$ and a strict partition $\lambda$, define
\[ \KP_z := \sum_{\phi \in \Incr_\infty(\cHfpf(z))} x^\phi\]
where  $x^\phi := \prod_i x_i^{\ell(a_i)}$
for $\phi=(a^1,a^2,a^3,\dots)$.

A  \defn{semistandard weak set-valued shifted tableaux} of strict partition shape $\lambda$
is a filling of $\SD_\lambda$ by elements of $\{ 1' < 1 < 2' < 2 < \dots\}$, with multiple elements and repetitions allowed in each box,
but with no primed numbers repeated in a row and no unprimed numbers repeated in a column.
The entries of such a tableau $T$ are required to be weakly increasing in the sense that the largest entry in one box
cannot be greater that the smallest entry in the next box in the same row or column.
The weight of $T$ is again the monomial $x^T := \prod_i x_i^{m_i}$ where $m_i$ is the number of entries of $T$ equal to $i$ or $i'$.
Let $\KP_\lambda := \sum_T x^T
$
where the sum is over all semistandard weak set-valued shifted tableaux of shape $\lambda$ with no primed entries on the main diagonal.
By \cite[Cor. 6.6]{LM}, we have
$\GP_\lambda = \omega(\KP_\lambda)$,
where $\omega$ is the automorphism of the algebra of symmetric functions  sending each Schur function $s_\lambda\mapsto s_{\lambda^\top}$.
In turn, each $\KP_z$ is related to $\KP_\lambda$ by the following theorem:

\begin{theorem}[{See \cite[Thm. 4.5]{Marberg2019a}}]
Let $z \in \Ifpf_n$.
There is a bijection 
$\phi \mapsto (P_\Sp(\phi), Q_\Sp(\phi))$
from $\Incr_\infty(\cHfpf(z))$ to the set of pairs $(P,Q)$ where $P$ is a shifted tableau with increasing rows and columns whose row reading word is in $\cHfpf(z)$, and $Q$ is a weak set-valued shifted tableau of the same shape with no primed entries on the main diagonal.
Moreover, one has 
$x^\phi = x^{Q_\Sp(\phi)}$.
\end{theorem}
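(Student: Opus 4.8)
The plan is to realise $\phi \mapsto (P_\Sp(\phi), Q_\Sp(\phi))$ as an explicit \emph{symplectic shifted Hecke insertion}, parallel to the orthogonal correspondence of Theorem~\ref{intro-thm1}, and then to invert it. First I would define single-letter insertion: to insert an integer $j$ into a shifted tableau $P$ with strictly increasing rows and columns, run a Sagan--Worley-type hybrid bumping procedure, switching a bump to column insertion once it reaches the main diagonal, and modified in the Hecke manner so that whenever $j$ would displace (or be appended next to) an equal entry the shape of $P$ is left unchanged and bumping continues with the value that would have been displaced. Each insertion either adjoins one box to $P$ or fixes the shape, and in both cases there is a well-defined terminal cell. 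Given $\phi = (a^1, a^2, \dots) \in \Incr_\infty(\cHfpf(z))$, insert the letters of $a^1$ from left to right, then those of $a^2$, and so on; each inserted letter from block $a^i$ deposits one entry equal to $i$ or $i'$ into the terminal cell of $Q_\Sp(\phi)$, primed exactly when the insertion path terminated with a vertical step. Because every letter of $a^i$ contributes precisely one $Q$-entry labelled $i$ or $i'$ (here the fact that \emph{weak} set-valued tableaux allow repeated entries in a box is exactly what lets a Hecke stabilization still be recorded), the identity $x^\phi = x^{Q_\Sp(\phi)}$ holds by construction, and the substance of the theorem is that $(P_\Sp(\phi), Q_\Sp(\phi))$ lands in the stated codomain and that the map is bijective.

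I would establish the forward direction by induction on the number of inserted letters. The easy invariant is that $P_\Sp(\phi)$ retains strictly increasing rows and columns, a purely local check on bumping paths. The key invariant is that the row reading word of $P_\Sp(\phi)$ stays in $\cHfpf(z)$: for this one shows that single-letter insertion alters the reading word only by a \emph{symplectic Hecke--Coxeter--Knuth} relation, which preserves both the symplectic Demazure product defining $\cHfpf(z)$ and the parity condition that its words do not begin with an odd letter; this step is where one invokes the structure theory of symplectic Hecke words from \cite{Marberg2019a}. Simultaneously one checks the weak set-valued conditions on $Q_\Sp(\phi)$: within a single weakly increasing block $a^i$ the successive insertion paths move weakly rightward and downward, so at most one terminates vertically into any given row (no repeated $i'$ in a row) and at most one terminates horizontally into any given column (no repeated $i$ in a column), while across boxes the entries increase in the order $1' < 1 < 2' < 2 < \cdots$; and since a new diagonal cell, or a stabilization on the diagonal, can only be produced by a bump arriving along a row, which is recorded unprimed, $Q_\Sp(\phi)$ has no primed entries on the main diagonal.

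For the inverse, given $(P, Q)$ in the codomain I would peel off one block at a time. Let $m$ be the largest label occurring in $Q$; visit the cells carrying an entry $m$ or $m'$ in the reverse of the order insertion would have produced them, and for each recorded entry perform reverse symplectic Hecke bumping on $P$, deleting the cell if that entry was its unique occupant and merely reverse-bumping out of it otherwise (an undone stabilization); the letters ejected, in the order produced, form a word $a^m$, which one checks is weakly increasing and whose re-insertion restores both $P$ and the $m$-labelled part of $Q$. Iterating down to $m = 1$ recovers $\phi$, and each reverse step being a symplectic Hecke--Coxeter--Knuth move confirms that the reconstructed $\phi$ lies in $\Incr_\infty(\cHfpf(z))$. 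The main obstacle is precisely the correctness of this reverse procedure in the presence of stabilizations: a Hecke stabilization does not change the shape of $P$, hence is invisible in the shape of $Q$ and can only be inferred from the multiplicity of an entry inside a single box, so one must prove carefully that the reverse bumping path from a recorded cell is deterministic with its type (shape-changing versus stabilizing) forced by $Q$, and that forward insertion of the recovered weakly increasing $a^m$ reproduces the same cells with the same primings. This last point rests on a delicate analysis of how the insertion paths of a $\leq$-ordered sequence of letters nest and propagate, and it is the technical heart of the argument.
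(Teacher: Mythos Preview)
The paper does not actually prove this theorem: it is stated in the motivation section with the attribution ``See \cite[Thm.~4.5]{Marberg2019a}'' and is used purely as background, with no argument supplied here. So there is nothing in this paper to compare your proposal against.

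That said, your sketch is broadly the approach taken in the cited reference \cite{Marberg2019a}: one defines symplectic shifted Hecke insertion letter by letter via a Sagan--Worley-style bumping procedure with a Hecke modification, records in $Q$ whether each insertion terminated by row or column with the block index $i$ or $i'$, proves that the row reading word of $P$ stays in $\cHfpf(z)$ by showing each bump step is a symplectic Coxeter--Knuth move, and inverts by reverse bumping guided by the largest entries of $Q$. One point to be careful about in your description: your priming rule ``primed exactly when the insertion path terminated with a vertical step'' and your claim that diagonal cells are always reached by a row bump need to be stated precisely in the shifted Hecke setting, since the switch from row to column insertion happens at the diagonal and the terminal behavior there is exactly what forces the no-primes-on-diagonal condition; the details in \cite{Marberg2019a} handle this, but your phrasing elides it. Otherwise your outline matches the structure of the original proof.
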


This bijection is called  \defn{symplectic Hecke insertion} 
in \cite{Marberg2019a}.
If $a =a_1a_2\cdots a_k \in \cHfpf(z)$ then
we set $P_\Sp(a) =P_\Sp(\phi)$ and 
$Q_\Sp(a) =Q_\Sp(\phi)$
for $\phi = (a_1, a_2,\dots,a_k,\emptyset,\emptyset,\dots)$.
The value of $P_\Sp(\phi)$ depends only on the underlying word,
but not on its division into weakly increasing factors.
All letters in a symplectic Hecke word for $z \in \Ifpf_n$ are in $\{1,2,\dots,n-1\}$,
so there are only finitely many shifted tableaux with increasing rows and columns
that can have row reading words in $\cHfpf(z)$.
It follows that  $\KP_z$ is the finite sum 
$\sum_{T \in \{ P_\Sp(a) : a \in \cHfpf(z)\}} \KP_{\mathsf{shape}(T)}.$

Assume $y \in \Ifpf_m$ and $z \in \Ifpf_n$ for even integers $m,n \geq 0$.
Let $y\times z  \in \Ifpf_{m+n}$ be the permutation
mapping $i\mapsto y(i)$ for $1\leq i \leq m$ and $i + m \mapsto z(i) + m$ for $1\leq i \leq n$.
Next, for $\phi = (a^1,a^2,\dots) \in \Incr_\infty(\cHfpf(y))$
and 
$\psi = (b^1,b^2,\dots) \in \Incr_\infty(\cHfpf(z))$,
let $\phi \oplus \psi = ( a^1c^1, a^2c^2,\dots)$
where $c^i$ is formed by adding $m$ to each letter of $b^i$.

It is clear from the results about  symplectic Hecke words in \cite[\S1.3]{Marberg2019a}
that $(\phi,\psi) \mapsto \phi \oplus \psi $ is a bijection
$\Incr_\infty(\cHfpf(y))\times \Incr_\infty(\cHfpf(z)) \xrightarrow{\sim}
\Incr_\infty(\cHfpf(y\times z))$.
  Therefore
$\KP_y\KP_z = \KP_{y\times z}.$
In turn, if the largest part of $\lambda$
is less than $n-1$, then there 
exists  $z^\fpf_\lambda \in \Ifpf_n$ (with an explicit formula) such that $\KP_\lambda = \KP_{z^\fpf_\lambda}$ \cite[Thm. 4.17]{MP2021}.
Since $\omega$ is an algebra automorphism, we have
\be
 \KP_\lambda \KP_\mu = \sum_\nu e_{\lambda\mu}^{\nu} \KP_\nu
 \quand
  \GP_\lambda \GP_\mu = \sum_\nu e_{\lambda\mu}^{\nu} \GP_\nu
  \ee
  where $e_{\lambda\mu}^\nu$  is the number of tableaux in  
$\{ P_\Sp(a) : a \in\cHfpf(z^\fpf_\lambda \times z^\fpf_\mu)\}$ of shape $\nu$.\footnote{
This becomes a Littlewood-Richardson rule for the symmetric functions
 $\GP^{(\beta)}_\lambda$ defined in
\cite{IkedaNaruse},
which involve a formal parameter $\beta$,
via the identity
$\GP^{(\beta)}_\lambda =\beta^{-|\lambda|} \GP_\lambda( \beta x_1, \beta x_2 , \beta x_3, \dots) $.}

Here is how one could try to adapt this argument to 
derive an analogous formula for the coefficients expanding $\GQ_\lambda \GQ_\mu$ into $\GQ$-functions.
The appropriate analogue of $\KP_\lambda$ is 
the generating function $\KQ_\lambda := \sum_T x^T$
for all weak set-valued shifted tableaux $T$ of shape $\lambda$, now with primed entries allowed on the main diagonal. We have 
$\GQ_\lambda = \omega(\KQ_\lambda)$ by \cite[Cor. 6.6]{LM}.

There is a natural candidate for the $Q$-form of $\KP_z$.
When $n$ is even, the symplectic group
$\Sp_n(\CC)$
 acts on the type $\mathsf{A}_{n-1}$ flag variety $\Fl_n$ with finitely many orbits indexed by
 $\Ifpf_n$. The closures of these orbits have canonical 
 representatives in the connective $K$-theory ring of $\Fl_n$
 satisfying a certain stability property \cite{WyserYong}. These representatives are  polynomials  
 $\fk G^\Sp_z \in \ZZ[\beta][x_1,x_2,\dots]$, and their ``stable limits'' give 
 certain symmetric functions $\GP^\Sp_z$ that satisfy 
 $\KP_z = \omega(\GP^\Sp_z|_{\beta=1}) $
 (compare \cite[Cor. 4.6]{MP2019a} with the results in \cite[\S5]{Marberg2019a}).

For any positive integer $n$, the orthogonal group $\O_n(\CC)$ likewise acts on $\Fl_n$
with finitely many orbits, now indexed by $I_n := \{ z \in S_n : z=z^{-1} \}$.
The closures of these orbits again have canonical 
 representatives in the connective $K$-theory ring of $\Fl_n$
 satisfying a certain stability property \cite{MP2019a}.
 These are inhomogeneous polynomials  
 $\fk G^\O_z\in \ZZ[\beta][x_1,x_2,\dots]$ indexed by $z \in I_n$.
Mimicking the properties of $\KP_z$, one would like to define
the ``stable limit''
 \[\GQ^\O_z:=\lim_{m\to \infty} \fk G^\O_{1^m \times z}\] for $z \in I_n$,
 where $1^m$ is the identity permutation in $S_m$,
 and  then set 
\[\KQ_z :=\omega(\GQ^\O_z|_{\beta=1}).\]
These definitions would be appropriate because
if $z$ is \defn{vexillary}, that is, $2143$-avoiding,
then the limit giving
$\GQ^\O_z$ converges, the resulting power series
$\KQ_z$ is equal to $\KQ_\lambda$ for a certain strict partition $\lambda$,
and any $\KQ_\lambda$ can be attained in this way 
\cite[Thm. 4.11]{MP2019a}.
Some difficulties remain, however:
\begin{itemize}
\item[(a)] No proof is yet known that $\lim_{m\to \infty} \fk G^\O_{1^m \times z}$
converges if $z$ is not vexillary \cite[Prob. 5.3]{MP2019a}.

\item[(b)] There should exist a set of \defn{orthogonal Hecke words} $\iH(z)$, analogous to $\cHfpf(z)$,
such that $\KQ_z = \sum_{\phi \in \Incr_\infty(\iH(z))} x^\phi$
and $\KQ_y \KQ_z = \KQ_{y \times z}$
for all $y \in I_m$ and $z \in I_n$.
It is not known how to define this set even when $z$ is vexillary.

\item[(c)] If the first two issues can be addressed, then to prove that the $\GQ_\lambda$'s generate a ring and to
find a combinatorial interpretation of the $\GQ$-expansion of $\GQ_\lambda \GQ_\mu$, it remains only to find
an appropriate \defn{orthogonal Hecke insertion} algorithm.
This 
should bijectively map elements of $\Incr_\infty(\iH(z))$  
to pairs $(P,Q)$ of shifted tableaux with the same shape,
where now $Q$ is weak set-valued but with primed entries allowed on the main diagonal.
\end{itemize}
The results in this paper provide a base case 
for the last item. 

Specifically, 
$\iH(z)$ should be a superset of $\iR^+(z)$ and
the 
definition of orthogonal Hecke insertion should be an extension
of our map $a \mapsto (\PO(a),\QO(a))$.
This is because if we replace the inhomogeneous
polynomial $\fk G^\O_{1^m \times z}$ by its terms of lowest degree,
then the desired stable limit  does always converge as $m\to \infty $ (see \cite[\S1.5]{HMP1}), so at least the lowest degree terms of $\GO_z$ and $\KQ_z$ are well-defined.
Both of these give the same homogeneous symmetric function (by \cite[Cor. 4.62]{HMP4},
since $\omega$ fixes every Schur $Q$-function),
which we denote by $Q_z$.

As explained 
in Section~\ref{qapp-sect}, it 
  further holds that 
$Q_z =  \sum_{\phi \in \Incr_\infty(\iR^+(z))} x^\phi$
and $Q_yQ_z = Q_{y\times z}$
for all $y \in I_m$ and $z \in I_n$.
This resolves the ``homogeneous'' forms of (a) and (b), and
our first main theorem  gives a homogeneous version of the correspondence desired in (c).
As an application, this leads to another Littlewood-Richardson rule for the Schur $Q$-functions (see 
Corollary~\ref{lrq-cor}).
One hopes that this rule can be generalized to the $\GQ_\lambda$'s in future work.

\subsection*{Acknowledgments}

This work was partially supported by Hong Kong RGC grants ECS 26305218 and GRF 16306120.
I am especially grateful to
Travis Scrimshaw 
for many useful conversations, and for hosting  
a productive visit to the University of Queensland.
I also thank Zach Hamaker, Joel Lewis, and Brendan Pawlowski for helpful discussions.

\section{Preliminaries}\label{prelim-sect}

In this section we review some preliminary facts and background material.
Section~\ref{invw-sect} surveys the basic theory of involution words.
Section~\ref{pr-words-sect} then discusses primed words and primed involution words.
In Section~\ref{tab-sect} we set up our conventions for shifted tableaux.
Throughout, we write $\ZZ$ for the set of all integers.
When $n\in \ZZ$ is nonnegative, we let $[n] := \{i \in \ZZ : 0 < i \leq n\}$.

\subsection{Involution words}\label{invw-sect}

We use the term \defn{word} to mean any finite sequence of integers $a=a_1a_2\cdots a_n$. 
We write $\ell(a) := n$ for the length of a word.
Recall from the introduction that $\cR(\sigma)$ denotes
 the set of reduced words for a permutation $\sigma \in S_\ZZ:= \langle s_i : i \in \ZZ\rangle$,
 while $\iR(z)$  denotes the set of involution words   for an involution $z=z^{-1} \in S_\ZZ$.

Let $\simA$ be the equivalence relation on words that has 
$a X(X+1)X b \simA a (X+1)X(X+1) b$ and 
$a XYb \simA a YXb $
 for all words $a$, $b$ and all $X,Y \in \ZZ$ with $|X-Y| > 1$. 
For each $\sigma \in S_\ZZ$, the set $\cR(\sigma)$ is an equivalence class under $\simA$,
and an arbitrary word belongs to $\cR(\sigma)$ for some $\sigma \in S_\ZZ$ if and only if its $\simA$-equivalence class
contains no words with equal adjacent letters \cite[\S3.3]{CCG}.
We review a similar result that holds for  involution words.

 Let 
 $I_\ZZ := \{ \sigma \in S_\ZZ : \sigma=\sigma^{-1}\}$ and   $I_n := S_n \cap I_\ZZ$ when $0<n \in \ZZ$. 
If $z \in I_\ZZ$ and $i \in \ZZ$ then  
$s_i \circ z \circ s_i=z$ when $z(i)>z(i+1)$,  
while $s_i \circ z \circ s_i=zs_i = s_iz$ when $i$ and $i+1$ are fixed points of $z$, 
and otherwise $s_i \circ z \circ s_i=s_izs_i$.
It follows (see \cite[Lem. 2.1]{HMP1}) that if $z \in I_\ZZ$ and $a_1,a_2,\dots,a_n \in \ZZ$
then the word
$a_1a_2\cdots a_n$ belongs to $ \iR(z)$ if and only if 
$z =  s_{a_n}\circ \cdots \circ s_{a_2}\circ s_{a_1}\circ s_{a_2}\circ \cdots\circ s_{a_n}$ 
 and for each $i \in[n]$ it holds that
\[(s_{a_{i-1}}\circ \cdots \circ s_{a_2} \circ s_{a_1} \circ s_{a_2} \circ \cdots\circ s_{a_{i-1}})(a_i) < (s_{a_{i-1}}\circ \cdots \circ s_{a_2} \circ s_{a_1} \circ s_{a_2}  \circ \cdots\circ s_{a_{i-1}})(1+a_i).\]
For example, we have $1232 \in \iR(4321)$ since
$s_1 = 2134$, $s_2\circ s_1 \circ s_2 = s_2s_1s_2= 3214$, $s_3\circ s_2\circ s_1 \circ s_2\circ s_3 = s_3s_2s_1s_2s_3=4231$,
and
$s_2\circ s_3\circ s_2\circ s_1 \circ s_2\circ s_3 \circ s_2 = s_3s_2s_1s_2s_3 s_2 = 4321$.

\begin{lemma}\label{new-lem}
 Suppose $z\in S_\ZZ$ has $z(i) > z(i+1)$ for some $i \in \ZZ$. Then some $a \in \iR(z)$ ends in $i$.
\end{lemma}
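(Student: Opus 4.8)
The plan is to reduce the lemma to the construction of one auxiliary involution and then quote the characterization of $\iR(z)$ recalled just before the statement. Note first that $\iR(z)$ is nonempty only when $z$ is an involution, so I take $z \in I_\ZZ$. The key reduction is this: it suffices to build an involution $y = y^{-1} \in S_\ZZ$ with $y(i) < y(i+1)$ and $s_i \circ y \circ s_i = z$. Indeed, granting such a $y$, the set $\iR(y)$ is nonempty, so I would pick any $b = b_1 b_2 \cdots b_m \in \iR(y)$ and set $a := b_1 b_2 \cdots b_m\, i$. By the characterization from \cite[Lem.~2.1]{HMP1} displayed above, verifying $a \in \iR(z)$ comes down to the identity $z = s_i \circ y \circ s_i$ (true by construction, since the first $m$ letters of $a$ produce the partial involution $y$) together with the ascent inequality at the last index, which is exactly $y(i) < y(i+1)$; the ascent inequalities at the earlier indices are the ones already witnessing $b \in \iR(y)$. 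So $a \in \iR(z)$ and ends in $i$, as wanted. This step is purely formal.

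The substance is constructing $y$, which I would do by cases on whether the two-element set $\{z(i), z(i+1)\}$ equals $\{i, i+1\}$. In the generic case $\{z(i), z(i+1)\} \ne \{i, i+1\}$, I would take $y := s_i z s_i$. Then $y = y^{-1}$ is immediate, and $y(i) < y(i+1)$ follows from $z(i) > z(i+1)$ by computing $y(i) = s_i(z(i+1))$ and $y(i+1) = s_i(z(i))$ and noting that the value-swap $s_i$ reverses the relative order of a pair only when that pair is $\bigl(z(i), z(i+1)\bigr) = (i+1, i)$, which is precisely what the case hypothesis rules out. Since $z(i) > z(i+1)$ also forbids $i$ and $i+1$ from both being fixed by $y$, the trichotomy recalled in the excerpt forces $s_i \circ y \circ s_i = s_i y s_i = z$. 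In the degenerate case $z(i) = i+1$ and $z(i+1) = i$, the element $z$ commutes with $s_i$, and I would instead take $y := z s_i = s_i z$: then $y = y^{-1}$, the points $i$ and $i+1$ are fixed by $y$ (so $y(i) = i < i+1 = y(i+1)$), and the second clause of the trichotomy gives $s_i \circ y \circ s_i = y s_i = z$. In both cases $y$ has the required properties, and the reduction finishes the proof.

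The main thing to watch is the degenerate case: there one must notice that ordinary conjugation $s_i z s_i$ is the \emph{wrong} choice, since by the first clause of the trichotomy it just returns $z$ (still with a descent at $i$), and that the right move is to peel off the $2$-cycle $(i, i+1)$ from $z$, landing in the ``two fixed points'' branch of the trichotomy. Beyond that, everything is routine: checking $y = y^{-1}$, the order inequality $y(i) < y(i+1)$, and the applicable clause of the trichotomy. One can also observe as a sanity check that the word $a$ produced has length $\ell(b) + 1$, consistent with the involution length dropping by one from $z$ to $y$, but this need not be argued separately because the cited characterization of $\iR(z)$ already builds in minimality of length.
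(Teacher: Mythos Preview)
Your proof is correct and follows essentially the same approach as the paper's: define $y = zs_i = s_iz$ when $z(i)=i+1$ and $y = s_izs_i$ otherwise, then append $i$ to any involution word for $y$. The paper's proof is much terser (two sentences, invoking ``the remarks above''), whereas you have spelled out the verifications that $y(i)<y(i+1)$ and that the correct branch of the Demazure-conjugation trichotomy applies in each case; both arguments are the same in substance.
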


\begin{proof}
Let $y = zs_i = s_i z$ if $z(i) = i+1$ and otherwise let $y=s_izs_i$. 
Then $y\in I_\ZZ$ and adding $i$ to any of its involution words gives an involution word $z$
in view of the remarks above.
\end{proof}

Define $\iisim$ to be the transitive closure of $\simA$  and the relation with
$
XYa \iisim  YXa $
for all words $a$ and all letters $X,Y\in \ZZ$.
Hu and Zhang   prove the first claim in the following result
in \cite{HuZhang1}:
 
 \begin{proposition}[{\cite{HuZhang1}}] \label{mat3-prop}
Each set $\iR(z)$ for $z \in I_\ZZ$ is an equivalence class under $\iisim$.
An arbitrary word is an involution word for some element of $ I_\ZZ$ if and only if its $\iisim$-equivalence class
contains no words with equal adjacent letters.
\end{proposition}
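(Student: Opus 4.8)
The plan is to reduce Proposition~\ref{mat3-prop} to Proposition~\ref{new-lem} and the corresponding statement for ordinary reduced words (the braid relation characterization of $\cR(\sigma)$ quoted from \cite{CCG}), using the fact that an involution word of $z$ is obtained from a reduced word by the ``half-folding'' operation and that the relation $\iisim$ precisely tracks which braid/commutation moves on the folded word are visible. Concretely, I would first establish that each $\iR(z)$ is a single $\iisim$-class by induction on word length, and then handle the ``recognition'' statement separately.

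First I would prove that $\iR(z)$ is closed under $\iisim$ and that any two words in $\iR(z)$ are $\iisim$-related. Closure under $\simA$ is the part Hu--Zhang already provide (cited as the first claim), so it suffices to check closure under the extra relation $XYa \iisim YXa$ at the front of a word and to prove connectedness. For closure, if $XYa \in \iR(z)$, I would compute $z$ via the folded product and observe that the first two letters $X,Y$ enter the definition of $z$ only through $s_X \circ s_Y$ being applied from both sides, i.e. $z$ is unchanged under swapping them provided the resulting word is still \emph{reduced} as an involution word; the inequality condition in Lemma~2.1 of \cite{HMP1} at the first two indices is exactly what guarantees this, and one checks it is symmetric in the two leading letters when $|X-Y|>1$ as well as in the relevant $|X-Y|=1$ case — wait, that case is not allowed by the relation, so only $|X-Y|>1$ at the front plus the explicit transposition relation needs checking, and both amount to a short case analysis on $z(X),z(X+1),z(Y),z(Y+1)$. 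For connectedness, I would induct on $\ell(a)$: given $a,b \in \iR(z)$ with a common value of $z$, use Lemma~\ref{new-lem} to find, for a chosen descent $i$ of $z$, some $c \in \iR(z)$ ending in $i$; then show that $a$ (resp. $b$) can be transformed by $\iisim$-moves to a word ending in $i$ — this is where the first-letter transposition relation does real work, since it lets a ``commuting'' occurrence of $i$ migrate to the end — and conclude by stripping the last letter and applying the inductive hypothesis to $y \in I_\ZZ$ where $z = s_i \circ y \circ s_i$.

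For the recognition statement, the forward direction is immediate: the first claim forces every word in $\iR(z)$ into an $\iisim$-class, and no element of $\iR(z)$ has equal adjacent letters (an involution word is reduced, hence has distinct consecutive letters since $s_j \circ s_j = s_j$ would shorten it), and $\iisim$-moves never create equal adjacent letters from a word lacking them — this last point needs a small check that $XYa \leftrightarrow YXa$ preserves the ``no equal adjacent letters'' property, which is clear. For the converse, suppose $a$ is a word whose $\iisim$-class has no word with equal adjacent letters; I would show $a \in \iR(z)$ for the element $z$ obtained by applying the folded product to $a$. The strategy is again induction on $\ell(a)$, peeling off the last letter $a_n$: set $y$ to be the folded product of $a_1\cdots a_{n-1}$; by induction $a_1\cdots a_{n-1} \in \iR(y)$; then I must verify the inequality condition at index $n$ from Lemma~2.1, i.e. that $y(a_n) < y(a_n+1)$, and that the whole word is reduced (length did not drop). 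If $y(a_n) < y(a_n+1)$ fails, then $z = y$ has $z(a_n) > z(a_n+1)$, and I would derive a contradiction by using Lemma~\ref{new-lem}-type reasoning to produce an $\iisim$-equivalent word with a repeated adjacent letter: reduce $a_1 \cdots a_{n-1}$ within $\iR(y)$ to a word ending in $a_n$ (possible by the already-proved connectedness plus the first-letter relation, moving a commuting copy of $a_n$ to the end), giving $\cdots a_n a_n$ after appending the final $a_n$.

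The main obstacle I anticipate is the connectedness argument and, in particular, justifying that one can always $\iisim$-move a descent letter to the \emph{end} of an involution word. Ordinary braid/commutation moves suffice to move a letter to the \emph{front} of a reduced word (this underlies the classical theory), but involution words behave asymmetrically: the fold means that manipulating the \emph{last} letter is easy (it corresponds to conjugation and matches the plain-word theory on that side) while manipulating the \emph{first} letter is what the extra relation $XYa \iisim YXa$ is designed for. So the delicate point is to interleave these two kinds of moves correctly. I expect the cleanest route is: reduce the problem to moving a letter to the \emph{front} using the observation that reversing an involution word is not quite an involution word, but the plain-word commutation/braid structure on the ``second half'' lets one shuffle; then the explicit front-transposition relation converts front-manipulation into what we need. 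Throughout I would lean on the explicit descent/inequality criterion from \cite[Lem. 2.1]{HMP1} and on Lemma~\ref{new-lem} as the combinatorial engine, so that no genuinely new machinery beyond Hu--Zhang's $\simA$-result is required.
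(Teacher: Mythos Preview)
Your proof of the converse in the second assertion is essentially identical to the paper's: peel off the last letter $a_n$, apply induction to get $a_1\cdots a_{n-1}\in\iR(y)$, and if $y(a_n)>y(a_n+1)$ invoke Lemma~\ref{new-lem} to produce $b_1\cdots b_{n-1}\in\iR(y)$ ending in $a_n$; the first assertion then gives $a_1\cdots a_{n-1}\iisim b_1\cdots b_{n-1}$, hence $a\iisim b_1\cdots b_{n-1}a_n$, contradicting the hypothesis.

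Where you diverge is on the first assertion. The paper does not prove it at all: it simply cites \cite[Thm.~3.1]{HuZhang1}. Your attempt to reprove connectedness is therefore more than what is needed here, and the ``main obstacle'' you flag is precisely the nontrivial content of Hu--Zhang's theorem. Showing that an arbitrary $a\in\iR(z)$ is $\iisim$-equivalent to a word ending in a prescribed descent $i$ of $z$ is the involution-word exchange property; your sketch (``the first-letter transposition relation lets a commuting occurrence of $i$ migrate to the end'') does not supply a mechanism, since that relation only acts at the front of the word and there is no evident way to push a letter to the back without already knowing the result.

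One small error in your forward-direction argument: the claim that ``$\iisim$-moves never create equal adjacent letters from a word lacking them'' is false in general (e.g.\ $131\iisim 311$ via the front swap). You do not need it anyway: once the first assertion is granted, the $\iisim$-class of any $a\in\iR(z)$ is exactly $\iR(z)$, all of whose elements are involution words and hence have no equal adjacent letters.
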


For example, we have $ \iR(3412) = \{ 132 \iisim 312\}$
and $\iR(4231) = \{
123 \iisim 213 \iisim 231 \iisim 321
\}$.

\begin{proof}
The first assertion is \cite[Thm. 3.1]{HuZhang1}.
The second assertion may be proved from the first by induction in the following way.
Suppose $a_1a_2\cdots a_n$ is a word whose $\iisim$-equivalence class
contains no words with equal adjacent letters.
Then the subword $a_1a_2\cdots a_{n-1}$ has the same property, so by induction it is an involution word for
some $z \in I_\ZZ$. 
By the remarks before Lemma~\ref{new-lem}, to show that $a_1a_2\cdots a_n$ is an involution word (necessarily for $s_{a_n}\circ z \circ s_{a_n}$)
it suffices to check that $z(a_n) < z(1+a_n)$. But if this inequality does not hold then $z$ has an involution word $b_1b_2\cdots b_{n-1}$
with $b_{n-1} = a_n$ by Lemma~\ref{new-lem}, and by induction $a_1a_2\cdots a_{n-1} \iisim b_1b_2\cdots b_{n-1}$,
so $a_1a_2\cdots a_{n} \iisim b_1b_2\cdots b_{n-1}a_n$, contradicting our hypothesis
about the $\iisim$-equivalence class of $a_1a_2\cdots a_{n}$.
\end{proof}

\subsection{Primed words}\label{pr-words-sect}

Let $\ZZ' :=\ZZ-\frac{1}{2}$ and given $i \in \ZZ$ define $i' := i - \frac{1}{2} \in \ZZ$.
This convention means that 
$(i+1)' = i' + 1$ and $\lceil i'\rceil = \lceil i \rceil = i$ for all $i \in \ZZ$, and that
$\ZZ\sqcup \ZZ' = \{ \cdots  < 0' < 0 < 1'<1<2'<2< \cdots\}= \frac{1}{2}\ZZ$.
We refer to elements of $\ZZ'$ as \defn{primed letters},
and 
we view all primed involution words in $\iR^+(z)$ as finite sequence of elements of $\frac{1}{2}\ZZ$.

``Removing the prime'' from $x \in \ZZ\sqcup \ZZ' $
means to replace $x$ by $\lceil x \rceil$. 
``Reversing the prime'' on  $x \in \ZZ\sqcup \ZZ' $
means to replace $x$ by the unique element of $\{ \lceil x \rceil- \frac{1}{2},  \lceil x \rceil \}  \setminus\{x\}$,
so that $i \in \ZZ$ becomes $i' \in \ZZ'$ and vice versa.
When working with a pair of numbers $x,y \in \ZZ\sqcup \ZZ'$,
we will refer to the operation that reverses the primes on both numbers if exactly one is unprimed and 
leaves them unchanged otherwise as ``switching their primes.''

We use the term \defn{primed word} to mean a finite sequence $a=a_1a_2\cdots a_n$
with letters $a_i \in \ZZ\sqcup \ZZ'$.
The \defn{unprimed form} of $a$ is the word $\unprime(a) := \lceil a_1\rceil \lceil a_2\rceil \cdots \lceil a_n\rceil$ obtained by removing the primes from all letters.

Let $z \in I_\ZZ$. In the introduction we defined an index $i$ to be a \defn{commutation} in an involution word $a_1a_2\cdots a_n  \in \iR(z)$ if 
$s_{a_i}$ commutes with $y:=s_{a_{i-1}}\circ \cdots \circ s_{a_2} \circ s_{a_1}  \circ s_{a_2} \circ\cdots \circ s_{a_{i-1}}$.
Because $s_{a_i}$ must also be a left and right ascent of $y$, it follows that $i \in [n]$ is commutation in $a_1a_2\cdots a_n $
if and only if 
 $a_i$ and $1+a_{i}$ are both fixed points of $y$,
 in which case $(a_i,1+a_i)$ is a 2-cycle of $s_{a_i}\circ y \circ s_{a_i}=s_{a_i}y=ys_{a_i}$. 
 On the other hand, if $i$ is not a commutation then $s_{a_i}\circ y \circ s_{a_i} = s_{a_i} y  s_{a_i}$
 has the same number of 2-cycles as $y$.
Thus the number of commutations in $a_1a_2\cdots a_n$ is the number 
 of 2-cycles of $z$.
 
 Recall from the introduction that the set of \defn{primed involution words} $\iR^+(z)$ consists of all primed words formed by adding primes to letters 
 indexed by commutations in involution words. 
 
 \begin{remark}\label{o-sp-rmk}
As explained in \cite[\S2.2-2.3]{Wyser} or \cite[\S8.1]{HM2021}, 
the set  $I_n \subset S_n$ indexes the orbits of the orthogonal group $\O_n(\CC)$ acting on the type $\mathsf{A}_{n-1}$ flag variety $\Fl_n:=\GL_n(\CC)/B$.
In \cite{Brion2001}, Brion derives a formula for the cohomology classes of the closures of these orbits, involving a certain directed graph
on the set of orbits. The directed paths that arise in Brion's cohomology formula
(from the orbit indexed by $z$ to the unique dense orbit) are in bijection with $\iR^+(z)$.
This is one motivation for studying these sets.
This is also why we will often include the adjective ``orthogonal'' with  constructions involving $\iR^+(z)$. 
There is a parallel ``symplectic'' story for 
a different analogue of reduced words corresponding to the orbits of $\Sp_{2n}(\CC)$ acting on $\Fl_{2n}$ (see, e.g., \cite{HMP5,Marberg2019a,MP2021,WyserYong}).
\end{remark}

In a few places we will need the following additional properties of commutations from \cite{Marberg2021}.

\begin{proposition}[{\cite[Prop. 8.2]{Marberg2021}}] \label{despite-prop}
Let $a=a_1a_2\cdots a_n  \in \iR^+(z)$  for some $z\in I_\ZZ$.
\ben
\item[(a)] Suppose $\lceil a_i \rceil =  \lceil a_{i+1} \rceil \pm 1$ for  $i \in [n-1]$. Then at most one of 
 $a_i$ or $a_{i+1}$ is primed, so at most one of the indices
$i$ or $i+1$ is a commutation in  $a$, and if $i=1$ then $a_{i+1}\in \ZZ$.

\item[(b)] Suppose $\lceil a_{i}\rceil = \lceil a_{i+2}\rceil$ for  $i \in [n-2]$. Then $i>1$,
$a_{i+1} = \lceil a_{i}\rceil \pm 1\in  \ZZ$, and at most one of 
 $a_i$ or $a_{i+2}$ is primed, so at most one of the indices
$i$ or $i+2$ is a commutation in  $a$.
\een
\end{proposition}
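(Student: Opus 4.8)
The plan is to reduce both statements to concrete assertions about a single fixed-point-free involution acting on a pair or triple of consecutive integers, and then check those by hand. Throughout, let $a=a_1a_2\cdots a_n\in\iR^+(z)$ and write $b=\unprime(a)=b_1b_2\cdots b_n\in\iR(z)$, so that $b$ is a genuine involution word. For each index $j$ set $y_j := s_{b_{j-1}}\circ\cdots\circ s_{b_1}\circ\cdots\circ s_{b_{j-1}}\in I_\ZZ$, with $y_1 = 1$; this is the involution ``built so far.'' By the characterization of involution words recalled before Lemma~\ref{new-lem}, each $b_j$ is both a left and right ascent of $y_j$, i.e.\ $y_j(b_j) < y_j(b_j+1)$. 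Recall also, from the discussion just before the statement of the proposition, that $j$ is a commutation in $a$ precisely when $b_j$ and $b_j+1$ are \emph{both} fixed points of $y_j$, and otherwise the conjugation $s_{b_j}\circ y_j\circ s_{b_j}=s_{b_j}y_js_{b_j}$ has the same cycle type as $y_j$.

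For part (a): suppose $\lceil a_i\rceil = \lceil a_{i+1}\rceil \pm 1$, so $b_i$ and $b_{i+1}$ differ by exactly $1$; say $b_{i+1}=b_i+1$ (the other case is symmetric). I would first argue that $i$ and $i+1$ cannot both be commutations. If $i$ were a commutation, then $b_i$ and $b_i+1=b_{i+1}$ are fixed points of $y_i$, and after conjugating we get $y_{i+1}=s_{b_i}y_i$, which sends $b_{i+1}=b_i+1\mapsto b_i$; in particular $b_{i+1}$ is \emph{not} a fixed point of $y_{i+1}$, so $i+1$ is not a commutation. Symmetrically, if $i+1$ is a commutation then $b_{i+1}$ and $b_{i+1}+1$ are fixed by $y_{i+1}$; but I'd need to rule out $i$ being a commutation, which amounts to understanding $y_i$ from $y_{i+1}=s_{b_i}\circ y_i\circ s_{b_i}$. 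The cleanest route is: at most one of $i,i+1$ is a commutation, hence at most one of $a_i,a_{i+1}$ is primed (since primes in a primed involution word sit only on commutations of the underlying word). For the final clause, if $i=1$ then $y_1=1$ and $s_{b_1}$ trivially commutes with it, so $i=1$ is always a commutation; by the just-proved dichotomy $i+1=2$ is then not a commutation, so $a_{i+1}=a_2\in\ZZ$ is unprimed.

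For part (b): suppose $\lceil a_i\rceil = \lceil a_{i+2}\rceil$ with $i\in[n-2]$, so $b_i = b_{i+2} =: c$. Since $b_{i+1}$ must be both an ascent position making the word reduced, and since $b_i b_{i+1} b_{i+2}$ is a reduced subword-fragment of an involution word with $b_i=b_{i+2}$, the braid/commutation analysis (using that the $\iisim$-class of $b$ contains no repeated adjacent letters, Proposition~\ref{mat3-prop}) forces $b_{i+1} = c\pm 1$: if $|b_{i+1}-c|>1$ the two $c$'s would commute past $b_{i+1}$ giving adjacent equal letters, and $b_{i+1}\ne c$ for the same reason, so the only option is $b_{i+1}=c\pm1$ and the local pattern is the braid $c\,(c\pm1)\,c$. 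Next, $i>1$: if $i=1$, then $y_1=1$, $y_2=s_c$, $y_3 = s_{c\pm1}s_cs_{c\pm1}$, and one checks directly that $y_3(c)$ and $y_3(c+1)$ violate the ascent condition $y_3(b_3)<y_3(b_3+1)$ with $b_3=c$ — so $a$ could not have been a valid involution word starting this way; this is the place to do the small explicit computation. Finally, that $a_{i+1}\in\ZZ$: since the local shape is a genuine braid move, $i+1$ is not a commutation (conjugating by $s_{b_{i+1}}$ changes which points are moved, it does not fix $b_{i+1}$ and $b_{i+1}+1$ simultaneously at step $i+1$ because $b_i=c=b_{i+1}\mp1$ was already involved), hence $a_{i+1}$ is unprimed; and then a short argument — essentially the same dichotomy as in part (a) applied to the pair $(i+1,i+2)$ or directly to the braid triple — shows at most one of $a_i,a_{i+2}$ is primed.

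The main obstacle I anticipate is the bookkeeping in the ``reverse'' directions: given information about $y_{j+1}=s_{b_j}\circ y_j\circ s_{b_j}$, deducing the needed structure of $y_j$, because conjugation is not injective on cycle data when $b_j,b_j+1$ interact with the cycle of $y_j$ containing them. I would handle this by always propagating \emph{forward} — track how fixed points and $2$-cycles of $y_j$ evolve into $y_{j+1}$ under the three cases of $s_i\circ z\circ s_i$ recalled before Lemma~\ref{new-lem} — and by invoking Proposition~\ref{mat3-prop} to pin down the local letter pattern ($c\,(c\pm1)\,c$ in (b), and ``not both primed'' in (a)) before doing any permutation computation. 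The explicit $i=1$ verification in (b) with $y_3=s_{c\pm1}s_cs_{c\pm1}$ is routine once set up. If the paper's \cite[Prop.~8.2]{Marberg2021} is available, much of this is just a matter of transcribing its proof; otherwise the above self-contained route via the ascent characterization and Hu–Zhang's Proposition~\ref{mat3-prop} suffices.
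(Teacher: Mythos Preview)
The paper does not give a proof of this proposition; it is simply cited as \cite[Prop.~8.2]{Marberg2021}. So there is no ``paper's own proof'' to compare against, and I will instead assess your plan on its own terms.

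Your approach---forward-propagating the involutions $y_j$ and using the ascent characterization of involution words together with Proposition~\ref{mat3-prop}---is the right one, and part~(a) as well as the $b_{i+1}=c\pm1$ and $i>1$ claims in part~(b) go through as you describe. However, two steps in part~(b) are underspecified and your stated justifications do not actually work.

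First, the claim that $i+1$ is not a commutation: your reasoning ``because $b_i=c=b_{i+1}\mp1$ was already involved'' is not sufficient. Take $b_{i+1}=c+1$ and suppose $y_i(c)=c$, $y_i(c+2)=c+2$, but $y_i(c+1)>c+2$ (so $i$ is not a commutation). Then $y_{i+1}=s_cy_is_c$ \emph{does} fix both $c+1$ and $c+2$, so $i+1$ is formally a commutation of the prefix. The contradiction only appears when you compute $y_{i+2}(c)=y_i(c+1)>c+2=y_{i+2}(c+1)$, violating the ascent condition at step $i+2$. So the argument genuinely requires looking ahead to $b_{i+2}$; you should state this case explicitly rather than asserting that $y_{i+1}$ cannot fix $b_{i+1}$ and $b_{i+1}+1$.

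Second, ``at most one of $i,i+2$ is a commutation'': invoking part~(a) on the pair $(i+1,i+2)$ only gives that at most one of $i+1,i+2$ is a commutation, which says nothing about $i$ versus $i+2$. You need a direct computation: assuming $i$ is a commutation (so $y_{i+1}$ has the $2$-cycle $(c,c+1)$), push forward through $y_{i+2}$ and check that $y_{i+2}(c)\neq c$, so $i+2$ cannot be a commutation. This is routine but is not a consequence of the ``same dichotomy as in part~(a).''
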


Write $\isim$ for the transitive closure of the relation with
$ aXYb \isim aYXb
$ for all $X,Y\in \ZZ\sqcup \ZZ'$
such that $|\lceil X\rceil- \lceil Y\rceil | > 1$, as well as with
$
aXYXb \isim aYXYb $ and $ aX'YXb\isim aYXY'b
$
for  unprimed numbers $X,Y \in \ZZ$ such that $|X-Y|=1$, 
and finally with
$ Xa \isim X'a$ and $ XYa\isim YXa 
 $
for unprimed numbers $X,Y \in \ZZ$. In these relations $a$ and $b$ are arbitrary primed words.
For example, we have
$ 1'232' \isim 1'3'23 \isim 13'23 \isim 3'123 \isim 3123 \isim 1323 \isim 1232
\isim 2132
\isim 2312 
\isim 3212\isim 3121.
$
The following extension of Proposition~\ref{mat3-prop} is a corollary of more general results in \cite{Marberg2021}.

 \begin{proposition}[{\cite[Cor. 8.3]{Marberg2021}}] 
 \label{imat-prop}
Each set $\iR^+(z)$ for $z \in I_\ZZ$ is an equivalence class under $\isim$.
\end{proposition}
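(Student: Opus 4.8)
The plan is to prove Proposition~\ref{imat-prop} by reducing it to Proposition~\ref{mat3-prop}, exploiting the fact that the $\isim$-relations restricted to unprimed words coincide with the $\iisim$-relations (every $\iisim$-move appears among the $\isim$-moves, at least after applying the prime-reversal moves to kill any primes that would otherwise appear). First I would establish the easy direction: any two words in a fixed $\iR^+(z)$ are $\isim$-equivalent. Given $a \in \iR^+(z)$, the last $\isim$-relation $Xa \isim X'a$ lets me strip all primes off $\unprime(a)$ one letter at a time, so $a \isim \unprime(a)$; but $\unprime(a) \in \iR(z)$ by definition of primed involution words, and by Proposition~\ref{mat3-prop} all elements of $\iR(z)$ are $\iisim$-equivalent. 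I then need to check that each elementary $\iisim$-move on an unprimed involution word can be realized as a sequence of $\isim$-moves staying inside $\iR^+(z)$ — the braid move $aX(X+1)Xb \iisim a(X+1)X(X+1)b$ and the far-commutation $aXYb \iisim aYXb$ are literally among the $\isim$-relations, and the head-commutation $XYa \iisim YXa$ is too; so $\unprime(b) \isim \unprime(a)$ implies $a \isim \unprime(a) \isim \unprime(b) \isim b$. Hence each $\iR^+(z)$ lies in a single $\isim$-class.

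For the reverse inclusion I must show that if $a \isim a'$ then $a,a'$ lie in the same $\iR^+(z)$; equivalently, that each $\isim$-move sends a primed involution word to another primed involution word for the \emph{same} $z$. I would argue move by move. The prime-reversal move $Xa \isim X'a$ and the first-letter commutation $XYa \isim YXa$ change nothing about membership since the defining property of $\iR^+(z)$ only depends on $\unprime(\cdot)$ together with the rule that primes may only sit on commutation indices; here I lean on the fact that index $1$ is always a commutation (as noted in the introduction) and, for the swap $XYa \isim YXa$, on Proposition~\ref{mat3-prop} together with the commutation bookkeeping that says swapping the first two unprimed letters of an involution word again gives an involution word for the same $z$ with the same set of commutation indices (up to the obvious transposition of the first two). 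For the interior far-commutation $aXYb \isim aYXb$ with $|\lceil X\rceil - \lceil Y\rceil|>1$, one checks via the characterization of $\iR(z)$ recalled before Lemma~\ref{new-lem} that commuting distant simple reflections does not disturb which $z$ is produced nor which indices are commutations, and since the moved letters are distant the primes on them remain on commutation indices. The two ``braid-type'' moves $aXYXb \isim aYXYb$ and $aX'YXb \isim aYXY'b$ for $|X-Y|=1$ are the substantive ones: here I would use Proposition~\ref{despite-prop}(a)--(b) to control exactly which of the three middle positions can carry a prime, verify using the $\iisim$ braid relation that $\unprime$ of both sides is an involution word for the same $z$, and then check that the commutation indices among the three affected positions are transposed in the expected way so that the single allowed prime lands on a commutation index on the other side as well. (The relation $aX'YXb \isim aYXY'b$ is precisely the "primed braid" whose correctness is the content of the more general results of \cite{Marberg2021}.)

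The main obstacle is this last bookkeeping step for the braid-type moves: one has to track, through an elementary braid relation on the underlying Demazure-product word, how the set of commutation indices changes, and confirm that Proposition~\ref{despite-prop} forces the unique primable slot on the left side to correspond to the unique primable slot on the right side. I expect the cleanest route is to avoid re-deriving this from scratch and instead cite \cite[Cor. 8.3]{Marberg2021} essentially verbatim — the proposition is stated in the excerpt as "a corollary of more general results in \cite{Marberg2021}," so the honest proof is a short deduction: the general theorem of \cite{Marberg2021} describes $\isim$-classes of primed words in terms of an appropriate "primed Demazure product," and specializing that description to the involution case (where one uses the twisted conjugation action recalled before Lemma~\ref{new-lem}) identifies the classes with the sets $\iR^+(z)$. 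So in practice I would write a two-paragraph proof: paragraph one gives the $a \isim \unprime(a)$ reduction plus the invocation of Proposition~\ref{mat3-prop} for the forward inclusion, and paragraph two quotes \cite[Cor. 8.3]{Marberg2021} for the reverse inclusion, spelling out only the specialization dictionary (unprimed form, commutation indices, twisted conjugation) rather than re-proving the braid lemmas.
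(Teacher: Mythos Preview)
The paper does not prove this proposition at all; it is stated with a bare citation to \cite[Cor.~8.3]{Marberg2021}. So there is nothing in the paper to compare against beyond that citation, and your final suggestion---to simply invoke \cite[Cor.~8.3]{Marberg2021}---is exactly what the paper does.

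Your attempt to make the forward inclusion self-contained has a genuine gap, however. The relation $Xa \isim X'a$ only toggles the prime on the \emph{first} letter; it does not let you ``strip all primes off one letter at a time.'' If $a$ carries a prime at an interior position $i>1$, you cannot remove it without first migrating it to the front, and that migration is not free: when the primed letter is adjacent (in value) to its neighbour you are forced to use the primed braid move $aX'YXb \isim aYXY'b$. The paper's own example right before the proposition illustrates this: to pass from $1'232'$ to $1232$ one must go through $1'3'23$ via the primed braid. So proving $a\isim\unprime(a)$ already requires exactly the bookkeeping about how commutation indices transform under braid moves that you flagged as ``the main obstacle'' for the reverse inclusion. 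The two directions are not really of different difficulty; both rest on the commutation-tracking lemmas (Proposition~\ref{despite-prop} here, or the more general results in \cite{Marberg2021}), which is why the paper does not separate them and simply cites the source.
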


\subsection{Tableaux}\label{tab-sect}

A \defn{partition} of an integer $n\geq 0$ is a finite 
sequence of integers $\lambda =(\lambda_1 \geq \lambda_2 \geq \dots \geq \lambda_k >0)$
that sum to $n$. In this event we set $\ell(\lambda):= k$,  $\lambda_i :=0$ for $i>\ell(\lambda)$, and $|\lambda|:=\sum_i\lambda_i=n $.
A partition is \defn{strict} if the parts $\lambda_i$ are all distinct.
The \defn{diagram} of a partition $\lambda$ is 
the set of positions $\D_\lambda:= \{ (i,j) \in \ZZ\times \ZZ :1 \leq j \leq \lambda_i\}.$
The \defn{shifted diagram} of a strict partition $\mu$
is the set $\SD_\mu:= \{ (i,i+j-1) : (i,j) \in \D_\mu\}$.

In this article, a \defn{tableau} of shape $\lambda$  means an arbitrary map $ \D_\lambda \to \ZZ$
and a \defn{shifted tableau} of shape $\mu$ means an arbitrary map $ \SD_\mu \to \ZZ \sqcup \ZZ'$.
If $T$ is a (shifted) tableau then we write $T_{ij}$ for the value assigned to some position $(i,j)$ in its domain.
 The \defn{(main) diagonal} of a shifted tableau
is the set of positions $(i,j)$ in its domain with $i=j$.
We often refer to positions $(i,j)$ in the domain of a tableau as 
its \defn{boxes}.

A (shifted) tableau is \defn{increasing} if its rows and columns are strictly increasing as indices increase.
An increasing (shifted) tableau of shape $\lambda$ is \defn{standard} 
if it contains an entry equal to $i$ or $i'$ for each $i \in [|\lambda|]$.
A (shifted) tableau is \defn{semistandard} if its entries are all positive, its rows and columns are weakly increasing,
no primed entry is repeated in a row, and no unprimed entry is repeated in a column.\footnote{
Semistandard shifted tableaux are sometimes required to have no primed entries on the main diagonal,
or no primed entries in any boxes. 
Our conventions, which do not impose either condition, follow
references like \cite{Sag87,Worley}.} 
We draw tableaux in French notation, so that row indices increase from bottom to top and column indices increase from left to right.
If
\be\label{tableau-ex}
A = \ytab{  4 \\  3 & 3& 7 \\ 1 & 1 & 6 & 6},
\quad
S = \ytab{ 8 \\  3 & 5& 7 \\ 1 & 2 & 4 & 6},
\quad 
B = \ytab{ \none & \none & 8 \\ \none & 2' & 7 & 7 \\ 1' & 2' & 4' & 6},
\quand
T = \ytab{ \none & \none & 8' \\ \none & 3 & 5' & 7 \\ 1' & 2' & 4' & 6},
\ee
then 
$A$ is a semistandard tableau and $B$ is a semistandard shifted tableau,
while 
$S$ is a standard tableau and $T$ is a standard shifted tableau. All four tableaux are of shape $\lambda=(4,3,1)$.
We have $A_{23}= B_{23} = S_{23} = 7$ while $T_{23} = 5'$.

Suppose $T$ is a tableau, or more generally any map from a finite subset of $\ZZ\times \ZZ$ to a totally ordered set.
The \defn{row reading word} of $T$ is the sequence $\row(T)$ formed by
reading the entries of $T$  from left to right, row by row, starting with the top row (in French notation).
Above, we have $\row(A) = 43371166$, $\row(S) = 83571246$,  $\row(B) = 82'771'2'4'6$, and $\row(T) = 8'35'71'2'4'6$.
The \defn{column reading word} of $T$ is the sequence $\col(T)$ formed by
reading the entries of $T$ from top to bottom, column by column, starting with the first column.
Above, we have $\col(A) = 43131766$, $\col(S) = 83152746$,  $\col(B) = 1'2'2'874'76$, and $\col(T) = 1'32'8'5'4'76$.

When $T$ is a shifted tableau, we form $\unprime( T )$
 by removing all primes from $T$'s entries.

\begin{proposition}\label{unprime-incr-prop}
Suppose $T$ is a shifted tableau such that $\row(T)$ or $\col(T)$ is a primed involution word
 for an element of $I_\ZZ$.
Then $T$ is increasing if and only  $\unprime( T )$ is increasing.
\end{proposition}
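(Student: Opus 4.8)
The plan is to prove the two implications separately, with the ``if'' direction being essentially formal and the ``only if'' direction carrying all of the content. For ``if'' I would not use the hypothesis on $\row(T)$ or $\col(T)$ at all: if $\unprime(T)$ is increasing and $(i,j)$, $(i,j{+}1)$ are both in the domain, then $\lceil T_{ij}\rceil=\unprime(T)_{ij}<\unprime(T)_{i,j+1}=\lceil T_{i,j+1}\rceil$, and since $x\le\lceil x\rceil$ and $y>\lceil y\rceil-1$ for all $x,y\in\tfrac12\ZZ$ we get $T_{ij}\le\lceil T_{ij}\rceil\le\lceil T_{i,j+1}\rceil-1<T_{i,j+1}$; the same argument applies to vertically adjacent boxes, so $T$ is increasing. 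For ``only if'', suppose $T$ is increasing. Then $\unprime(T)$ is automatically weakly increasing (the ceiling function is monotone), so the only way it can fail to be increasing is that two adjacent boxes carry the same unprimed value; since $T$ itself is strictly increasing, such a ``bad pair'' must consist of a box with a primed entry $k'$ immediately followed, in the direction of increase, by a box with the unprimed entry $k$ for some $k\in\ZZ$ (horizontally $T_{ij}=k'$, $T_{i,j+1}=k$, or vertically $T_{ij}=k'$, $T_{i+1,j}=k$).

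Next I would split into cases according to whether the bad pair is \emph{aligned} with the relevant reading word or \emph{transverse} to it. Suppose first that $\row(T)$ is a primed involution word and the bad pair lies in a single row; then $k'$ and $k$ are consecutive letters of $\row(T)$. But $\unprime(\row(T))$ is an involution word, so by Proposition~\ref{mat3-prop} its $\iisim$-equivalence class contains no word with equal adjacent letters, and in particular that word itself has none; hence $\lceil k'\rceil\neq\lceil k\rceil$, a contradiction. The same reasoning disposes of a column bad pair when $\col(T)$ is a primed involution word (there $k$ precedes $k'$ as consecutive letters of $\col(T)$). So from now on we may assume there are no aligned bad pairs, and it remains only to rule out a transverse one.

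This transverse case is where I expect the real work to be. Take the representative situation: $\row(T)\in\iR^+(z)$ and $T_{ij}=k'$, $T_{i+1,j}=k$. In $\row(T)$ the rows $i{+}1$ and $i$ occupy consecutive blocks, so the letters strictly between the occurrence of $T_{i+1,j}=k$ and that of $T_{ij}=k'$ are exactly the tail of row $i{+}1$ (entries right of column $j$) followed by the head of row $i$ (entries left of column $j$). Since aligned (here: row) bad pairs have already been excluded, the ceilings strictly increase along each of these two runs, and combined with $T_{i+1,j+1}>k$ and $T_{i,j-1}<k'$ this shows that every letter of the first run has ceiling $\ge k{+}1$ (with only the leftmost one, $T_{i+1,j+1}$, possibly equal to $k{+}1$) and every letter of the second run has ceiling $\le k{-}1$ (with only the rightmost one, $T_{i,j-1}$, possibly equal to $k{-}1$); in particular no intervening letter has ceiling $k$. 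I would then use the $\isim$-relations of Section~\ref{pr-words-sect}, together with the fact (Proposition~\ref{imat-prop}) that $\iR^+(z)$ is a single $\isim$-class, to slide the primed letter $k'$ leftward: it commutes past every letter of ceiling $\le k{-}2$ or $\ge k{+}2$, and the at most two ``close'' obstructions of ceiling $k{-}1$ or $k{+}1$ are removed using the braid- and front-moves of $\isim$ together with the constraints of Proposition~\ref{despite-prop} (in particular the clauses forcing certain short patterns away from the very start of the word). The outcome is a word $\isim$-equivalent to $\row(T)$, hence still in $\iR^+(z)$ and therefore still with no two consecutive letters of equal ceiling, in which $k'$ has nevertheless been brought adjacent to a letter of ceiling $k$ --- a contradiction. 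The case where $\col(T)$ is the primed involution word and the transverse bad pair lies in a row is the mirror image of this and is handled the same way. The main obstacle, as indicated, is precisely this transverse case: because the two members of a transverse bad pair are separated in the reading word by an entire partial row, one cannot read off a contradiction directly, but must simultaneously exploit that $T$ is increasing to pin down the intervening letters and then thread them through the rather rigid list of $\isim$-moves, the management of the braid moves around letters of ceiling $k\pm1$ and of the residual boundary sub-cases being the fiddly part.
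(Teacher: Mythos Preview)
Your ``if'' direction and your handling of the aligned case are both correct and match the paper. The gap is in the transverse case: your plan is to slide $k'$ leftward through the intervening letters until it meets $k$, but the letters $T_{i,j-1}$ (possible ceiling $k-1$) and $T_{i+1,j+1}$ (possible ceiling $k+1$) block commutation, and the assertion that ``braid- and front-moves together with Proposition~\ref{despite-prop}'' remove these obstructions is not justified. Concretely, if $T_{i+1,j+1}$ has ceiling $k+1$, then $k$ cannot move right at all; if in addition $T_{i,j-1}$ has ceiling $k-1$, then $k'$ cannot move left either. A braid move on $k,(k+1)^?,\cdot$ or on $\cdot,(k-1)^?,k'$ would require a second letter of ceiling $k$ in the pattern, and none is available among the intervening letters. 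So the sliding strategy, as stated, stalls.

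The paper's argument is structurally different and supplies exactly the missing idea. It fixes the bad column pair $T_{ij}=x$, $T_{i-1,j}=x'$ to be \emph{extremal} (first in the row reading order), and then analyses the maximal $l\ge 0$ with $T_{i,j+l}\le x+l$. If $l>0$ (the ``staircase'' case, which is forced precisely when your obstruction at ceiling $k+1$ is present), one slides $T_{i,j+l}=x+l$ to the \emph{right} --- not $k'$ to the left --- through the tail of row $i$ (ceilings $\ge x+l+2$ by maximality of $l$) and the head of row $i-1$ (ceilings $\le x+l-2$), landing next to $T_{i-1,j+l-1}T_{i-1,j+l}=(x+l-1)'(x+l)'$; the resulting pattern $(x+l)(x+l-1)'(x+l)'$ violates Proposition~\ref{despite-prop}(b) because the middle letter is primed. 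If $l=0$, one shows $\lceil T_{i-1,j-1}\rceil=x-1$ by a similar slide, and then either $i=j$ (push the three letters to the front and invoke the ``$i>1$'' clause of Proposition~\ref{despite-prop}(b)) or $i<j$, in which case the inequalities force $T_{i-1,j-1}=(x-1)'$ and $T_{i,j-1}=x-1$, a strictly earlier bad pair contradicting extremality. The key point you are missing is that when the direct slide toward $k$ is blocked, one must instead exploit the structure that blocking creates (the staircase) and slide in the opposite direction toward a different forbidden pattern.
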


\begin{proof}
If $\unprime(T)$ is increasing then $T$ is clearly increasing.
Assume that $T$ is increasing and $\row(T)$ is a primed involution word.
Since $\row(\unprime(T))$ is an involution word and therefore reduced, 
no row of $T$ can contain both $x \in \ZZ$ and $x' \in \ZZ'$, so the rows of $\unprime(T)$ are (strictly) increasing.
It remains to show that this property also applies to the columns of $T$.
Arguing by contradiction, suppose there is a box 
$(i,j) \in T$ such that $T_{ij} = x$ and $T_{i-1,j} = x'$ for some $x \in \ZZ$.
Assume $(i,j)$ is the first such box in the row reading order, so that the box is maximally northwest in French notation.

Let $l \geq 0$ be maximal such that $(i,j+l)$ is occupied in $T$ with  $ T_{i,j+l}  \leq x+l$.
Then we must have $T_{i-1,j+k}=x+k'$ and $T_{i,j+k}=x+k$ for each $0\leq k \leq l$ since $T$ is increasing and $\unprime(T)$ has increasing rows.
If $l>0$ then box $(i,j+l+1)$ is either unoccupied in $T$ or filled with a number greater than $x+l+1$.
In this case,
we can use $\isim$ to commute $T_{i,j+l}=x+l$ to the right in  $\row(T)$ past the remaining letters in row $i$ and then also past the letters in columns $i-1,i,\dots,j+l-2$ of row $i-1$
to obtain a primed involution word with $(x+l)(x+l-1)'(x+l)'$ as a consecutive subsequence.
This is impossible by Proposition~\ref{despite-prop}, so we conclude that $l=0$.

Having $l=0$ means that box $(i,j+1)$ is either unoccupied in $T$ or filled with a number greater than $x+1$.
It therefore follows by similar reasoning that $\lceil T_{i-1,j-1} \rceil = x-1$,
as otherwise $\row(T)$ would be equivalent under $\isim$ to a primed involution word with adjacent letters equal to $x$ and $x'$, which is impossible.
We now reach one of two contradictions.
If $i=j$ then we can use $\isim$ to commute $T_{ij}$, $T_{i-1,j-1}$, and $T_{i-1,j}$ past all earlier lettters in $\row(T)$ 
to obtain a primed involution word starting with $T_{ij} T_{i-1,j-1} T_{i-1,j} \in \{ x(x-1)x', x(x-1)'x'\}$, which contradicts Proposition~\ref{despite-prop}(b).
If instead $i < j$, then since we cannot have  $T_{i,j-1}=x'$ as the rows of $\unprime(T)$ are increasing, the inequalities $x'-1\leq T_{i-1,j-1} < T_{i,j-1} <T_{ij}=x$
can only hold if $T_{i-1,j-1} = x'-1$ and $T_{2,j-1}=x-1$, which contradicts the minimality of $(i,j)$.
We conclude that $\unprime(T)$ is increasing.

The argument to show that $\unprime(T)$ is increasing when $T$ is increasing and $\col(T)$ is a primed involution word
 is similar to the previous case. One simply ``conjugates'' all of the preceding statements,
where if  $T$ is contained in the square $[N-1]\times[N-1]$, then \defn{conjugation} applies the transformation
 $(i,j) \mapsto (N-j,N-i)$ to the boxes of $T$ and   $x \mapsto 1' - x$ to the entries of $T$.
\end{proof}

In the following lemma, 
let $\simK$ denote the transitive closure of the symmetric relation on primed words that 
has $  uACB v \simK u CAB w$ and $u BCA v \simK u BAC v$ 
whenever $u$ and $v$ are primed words and $A,B,C \in \ZZ\sqcup\ZZ'$ are such that $\lceil A\rceil <\lceil B\rceil <\lceil C\rceil $.  
This is similar to \defn{(strict) Knuth equivalence}.

\begin{lemma}\label{simK-lem}
Let $T$ be a shifted tableau. If $\unprime(T)$ is increasing then $\row(T) \simK \col(T)$.
Consequently, if $T$  is increasing and $z \in I_\ZZ$, then $\row(T) \in \iR^+(z)$ if and only if $\col(T) \in \iR^+(z)$,
and in this case $\row(T) \simK \col(T)$.
\end{lemma}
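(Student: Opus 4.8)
The plan is to reduce everything to the first assertion, since the ``consequently'' part then follows quickly. First I would observe that $\simK$ is contained in the relation $\isim$ of Section~\ref{pr-words-sect}: each generating move of $\simK$, namely $uACBv \leftrightarrow uCABv$ or $uBCAv \leftrightarrow uBACv$ with $\lceil A \rceil < \lceil B \rceil < \lceil C \rceil$, merely transposes the two adjacent letters with unprimed values $\lceil A \rceil$ and $\lceil C \rceil$, and these differ by at least $2$, so the move is an instance of the commutation $aXYb \isim aYXb$. Granting the first assertion, if $\unprime(T)$ is increasing then $\row(T) \simK \col(T)$, hence $\row(T) \isim \col(T)$, so $\row(T)$ and $\col(T)$ lie in the same $\isim$-class; by Proposition~\ref{imat-prop} this means $\row(T) \in \iR^+(z)$ iff $\col(T) \in \iR^+(z)$. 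And if $T$ is increasing with $\row(T)$ or $\col(T)$ in $\iR^+(z)$, then $\unprime(T)$ is increasing by Proposition~\ref{unprime-incr-prop}, so the first assertion applies and we are done.

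For the first assertion I would induct on the number $k$ of rows of $T$. If $k \le 1$ then $\row(T) = \col(T)$, so suppose $k \ge 2$ and let $U = T_{11}T_{12}\cdots T_{1\lambda_1}$ be the bottom row of $T$. Deleting this row (and re-coordinatizing) yields a shifted tableau $T'$ with $\unprime(T')$ increasing, so by induction $\row(T') \simK \col(T')$; since $\simK$ is stable under appending a fixed word and $\row(T) = \row(T')\, U$, we get $\row(T) \simK \col(T')\, U$. It therefore suffices to prove $\col(T) \simK \col(T')\, U$. Write $D_c$ for the $c$-th column of $T'$ read from top to bottom; each $D_c$ is strictly decreasing in unprimed value because columns of a shifted diagram occupy a contiguous set of rows. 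With $m = \lambda_2 + 1$, one checks directly from the shapes that $\col(T) = T_{11}\,[D_2 T_{12}]\,[D_3 T_{13}]\cdots[D_m T_{1m}]\,T_{1,m+1}\cdots T_{1\lambda_1}$, whereas $\col(T')\, U = D_2 D_3 \cdots D_m\, T_{11} T_{12}\cdots T_{1\lambda_1}$.

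The heart of the proof is to transform the first of these words into the second by ``threading'' the bottom-row entries $T_{1c}$ rightward, processing the column blocks $D_2, D_3, \dots, D_m$ in order. The key point, which comes straight from the hypothesis that $\unprime(T)$ is increasing, is that $T_{1c}$ is strictly smaller in unprimed value than every entry of $T$ lying above it in column $c$ and than every entry of $T$ in a column of index $> c$. Using this, an entry $T_{1c}$ can be carried from the left end of a block $D_d$ (with $d > c$) to its right end by successive moves $uACBv \simK uCABv$: while descending through $D_d$, the witness $B$ is the next (and therefore strictly smaller) entry of $D_d$, which lies strictly between $T_{1c}$ and the entry being passed; and the final move, past the bottom entry $T_{2d}$ of $D_d$, uses the adjacent bottom-row entry $T_{1d}$ as witness, which again lies strictly between $T_{1c}$ and $T_{2d}$. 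Carrying $T_{1,d-1}$ then $T_{1,d-2}$, $\dots$, then $T_{11}$ through $D_d$ at stage $d$ accumulates the prefix $T_{11}T_{12}\cdots$ just after $D_d$, and after stage $m$ the word has become $\col(T')\, U$, completing the induction.

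I expect the main obstacle to be the bookkeeping in this last step: one must describe the intermediate words precisely enough to be sure that at each move the three letters involved really do have strictly increasing unprimed values, and one must handle the right-hand boundary of the shifted shape carefully --- there some $D_d$ may be a single entry (so its only available witness is the adjacent bottom-row entry) and the columns $m+1, \dots, \lambda_1$ of $T$ contain no entry of $T'$ at all. None of this requires anything beyond the strict monotonicity of $\unprime(T)$ along rows and columns, so no genuinely new ingredient enters past the inductive hypothesis.
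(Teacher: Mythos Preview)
Your argument is correct and is essentially the paper's proof with rows and columns interchanged: the paper inducts on the number of \emph{columns}, peels off the last column $w$, and threads its entries rightward through the rows of $\row(T)$ via moves $uBCAv\simK uBACv$, whereas you induct on rows and thread the bottom-row entries through the column blocks of $\col(T)$ via $uACBv\simK uCABv$. One small correction to your bookkeeping: when carrying $T_{1c}$ through $D_d$ after $T_{1,c+1},\dots,T_{1,d-1}$ have already been pushed through, the letter immediately to the right of $T_{2d}$ (and hence the witness $B$ for the final move) is $T_{1,c+1}$, not $T_{1d}$ --- but the required inequality $\lceil T_{1c}\rceil < \lceil T_{1,c+1}\rceil < \lceil T_{2d}\rceil$ holds for the same reasons, so the argument goes through unchanged.
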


\begin{proof}
Let $w$ be the last column of $T$ read in reverse order.
Construct $U$ from  $T$ by removing the last column. Then by induction $\col(T) = \col(U) w \simK \row(U) w$ and 
it remains to check that $\row(U) w \simK \row(T)$. 
For this, observe that if $T$ has $j$ columns and $i := \ell(w)$, then starting from $\row(T)$,
we can use $\simK$ first to commute $w_1 = T_{ij}$ to the right past the entries in columns $i-1,i,\dots,j-1$ of row $i-1$,
 then to commute $w_2= T_{i-1,j}$ followed by $w_1$ to the right past the entries in columns $i-2,i-1,\dots,j-1$
of row $i-2$, and so forth, until we are left with $\row(U)$ followed by $w$.

If $T$  is increasing and $\row(T)$ or $\col(T)$ is in $ \iR^+(z)$, then $\unprime(T)$ is increasing 
by Proposition~\ref{unprime-incr-prop},
so $\row(T) \simK \col(T)$ and both reading words are in $\iR^+(z)$ as $u\simK v$
implies $u \isim v$.
\end{proof}

\section{Shifted Edelman-Greene insertion}\label{main-sect}

This section contains our main results, 
 which are organized around 
 a shifted version of \defn{Edelman-Greene insertion} \cite{EG}
 that sends primed involution words
to pairs of shifted tableaux.
Section~\ref{shifted-eg-sect} gives the precise definition of this insertion algorithm,
along with some examples and basic properties.
Section~\ref{semist-sect} then describes its ``semistandard'' extension.
Section~\ref{qapp-sect} explains an application of the semistandard algorithm
to formulating a Littlewood-Richardson rule for Schur $Q$-functions.
Sections~\ref{ock-sect} and \ref{dual-sect}
explore some related operators on primed involution words and standard shifted tableaux.
 Section~\ref{marked-sect}, finally, examines how the primes in a primed involution word may be used to 
 label the 2-cycles of the corresponding involution.
 
\subsection{Definitions for the standard case}\label{shifted-eg-sect}

This section give the definition of \defn{orthogonal Edelman-Greene insertion} and a few of its basic properties.
Suppose $T$ is an increasing shifted tableau with no primed entries on the main diagonal
and a number $u\in \ZZ\sqcup \ZZ'$ such that $\row(T)u \in \iR^+(z)$ for some $z \in I_\ZZ$.
We first explain how to insert $u$ into $T$ to 
obtain another shifted tableau $T\iarrow u$ that is increasing with no  primed entries on the main diagonal.
Later, we will see that this new tableau also has $\row(T\iarrow u) \in \iR^+(z)$.

\begin{definition}
\label{iarrow-def}
Suppose $T$ is an increasing shifted tableau with no primed entries on the main diagonal  
and $u\in \ZZ\sqcup \ZZ'$ is   such that
$\row(T)u$ is a primed involution word for some element of $I_\ZZ$.\footnote{
As $\row(T)$ is also a primed involution word in this case,
Proposition~\ref{unprime-incr-prop}
implies that $\unprime(T)$ is increasing.}
We construct  another shifted tableau $T\iarrow u$ by the following iterative process: 
\begin{itemize}

\item[(1)] 
On the $i$th iteration, an entry $w \in \ZZ\sqcup \ZZ'$ is inserted into row or column $i$,
which we refer to as the \defn{current segment}. The entries in the current segment will always be strictly increasing, even after removing all primes.
The process begins 
with $u$ inserted into the first row of $T$.

\item[(2)] 
Suppose  $\lceil w\rceil $ is less than some entry in the current segment.
   Let $m\leq M$ denote the smallest entries in the current segment
with $\lceil w \rceil \leq \lceil m\rceil $ and $\lceil w \rceil < \lceil M \rceil$. 
If $m< M$, then $M$ will be unprimed and in the box directly after $m$, and we will have 
$\lceil w \rceil = \lceil m\rceil =  M-1$.\footnote{
This claim only holds since we assume that $\row(T)u$ is a primed involution word; see Remark~\ref{iarrow-rmk}.}
\ben
\item[(a)] If $m=M$ is off the main diagonal then $w$ replaces $m$ and we insert $m$ into   
 the next row (respectively, column) if the current segment is a row (respectively, column).

\item[(b)] If $m=M$  is on the main diagonal then $m $ will be unprimed. In this case,  
we replace $m$ by $\lceil w \rceil$ and insert $m+1$ if $w \in \ZZ$ (respectively, $m'+1$ if $w \in \ZZ'$) into the next column.

\item[(c)]  If $m $ and $ M$ are distinct, then we switch the primes on these entries,
and continue by inserting $w+1$ into either the next column (if  $m$ is on the main diagonal or the current segment is a column) or the next row (otherwise).

\een 

\item[(3)] If $\lceil w\rceil $ is not less than some entry in the current segment, 
then we place $w$ in the segment's first empty box $(x,y)$ with $x\leq y$.\footnote{
It is not obvious, but such a box will always exists and adding it to $T$ will give the diagram of a shifted partition.
}
If $x=y$ and $w $ is primed, then we change the box's entry from $w$ to $\lceil w\rceil$
and say that the insertion process \defn{ends in column insertion}.
We also say the   process \defn{ends in column insertion} if $x <y$ and the current segment
is a column. In all other cases, the   process \defn{ends in row insertion}.
We define $T\iarrow u$ to be the result of this step. 

 \end{itemize}
If this process lasts for $N$ iterations, then we define $(x_i,y_i)$ and $(\tilde x_i, \tilde y_i)$ for $i\in [N-1]$ to be the respective positions of $m$ and $M$
in step (2) on iteration $i$, and let $(x_N,y_N)=(\tilde x_N, \tilde y_N)$ be the new box $(x,y)$ added to the tableau in step (3).
We call the sequences 
\[\path^\leq(T,u):=\((x_i,y_i): i=1,2,\dots,N\) \quand \path^<(T,u):=\((\tilde x_i,\tilde y_i) :i=1,2,\dots,N\)\]
  the \defn{weak} and \defn{strict bumping paths} 
that result from inserting $u$ into $T$.
\end{definition}

\begin{example}
\def\gap{\leadsto\ }
The following examples illustrate most of the cases that can occur in Definition~\ref{iarrow-def}.
\ben
\item[(a)] 
If 
$T = \ytab{ 1 & 3 & 4 }$ and $u=2$ then  $T\iarrow u$ is computed as
\[ \ba   \ytab{ \none & \none & \none \\ 1 & 3 & 4  & \none & \none[\leftarrow 2\ \ \ \ \ ] }
&\gap
  \ytab{ \none & \none & \none & \none& \none[\leftarrow 3\ \ \ \ \ ] \\ 1 & 2 & 4  & \none  }
\gap
  \ytab{
 \none & 3   \\ 1 & 2 & 4     }= T \iarrow u.
\ea
\]
Here, the insertion process ends in row insertion and the bumping paths are 
\[ \ba
\path^\leq(T,u) = \path^<(T,u) = \( (1,2),(2,2) \).
\ea
\]

\item[(b)] If
$T = \ytab{ 1 & 3' & 4 }$ and $u=2$ then  $T\iarrow u$ is computed as
\[ \ba  \ytab{ \none & \none & \none \\ 1 & 3' & 4  & \none & \none[\leftarrow 2\ \ \ \ \ ] }
&\gap
  \ytab{ \none & \none & \none & \none& \none[\leftarrow 3'\ \ \ \ \ ] \\ 1 & 2 & 4  & \none  }
\gap
  \ytab{
 \none & 3   \\ 1 & 2 & 4     }= T \iarrow u.
\ea
\]
Here, the insertion process ends in column insertion and the bumping paths are 
\[ \ba
\path^\leq(T,u) = \path^<(T,u) = \( (1,2),(2,2) \).
\ea
\]

\item[(c)] If $T = \ytab{ \none & 4 & 5 \\ 1 & 3 & 4' }$ and $u=2$ then  $T\iarrow u$ is computed as
\[ \ba  \ytab{ \none \\ \none & 4 & 5 \\ 1 & 3 & 4'  & \none & \none[\leftarrow 2\ \ \ \ \ ] }
&\gap
  \ytab{ \none \\ \none & 4 & 5 & \none& \none[\leftarrow 3\ \ \ \ \ ] \\ 1 & 2 & 4'  & \none  }
\gap
  \ytab{\none &\none & \none[\barr{c} \\  4 \\ \downarrow \earr] \\
\none &\none & \none \\
 \none & 3 & 5   \\ 1 & 2 & 4' &\none  \\ \none   }
 \gap
  \ytab{\none &\none&\none & \none[\barr{c} \\  5 \\ \downarrow \earr] \\
\none &\none & \none \\
 \none & 3 & 5'   \\ 1 & 2 & 4  & \none \\ \none  }
 \\&\gap
  \ytab{
 \none & 3 & 5'   \\ 1 & 2 & 4  & 5  
} = T \iarrow u.
\ea
\]
In this case the insertion process ends in column insertion and the bumping paths are 
\[ \ba
\path^\leq(T,u) = \( (1,2),(2,2),(1,3),(1,4) \)
\text{ and }
\path^<(T,u) = \( (1,2),(2,2),(2,3),(1,4) \).
\ea
\]

\item[(d)]  If $T= \ytab{ \none & 5 & 6 \\ 1 & 3' & 4}$ and $u=2$ then  $T\iarrow u$ is computed as
\[ \ba   \ytab{\none \\ \none & 5 & 6 \\ 1 & 3' & 4  & \none & \none[\leftarrow 2\ \ \ \ \ ] }
&\gap
  \ytab{ \none \\ \none & 5 & 6 & \none& \none[\leftarrow 3'\ \ \ \ \ ] \\ 1 & 2 & 4  & \none  }
\gap
  \ytab{\none &\none & \none[\barr{c} \\  5' \\ \downarrow \earr] \\
\none &\none & \none \\
 \none & 3 & 6   \\ 1 & 2 & 4  & \none \\ \none   }
 \gap
  \ytab{\none &\none&\none & \none[\barr{c} \\  6 \\ \downarrow \earr] \\
\none &\none & \none \\
 \none & 3 & 5'   \\ 1 & 2 & 4  & \none \\ \none  }
\\&  \gap
  \ytab{
 \none & 3 & 5'   \\ 1 & 2 & 4  & 6
} = T \iarrow u.
\ea
\]
In this case the insertion process ends in column insertion and the bumping paths are 
\[ \ba
\path^\leq(T,u)=\path^<(T,u)  = \( (1,2),(2,2),(2,3),(1,4) \).
\ea
\]
\een
\end{example}

Proposition~\ref{o-lem2}
will show that if $T$ and $u$ are as in Definition~\ref{iarrow-def} then $\row(T\iarrow u)$ is also a primed involution word.
We can therefore iterate the above insertion process as follows:

\begin{definition}
\label{o'-eg-def}
Given a primed involution word $a=a_1a_2\cdots a_n$ for some element of $I_\ZZ$,
let $\PO(a)$ be the shifted tableau $\emptyset \iarrow a_1 \iarrow a_2 \iarrow \cdots \iarrow a_n$ and let $\QO(a)$ be the standard shifted tableau with the same shape
as $\PO(a)$ 
that has $i$ (respectively, $i'$) in the box added by inserting   $a_i$ into $\PO(a_1a_2\cdots a_{i-1})$ when this
ends in row insertion (respectively, column insertion).
\end{definition}

We refer to $a \mapsto (\PO(a),\QO(a))$ as \defn{orthogonal Edelman-Greene insertion}.
There is a similar correspondence called \defn{symplectic Edelman-Greene insertion},
with a different domain containing only unprimed words, which is denoted $a \mapsto (\PSp(a),\QSp(a))$ in \cite[Def. 3.23]{Marberg2019b}.
For more about the connection between these maps and the orthogonal and symplectic groups, see Remark~\ref{o-sp-rmk}.
\begin{example}\label{po-ex}
The  words $a=134524'$, $b=5'431'4'2$, and $c=41'354'2$ all have
\[
\PO(a) = \PO(b)=\PO(c) =  \ytab{
 \none & 3 & 5'   \\ 1 & 2 & 4  & 5  
}
\]
while
$
\QO(a) =  \ytab{
 \none & 5 & 6  \\ 1 & 2 & 3  & 4
},
$
$
\QO(b) =  \ytab{
 \none & 5' & 6'  \\ 1' & 2' & 3'  & 4'
},
$
and
$
\QO(c) =  \ytab{
 \none & 3' & 5  \\ 1 & 2' & 4  & 6'
}.
$
\end{example}

\begin{remark}
The original Edelman-Greene correspondence $a \mapsto (P_\EG(a),Q_\EG(a))$
from \cite{EG}, sending
 reduced words $a \in \cR(\sigma)$ for $\sigma \in S_n$ to pairs of unshifted tableaux of the same shape,
may be embedded in Definition~\ref{o'-eg-def} in the following way. 
Fix $\sigma \in S_n$ and choose
an involution word $b$ for 
\[z := (0,n)(-1,n-1)(-2,n-2)\cdots(-n+1,1) \in I_\ZZ.\]
Then $a \mapsto ba$ is an injective map $\cR(\sigma) \hookrightarrow \iR(\sigma^{-1}z\sigma)$, 
and when we carry out the bumping process to compute $\PO(ba)$,
the first $\ell(b)$ insertions will result in a shifted tableau of shape $(n,\dots,3,2,1)$
whose last column is $0,1,2,\dots,n-1$.
This part of the insertion tableau $\PO(ba)$ will remain fixed during the remaining $\ell(a)$ insertions,
which will only involve row bumping operations that follow the rules of the original Edelman-Greene correspondence.
We recover $P_\EG(a)$ from $\PO(ba)$ by omitting the first $n$ columns,
while $Q_\EG(a)$ is given by omitting the first $n$ columns of $\QO(ba)$ and subtracting $\ell(b)$ from the remaining entries, 
which are all unprimed numbers.
\end{remark}

\begin{example}
When $n=4$ we can take $ b=-3,-1,-2,1,0,-1,3,2,1,0$. Then for the reduced word $a= 23121 \in \cR(3412)$,
we have
\[
\PO(ba) = \ytab{
\none & \none & \none & 3 \\
\none & \none & 1 & 2 & 3 \\
\none & -1 & 0 & 1 & 2 & 3 \\
-3 & -2 & -1 & 0 & 1 & 2 
}
\quand
\QO(ba) = \ytab{
\none & \none & \none & 10 \\
\none & \none & 6 & 9 & 15 \\
\none & 3 & 5 & 8 & 13 & 14 \\
1 & 2 & 4 & 7 & 11 & 12 
}
\]
compared to $P_\EG(a) = \smalltab{3 \\ 2 & 3 \\ 1 & 2}$
and $Q_\EG(a) = \smalltab{5 \\ 3 &4 \\ 1 & 2}$.
\end{example}

As noted in the introduction,
$a \mapsto (\PO(a),\QO(a))$
restricted to unprimed involution words
reduces to a map 
previously studied in \cite{HMP4,Marberg2019a,Marberg2019b}.
Our inclusion of primes
may seem like a minor generalization.
However, there seems to be no simple way to derive our main results as corollaries 
of what is known about the unprimed form of orthogonal Edelman-Greene insertion.

\begin{remark}\label{iarrow-rmk}
Suppose $T$ is an increasing shifted tableau with no primed entries on the main diagonal  
and $u\in \ZZ\sqcup \ZZ'$ is   such that
$\row(T)u\in \iR^+(z)$ for some $z \in I_\ZZ$.
Since $a \mapsto (\PO(a),\QO(a))$ restricted to unprimed words coincides with \cite[Def. 3.20]{Marberg2019b},
 \cite[Rem. 3.25]{Marberg2019b}
implies the following properties  concerning the process to construct $T \iarrow u$,
stated in the notation of Definition~\ref{iarrow-def}:
\bei
\item[(a)] Denote the intermediate tableau created by the $i$th iteration in Definition~\ref{iarrow-def} by $T_i$, so that $T=T_0$ and $T \iarrow u = T_N$ if $N>0$ is the length of the two bumping paths.
Then each $T_i$ is a shifted tableau with no primes on the main diagonal, and $\unprime(T_i)$ is increasing.

\item[(b)] If    $m$ and $M$ in step (2) on iteration $i$ are distinct, then the boxes $(x_i,y_i) $ and $ (\tilde x_i, \tilde y_i)$ containing these entries are adjacent, 
and the number $w$ being inserted has $\lceil w \rceil = \lceil m \rceil = \lceil M\rceil - 1$.

\item[(c)] Suppose  $m$ and $M$ in step (2) on iteration $i$ are distinct and $m$ is on the main diagonal.
Then 
$m=T_{ii}$ is unprimed and 
$M= T_{i,i+1}$, and we have $ T_{i+1,i+1} = \lceil M\rceil +1 = m +2$.

\eei
There is one final property that will be demonstrated in the proof of Proposition~\ref{o-lem2}:
\bei
\item[(d)] If the $i$th iteration has a number $w$ being inserted into row (respectively, column) $i$,
then placing $w$ between rows (respectively, columns) $i-1$ and $i$
in the row (respectively, column) reading word of $T_{i-1}$ gives another primed involution word in $\iR^+(z)$ by \eqref{will-be-shown-eq0} and \eqref{will-be-shown-eq}.
In view of this observation, if the numbers $m$ and $M$ in step (2) on iteration $i$
are distinct, then since we already know that $\unprime(T_{i-1})$ is increasing and $\lceil w \rceil = \lceil m\rceil  = \lceil M \rceil - 1$,
it follows from Proposition~\ref{despite-prop} that $M$ must be unprimed and that $w$
can only be primed if $m$ is unprimed and not on the main diagonal (as if $m$ is on the diagonal then its index in the reading word mentioned above is already a commutation). 
\eei
\end{remark}

We mention some other properties of $\PO(a)$ and $\QO(a)$ that readily follow from the definitions.
Given a shifted tableau $T$, form $\unprime_{\diag}(T)$ from $T$ by removing all main diagonal primes. 

\begin{proposition}\label{unprime-tab-prop}
If $a$ is a primed involution word then
\[
\PO(\unprime(a)) = \unprime(\PO(a))\quand \QO(\unprime(a)) = \unprime_{\diag}(\QO(a)).
\]
\end{proposition}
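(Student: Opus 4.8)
The plan is to argue by induction on the length of the primed involution word $a = a_1 a_2 \cdots a_n$, reducing the statement to a single-insertion comparison: if $T$ is an increasing shifted tableau with no primed diagonal entries and $\row(T)u \in \iR^+(z)$, then I claim
\[
\unprime(T) \iarrow \lceil u \rceil = \unprime(T \iarrow u),
\]
and moreover that this single insertion ends in row insertion for $\unprime(T) \iarrow \lceil u \rceil$ exactly when the insertion $T \iarrow u$ ends in row insertion \emph{or} ends in column insertion at an off-diagonal box; and it ends in row insertion for $\unprime(T)\iarrow \lceil u\rceil$ while the box added is on the diagonal precisely when $T \iarrow u$ ends in column insertion at a diagonal box. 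Granting this claim, both identities follow: the first by iterating (using Remark~\ref{iarrow-rmk}(a) to know each intermediate tableau stays increasing with no diagonal primes, and Proposition~\ref{unprime-incr-prop} to know $\unprime(a)$ remains a valid unprimed involution word at each stage), and the second because $\unprime_{\diag}$ of $\QO(a)$ is obtained by recording a row entry exactly when the corresponding step of the primed insertion either ends in row insertion or ends in column insertion off the diagonal — which is exactly the bookkeeping the claim tracks.

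The core work is therefore to trace through the cases of Definition~\ref{iarrow-def} and check that removing all primes commutes with one bumping step. First I would observe that the bumping paths $\path^\leq$ and $\path^<$ depend only on the \emph{unprimed} values: step (2) selects $m \leq M$ by comparing $\lceil w \rceil$ to ceilings of entries in the current segment, and step (3) places $w$ in the first empty box by the same ceiling comparison. So $\path^\leq(T,u) = \path^\leq(\unprime(T), \lceil u \rceil)$ and similarly for $\path^<$, and the two insertions visit the same sequence of boxes. It then remains to check, iteration by iteration, that the entry actually written into each box of $T \iarrow u$ has ceiling equal to the entry written into the corresponding box of $\unprime(T) \iarrow \lceil u \rceil$. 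In cases (2a) and (2c) the primed insertion either moves $m$ or switches the primes on $m, M$ — in both cases the ceilings of all entries are exactly what the unprimed insertion produces (switching primes does not change ceilings, and the unprimed version simply does the standard adjacent-box bump using Remark~\ref{iarrow-rmk}(b)). Case (2b) and the diagonal subcase of step (3) are where a prime is genuinely \emph{removed} on the diagonal: here I must check that $\unprime_{\diag}$ already strips that prime from $\QO(a)$, which is why the diagonal cases must be separated out in the claim; but note that in (2b) the entry written on the diagonal is $\lceil w \rceil$, matching the unprimed insertion exactly, and the entry pushed into the next column is $m+1$ regardless of whether $w$ was primed — again matching. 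Finally, in step (3), if the new box is off-diagonal then $w$ is written as-is and its ceiling equals $\lceil u \rceil$'s image, while if it is on the diagonal the primed process writes $\lceil w \rceil$, matching the unprimed process.

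The main obstacle I anticipate is the record-tableau bookkeeping rather than the insertion-tableau identity: I need the precise correspondence between "ends in column insertion at a diagonal box" in the primed algorithm and "the box recorded in $\QO(a)$ carries a diagonal prime that $\unprime_{\diag}$ deletes." This requires knowing that whenever the primed insertion terminates in step (3) by writing $\lceil w \rceil$ into a diagonal box with $w$ primed (the only way a diagonal prime gets recorded), the box added is genuinely on the diagonal, so that $\unprime_{\diag}$ acts on it; and conversely that off-diagonal column-insertion terminations record an \emph{unprimed} $i'$-vs-$i$ distinction that is retained. Here I would lean on Remark~\ref{iarrow-rmk}(a) (no primed entries ever appear on the diagonal of any $T_i$, so a diagonal prime in $\QO(a)$ can only have arisen via a step (3) termination with $x = y$ and $w$ primed, per Definition~\ref{iarrow-def}(3)) together with Definition~\ref{o'-eg-def}'s rule that such a termination is labelled "column insertion." Once that alignment is pinned down, the identity $\QO(\unprime(a)) = \unprime_{\diag}(\QO(a))$ follows formally from the shape equality $\mathsf{shape}(\PO(\unprime(a))) = \mathsf{shape}(\unprime(\PO(a))) = \mathsf{shape}(\PO(a))$ and the observation that every step labelled "row insertion" for $a$ is also labelled "row insertion" for $\unprime(a)$, while a step labelled "column insertion" for $a$ is labelled "row insertion" for $\unprime(a)$ iff the added box is off the diagonal — and in that case the recorded entry is unprimed in both, whereas on-diagonal column insertions for $a$ record a prime that $\unprime_{\diag}$ removes, matching the row label for $\unprime(a)$.
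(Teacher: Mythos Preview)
Your approach is essentially the paper's own argument, just worked out in more detail: the paper's proof is two sentences observing that the bumping process depends only on unprimed values, so removing primes from $a$ leaves the insertion process unchanged except that no primed entries appear and diagonal terminations are always labelled row insertion.

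However, you have systematically reversed the on-diagonal versus off-diagonal cases in your row/column bookkeeping. You write that the unprimed insertion ends in row insertion ``exactly when the insertion $T \iarrow u$ ends in row insertion \emph{or} ends in column insertion at an off-diagonal box,'' and later that ``a step labelled `column insertion' for $a$ is labelled `row insertion' for $\unprime(a)$ iff the added box is off the diagonal.'' Both should say \emph{on} the diagonal, not off. The point is that whether the current segment is a row or a column depends only on the bumping path, which you correctly note is unchanged by unpriming; so at an off-diagonal terminal box the row/column label is \emph{identical} for $a$ and $\unprime(a)$, and both record the same primed or unprimed entry. The only place the label can differ is at a diagonal terminal box, where the primed algorithm may record $i'$ (if the final inserted number $w$ happened to be primed) while the unprimed algorithm always records $i$; and this is exactly what $\unprime_{\diag}$ corrects. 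Your very last clause (``on-diagonal column insertions for $a$ record a prime that $\unprime_{\diag}$ removes, matching the row label for $\unprime(a)$'') has it right, but it contradicts what you wrote earlier. Once you fix the swapped cases, the argument goes through and is the same as the paper's.
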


\begin{proof}
This follows from Definitions~\ref{iarrow-def} and \ref{o'-eg-def}: if all primes are removed from $a$
then the insertion process to compute $\PO(a)$ is unchanged except that no entries added to $\PO(a)$ are primed, and
all insertions that contribute new boxes to the main diagonal must end in row insertion.
\end{proof}

The first letter in a nonempty involution word is always a commutation.
Toggling the prime on this letter also has a predictable effect on the output of orthogonal Edelman-Greene insertion.
If $i \in \ZZ$ then $\PO(i) =\PO(i')= \smalltab{i}$ while $\QO(i) = \smalltab{1}$ and $\QO(i') = \smalltab{1'}$.
More generally:

\begin{proposition}\label{first-toggle-obs}
If $a$ is a nonempty primed involution word and $b$ is formed from $a$ by toggling the prime on its first letter,
then $\PO(a) = \PO(b)$ and $\QO(b)$ is formed from $\QO(a)$ by toggling the prime on the entry in box $(1,1)$.
\end{proposition}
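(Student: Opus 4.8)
The plan is to trace the insertion process for $a$ and $b$ in parallel, using the fact that they differ only in whether the first letter is primed. Write $a = a_1 a_2 \cdots a_n$ and let $b$ agree with $a$ except that $\lceil b_1 \rceil = \lceil a_1 \rceil$ while exactly one of $a_1, b_1$ is primed. First I would observe that the very first insertion $\emptyset \iarrow a_1$ (respectively $\emptyset \iarrow b_1$) falls into step (3) of Definition~\ref{iarrow-def}: the empty current segment has no entry for $\lceil w \rceil$ to be less than, so $w = a_1$ (respectively $b_1$) is placed in box $(1,1)$. Since $(1,1)$ is on the main diagonal, the rule in step (3) strips the prime if present, so in both cases $\PO(a_1) = \PO(b_1) = \smalltab{\lceil a_1 \rceil}$ as a tableau; the only difference is that the insertion for $a$ ends in row insertion iff $a_1$ is unprimed, and likewise for $b$, so $\QO(a_1)$ and $\QO(b_1)$ have box $(1,1)$ filled with entries that differ exactly by toggling the prime. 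This is the base case and also already covers $n=1$.

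For $n \geq 2$, the key point is that after the first insertion the two partial insertion tableaux $\PO(a_1)$ and $\PO(b_1)$ are \emph{literally equal} — the prime has been discarded on the diagonal — so every subsequent insertion $\iarrow a_i = \iarrow b_i$ for $i \geq 2$ is performed on identical data and produces identical results at every step. Hence $\PO(a) = \PO(b)$, and for each $i \geq 2$ the $i$th insertion ends in row insertion for $a$ iff it does for $b$, so $\QO(a)$ and $\QO(b)$ agree in every box except possibly $(1,1)$. In box $(1,1)$ the entry of $\QO(a)$ is $1$ or $1'$ according to whether the first insertion ends in row or column insertion, i.e.\ according to whether $a_1 \in \ZZ$ or $a_1 \in \ZZ'$, and similarly for $b$; since $a_1$ and $b_1$ have opposite priming, these entries are related by toggling the prime. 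This gives exactly the claimed statement.

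I should also check that $b$ is a legitimate primed involution word, so that $\PO(b)$ and $\QO(b)$ are defined. By the remarks in Section~\ref{pr-words-sect}, index $i = 1$ is always a commutation in any nonempty involution word, so one is free to add or remove a prime on the first letter; thus $b \in \iR^+(z)$ whenever $a \in \iR^+(z)$. The only genuinely substantive ingredient is the observation that step~(3) of Definition~\ref{iarrow-def} unconditionally removes a prime when the new box lies on the main diagonal — which is built into the definition — together with the fact (used implicitly) that the first insertion really does land in box $(1,1)$ on the diagonal. I do not expect any serious obstacle here; the argument is essentially a one-line unwinding of the definitions, and the main thing to be careful about is phrasing the induction so that the equality $\PO(a_1) = \PO(b_1)$ of tableaux (not merely of shapes) is made explicit, since that equality is what forces all later insertions to coincide.
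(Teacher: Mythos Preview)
Your proposal is correct and follows essentially the same approach as the paper, which simply states that the result ``follows directly from Definition~\ref{iarrow-def}.'' You have spelled out in detail exactly the unwinding of the definition that the paper leaves implicit: the first insertion lands on the diagonal where the prime is stripped, so $\PO(a_1)=\PO(b_1)$ as tableaux and all subsequent insertions coincide.
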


\begin{proof}
This again follows directly from Definition~\ref{iarrow-def}.
\end{proof}

We can also say what happens to $\PO(a)$ and $\QO(a)$ when $a_1$ and $a_2$ are interchanged.

\begin{proposition}\label{second-toggle-obs}
If $a$ is a primed involution word with at least two letters and $b$ is formed from $a$ by 
interchanging its first two letters and then switching their primes,
then $\PO(a) = \PO(b)$ and $\QO(b)$ is formed from $\QO(a)$ by toggling the prime on the entry in box $(1,2)$.
\end{proposition}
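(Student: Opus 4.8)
The plan is to reduce the statement to the first two insertion steps. Write $a = a_1a_2\cdots a_n$ and $b = b_1b_2\cdots b_n$. By construction $b_j = a_j$ for all $j \geq 3$, while $\lceil b_1\rceil = \lceil a_2\rceil$ and $\lceil b_2\rceil = \lceil a_1\rceil$, and unwinding the definition of ``switching their primes'' shows that $b_1$ is primed if and only if $a_1$ is primed, and $b_2$ is primed if and only if $a_2$ is primed. First I would check that $b \in \iR^+(z)$, so that $\PO(b)$ and $\QO(b)$ are defined at all. For this, observe that $b$ is produced from $a$ by a short sequence of defining moves of $\isim$: toggle the prime on the first letter if needed (using $Xw \isim X'w$), transpose the first two letters (using $uXYv \isim uYXv$, which is legal here because either both letters are unprimed or, by Proposition~\ref{despite-prop}(a), $|\lceil a_1\rceil - \lceil a_2\rceil| > 1$), and toggle the first prime once more if needed. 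Hence $a \isim b$, so $b \in \iR^+(z)$ by Proposition~\ref{imat-prop}. Since orthogonal Edelman--Greene insertion is built by iterated single-letter insertion and $a, b$ share the tail $a_3\cdots a_n = b_3\cdots b_n$, it now suffices to show: (a) $\PO(a_1a_2) = \PO(b_1b_2)$; and then, since the first two insertions adjoin precisely the cells $(1,1)$ and $(1,2)$, that (b) those two insertions record the same $\QO$-entry in cell $(1,1)$ for $a$ and $b$, and (c) record $\QO$-entries in cell $(1,2)$ that differ exactly by a prime.

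Next I would run Definition~\ref{iarrow-def} on $\emptyset \iarrow a_1 \iarrow a_2$, recording the bookkeeping of Definition~\ref{o'-eg-def}. In every case $\emptyset \iarrow a_1 = \ytab{\lceil a_1\rceil}$, and by step~(3) cell $(1,1)$ is recorded in $\QO$ as $1$ if $a_1 \in \ZZ$ and as $1'$ if $a_1 \in \ZZ'$ (a primed first letter is placed unprimed on the diagonal and the process ``ends in column insertion''). There are then two cases. If $\lceil a_2\rceil > \lceil a_1\rceil$ --- which includes $\lceil a_2\rceil = \lceil a_1\rceil + 1$ --- then inserting $a_2$ just places it in cell $(1,2)$ by step~(3), with its prime intact since $(1,2)$ is off the diagonal; the process ends in row insertion, so cell $(1,2)$ is recorded in $\QO$ as $2$, and $\PO(a_1a_2) = \ytab{\lceil a_1\rceil & a_2}$. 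If $\lceil a_2\rceil < \lceil a_1\rceil$, then inserting $a_2$ bumps the (necessarily unprimed) diagonal entry $\lceil a_1\rceil$ via step~(2)(b): cell $(1,1)$ is overwritten with $\lceil a_2\rceil$, and $\lceil a_1\rceil$ --- carrying a prime exactly when $a_2$ does --- is reinserted into the empty second column and placed in cell $(1,2)$ by step~(3); the process ends in column insertion, so cell $(1,2)$ is recorded in $\QO$ as $2'$, and $\PO(a_1a_2)$ is the one-row tableau $\ytab{\lceil a_2\rceil & \lceil a_1\rceil}$, its second entry primed exactly when $a_2$ is.

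To conclude, I would substitute $\lceil b_1\rceil = \lceil a_2\rceil$, $\lceil b_2\rceil = \lceil a_1\rceil$ and the prime correspondences into the two displayed formulas. Since $\lceil a_1\rceil \ne \lceil a_2\rceil$ (the words are reduced), $a$ is in the first case exactly when $b$ is in the second and vice versa; reading off the tableaux then gives $\PO(b_1b_2) = \PO(a_1a_2)$ outright, shows cell $(1,1)$ carries the same $\QO$-entry for $a$ and $b$, and shows cell $(1,2)$ carries $2$ for whichever of $a, b$ is in the first case and $2'$ for the other --- that is, the prime on the $(1,2)$-entry of $\QO$ is toggled. Because $\PO(a_1a_2) = \PO(b_1b_2)$ and the remaining letters agree, the insertions of $a_3\cdots a_n$ run identically for $a$ and $b$; hence $\PO(a) = \PO(b)$, and $\QO(a)$ and $\QO(b)$ agree outside the cells $(1,1)$ and $(1,2)$, agree at $(1,1)$, and differ only by a prime at $(1,2)$, which is exactly the claim.

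The step I expect to be the main obstacle is the prime bookkeeping inside step~(2)(b) when $\lceil a_2\rceil < \lceil a_1\rceil$: one must confirm that the value reinserted into the second column acquires a prime exactly when $a_2$ is primed (so the $\PO$-entry of cell $(1,2)$ comes out right), and then that this dovetails with the ``switch their primes'' convention, so that the two $\PO$-tableaux truly coincide rather than merely agreeing up to a prime in cell $(1,2)$. The proof that $b \in \iR^+(z)$ is the other place to be careful, but it follows cleanly from Propositions~\ref{despite-prop} and~\ref{imat-prop} (or, alternatively, from Proposition~\ref{mat3-prop} applied to $\unprime(b)$ together with a direct check that every prime of $b$ lands on a commutation). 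Everything else is a routine comparison with Definition~\ref{iarrow-def}.
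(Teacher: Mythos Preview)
Your proposal is correct and follows exactly the approach the paper intends: the paper's own proof is the single sentence ``This also follows directly from Definition~\ref{iarrow-def},'' and your argument is a careful unpacking of that computation. Your added verification that $b\in\iR^+(z)$ via $\isim$ and Proposition~\ref{imat-prop} is a nice bit of hygiene the paper leaves implicit, and your case analysis of the two-letter insertion (including the correct reading of step~(2)(b), that the bumped diagonal entry $m$ is reinserted carrying the prime of $w$) matches the examples following Definition~\ref{iarrow-def}.
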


This means  that if $a=1'3'\cdots $ then $b=3'1'\cdots$, while if $a= 13'\cdots$ then $b=31'\cdots$.

\begin{proof}
This also follows directly from Definition~\ref{iarrow-def}.
\end{proof}

Our first nontrivial result about orthogonal Edelman-Greene insertion is the following theorem.

\begin{theorem}\label{o-thm1}
For each $z \in I_\ZZ$,
the map $a \mapsto (\PO(a), \QO(a))$ is a bijection from the set of primed involution words $\iR^+(z)$ to the set of pairs $(P,Q)$ of 
shifted tableaux of the same shape, in which $P$ is increasing with no primes on the main diagonal, $Q$ is standard, and $\row(P) \in \iR^+(z)$.
 \end{theorem}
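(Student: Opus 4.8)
The plan is to prove Theorem~\ref{o-thm1} by establishing three things in sequence: (i) that the insertion map is well-defined, i.e., that $\row(T \iarrow u) \in \iR^+(z)$ whenever $\row(T)u \in \iR^+(z)$, so that $\PO$ and $\QO$ can be iterated; (ii) that $a \mapsto (\PO(a), \QO(a))$ lands in the claimed codomain; and (iii) that it is a bijection, which I would do by constructing an explicit inverse (a reverse/uninsertion algorithm). The heart of the argument is (i), which is exactly what Proposition~\ref{o-lem2} is stated to provide, so by the time this theorem is proved I may assume it; concretely I would phrase the well-definedness as an induction on the length of $a$, using Remark~\ref{iarrow-rmk}(a) to keep track of the invariant that each intermediate tableau is increasing with no primed diagonal entries and $\unprime$ of it is increasing, and Remark~\ref{iarrow-rmk}(d) together with Proposition~\ref{despite-prop} to control where primes can appear.

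For the codomain claim (ii): given $a = a_1\cdots a_n \in \iR^+(z)$, the tableau $P = \PO(a)$ is increasing with no primed diagonal entries by construction and the invariants above, and $\row(P) \in \iR^+(z)$ follows by iterating Proposition~\ref{o-lem2} (each insertion step preserves the property that the row reading word is a primed involution word for $z$, since $\row(\emptyset) = \emptyset \in \iR^+(1)$ and Demazure-product considerations pin down the involution). That $Q = \QO(a)$ is a standard shifted tableau of the same shape as $P$ is essentially bookkeeping: each insertion adds exactly one box, and the box-position sequence is a valid growth of shifted diagrams (using the footnote claim in step (3) of Definition~\ref{iarrow-def}), while the labels $1, 2, \dots, n$ with primes recording row-vs-column endings are automatically strictly increasing along rows and columns because box $(i,j)$ is filled before any box weakly southeast of it. I would cite the unprimed analogue from \cite{Marberg2019b} for the shape-growth facts and check only that allowing primes does not disturb them, which Remark~\ref{iarrow-rmk}(a) handles.

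For bijectivity (iii), the plan is to define a reverse insertion operator: given a pair $(P, Q)$ in the codomain with $n$ boxes, locate the box $b_n$ of $Q$ containing $n$ or $n'$ (it is necessarily an outer corner of the shifted shape), read off from the prime on that entry whether the last insertion ended in row or column insertion, and run the bumping path of Definition~\ref{iarrow-def} backwards from $b_n$ to recover a letter $a_n \in \ZZ \sqcup \ZZ'$ and a smaller tableau $P'$; then recurse on $(P', Q|_{\text{first } n-1 \text{ boxes}})$. The main technical burden is showing this reverse procedure is well-defined — that at each step the backward bumping path is uniquely determined and terminates, producing a letter whose concatenation keeps us inside $\iR^+(z)$ — and that it is a genuine two-sided inverse of forward insertion. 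I would prove "reverse then forward $=$ identity" and "forward then reverse $=$ identity" by induction on $n$, the inductive step amounting to a local check that the forward rules in step (2)(a)--(c) and step (3) are each individually reversible given the data (box position, prime on the $Q$-entry, and the current tableau). I expect the \textbf{main obstacle} to be the reverse direction of the prime-switching cases (2)(b) and (2)(c) and the diagonal bookkeeping: one must verify that the primes recorded in $Q$, combined with the structural constraints from Proposition~\ref{despite-prop} and Remark~\ref{iarrow-rmk}(b)--(d), suffice to uniquely reconstruct whether a given backward step came from case (a), (b), or (c), and in particular to recover whether the uninserted intermediate entry was primed. Much of the unprimed skeleton of this argument is available from \cite{Marberg2019b, HMP4}, so the real work is isolating and checking the finitely many new sub-cases introduced by primes on commutation letters and on the main diagonal.
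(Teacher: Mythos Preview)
Your proposal is correct, and parts (i) and (ii) match the paper exactly (the paper's proof opens by invoking Remark~\ref{iarrow-rmk} and Proposition~\ref{o-lem2} for precisely these claims). However, for bijectivity (iii) the paper takes a genuinely different route. Rather than building a reverse insertion algorithm, the paper reduces to the already-known unprimed case: given a target pair $(P,Q)$, it applies the unprimed bijection \cite[Thm.~5.19]{HMP4} to $(\unprime(P),\unprime_{\diag}(Q))$ to recover a unique unprimed word $a \in \iR(z)$, and then invokes Proposition~\ref{tau-prop} (the formula expressing the primed positions of $\PO(\hat a)$ and the diagonal primed positions of $\QO(\hat a)$ in terms of $\tpi(a)$ and $\marked(\hat a)$) to show there is a unique choice of primes on the commutations of $a$ producing the given $(P,Q)$.

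The paper is explicit that your approach would also work---it remarks just after stating the theorem that ``it is routine \dots\ to describe a reverse insertion algorithm that gives the inverse map,'' citing \cite[\S3.3]{Marberg2019a} and \cite[\S5.3]{PatPyl2014}, but then says it will derive the theorem ``by another method.'' The trade-off: your approach is self-contained and avoids the cycle-migration machinery of Sections~\ref{bumping-sect}--\ref{cyc-sect2}, at the cost of the case analysis you anticipate for reversing steps (2)(b) and (2)(c). The paper's approach gets the theorem almost for free once Proposition~\ref{tau-prop} is in hand, and that proposition is needed anyway for the proof of Theorem~\ref{ck-fkd-thm}, so the paper is amortizing the technical work across both theorems.
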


The theorem remains true
when we replace  $\iR^+(z)$ by $\iR(z)$ if we further require $Q$ to have no primes  on the main diagonal
 \cite[Thm. 5.19]{HMP4}.
It is routine, following   \cite[\S3.3]{Marberg2019a} or \cite[\S5.3]{PatPyl2014}, to describe a reverse insertion algorithm that gives the inverse map $(\PO(a), \QO(a)) \mapsto a$.
However, we will end up deriving Theorem~\ref{o-thm1} by another method in Section~\ref{proof-sect4}.
For the rest of this section, we will assume that Theorem~\ref{o-thm1} is given,
and then use this to develop a few other results.

\subsection{Extension to the semistandard case}\label{semist-sect}
 
In this section, we discuss a generalization of Definition~\ref{o'-eg-def}
that outputs a pair of shifted tableaux $(P,Q)$ in which $Q$ is semistandard rather than standard.

A positive integer $i$ is a \defn{descent} of a standard shifted tableau $T$ if
either (a) $i$ and $i+1$ both appear in $T$ with $i+1$ in a row strictly after $i$,
(b) $i'$ and $i'+1$ both appear in $T$ with $i'+1$ in a column strictly after $i'$,
or (c) $i$ and $i'+1$ both appear in $T$.
Let $\Des(T)$ denote the set of descents of $T$. 
If $T$ is as in \eqref{tableau-ex}, then $\Des(T) = \{1, 3,6\}$.

\begin{lemma}\label{udiag-lem}
If $T$ is a standard shifted tableau then $\Des(T) = \Des(\unprime_{\diag}(T))$.
\end{lemma}

\begin{proof}
Form $H_i$ by reading the primed entries up column $i$ of $T$ then the  unprimed entries across row $i$.
For $T$ in \eqref{tableau-ex}, this gives $H_3 = 4'5'$, $H_2 = 2'37$, and $H_1=1'6$.
Then $i \in \Des(T)$ if and only if $i+1$ precedes $i$ in $\unprime(\cdots H_3H_2H_1)$,
which is unchanged for $T$ replaced by $\unprime_{\diag}(T)$.
\end{proof}

If $a=a_1a_2\cdots a_n$ is a primed word then let $\Des(a) := \{ i \in [n-1] : a_i > a_{i+1}\}$.

\begin{proposition}\label{o-thm3}
Let $a \in \iR^+(z)$ for some $z \in I_\ZZ$. Then $\Des(a) = \Des(\QO(a))$.
 \end{proposition}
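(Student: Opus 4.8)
The plan is to prove $\Des(a) = \Des(\QO(a))$ by analyzing, for each index $i \in [n-1]$, how the relative order of $a_i$ and $a_{i+1}$ in the input word relates to the relative position of the boxes that these two letters contribute when they are inserted into $\PO(a_1\cdots a_{i-1})$ and $\PO(a_1\cdots a_i)$ respectively. Write $T = \PO(a_1\cdots a_{i-1})$, let $(x,y)$ be the box added when $a_i$ is inserted into $T$ (with entry $i$ or $i'$ in $\QO(a)$ according to whether that insertion ends in row or column insertion), and let $(x',y')$ be the box added when $a_{i+1}$ is inserted into $T \iarrow a_i$. By the definition of $\Des(\QO(a))$, the index $i$ is a descent of $\QO(a)$ precisely when one of the three conditions (row/column/mixed) from the definition holds for the pair of boxes labeled $i/i'$ and $i{+}1/(i{+}1)'$; so I must match this up, case by case, with the condition $a_i > a_{i+1}$ in the primed alphabet.

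The cleanest route is to reduce to the two-letter situation and invoke compatibility with the insertion bumping paths. First I would establish the following local statement: if $T$ is an increasing shifted tableau with no primed diagonal entries and $\row(T) uv \in \iR^+(z)$, then the descent condition ``$u > v$ in $\frac12\ZZ$'' holds if and only if, writing $(x,y)$ for the box added by $T \iarrow u$ and $(x',y')$ for the box added by $(T\iarrow u)\iarrow v$, the pair $(x,y),(x',y')$ together with the row/column-insertion flags satisfies one of conditions (a),(b),(c) in the definition of $\Des$. This is essentially a statement about two consecutive insertions, and it can be checked by tracking the weak bumping paths $\path^\leq(T,u)$ and $\path^\leq(T\iarrow u, v)$: when $v \geq u$ (no descent), the second path lies weakly to the right of the first in the relevant sense and the new box $(x',y')$ is weakly northeast of $(x,y)$ in a way that forbids all three descent conditions; when $v < u$ (descent), the second path is strictly ``below/left'' and forces exactly one of (a),(b),(c) depending on whether $u,v$ are primed and whether the paths end in row or column insertion. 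The bookkeeping here is exactly the content of Remark~\ref{iarrow-rmk} and the standard comparison-of-bumping-paths lemma for shifted Edelman–Greene insertion, so I would cite \cite[\S3]{Marberg2019b} (or the unprimed analogue in \cite{HMP4}) for the non-primed skeleton of this argument and then handle the extra primed cases by hand.

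Given the local statement, the proof of Proposition~\ref{o-thm3} is immediate: apply it with $T = \PO(a_1\cdots a_{i-1})$, $u = a_i$, $v = a_{i+1}$, noting that $\row(T)\,a_i\,a_{i+1}$ is a primed involution word in $\iR^+(z)$ because $a \in \iR^+(z)$ and (by Proposition~\ref{o-lem2}, which the excerpt promises) row reading words are preserved under $\iarrow$. The box added by inserting $a_i$ into $\PO(a_1\cdots a_{i-1})$ is by definition the box of $\QO(a)$ carrying the entry $i$ or $i'$, and likewise for $a_{i+1}$ and $i+1$; the row-vs-column flags are exactly the primed-vs-unprimed distinction in $\QO(a)$. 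So $i \in \Des(a) \iff a_i > a_{i+1} \iff$ the descent condition holds for the two relevant boxes of $\QO(a)$ $\iff i \in \Des(\QO(a))$.

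The main obstacle is the local two-insertion comparison, specifically getting the primed cases of conditions (b) and (c) right. In the unprimed theory a descent corresponds cleanly to ``the second bumping path stays strictly left of the first,'' but once primes are allowed, the switching-primes operation in step (2c) and the diagonal case (2b) can change whether an insertion ends in row or column mode, and condition (c) of $\Des$ (``$i$ and $i'+1$ both appear'') mixes the primed and unprimed markers in a way that has no unprimed analogue. I expect to need Proposition~\ref{despite-prop} and parts (b)–(d) of Remark~\ref{iarrow-rmk} to rule out the degenerate configurations, and possibly Lemma~\ref{udiag-lem} together with Proposition~\ref{unprime-tab-prop} to reduce the diagonal subtleties to the already-known unprimed statement $\Des(\unprime(a)) = \Des(\QO(\unprime(a)))$, transferring back via $\QO(\unprime(a)) = \unprime_{\diag}(\QO(a))$ and the fact that toggling the first letter's prime (Proposition~\ref{first-toggle-obs}) or swapping the first two (Proposition~\ref{second-toggle-obs}) affects neither $\Des(a)$ in the relevant way nor $\Des(\QO(a))$ beyond diagonal entries. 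If that reduction works, the primed cases collapse to Lemma~\ref{udiag-lem} plus a short check that a prime on $a_{i+1}$ (when $a_{i+1} \in \ZZ'$, i.e. when $i+1$ indexes a commutation) does not create or destroy a descent relative to the unprimed word except exactly when it should, which is a finite case analysis on the primed alphabet.
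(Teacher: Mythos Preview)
Your direct bumping-path approach would work in principle, but you have buried the actual short proof inside your discussion of ``obstacles.'' The paper's argument is precisely the reduction you sketch at the end, and nothing more: since $\unprime(a)$ is an involution word (hence reduced), no two adjacent letters have the same ceiling, so $a_i > a_{i+1}$ if and only if $\lceil a_i\rceil > \lceil a_{i+1}\rceil$, giving $\Des(a) = \Des(\unprime(a))$ immediately. Then
\[
\Des(a) = \Des(\unprime(a)) = \Des(\QO(\unprime(a))) = \Des(\unprime_{\diag}(\QO(a))) = \Des(\QO(a)),
\]
using \cite[Prop.~2.24]{HKPWZZ} for the second equality, Proposition~\ref{unprime-tab-prop} for the third, and Lemma~\ref{udiag-lem} for the last. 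That is the entire proof: no local two-insertion statement, no comparison of bumping paths, no case analysis on conditions (a), (b), (c), and no use of Propositions~\ref{first-toggle-obs} or~\ref{second-toggle-obs}.

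What your primary approach buys is an independent re-derivation of the unprimed descent result as a byproduct, but at the cost of redoing the work already in \cite{HKPWZZ} and then handling the primed cases on top. The reduction buys you the primed statement essentially for free once you notice that primes never affect which of two adjacent letters in a primed involution word is larger. You correctly identified all the ingredients (Lemma~\ref{udiag-lem}, Proposition~\ref{unprime-tab-prop}, the known unprimed case); you just did not see that assembling them \emph{is} the proof rather than a fallback for ``diagonal subtleties.''
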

 
 \begin{proof}
 We have $\Des(a) = \Des(\unprime(a))$ since the word $\unprime(a) \in \iR(z)$ has no equal adjacent letters.
Next,
  \cite[Prop. 2.24]{HKPWZZ} asserts that $  \Des(\unprime(a)) = \Des(\QO(\unprime(a)))$.
Finally, we have $\QO(\unprime(a)) = \unprime_{\diag}(\QO(a))$ 
by Proposition~\ref{unprime-tab-prop}
and $\Des(\unprime_{\diag}(T)) = \Des(T)$ for all standard shifted tableaux $T$ by Lemma~\ref{udiag-lem}.
 \end{proof}
 
 When $a$ is a word in a totally ordered alphabet and $N$ is a nonnegative integer,
 we let $\Incr_N(a)$ denote the set of $N$-tuples of weakly increasing, possibly empty subwords $(a^1,a^2,\cdots ,a^N)$
 such that $a=a^1a^2\cdots a^N$.
Recall from the introduction that $\Incr_\infty(a)$ is the set of infinite sequences $(a^1,a^2,\cdots )$ of weakly increasing words 
 with $a=a^1a^2\cdots $; here, all but finitely many   $a^i$ must be empty.
 If $\cA$ is a set of words and $N \in \{0,1,2,\dots\}\sqcup\{\infty\}$ then we let $\Incr_N(\cA) = \bigsqcup_{a \in \cA} \Incr_N(a)$.
 
 \begin{definition}
Given $\phi = (a^1,a^2,\cdots ) \in \Incr_N(\iR^+(z))$ for  $z \in I_\ZZ$,
let
$\PO(\phi) := \PO(a^1a^2\cdots )$ and form $\QO(\phi)$ from $\QO(a^1a^2\cdots  )$ 
by replacing each entry $j\in\ZZ$ (respectively, $j'\in \ZZ'$) by $i$ (respectively, $i'$), where $i >0$
is minimal with $j \leq \ell(a^1)+\ell(a^2) +  \dots +\ell(a^i)$.
\end{definition}

For example, if $\phi = (\emptyset, 4, 1'3, \emptyset, 5, \emptyset,  4', 2) \in \Incr_8(41'354'2)$ then 
\[
\PO(\phi) =  \ytab{
 \none & 3 & 5'   \\ 1 & 2 & 4  & 5  
}
\quand
\QO(\phi) =  \ytab{
 \none & 3' & 7  \\ 2 & 3' & 5  & 8'
}.\]
If $ (a^1,a^2,\cdots ) \in \Incr_N(\iR^+(z))$ 
then each $\unprime(a^i)$ is strictly increasing as $a^1a^2\cdots \in \iR^+(z)$.

\begin{theorem}\label{o-summary-thm}
For each $z \in I_\ZZ$,
the map $\phi \mapsto (\PO(\phi), \QO(\phi))$ is a bijection from $\Incr_\infty(\iR^+(z))$ to the set of pairs $(P,Q)$
of shifted tableaux of the same shape in which $P$ is increasing with no primes on the main diagonal, $Q$ is semistandard, 
and $\row(P) \in \iR^+(z)$.
\end{theorem}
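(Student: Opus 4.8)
The plan is to deduce Theorem~\ref{o-summary-thm} from the standard case (Theorem~\ref{o-thm1}) together with the descent compatibility statement of Proposition~\ref{o-thm3}, by the usual ``standardization'' argument that promotes a standard bijection to its semistandard counterpart. First I would set up the standardization map: given a pair $\phi=(a^1,a^2,a^3,\dots)\in\Incr_\infty(\iR^+(z))$, let $a=a^1a^2a^3\cdots\in\iR^+(z)$ be the concatenation and let $n=\ell(a)$; then $\std(\phi)\in\iR^+(z)$ is defined by taking the underlying unprimed word's standardization in the usual way, keeping track of the block boundaries (positions where a new factor $a^{i+1}$ begins), and preserving the primes. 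Because the factors $a^i$ are weakly increasing and each $\unprime(a^i)$ is strictly increasing, $\std(\phi)$ is a genuine standard primed involution word (still a reduced word once primes are removed), and the passage $\phi\mapsto(\std(\phi),\text{block data})$ is a bijection from $\Incr_\infty(\iR^+(z))$ onto pairs consisting of a standard word in $\iR^+(z)$ together with a choice of which descents of that word are ``forced'' block breaks — precisely, the block-break set must contain $\Des(\std(\phi))$.

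The key step is then to check that $\PO$ and $\QO$ interact correctly with standardization. For $\PO$, since $\PO(\phi)=\PO(a^1a^2\cdots)=\PO(a)$ and $\PO$ depends only on the underlying sequence of letters (not on its division into factors, and standardization does not change the relative order or the primes in any way that affects the insertion rules — one has to verify this), we get $\PO(\phi)=\PO(\std(\phi))$. For $\QO$, the recording tableau $\QO(\phi)$ is obtained from the standard recording tableau $\QO(\std(\phi))$ by the order-preserving relabeling $j\mapsto i$ where $i$ is minimal with $j\le\ell(a^1)+\dots+\ell(a^i)$; this relabeling is exactly the operation that turns a standard shifted tableau into a semistandard one, and it is a bijection onto semistandard shifted tableaux of the same shape once one records which descents of $\QO(\std(\phi))$ are ``strict'' (i.e. correspond to a genuine increase of the label). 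Here is where Proposition~\ref{o-thm3} enters: $\Des(\std(\phi))=\Des(\QO(\std(\phi)))$, so the forced block breaks on the word side match exactly the forced strict increases on the tableau side, and the two bijections (word-side and tableau-side) are compatible. Concretely, I would prove: for a fixed standard pair $(P,Q)=(\PO(w),\QO(w))$ with $w\in\iR^+(z)$ standard, the fibers of $\phi\mapsto(\PO(\phi),\QO(\phi))$ over semistandard relabelings of $Q$ are in bijection with factorizations $w=a^1a^2\cdots$ whose break set contains $\Des(w)$, which by Proposition~\ref{o-thm3} is the same as relabelings of $Q$ into semistandard tableaux.

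Assembling: combine the bijection of Theorem~\ref{o-thm1} (standard words $\leftrightarrow$ standard pairs $(P,Q)$ with $\row(P)\in\iR^+(z)$), the standardization bijection (elements of $\Incr_\infty(\iR^+(z))$ $\leftrightarrow$ standard words together with an admissible block structure), and the relabeling bijection (standard $Q$ plus admissible strict-increase data $\leftrightarrow$ semistandard $Q$), using Proposition~\ref{o-thm3} to identify the two kinds of admissibility data. The composite is exactly $\phi\mapsto(\PO(\phi),\QO(\phi))$, and its codomain is pairs $(P,Q)$ with $P$ increasing, no primes on the diagonal, $Q$ semistandard, and $\row(P)\in\iR^+(z)$, as claimed. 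I expect the main obstacle to be the bookkeeping in the ``relabeling bijection'' step: one must verify carefully that replacing entries $j,j'$ by $i,i'$ according to the block boundaries sends standard shifted tableaux bijectively to semistandard shifted tableaux of the same shape, and that the condition ``block breaks $\supseteq\Des(w)$'' translates precisely into ``the relabeling respects the no-repeated-primed-in-a-row / no-repeated-unprimed-in-a-column conditions.'' This requires the precise definition of $\Des$ for standard shifted tableaux (cases (a), (b), (c) above) and Lemma~\ref{udiag-lem}, and is the place where the primed-diagonal subtleties (which distinguish this from the $\iR(z)$ version) must be handled with care.
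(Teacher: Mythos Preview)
Your approach is essentially the same as the paper's and is correct in outline, but you have introduced an unnecessary layer of confusion with the ``standardization'' of the word. There is nothing to standardize on the word side: by definition $\PO(\phi)=\PO(a^1a^2\cdots)=\PO(a)$ for the concatenation $a\in\iR^+(z)$, so your $\std(\phi)$ should simply be $a$ itself, and the claim ``standardization does not change the relative order or the primes in any way that affects the insertion rules --- one has to verify this'' is vacuous. The only genuine standardization occurs on the \emph{tableau} side, and this is where the paper focuses its attention.

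Concretely, the paper's proof establishes directly that the relabeling map $(T,\alpha)\mapsto U$ (replace $j,j'$ in a standard shifted tableau $T$ by $i,i'$ where $i$ is minimal with $j\le\alpha_1+\cdots+\alpha_i$) is a bijection from pairs $(T,\alpha)$ with $I(\alpha)\supseteq\Des(T)$ to semistandard shifted tableaux $U$, with inverse given by reading off $\alpha$ from the content of $U$ and standardizing $U$ back to $T$. Then Proposition~\ref{o-thm3} says $\Des(a)=\Des(\QO(a))$, so the data ``$a\in\iR^+(z)$ plus a composition $\alpha$ with $I(\alpha)\supseteq\Des(a)$'' (which is exactly $\Incr_\infty(\iR^+(z))$) maps bijectively via Theorem~\ref{o-thm1} and the relabeling to the desired pairs $(P,Q)$. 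Your sketch arrives at the same place, but the paper's version is cleaner: drop the word-side ``standardization'' language entirely and state the tableau-side relabeling bijection explicitly, including its inverse.
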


\begin{proof}
Let $T$ be a standard shifted tableau whose shape is a strict partition of $m$
and let $\alpha = (\alpha_1,\alpha_2,\dots)$ be a weak composition of $m$ 
such that $I(\alpha) := \{ \alpha_1+\alpha_2 + \dots +\alpha_i : i \geq 1\} \setminus\{m\}$
contains $\Des(T)$.

We claim that such pairs $(T,\alpha)$ are in bijection with semistandard shifted tableaux
via the map that replaces $j$ (respectively, $j'$) in $T$
by $i$ (respectively, $i'$)
where $i>0$ is minimal with $j \leq \alpha_1 +\alpha_2+ \dots +\alpha_i$.
The shifted tableau $U$ obtained from $(T,\alpha)$ in this way is semistandard
because $i \notin \Des(T)$ implies that $i$ and $i+1$ do not appear in the same column of $T$,
that $i'$ and $i'+1$ do not appear in the same row of $T$, and that $T$ does not contain both $i$ and $i'+1$.
In the reverse direction, one can recover $\alpha$ from $U$ as the sequence whose $i$th entry is the number of 
boxes containing $i$ or $i'$, and one can recover $T$ from $U$ by the \defn{standardization} process
that replaces each vertical strip of boxes containing $i'$ by consecutive primed numbers and each horizontal strip
of boxes containing $i$ by consecutive unprimed numbers.

By Proposition~\ref{o-thm3}, if $\phi =(a^1,a^2,\dots)\in \Incr_\infty(\iR^+(z))$,
then $\QO(\phi)$ is obtained by applying this bijection to $(T,\alpha)$
for $T = \QO(a^1a^2\cdots a^n)$
and $\alpha = (\ell(a^1),\ell(a^2),\dots)$. 
Given this observation, we deduce that 
 $\phi \mapsto (\PO(\phi), \QO(\phi))$ is injective and surjective from
Theorem~\ref{o-thm1}.
\end{proof}

 \subsection{Application to multiplying Schur $Q$-functions}\label{qapp-sect}

In this section,
we explain an application of Theorem~\ref{o-summary-thm} mentioned in the introduction.
Let $x_i$ for $i \in \ZZ$ be commuting indeterminates. 
Given a shifted tableau $T$, let $x^T :=\prod_{i \in \ZZ} x_i^{c_i}$
where $c_i$ is the number of entries in $T$ equal to $i$ or $i'$.
 The \defn{Schur $Q$-function} of a strict partition $\lambda$ is the formal power series 
 $Q_\lambda:= \sum_T x^T \in \ZZ[[x_1,x_2,\dots]]$ where $T$ ranges over all semistandard shifted tableaux of shape $\lambda$.
The Schur $Q$-functions are symmetric in the $x_i$ variables and linearly independent \cite{S28}. 
We present a new proof that they span a ring with nonnegative integer structure coefficients.

For $z \in I_\ZZ$, let $Q_z := \sum_{\phi \in \Incr_\infty(\iR^+(z))} x^\phi$
where $x^\phi := x_1^{\ell(a_1)} x_2^{\ell(a_2)}  \cdots$ if $\phi = (a^1,a^2,\dots)$. 
These power series are denoted $\hat G_z$ in \cite[\S4.5]{HMP4}.
The following is immediate from Theorem~\ref{o-summary-thm}.

\begin{corollary}[{\cite[Cor. 4.62]{HMP4}}]
\label{q-cor1}
We have $Q_z = \sum_{T \in \{ \PO(a) : a \in \iR^+(z)\}} Q_{\mathsf{shape}(T)}$.
\end{corollary}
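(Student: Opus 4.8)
The plan is to read off Corollary~\ref{q-cor1} as a formal power series identity that follows from organizing the bijection in Theorem~\ref{o-summary-thm} by the value of the insertion tableau $\PO$. First I would recall that by definition $Q_z = \sum_{\phi \in \Incr_\infty(\iR^+(z))} x^\phi$, where $x^\phi$ records the weight monomial $\prod_i x_i^{\ell(a_i)}$ of $\phi = (a^1, a^2, \dots)$. The key point is to check that this weight is unchanged under the bijection $\phi \mapsto (\PO(\phi), \QO(\phi))$ in the sense that it depends only on $\QO(\phi)$: indeed, by construction $\QO(\phi)$ has exactly $\ell(a^i)$ boxes filled with $i$ or $i'$, so $x^\phi = x^{\QO(\phi)}$ in the notation of Section~\ref{qapp-sect}.

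Next I would partition the domain $\Incr_\infty(\iR^+(z))$ according to the insertion tableau $\PO(\phi)$. By Theorem~\ref{o-summary-thm}, the map $\phi \mapsto (\PO(\phi), \QO(\phi))$ is a bijection onto the set of pairs $(P,Q)$ where $P$ is increasing with no primes on the main diagonal, $\row(P) \in \iR^+(z)$, $Q$ is semistandard, and $P$, $Q$ have the same shape. Fixing $P$, the fibre of $\PO$ over $P$ is therefore in weight-preserving bijection with the set of all semistandard shifted tableaux $Q$ of shape $\mathsf{shape}(P)$. Summing the weights over that fibre gives precisely $\sum_Q x^Q = Q_{\mathsf{shape}(P)}$, the Schur $Q$-function of that strict partition shape. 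Summing over the finitely (or at most countably) many possible insertion tableaux $P$ — i.e.\ over $P$ ranging through $\{ \PO(a) : a \in \iR^+(z)\}$, noting that $\PO(\phi)$ depends only on the underlying word $a^1 a^2 \cdots$ and hence every such $P$ is of the form $\PO(a)$ for some $a \in \iR^+(z)$ — yields $Q_z = \sum_{T \in \{\PO(a) : a \in \iR^+(z)\}} Q_{\mathsf{shape}(T)}$, as claimed.

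The only genuinely substantive ingredient is Theorem~\ref{o-summary-thm} itself, which is assumed; everything else is bookkeeping. The one point that deserves a sentence of care is the interchange of summations: since $Q_z$ and each $Q_\lambda$ are formal power series with nonnegative coefficients, and the decomposition of the index set into $\PO$-fibres is a genuine set partition, the rearrangement is valid termwise with no convergence issue. I expect the main obstacle (such as it is) to be merely confirming the weight identity $x^\phi = x^{\QO(\phi)}$ carefully from the definition of $\QO(\phi)$ on $\Incr_\infty$ — that the relabelling $j \mapsto i$ with $i$ minimal such that $j \le \ell(a^1) + \cdots + \ell(a^i)$ places exactly $\ell(a^i)$ entries into class $i$ — which is immediate from the definition but is the hinge of the argument.
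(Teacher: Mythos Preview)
Your proposal is correct and follows exactly the approach the paper takes: the paper simply states that the corollary ``is immediate from Theorem~\ref{o-summary-thm},'' and your write-up spells out precisely why---partitioning $\Incr_\infty(\iR^+(z))$ by the value of $\PO$, using the weight-preservation $x^\phi = x^{\QO(\phi)}$, and recognizing that the fibre over a fixed $P$ contributes $Q_{\mathsf{shape}(P)}$. There is nothing to add; your verification of $x^\phi = x^{\QO(\phi)}$ from the relabelling rule in the definition of $\QO(\phi)$ is the only detail worth checking, and you have it right.
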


Suppose $\lambda$ is a strict partition and $T_\lambda$ is the increasing  shifted tableau of shape $\lambda$
whose entry in box $(i,j)$ is $i+j-1$.
There exists a unique element $z \in I_\ZZ$ (called the  \defn{dominant} involution of shape $\lambda$) whose 
 \defn{involution Rothe diagram} $\hat\D(z) := \{ (i,j) \in \ZZ\times \ZZ : z(i) > j \leq i < z(j)\}$
 coincides with the transpose of $\SD_\lambda$  \cite[Prop. 4.16]{HMP6}.
 If we denote this element by $z_\lambda \in I_\ZZ$, then 
 $\row(T_\lambda)$ and $\col(T_\lambda)$ are both in $ \iR(z_\lambda)$ by \cite[Thm. 3.9 and Prop. 4.15]{HMP6}.\footnote{
 In the terminology of \cite{HMP6},
$\col(T_\lambda)$ is the standard reading word of the unique involution pipe dream for $z_\lambda$
described in \cite[Prop. 4.15]{HMP6},
while $\row(T_\lambda)$ is an alternate reading word in the sense of \cite[Def. 3.4]{HMP6}.}
For example, if $\lambda = (4,2,1)$ then 
\[
\ytab{
\none & \none & 5
\\
\none & 3 & 4 
\\
1 & 2 &3 &4 
}
\quand z_\lambda = (s_4s_3s_4 \widehat{s_5} s_2 \widehat{s_3}\widehat{s_1})(s_1 s_3 s_2 s_5 s_4s_3 s_4)= (1,5)(2,4)(3,6)\]
 where $\widehat{s_i}$ indicates the omission of that factor.
 
We need one more definition. Given any $z \in \I_\ZZ$, let $c_i$ be the number of positions in row $i$ of $\hat\D(z)$.
Then the \defn{involution shape} of $z$ \cite[Def. 4.38]{HMP4} is the transpose of the partition that sorts the sequence $(\dots,c_1,c_2,c_3,\dots)$.
 When $z=z_{(4,2,1)}$ the nonzero values of $c_i$ are $(c_1,c_2,c_3,c_4) = (1,2,3,1)$ so the involution shape is $(3,2,1,1)^\top = (4,2,1)$.
 This coincidence is a general phenomenon.
 
\begin{lemma}\label{zdom-lem}
Suppose $\lambda$ is a strict partition. Then the following properties hold:
\ben
\item[(a)] The involution shape of $z_\lambda$ is $\lambda$.
\item[(b)] It holds that $Q_{z_\lambda} = Q_{\lambda} $.
\item[(c)] We have $\PO(a) =T_\lambda$ for all $a \in \iR^+(z_\lambda)$.
\een
\end{lemma}

\begin{proof}
For part (a), observe that 
since $\hat\D(z_\lambda)$ is the transpose of $\SD_\lambda$,
the relevant value of $c_i$ is just height of column $i$ of $\SD_\lambda$. 
We claim that these numbers are a permutation of the heights of the columns of the unshifted diagram $\D_\lambda$.
As the latters heights are the parts of $\lambda^\top$, our claim implies that $(\dots,c_1,c_2,c_3,\dots)$ sorts to $\lambda^\top$
so the involution shape of $z_\lambda$ is $\lambda$ as desired.

To justify our claim, note that $\SD_\lambda$ can be formed by rearranging the columns of $\D_\lambda$ in the following way.
Since $\lambda$ is strict, $\D_\lambda$ has a column of height $k$ for each $k = 1,2,\dots,\ell(\lambda)$. Remove these columns from $\D_\lambda$
and then place them in ascending order on the left side of what remains. The result is $\SD_\lambda$. 

One can compute that $\PO(\col(T_\lambda)) = T_\lambda$ directly from the definition
 of $\PO$.
Given this observation and Corollary~\ref{q-cor1}, to prove parts (b) and (c) it suffices to show that  $Q_{z_\lambda} = Q_{\lambda} $.
We do this by appealing to results in \cite{HMP4}.
The permutation $z_\lambda$ is $132$-avoiding by  \cite[Ex. 2.2.2]{Manivel} and so 
also 2143-avoiding (i.e., \defn{vexillary}). 
 By \cite[Thm.~4.67]{HMP4}, the symmetric function  $Q_{y}$ is equal to a single Schur $Q$-function 
whenever $y \in \I_\ZZ$ is vexillary, and so in particular when $y=z_\lambda$.

Finally, \cite[Cor~4.42]{HMP4} identifies the top term 
in the Schur $Q$-expansion of $Q_y$  for any involution $y$: this is precisely
the Schur $Q$-function indexed by the involution shape of $y$.
Since this term is the only term when $y=z_\lambda$, we conclude from part (a) that 
$Q_{z_\lambda} = Q_{\lambda} $ as needed.
\end{proof}

As in the introduction, given elements $v \in S_m$ and $w \in S_n$, let $v \times w \in S_{m+n}$ be the permutation
mapping $i\mapsto v(i)$ for $i \in [m]$ and $m+j \mapsto m+ w(j)$ for $j \in [n]$.

\begin{corollary}\label{lrq-cor}
If $\lambda$ and $\mu$ are strict partitions then $Q_{\lambda} Q_\mu = \sum_\nu g^{\nu}_{\lambda\mu} Q_\nu$
where the sum is over strict partitions $\nu$
and $g^{\nu}_{\lambda\mu} $ is the number of elements in $\left\{ \PO(a) : a \in \iR^+(z_\lambda \times z_\mu)\right\}$ of shape $\nu$.
\end{corollary}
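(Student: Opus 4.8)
The plan is to reduce $Q_\lambda Q_\mu$ to $Q_{z_\lambda\times z_\mu}$ and then expand the latter by shape. Since the involution Rothe diagrams of $z_\lambda$ and $z_\mu$ are finite, these are honest finite permutations, say $z_\lambda\in I_m$ and $z_\mu\in I_n$, so $z_\lambda\times z_\mu\in I_{m+n}$ is defined. Granting the multiplicativity $Q_yQ_z=Q_{y\times z}$ discussed below, one has
\[
Q_\lambda Q_\mu \;=\; Q_{z_\lambda}Q_{z_\mu}\;=\;Q_{z_\lambda\times z_\mu}\;=\;\sum_{T\,\in\,\{\PO(a)\,:\,a\in\iR^+(z_\lambda\times z_\mu)\}} Q_{\mathsf{shape}(T)}\;=\;\sum_\nu g^{\nu}_{\lambda\mu}\,Q_\nu,
\]
where the first equality is Lemma~\ref{zdom-lem}, the third is Corollary~\ref{q-cor1}, and the last regroups the finite set of insertion tableaux by shape. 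The coefficient of $Q_\nu$ is exactly the number of tableaux of shape $\nu$ in that set, namely $g^{\nu}_{\lambda\mu}$, and these coefficients are well-defined since the $Q_\nu$ over strict partitions $\nu$ are linearly independent \cite{S28}.

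So the real content is the identity $Q_yQ_z=Q_{y\times z}$ for $y\in I_m$ and $z\in I_n$, which I would establish by a weight-preserving bijection, exactly parallel to the symplectic Hecke computation recalled in Section~\ref{motiv-sect}. Given $\phi=(a^1,a^2,\dots)\in\Incr_\infty(\iR^+(y))$ and $\psi=(b^1,b^2,\dots)\in\Incr_\infty(\iR^+(z))$, set $\phi\oplus\psi:=(a^1c^1,a^2c^2,\dots)$, where $c^i$ is formed by adding $m$ to every letter of $b^i$. Each $a^ic^i$ is weakly increasing because every letter of $a^i$ has ceiling at most $m-1$ while every letter of $c^i$ has ceiling at least $m+1$, and $\ell(a^ic^i)=\ell(a^i)+\ell(b^i)$, so $x^{\phi\oplus\psi}=x^\phi x^\psi$. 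It therefore suffices to show that $\oplus$ is a bijection $\Incr_\infty(\iR^+(y))\times\Incr_\infty(\iR^+(z))\to\Incr_\infty(\iR^+(y\times z))$; summing the weight identity over all pairs then gives $Q_yQ_z=Q_{y\times z}$.

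Both the well-definedness and the bijectivity of $\oplus$ reduce to one structural statement: \emph{a primed word lies in $\iR^+(y\times z)$ if and only if, up to the relation $\isim$ of Proposition~\ref{imat-prop}, it equals $a\cdot(b+m)$ for some $a\in\iR^+(y)$ and $b\in\iR^+(z)$.} I would prove this in three steps. (i) Every involution word for the block-diagonal involution $y\times z$ uses only generators $s_i$ with $i\in\{1,\dots,m-1\}\cup\{m+1,\dots,m+n-1\}$: the set of generators that occur is invariant under the moves generating $\iisim$ in Proposition~\ref{mat3-prop}, and the concatenation of an involution word for $y$ with an $m$-shifted involution word for $z$ is manifestly a word of this kind for $y\times z$. (ii) Hence the letters of any $c\in\iR^+(y\times z)$ split into ``small'' letters (ceiling $\le m-1$) and ``large'' letters (ceiling $\ge m+1$), and since these commute, $\isim$ sorts $c$ into $a\cdot(b+m)$ with $a$ the small letters and $b+m$ the large ones, read in order. (iii) Because the two blocks $\langle s_1,\dots,s_{m-1}\rangle$ and $\langle s_{m+1},\dots,s_{m+n-1}\rangle$ commute elementwise and have additive Coxeter length, the iterated Demazure-conjugation expression defining an involution word for $a\cdot(b+m)$ decouples: it evaluates to $y\cdot(1^m\times z)=y\times z$, the ascent conditions for the $a$-letters and for the $(b+m)$-letters are checked independently within their blocks, and, crucially, an index is a commutation in $a\cdot(b+m)$ precisely when the corresponding index is a commutation in $a$ or in $b+m$. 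This gives $a\cdot(b+m)\in\iR^+(y\times z)$ iff $a\in\iR^+(y)$ and $b\in\iR^+(z)$; running the sorting of (ii) backwards recovers $(\phi,\psi)$ from $\phi\oplus\psi$, so $\oplus$ is a bijection.

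The main obstacle is step (iii), specifically the bookkeeping showing that ``commutation'' for a letter of $a\cdot(b+m)$ is detected entirely within a single block — this is the $\iR^+$-analogue of the fact quoted without proof for symplectic Hecke words in Section~\ref{motiv-sect}, and it is exactly what makes the prime decorations match on both sides of the bijection. Everything else — the weight identity, the regrouping by shape, and assembling Lemma~\ref{zdom-lem} with Corollary~\ref{q-cor1} — is routine. If the product formula $\hat G_y\hat G_z=\hat G_{y\times z}$ for the functions $\hat G_z=Q_z$ of \cite[\S4.5]{HMP4} is already available in the literature, the entire argument collapses to the displayed chain of equalities above.
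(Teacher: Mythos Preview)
Your proposal is correct and follows essentially the same approach as the paper: prove $Q_yQ_z=Q_{y\times z}$ via the concatenation bijection $\oplus$, then combine Lemma~\ref{zdom-lem} with Corollary~\ref{q-cor1}. The paper's proof is terser --- it simply asserts that the bijection follows from Proposition~\ref{imat-prop} --- whereas you spell out steps (i)--(iii) explicitly, but the content is the same.
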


\begin{proof}
Let $y \in I_\ZZ \cap S_m$ and $z \in I_\ZZ \cap S_n$. It follows from Proposition~\ref{imat-prop} that $\Incr_\infty(\iR^+(y \times z))$
is in bijection with the product $\Incr_\infty(\iR^+(y )) \times \Incr_\infty(\iR^+(z ))$
via the map $((a^1,a^2,\dots),(b^1,b^2,\dots)) \mapsto ( a^1c^1, a^2c^2,\dots)$
where $c^i$ is formed by adding $m$ to each letter of $b^i$.
This implies that $Q_y Q_z = Q_{y \times z}$, and so the result follows from Corollary~\ref{q-cor1}.
\end{proof}

\subsection{Orthogonal Coxeter-Knuth equivalence}\label{ock-sect}

An essential property of orthogonal Edelman-Greene insertion is that the fibers of $ \PO$
are equivalence classes for a simple relation on primed words, which we define in this section.
Let $\ck$ denote the operator that acts on 1- and 2-letter primed words 
by interchanging  
\be\label{ck-eq1}
X \leftrightarrow X',
\quad
XY \leftrightarrow YX,\quad 
XY' \leftrightarrow YX',\quad 
X'Y \leftrightarrow Y'X,\quand 
X'Y' \leftrightarrow Y'X'
\ee
for all $X,Y \in \ZZ$. In addition, let $\ck$ act on 3-letter primed words as the involution interchanging
\be\label{ck-eq2}
XYX \leftrightarrow YXY,\quad 
X'YX \leftrightarrow YXY',\quad
ACB \leftrightarrow CAB,\quand 
BCA \leftrightarrow BAC
\ee
for all $X,Y \in \ZZ$ with $|X-Y|=1$ and all $A,B,C \in \ZZ\sqcup \ZZ'$ with $\lceil A \rceil < \lceil B \rceil  <\lceil C \rceil $,
while fixing 
any 3-letter words not of these forms. 
Given a primed word $a=a_1a_2a_3\cdots a_n$ and $i \in[n-2]$, we define
\[ 
\ba
\ck_{-1}(a) &:=\ck(a_1) a_2a_3\cdots a_n, \\
\ck_0(a) &:= \ck(a_1a_2)a_3\cdots a_n, \\
\ck_i(a) &:= a_1\cdots a_{i-1} \ck(a_ia_{i+1}a_{i+2}) a_{i+3}\cdots a_n,
\ea
\]
while setting $\ck_i(a):=a$ for $i \in \ZZ$ with $i+2 \notin[\ell(a)]$.
For example, if $a = 45'7121'$ then
\[
\barr{rrr}
 \ck_{-1}(a) = 4'5'7121',\quad
&
\ck_{0}(a) = 54'7121',\quad
&
\ck_{1}(a) = 45'7121',
\\
\ck_{2}(a) = 45'1721',\quad
&
\ck_{3}(a) = 45'1721',\quad
&
\ck_{4}(a) = 45'72'12.
\earr \]
The abbreviation ``$\ck$'' is for \defn{orthogonal Coxeter-Knuth operator}.

\begin{lemma}\label{unpri-word-lem}
If $i \geq 0$ and $a $ is a primed involution word
then $\unprime(\ck_i(a)) = \ck_i(\unprime(a))$.
\end{lemma}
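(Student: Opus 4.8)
The plan is to argue that $\ck_i$ and $\unprime$ commute by checking this separately for each of the defining moves of the operator $\ck$ listed in \eqref{ck-eq1} and \eqref{ck-eq2}. The claim is essentially a case check, but Proposition~\ref{despite-prop} is needed to rule out the case that would break it, so the argument is not purely formal.

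First I would reduce to the local statement: since $\ck_i$ only alters at most three consecutive letters of $a$, and $\unprime$ acts letterwise, it suffices to show that $\unprime(\ck(w)) = \ck(\unprime(w))$ whenever $w$ is the relevant window $a_i a_{i+1} a_{i+2}$ (or $a_1a_2$, or $a_1$) of a primed involution word $a$, treating separately the cases that correspond to the different clauses of \eqref{ck-eq1} and \eqref{ck-eq2}. For the $1$- and $2$-letter rules this is immediate from inspection: each of the five moves in \eqref{ck-eq1} either permutes two letters or toggles a prime, and in every case removing all primes from both sides yields the same unprimed word (namely $X$, or $YX$ from $XY$, etc.). Similarly, for the $3$-letter rules the moves $ACB\leftrightarrow CAB$ and $BCA\leftrightarrow BAC$ just transpose two letters and so obviously commute with $\unprime$; and the braid-type moves $XYX\leftrightarrow YXY$ and $X'YX\leftrightarrow YXY'$ both have unprimed form $XYX\leftrightarrow YXY$, so again $\unprime$ commutes.

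The one subtlety is that $\ck$ is defined to \emph{fix} any $3$-letter window "not of these forms," so I must make sure that the classification of $a_ia_{i+1}a_{i+2}$ into one of the listed forms is unchanged when primes are removed — otherwise $\ck$ could act nontrivially on $w$ but trivially on $\unprime(w)$, or vice versa. Here is where Proposition~\ref{despite-prop} enters: for a window inside a primed involution word $a$ with $i\geq 0$ (so $i\geq 1$ in $\unprime(a)$, and all of $a_i, a_{i+1}, a_{i+2}$ lie in a genuine commutation-tail position), part~(b) forces that whenever $\lceil a_i\rceil = \lceil a_{i+2}\rceil$ we have $a_{i+1} = \lceil a_i\rceil \pm 1 \in \ZZ$ and at most one of $a_i, a_{i+2}$ is primed, so the window is exactly of the form $XYX$ or $X'YX$ (up to the symmetry $X\leftrightarrow X'$ handled by the relation), never $XY'X$ or $X'Y'X$; and part~(a) controls the adjacent-value cases so that the "$ACB$/$BCA$ with $\lceil A\rceil<\lceil B\rceil<\lceil C\rceil$" branch is detected identically before and after unpriming. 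Thus $w$ is of one of the listed forms if and only if $\unprime(w)$ is, and on each such form $\unprime\circ\ck = \ck\circ\unprime$ by the case analysis above; on windows not of a listed form, both sides fix $w$ and $\unprime(w)$ respectively, so the identity $\unprime(\ck_i(a)) = \ck_i(\unprime(a))$ holds.

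The main obstacle I anticipate is purely bookkeeping: being careful that the hypotheses of Proposition~\ref{despite-prop} really apply to the window in question (in particular that $i\geq 0$ translates to a legitimate index $\geq 1$ in $\unprime(a)$, and that $\unprime(a)$ is itself an involution word so that the "no equal adjacent letters" phenomena used implicitly are available), and that the prime-toggling clauses of \eqref{ck-eq1}–\eqref{ck-eq2} are matched up correctly with their unprimed images. No single step is deep; the risk is only in missing a sub-case of the window classification.
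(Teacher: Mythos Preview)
Your proposal is correct and takes essentially the same approach as the paper: the only case where commutation of $\unprime$ with $\ck_i$ is not immediate from the definitions is when $i\in[\ell(a)-2]$ and $\lceil a_i\rceil=\lceil a_{i+2}\rceil$, and Proposition~\ref{despite-prop}(b) disposes of it exactly as you describe. Your appeal to part~(a) for the $ACB/BCA$ branch is unnecessary, since the condition $\lceil A\rceil<\lceil B\rceil<\lceil C\rceil$ already depends only on ceilings, but this does not affect the validity of the argument.
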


\begin{proof}
This is clear  
unless $i \in [\ell(a)-2]$ and $\lceil a_i\rceil  = \lceil a_{i+2} \rceil$, but if this happens then 
Proposition~\ref{despite-prop} tells us that $a_{i+1} \in \ZZ$ and at most one of $a_i$ or $a_{i+2}$ is primed, so the result still holds.
\end{proof}

The transitive closure of the relation on unprimed words with $a \sim \ck_i(a)$ for all $i >0$ 
is often called \defn{Coxeter-Knuth equivalence} \cite[Def. 6.19]{EG}.
We define \defn{orthogonal Coxeter-Knuth equivalence} $\simICK$ to be 
the
 transitive closure of the relation on primed words with
 $a \simICK \ck_i(a)$ for all $i \in \ZZ$.
 
 \begin{lemma}
If $a \in \iR^+(z)$ for some $z \in I_\ZZ$ and $a \simICK b$, then $b \in \iR^+(z)$.
\end{lemma}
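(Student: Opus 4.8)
The plan is to reduce the lemma to one claim: for every $z\in I_\ZZ$, every $a\in\iR^+(z)$, and every $i\in\ZZ$, one has $a\isim\ck_i(a)$. This suffices, because $\simICK$ is by definition the transitive closure of the relation $a\simICK\ck_i(a)$, while by Proposition~\ref{imat-prop} the set $\iR^+(z)$ is a single $\isim$-equivalence class. Hence if $a\in\iR^+(z)$, the claim shows that each elementary $\ck_i$-step leads to another word in the same $\isim$-class $\iR^+(z)$, and a straightforward induction on the length of a chain of such steps gives $b\in\iR^+(z)$ whenever $a\simICK b$.

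To establish the claim I would split on the value of $i$. Whenever $\ck_i(a)=a$ there is nothing to prove; this disposes of all $i$ with $i+2\notin[\ell(a)]$, and of every $i\ge1$ for which $a_ia_{i+1}a_{i+2}$ is not one of the special triples in~\eqref{ck-eq2}. For $i=-1$, the operator $\ck_{-1}$ merely toggles the prime on $a_1$, which is exactly the $\isim$-generator $Xa\isim X'a$. For $i\ge1$, write $a=u\,(a_ia_{i+1}a_{i+2})\,v$ with $u,v$ unchanged; then it is enough to match each active case of~\eqref{ck-eq2} with a single $\isim$-step. The braid moves $XYX\leftrightarrow YXY$ and $X'YX\leftrightarrow YXY'$ (for $|X-Y|=1$) are two of the generating relations of $\isim$ verbatim, and the Knuth-type moves $ACB\leftrightarrow CAB$ and $BCA\leftrightarrow BAC$ (for $\lceil A\rceil<\lceil B\rceil<\lceil C\rceil$) each transpose two adjacent letters whose ceilings differ by at least $2$, hence are instances of the commutation relation $aXYb\isim aYXb$.

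The delicate case is $i=0$, where $\ck_0$ acts on $a_1a_2$ through~\eqref{ck-eq1}: it does not simply transpose $a_1$ and $a_2$, but swaps their underlying integers while keeping fixed which of the two positions is primed. Here I would first observe that $\unprime(a)\in\iR(z)$ has no equal adjacent letters, so $\lceil a_1\rceil\ne\lceil a_2\rceil$. If $|\lceil a_1\rceil-\lceil a_2\rceil|\ge2$, the bare transposition $a_1a_2\leftrightarrow a_2a_1$ is an instance of the commutation relation, and one upgrades it to the prime-preserving swap of~\eqref{ck-eq1} by inserting at most two applications of the toggle $Xa\isim X'a$ (shift a prime off the leading letter, commute, then restore it); this produces four short chains, one for each of $XY$, $XY'$, $X'Y$, $X'Y'$. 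If instead $|\lceil a_1\rceil-\lceil a_2\rceil|=1$, then Proposition~\ref{despite-prop}(a) forces $a_2\in\ZZ$, so $a_1a_2\in\{XY,\,X'Y\}$ with $Y=\lceil a_2\rceil$; in the first case $\ck_0$ yields $YX$, which is an instance of the unprimed-swap generator $XYa\isim YXa$, and in the second it yields $Y'X$, reached by a toggle, that swap, and a final toggle.

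I expect the $i=0$ analysis to be the only real obstacle, and the crucial point that makes it close is that the prime patterns $\isim$ cannot navigate when the two leading ceilings are adjacent — a primed second letter, or two adjacent primed letters with adjacent ceilings — simply never occur at the front of a primed involution word, by Proposition~\ref{despite-prop}(a). Everything else amounts to a direct, if slightly tedious, comparison of the generators of $\simICK$ with those of $\isim$; once the claim $a\isim\ck_i(a)$ is proved, the lemma is immediate from Proposition~\ref{imat-prop}.
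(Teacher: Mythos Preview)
Your proposal is correct and follows essentially the same approach as the paper: both reduce to showing $a\isim\ck_i(a)$ and then invoke Proposition~\ref{imat-prop}, matching the generators in \eqref{ck-eq2} directly with $\isim$-relations and handling the $i=0$ case via toggles, commutations, and Proposition~\ref{despite-prop}(a). The only cosmetic difference is organizational: the paper splits the $i=0$ analysis by which relation in \eqref{ck-eq1} is active (treating $XY'\leftrightarrow YX'$ as the one nontrivial case and deriving the remaining two from it), whereas you split first on whether $|\lceil a_1\rceil-\lceil a_2\rceil|$ is $1$ or at least $2$.
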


\begin{proof}
The first two relations in \eqref{ck-eq1} applied to the beginning of $a$
are special cases of $\isim$, while the last two relations in \eqref{ck-eq1} are compositions of the first three.
 The  word $a\in \iR^+(z)$ can only begin as $a=XY'\cdots$ for $X,Y \in \ZZ$
 if $|X-Y| > 1$, in which case applying the third relation in \eqref{ck-eq1} 
 corresponds to the $\isim$-equivalence $a = XY'\cdots \isim X'Y' \cdots \isim Y'X'\cdots  \isim YX'\cdots = \ck_0(a)$.
 The relations in \eqref{ck-eq2} are all special cases of $\isim$,
 so  
$\ck_i(a) \in \iR^+(z)$ for all $i$ by Proposition~\ref{imat-prop}.
\end{proof}


With the following result, we begin to see the close relationship between $\simICK$ and the map $\PO$.

\begin{proposition}\label{o-lem2}
Let $T$ be an increasing shifted tableau. Fix $z \in I_\ZZ$ and suppose $u \in \ZZ\sqcup \ZZ'$ has $\row(T)u \in \iR^+(z)$.
Then $\row(T)u \simICK \row(T\iarrow u)$.
Hence if $a\in \iR^+(z)$  then 
     $a\simICK \row(\PO(a))$.
 \end{proposition}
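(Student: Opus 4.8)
The plan is to prove the statement $\row(T)u \simICK \row(T\iarrow u)$ by induction on the number $N$ of iterations in the bumping process of Definition~\ref{iarrow-def}, tracking the reading word of the intermediate tableau $T_i$ together with the number $w$ currently being inserted. The key object is the word obtained by inserting $w$ into the row (respectively, column) reading word of $T_{i-1}$ at the boundary between rows (respectively, columns) $i-1$ and $i$; I will denote this word $r_i$. The claim to be established inductively is that each $r_i$ lies in $\iR^+(z)$ and that $r_i \simICK r_{i+1}$, with $r_1 = \row(T)u$ and $r_N \simICK \row(T\iarrow u)$. The base of the induction is the observation that $r_1 = \row(T)u$ is a primed involution word in $\iR^+(z)$ by hypothesis (this is the content of the two displayed equations \eqref{will-be-shown-eq0} and \eqref{will-be-shown-eq} referenced in Remark~\ref{iarrow-rmk}(d), which in turn follow once we know the reading words of the $T_i$ change in a controlled way).

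First I would analyze a single iteration. Suppose on iteration $i$ the entry $w$ is inserted into the current segment, row or column $i$. If $\lceil w\rceil$ exceeds every entry of the segment, the process terminates in step (3) by appending a new box, and one checks directly that moving $w$ from the end of the reading word into its sorted position inside row/column $i$ is a sequence of $\isim$-commutations, hence of $\ck$-moves of the form $XY \leftrightarrow YX$ or their primed analogues in \eqref{ck-eq1}–\eqref{ck-eq2}: this gives $r_i \simICK \row(T_i)$, completing the induction in the terminal case. Otherwise we are in case (2), with $m \leq M$ the relevant entries. When $m = M$ off the diagonal (case 2a), $w$ replaces $m$ and $m$ is inserted into the next segment; the reading word changes by a single transposition of $w$ and $m$, which is realized by one of the length-preserving $\ck$-moves (either $XY'X \leftrightarrow$ its partner when $\lceil w\rceil = \lceil m\rceil - 1$, or a plain commutation when $m$ sits elsewhere). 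The diagonal cases (2b) and (2c) are handled similarly: here Remark~\ref{iarrow-rmk}(c) pins down $m = T_{ii}$, $M = T_{i,i+1}$, and $T_{i+1,i+1} = m+2$, so the local configuration of letters is exactly one of the forms $XYX \leftrightarrow YXY$ or $X'YX \leftrightarrow YXY'$ in \eqref{ck-eq2}, and switching primes as prescribed in step (2c) matches the prime bookkeeping of those relations precisely. In every subcase the point is that the passage from $r_i$ to $r_{i+1}$ moves the inserted letter past exactly the letters of one row or column and performs one "braid-type" substitution, each step being an instance of $\simICK$; membership of $r_{i+1}$ in $\iR^+(z)$ then follows from the previous lemma (that $\simICK$ preserves $\iR^+(z)$), which also justifies invoking Proposition~\ref{despite-prop} on the next iteration.

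The main obstacle I expect is the careful verification that the prime assignments produced by the insertion rule in step (2)—switching primes on $m$ and $M$ in case (2c), the diagonal special case (2b), and the rule in step (3) for diagonal boxes—are exactly the prime assignments demanded by the relations \eqref{ck-eq2}, in all the boundary cases (segment is a row vs. a column, $m$ on vs. off the diagonal, $w$ primed vs. unprimed). This is where Remark~\ref{iarrow-rmk}(b)–(d) does the heavy lifting: it guarantees $\lceil w\rceil = \lceil m\rceil = \lceil M\rceil - 1$ when $m \ne M$ and constrains which of $w, m, M$ may be primed, so that only the listed forms in \eqref{ck-eq1}–\eqref{ck-eq2} can occur and no unintended prime configuration arises. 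Once the single-iteration step is nailed down, the induction assembles trivially: chaining $r_1 \simICK r_2 \simICK \cdots \simICK r_N \simICK \row(T_N) = \row(T\iarrow u)$ gives $\row(T)u \simICK \row(T\iarrow u)$, and the final sentence $a \simICK \row(\PO(a))$ for $a = a_1\cdots a_n \in \iR^+(z)$ follows by a second, outer induction on $n$: writing $a = a'a_n$ with $a' = a_1\cdots a_{n-1}$ and $P' = \PO(a')$, we have $a = \row(P')\cdots$ by the inductive hypothesis applied after noting $a' \simICK \row(P')$, hence $a \simICK \row(P')a_n \simICK \row(P' \iarrow a_n) = \row(\PO(a))$ by the first part (using that $\simICK$ is a congruence for appending a fixed letter, which is immediate since the $\ck_i$ only touch letters in fixed positions).
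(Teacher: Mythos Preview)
Your high-level structure matches the paper's: track an intermediate reading word through the bumping iterations and show successive words are $\simICK$-equivalent. The paper defines $\tilde T_i$ (the tableau $T_i$ with the currently-inserted letter placed at the end of the appropriate row or column) and shows the chain
\[
\row(T)u = \row(\tilde T_0) \simICK \cdots \simICK \row(\tilde T_{p-1}) \simICK \col(\tilde T_p) \simICK \cdots \simICK \col(\tilde T_N),
\]
then returns to $\row(T\iarrow u)$ via Lemma~\ref{simK-lem}. Your words $r_i$ are essentially these.

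There is, however, a genuine gap at the step you gloss over: the transition $\row(\tilde T_{p-1}) \simICK \col(\tilde T_p)$ when the bumping process first reaches the main diagonal. You treat cases (2b) and (2c) as if they were local three-letter braid substitutions $XYX\leftrightarrow YXY$, but that only accounts for the change inside row $p$. After the diagonal is hit, all subsequent iterations are column insertions, so the reading word you must track switches from the \emph{row} reading word of one tableau to the \emph{column} reading word of the next. That is a global reordering of every entry of the tableau, not a local $\ck$-move, and Lemma~\ref{simK-lem} does not directly apply because $\tilde T_{p-1}$ and $\tilde T_p$ are not the same tableau (they differ in the diagonal box and in the location of the extra inserted letter). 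The paper devotes most of the proof to this single step: it embeds $\tilde T_{p-1}$ and $\tilde T_p$ into ``doubled'' tableaux $V$ and $W$, introduces \emph{northeast} and \emph{southwest diagonal reading words}, and shows $\row(V)\simK \text{(SW diagonal of $V$)}$ and $\col(W)\simK \text{(NE diagonal of $W$)}$, reducing the problem to reordering the single main diagonal of $V$ (where $\simICK$, not just $\simK$, is genuinely needed). This construction is the core idea of the proof, and your plan does not contain it or a substitute for it.

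Two smaller points. First, your description of case (2a) as ``a single transposition of $w$ and $m$'' via moves ``$XY\leftrightarrow YX$'' is not right: the operators $\ck_i$ for $i>0$ have no two-letter swap; moving $w$ through a row requires the three-letter Knuth-type moves $ACB\leftrightarrow CAB$ and $BCA\leftrightarrow BAC$ from \eqref{ck-eq2}, using the increasing row entries as the required third letter. Second, Remark~\ref{iarrow-rmk}(c) only applies when $m\neq M$ on the diagonal (a subcase of (2c)); it says nothing about case (2b), which you have conflated with it.
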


\begin{proof}
Let $T=T_0,T_1,T_2,\dots, T_N = T\iarrow u$ be the shifted tableaux formed by successive iterations of the algorithm in
Definition~\ref{iarrow-def},
and let $u=u_0,u_1,u_2,\dots,u_{N-1}$ be the numbers such that $u_{i-1}$ is inserted into row or column $i$ of $T_{i-1}$ on iteration $i$.
Let $R_i^{(j)}$ be the word formed by reading the $j$th row of $T_i$ from left to right and let $C_i^{(j)}$ be the word formed by reading the $j$th column of $T_i$
from top to bottom.
Finally, let $\tilde T_N := T_N = T\iarrow u$ and construct $\tilde T_i$ from $T_i$ for $i<N$ by adding $u_{i}$ to the end of row (respectively, column) $i+1$  if the insertion on iteration $i+1$ is into a row
(respectively, column).  Figure~\ref{running-fig1} shows two examples of these definitions.

\begin{figure}[h]
\ben
\item[(a)] If $T= \ytab{ \none & 5 & 6 \\ 1 & 3' & 4}$ and $u=2$ then $p=2<N=3$ and $u_0=2 <  u_1=3' < u_2=5'$ along with
\[ 
 \tilde T_0=  \ytab{ \none & 5 & 6 \\ 1 & 3' & 4&2  },
\quad
 \tilde T_1=   \ytab{ \none & 5 & 6 & 3' \\ 1 & 2 & 4   },
\quad
 \tilde T_2=   \ytab{
\none &\none & 5' \\
 \none & 3 & 6   \\ 1 & 2 & 4    },
 \quad
  \tilde T_3=
  \ytab{
 \none & 3 & 5'   \\ 1 & 2 & 4  & 6
}.
\]

\item[(b)] If $T= \ytab{ \none & \none & 7 \\  \none & 5 & 6 \\ 1 & 3' & 5}$ and $u=4$ then $p=2<N=3$ and $u_0=4 <  u_1=5 < u_2=6$ along with
\[ 
 \tilde T_0=  \ytab{ \none & \none & 7 \\  \none & 5 & 6 \\ 1 & 3' & 5 & 4},
\quad
 \tilde T_1=  \ytab{ \none & \none & 7 \\  \none & 5 & 6 & 5 \\ 1 & 3' & 4},
\quad
 \tilde T_2=  \ytab{ \none & \none & 6 \\  \none & \none & 7 \\  \none & 5 & 6 \\ 1 & 3' & 4},
 \quad
  \tilde T_3= \ytab{ \none & \none & 7 \\  \none & 5 & 6 \\ 1 & 3' & 4 & 7}.
  \]
\een
\caption{Examples for the proof of Proposition~\ref{o-lem2}.}\label{running-fig1}
\end{figure}

Suppose there are exactly $p\in[N]$ iterations involving row insertion.
We will show that if there are no iterations involving column insertion (so that $p=N$)
then 
\be\label{will-be-shown-eq0}
 \row(T)u= \row(\tilde T_0)
\simICK \row(\tilde T_1)
\simICK  \dots  \simICK  \row(\tilde T_N),
\ee
and if there is at least one iteration involving column insertion
then
\be\label{will-be-shown-eq}
  \row(T)u=\row(\tilde T_0)
\simICK \row(\tilde T_1)
\simICK  \dots \simICK \row(\tilde T_{p-1}) \simICK
\col(\tilde T_{p}) \simICK \col(\tilde T_{p+1}) \simICK \dots \simICK  \col(\tilde T_N).
\ee
The first case is precisely the desired identity as $ T\iarrow u = \tilde T_N$.
In the second case, it follows that $ \iR^+(z)$ contains   $\col(T\iarrow u)$, so by Lemma~\ref{simK-lem}
we  have $ \row(T)u\simICK \col(T\iarrow u) \simICK \row(T\iarrow u)$ as desired.

We argue by induction on $i$.
Assume the first $i-1$ equivalences hold in \eqref{will-be-shown-eq0} or \eqref{will-be-shown-eq}. 
Then $ \iR^+(z)$ contains the relevant reading word $\row(\tilde T_{i-1})$ or $\col(\tilde T_{i-1})$,
so the assertions in Remark~\ref{iarrow-rmk}(d) hold up to iteration $i$.
From these and the other properties in Remark~\ref{iarrow-rmk}, we see that 
if iteration $i$ involves row (respectively, column) insertion and the next iteration does not change the insertion direction,
then  $R_{i-1}^{(i)} u_{i-1} \simICK u_{i}R_{i}^{(i)}$
 (respectively, $u_{i-1}C_{i-1}^{(i)}  \simICK C_{i}^{(i)} u_{i}$).
In example (a) in Figure~\ref{running-fig1}, 
 \[ 
 R_0^{(1)} u_0 = 13'42 \simICK 13'24 \simICK 3'124 = u_1 R_1^{(1)}
 \quand
 u_2 C_2^{(3)} = 5'64 \simICK 5'46 = C_3^{(3)}u_2.
  \]
It follows that if $i<p-1$
then $ \row(\tilde T_{i-1})
\simICK \row(\tilde T_{i})$
and if $i\geq p$ then $ \col(\tilde T_{i-1})
\simICK \col(\tilde T_{i})$.

Suppose $p<N$ so that the insertion direction changes from rows to columns after iteration $p$.
It remains to show that  
 $\row(\tilde T_{p-1}) \simICK \col(\tilde T_{p})$.
 In this situation it must hold that $\lceil u_{p-1}\rceil \leq \min(R_{p-1}^{(p)}) = T_{pp}$, so
 there are two cases to consider according to whether $\lceil u_{p-1}\rceil < T_{pp} $ or $\lceil u_{p-1}\rceil = T_{pp} $. 
 
 First assume that $\lceil u_{p-1}\rceil < T_{pp} $.
 To show that $\row(\tilde T_{p-1}) \simICK \col(\tilde T_{p})$, we describe two enlarged ``tableaux'' 
with the same row and column reading words as $\tilde T_{p-1}$ and $\tilde T_{p}$, respectively,
that have certain diagonal reading words that are easily related.
 Let $D=\{ (2i,2j) : (i,j) \in T \} $.
Then define $V : D \sqcup\{(2p-1,2p-1)\} \to \ZZ\sqcup\ZZ'$
to be
the map with $V_{2i,2j}= (\tilde T_{p-1})_{ij}$ and $V_{2p-1,2p-1} = u_{p-1}$,
and
 define
  $W : D \sqcup\{(2p+1,2p+1)\} \to \ZZ\sqcup\ZZ'$
to be the map with
 $W_{2i,2j}= (\tilde T_{p})_{ij}$ and $W_{2p+1,2p+1} = u_{p}$.
 For example (a) in Figure~\ref{running-fig1}, we have  $p=2$, $u_{p-1}=3'$,  and $u_{p} = 5'$, along with
 \[
  V =  \smalltab{
     \none[\cdot] &  \none[\cdot] &  \none[\cdot] & \none[\cdot]&  \none[\cdot] & \none[\cdot] \\
  \none[\cdot] &  \none[\cdot] &  \none[\cdot] &5&  \none[\cdot] & 6 \\
  \none[\cdot] &  \none[\cdot] & 3'&  \none[\cdot] &  \none[\cdot] &  \none[\cdot] \\
  \none[\cdot] & 1 &  \none[\cdot] & 2 &  \none[\cdot] &4 \\
\none[\cdot]&  \none[\cdot] &  \none[\cdot] &  \none[\cdot] &  \none[\cdot] &  \none[\cdot]
  }
 \qquand
 W =  \smalltab{
   \none[\cdot] &  \none[\cdot] &  \none[\cdot] & \none[\cdot]& 5' & \none[\cdot] \\
  \none[\cdot] &  \none[\cdot] &  \none[\cdot] &3&  \none[\cdot] & 6 \\
  \none[\cdot] &  \none[\cdot] & \none[\cdot]&  \none[\cdot] &  \none[\cdot] &  \none[\cdot] \\
  \none[\cdot] & 1 &  \none[\cdot] & 2 &  \none[\cdot] &4 \\
\none[\cdot]&  \none[\cdot] &  \none[\cdot] &  \none[\cdot] &  \none[\cdot] &  \none[\cdot]
  }.
 \]
 Since $\row (\tilde T_{p-1}) = \row(V)$
 and
 $\col(\tilde T_{p}) = \col(W)$,
 it suffices to show that
  $\row(V) \simICK \col(W)$.
 Form the \defn{northeast} (respectively, \defn{southwest}) \defn{diagonal reading words} of $V$ (and similarly for $W$)
by reading the main diagonals of $V$ from left to right, going in the northeast (respectively, southwest) direction.
In our example, these words for $V$ are $13'5\hs 26\hs 4$ and $53'1\hs 62\hs 4$, respectively.
Finally define $\tilde V$ and $\tilde W$ by removing the main diagonals from $V$ and $W$.
Observe that  $\tilde V = \tilde W$.

Recall the definition of $\simK$ from Lemma~\ref{simK-lem}; this is a subrelation of $\simICK$.
First, we claim that $\row(V)$ is equivalent under $\simK$ to the southwest diagonal reading word of $V$.
To see this, start with $\row(V)$ and consider the diagonals of $V$ from left to right.
If $a_1a_2\cdots a_q$ is the first diagonal in increasing order, then we can use $\simK$ to commute $a_1$ backwards in $\row(V)$
until it is just after $a_2$, and then we can use $\simK$ to commute first $a_2$ and then $a_1$ backwards
until they after both just after $a_3$, and so on, until we are left with $a_q\cdots a_2a_1$ followed by 
the row reading word of $V$ with its first diagonal omitted. We then proceed in the same way over the remaining diagonals,
eventually reaching the southwest diagonal reading word of $V$ via $\simK$-equivalences.

It follows similarly that $\col(W)$ is equivalent under $\simK$ to the northeast diagonal reading word of $W$. 
One can repeat the argument in the previous paragraph, after replacing $\row$ by $\col$ and 
redefining $a_1a_2\cdots a_q$ to be the first diagonal in decreasing order.

The arguments above also show that the southwest (respectively, northeast) diagonal reading word of $\tilde V = \tilde W$
is equivalent under $\simK$ to its row (respectively, column) reading word.
But the row and column reading words of $\tilde V = \tilde W$ are equivalent under $\simK$
by Lemma~\ref{simK-lem}, since this tableau is increasing when all primes are removed from its entries
 by Remark~\ref{iarrow-rmk}(a).
Thus all four reading words for $\tilde V = \tilde W$ are equivalent under $\simK$.

The diagonal reading words of $V$ and $W$ are given by adding the first diagonal (in one of two orders) to the start of the corresponding diagonal reading words
of $\tilde V=\tilde W$.
Thus, to show that $\row(V) \simICK \col(W)$, 
we are reduced to checking the simpler property that the main diagonal of $V$ in the southwest reading order
  is equivalent under $\simICK$ to   the main diagonal of $W$ in the northeast reading order.
This is straightforward since both words have at most one primed letter; for example, 
$53'1 \simICK 35'1 \simICK 315' \simICK 135'$. It is only in this last step that we need to use the relation $\simICK$ instead of only $\simK$.
We conclude that $\row(\tilde T_{p-1}) \simICK \col(\tilde T_{p})$
 when $\lceil u_{p-1}\rceil < T_{pp} $.

We are left to consider the case when $\lceil u_{p-1}\rceil = T_{pp} $.
By Remark~\ref{iarrow-rmk} this can only occur when $ T_{pp} = T_{p,p+1} - 1 = T_{p+1,p+1}-2 \in \ZZ$.
Let $i$ be the index of $v := T_{pp}$ in $\row(\tilde T_{p-1})$. This index must be a commutation since all letters preceding $v$ are at least $ T_{p+1,p+1}= v+2$,
so truncating $\row(\tilde T_{p-1})$ just before $i$ gives a primed involution word for an element of $I_\ZZ$ that fixes $v$ and $v+1$.
By moving $u_{p-1}$ across row $p$ of $\tilde T_{p-1}$, we see that $\row(\tilde T_{p-1})$ is equivalent under $\simK$ to a word with letters $v(v+1)u_{p-1}$
in positions $i$, $i+1$, and $i+2$.
As $\lceil u_{p-1}\rceil = v$ and the index of $v$ is a commutation,  Proposition~\ref{despite-prop} implies that $u_{p-1} =v$ is unprimed.

Finally define $V : D \sqcup\{(2p+1,2p+1)\} \to \ZZ\sqcup\ZZ'$
to have $V_{2i,2j} = (T_{p-1})_{ij} = (T_{p})_{ij}$
 and $V_{2p+1,2p+1} =  u_{p-1}+1= u_{p}$.
 For example (b) in Figure~\ref{running-fig1}, this gives
 \[
  V =  \smalltab{
     \none[\cdot] &  \none[\cdot] &  \none[\cdot] & \none[\cdot]&  \none[\cdot] & \none[\cdot] \\
     \none[\cdot] &  \none[\cdot] &  \none[\cdot] & \none[\cdot]&  \none[\cdot] & 7\\
     \none[\cdot] &  \none[\cdot] &  \none[\cdot] & \none[\cdot]& 6& \none[\cdot] \\
  \none[\cdot] &  \none[\cdot] &  \none[\cdot] &5&  \none[\cdot] & 6 \\
  \none[\cdot] &  \none[\cdot] & \none[\cdot]&  \none[\cdot] &  \none[\cdot] &  \none[\cdot] \\
  \none[\cdot] & 1 &  \none[\cdot] & 3' &  \none[\cdot] &4 \\
\none[\cdot]&  \none[\cdot] &  \none[\cdot] &  \none[\cdot] &  \none[\cdot] &  \none[\cdot]
  }
  \]
 Then $\row(\tilde T_{p-1}) \simICK \row(V)$ and $\col(\tilde T_{p}) = \col(V)$,
 so it suffices to show that $\row(V) \simICK \col(V)$.
This follows by repeating the argument in the case when $\lceil u_{p-1}\rceil < T_{pp} $ but with $W:=V$
(and with $\tilde V=\tilde W$ again formed from $V$ by omitting the main diagonal).
That is, we first show that the row and southwest diagonal reading words of $V$ are equivalent under $\simK$,
as are the column and northeast diagonal reading words. Then we observe that the row, column, and both diagonal reading words of 
$\tilde V = \tilde W$ also equivalent under $\simK$. This reduces things to checking  
that reading the main diagonal of $V$ in increasing or decreasing order 
gives equivalent words under $\simICK$. This is straightforward as the main diagonal of $V$ has no primed entries.
\end{proof}

 \subsection{Dual equivalence operators for shifted tableaux}\label{dual-sect}

 Proposition~\ref{o-lem2} implies that if $a$ and $b$ are primed involution words with $\PO(a) = \PO(b)$ then $a\simICK b$. 
We will eventually prove the converse statement, that if $a\simICK b$ then $\PO(a)= \PO(b)$. 
The proof of this fact is more difficult, and requires us to understand 
 how the operators $\ck_i$ 
interact with $\PO$ and $\QO$. The results in this section precisely explain this interaction.

Assume $T$ is a standard shifted tableau. Choose $q>0$ such that the domain of $T$ fits inside $[q]\times [q]$.
Let $C_i$ be the increasing sequence of primed entries in column $i$ of $T$,
and let $R_i$ be the increasing sequence of unprimed entries in row $i$ of $T$.
The \defn{shifted reading word} of $T$ 
is \be \shword(T) := \unprime(C_qR_q\cdots C_2R_2C_1R_1).\ee
For example, if $T$ is the standard shifted tableau
\be\label{T-ex} 
T= \ytab{ \none & 3 & 5' & 7 \\ 1' & 2' & 4' & 6}\ee
then 
the nonempty sequences $C_iR_i$ are 
$C_1R_1 = 1'6$, $C_2R_2 = 2'37$, $C_3R_3=4'5'$, so the shifted reading word is
$\shword(T) = 4523716$.

A useful feature of this way of defining the shifted reading word is that it
automatically holds that $\shword(T) = \shword(\unprime_{\diag}(T))$, where as above $\unprime_{\diag}$ is the operation
removing all primes from the main diagonal. 
As noted in the proof of Proposition~\ref{udiag-lem}, we have $i \in \Des(T)$ if and only if $i+1$ appears before $i$ in  $\shword(T)$ when reading from left to right.

Let $n$ be the number of boxes in $T$. For each $i \in [n]$, 
write $\square_i$ for the unique position 
of $T$ containing $i$ or $i'$. Then define $\fks_i ( T)$ 
to be the shifted tableau formed from $T$ as follows:
\begin{itemize}

\item[(a)] If $\square_i$ and $\square_{i+1}$ are not in the same row or  column, then swap $i$ with $i+1$ and  $i'$ with $i+1'$.

\item[(b)] If $\square_i$ and $\square_{i+1}$ are in the same row or column and neither box is on the main diagonal,
then reverse the primes on the entries in both boxes.

\item[(c)] If $\square_i$ and $\square_{i+1}$ are in the same row or column but one box is on the main diagonal,
then reverse the prime on the entry in the non-diagonal box;
then, if both $\square_{i-1}$ and $\square_{i+1}$ are on the main diagonal when $i-1 \in [n]$
(respectively, if both $\square_i$ and $\square_{i+2}$ are on the main diagonal when $i+2 \in [n]$),
 switch the primes on the entries in these diagonal boxes.
 
\end{itemize}
Case (c) of this definition is illustrated by 
\[
\fks_1 \( \ytab{ \none & 3 & 5' & 7 \\ 1' & 2' & 4' & 6}\) 
=  \ytab{ \none & 3' & 5' & 7 \\ 1 & 2 & 4' & 6}
 \quand
 \fks_2 \( \ytab{ \none & 3' & 5' & 7 \\ 1' & 2' & 4' & 6}\) 
 = \ytab{ \none & 3' & 5' & 7 \\ 1' & 2 & 4' & 6}.\]
 Cases (a) and (b) are respectively illustrated by
 \[
\quad 
\fks_3 \( \ytab{ \none & 3 & 5' & 7 \\ 1' & 2' & 4' & 6}\)= 
\ytab{ \none & 4 & 5' & 7 \\ 1' & 2' & 3' & 6} 
\quand 
\fks_4 \( \ytab{ \none & 3 & 5' & 7 \\ 1' & 2' & 4' & 6}\)= 
\ytab{ \none & 3 & 5 & 7 \\ 1' & 2' & 4 & 6} .\]

Next, for each $i \in \ZZ$,  we construct a shifted tableau $\fkd_i(T)$ of the same shape from $T$ as follows.
If $i+2 \notin [n]$ then we set $\fkd_i(T) :=T$.
We form
$ \fkd_{-1}(T) $ (respectively, $ \fkd_0(T) $) from $T$ by reversing the prime on the entry in the first (respectively, second) box in the first
row, which is always the unique position containing $1$ or $1'$ (respectively, $2$ or $2'$).
For example
\[
\fkd_{-1}\( \ytab{ \none & 3 & 5' & 7 \\ 1' & 2' & 4' & 6}\)
=\ytab{ \none & 3 & 5' & 7 \\ 1 & 2' & 4' & 6}
\quand
\fkd_{0}\( \ytab{ \none & 3 & 5' & 7 \\ 1' & 2' & 4' & 6}\)
= \ytab{ \none & 3 & 5' & 7 \\ 1' & 2 & 4' & 6}.
\]
 Finally, if $i \in [n-2]$ then we set
\[\fkd_i(T) :=  \begin{cases}
\fks_{i} ( T) &\text{if $i+2$ is between $i$ and $i+1$ in $\shword(T)$}
\\
\fks_{i+1} ( T) &\text{if $i$ is between $i+1$ and $i+2$ in $\shword(T)$}
\\T &\text{if $i+1$ is between $i$ and $i+2$ in $\shword(T)$.}
\end{cases}\]
We refer to $\fkd_i$ as a \defn{dual equivalence operator} on standard shifted tableaux.

\begin{remark}\label{3spot-rmk}
If $\square_{i-1}$ and $\square_{i+1}$ are on the main diagonal,
then these boxes must be $(q-1,q-1)$ and $(q,q)$ for some $q$
and $\square_i = (q-1,q)$, in which case $\square_{i+2}$ cannot occur in row $q$, so $i+2$ is not between $i$ and $i+1$ in $\shword(T)$.
Similarly, if $\square_{i+1}$ and $\square_{i+3}$ are   on the main diagonal,
then these boxes must be $(q,q)$ and $(q+1,q+1)$ for some $q$
and $\square_{i+2} = (q,q+1)$, in which case
$\square_i$ cannot occur in column $q$, so
 $i$ is not between $i+1$ and $i+2$ in $\shword(T)$.
 Comparing these facts with the definition of $\fks_i$, we see that
 $\fkd_i(T)$ can only differ from $T$ in positions $\square_i$, $\square_{i+1}$, and $\square_{i+2}$.
\end{remark}

For the tableau $T$ in \eqref{T-ex}, our definition of $\fkd_i$ gives
\[\ba
\fkd_{1}\( \ytab{ \none & 3 & 5' & 7 \\ 1' & 2' & 4' & 6}\)
&=  \ytab{ \none & 3' & 5' & 7 \\ 1 & 2 & 4' & 6} = \fks_1(T)  
,\\[-10pt]\\
 \fkd_{2}\( \ytab{ \none & 3 & 5' & 7 \\ 1' & 2' & 4' & 6}\)= \fkd_{3}\( \ytab{ \none & 3 & 5' & 7 \\ 1' & 2' & 4' & 6}\)
&= \ytab{ \none & 4 & 5' & 7 \\ 1' & 2' & 3' & 6}= \fks_3(T)  
,\\[-10pt]\\
 \fkd_{4}\( \ytab{ \none & 3 & 5' & 7 \\ 1' & 2' & 4' & 6}\) &=  \ytab{ \none & 3 & 5' & 7 \\ 1' & 2' & 4' & 6}=T
,\\[-10pt]\\
 \fkd_{5}\( \ytab{ \none & 3 & 5' & 7 \\ 1' & 2' & 4' & 6}\) &= \ytab{ \none & 3 & 6' & 7 \\ 1' & 2' & 4' & 5}= \fks_5(T).
 \ea
\]

Given a shifted tableau $T$,
let $\primes(T)$ be the total number of boxes 
 in $T$ with primed entries    and let $\primesdiag(T)$ 
  be the number of such boxes that are on the main diagonal.
 Since we always have $\shword(T) = \shword(\unprime_{\diag}(T))$, 
 it holds by definition that if $i\neq -1$ then
 \be\label{up-fkd-eq}
 \unprime_{\diag}(\fkd_i(T)) = \fkd_i(\unprime_{\diag}(T))
 \quand
 \primesdiag(T) =  \primesdiag(\fkd_i(T)).
 \ee 
 It is also obvious that $\fkd_{-1}$ and $\fkd_{0}$ are involutions.
We note a few other properties of $\fkd_i$:

\begin{proposition}\label{square-lem}
Suppose $T$ is a standard shifted tableau with $n$ boxes.
Let $\square_j$ for $j \in [n]$ denote the unique box 
of $T$ containing $j$ or $j'$.
Finally choose $i \in [n-1]$. Then:
\ben
\item[(a)] The operator $\fkd_i$ is an involution which only changes the values of $T$ in   $\square_i$, $\square_{i+1}$, and $\square_{i+2}$.

\item[(b)] If $\square_i$ and $\square_{i+2}$ are not both on the main diagonal, then $\primes(T)  = \primes(\fkd_i(T)) $ and
the main diagonal positions with primed entries in $\fkd_i(T)$ are  the same as those in $T$.

\item[(c)] If $\square_i$ and $\square_{i+2}$ are both on the main diagonal, then   $\primes(T)  = \primes(\fkd_i(T)) \pm 1$.

\een
\end{proposition}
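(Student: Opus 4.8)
The plan is to reduce each of the three claims to a finite, local check about the possible relative positions of the three boxes $\square_i, \square_{i+1}, \square_{i+2}$, using Remark~\ref{3spot-rmk} to confine all the action to those boxes. Part~(a) follows from two observations: first, by Remark~\ref{3spot-rmk}, $\fkd_i(T)$ differs from $T$ only in positions $\square_i,\square_{i+1},\square_{i+2}$; second, $\fkd_i$ is an involution because each of its constituent operations is. For $i \in \{-1,0\}$ this is clear since $\fkd_i$ just toggles a single prime. For $i \in [n-2]$, I would verify the involutivity by noting that the three-way case split in the definition of $\fkd_i$ is governed by the position of $i+2$ relative to $i$ and $i+1$ in $\shword(T)$, that by Lemma~\ref{udiag-lem} and the remarks preceding Proposition~\ref{square-lem} this relative order is an invariant of the restriction to these three values, and that $\fks_{i}$ and $\fks_{i+1}$ are themselves involutions on standard shifted tableaux (cases (a), (b), (c) of the definition of $\fks$ are each manifestly self-inverse — swapping two entries, reversing two primes, or reversing one prime and possibly switching two diagonal primes). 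One must check that applying $\fkd_i$ does not change which branch of the case split applies; this holds because $\fks_i$ and $\fks_{i+1}$ preserve $\shword(T)$ up to the relevant reordering, so the ``between'' condition is stable.

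**Next**, for part~(b), I would observe that if $\square_i$ and $\square_{i+2}$ are not both on the main diagonal, then $\fkd_i(T)$ is obtained either by leaving $T$ unchanged or by applying $\fks_j$ for $j \in \{i,i+1\}$, and in each of the $\fks$-cases (a), (b), (c) the number of primed boxes is preserved: case~(a) swaps entries wholesale (each of $i,i+1$ keeps its prime status, just moved), case~(b) reverses exactly two primes simultaneously — but here one must check the two boxes can't both go from primed to unprimed or both from unprimed to primed, which follows from the fact that in a column a primed entry sits above an unprimed one and in a row a primed entry sits left of an unprimed one so the pair is always one primed/one unprimed — and case~(c) reverses the prime in the single non-diagonal box and, when the optional second step fires, switches the primes between two diagonal boxes (again a prime-count-preserving swap, since ``switching primes'' on a pair reverses both only when exactly one was primed). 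For the claim about diagonal positions: the only way $\fkd_i$ touches the main diagonal is through case~(c) of $\fks$, and in that case the non-diagonal prime reversal doesn't affect diagonal boxes, while the optional ``switch the primes on the diagonal boxes'' step preserves the \emph{set} of diagonal primed positions precisely because switching swaps a primed and an unprimed diagonal entry. One needs to rule out case~(c) being triggered when only \emph{one} of $\square_i,\square_{i+2}$ is diagonal in a way that changes a diagonal prime — but case~(c) of $\fks_j$ reverses the prime on the \emph{non-diagonal} box of the pair $\{\square_j,\square_{j+1}\}$, so a diagonal entry is only altered via the subsequent ``switch'' step, which is prime-count-neutral.

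**Finally**, for part~(c), suppose $\square_i$ and $\square_{i+2}$ are both on the main diagonal. Then they must be consecutive diagonal boxes $(q,q)$ and $(q+1,q+1)$ for some $q$, and $\square_{i+1}$ lies strictly between them in the reading order — it can be $(q,q+1)$, or it could lie elsewhere, but by the structure of a standard shifted tableau the box holding $i+1$ with $i$ in $(q,q)$ and $i+2$ in $(q+1,q+1)$ is forced to be $(q,q+1)$. So the three boxes form an ``L'' at the corner: $(q,q),(q,q+1),(q+1,q+1)$. Now $\shword(T)$ restricted to $\{i,i+1,i+2\}$: reading column $q+1$'s primed entries then row $q+1$'s unprimed entries, then column $q$'s primed entries then row $q$'s unprimed entries — the entry $i+2$ in $(q+1,q+1)$ is on the diagonal hence unprimed, $i+1$ in $(q,q+1)$ contributes to $C_{q+1}$ if primed or $R_q$ if unprimed, and $i$ in $(q,q)$ is diagonal hence unprimed. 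One works out that in this configuration $i+2$ is never between $i$ and $i+1$ (consistent with Remark~\ref{3spot-rmk}), so $\fkd_i(T)$ is either $T$ (if $i+1$ is between $i$ and $i+2$) or $\fks_{i+1}(T)$ (if $i$ is between $i+1$ and $i+2$). The first alternative gives $\primes(\fkd_i(T)) = \primes(T)$, which would contradict the asserted $\pm 1$ — so I must argue the first alternative cannot occur. **The main obstacle** is exactly this: showing that when $\square_i$ and $\square_{i+2}$ are both diagonal, the reading-word condition always lands in the ``$i$ between $i+1$ and $i+2$'' branch, so that $\fkd_i = \fks_{i+1}$ applies and case~(c) of $\fks_{i+1}$ fires with the optional diagonal-switch step \emph{not} active (since $\square_{i+2}=(q+1,q+1)$ is diagonal but $\square_{i+3}$ need not be, and the relevant trigger for the second step of case~(c) with $j=i+1$ is whether $\square_{i+1},\square_{i+3}$ are both diagonal, which they are not here because $\square_{i+1}=(q,q+1)$ is off-diagonal). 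Hence $\fks_{i+1}$ reverses the prime on the non-diagonal box $\square_{i+1}=(q,q+1)$ and nothing else, changing $\primes$ by exactly $\pm 1$. To nail the reading-word branch I would trace through $\shword(T)$ carefully: with $i+1$ sitting in $(q,q+1)$, if $i+1$ is primed it appears in $C_{q+1}$ which is read before $R_q$ (containing $i$) and before $R_{q+1}$ is irrelevant; comparing against where $i$ (in $R_q$) and $i+2$ (unprimed diagonal, in $R_{q+1}$) fall, one checks the order is $i+2,\, i+1,\, i$ or $i+1,\, i,\ldots$ — in all sub-cases $i$ ends up between $i+1$ and $i+2$ or the configuration is symmetric under $\fkd_i$, forcing the $\fks_{i+1}$ branch. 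This bookkeeping is the genuinely delicate part; everything else is a routine enumeration of the at most a handful of local patterns.
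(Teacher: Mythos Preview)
Your overall strategy --- reduce everything to a local analysis of the three boxes --- is the same as the paper's, but there are genuine gaps in the execution of all three parts.

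\textbf{Part (a).} Your claim that ``applying $\fkd_i$ does not change which branch of the case split applies'' is false. When $\square_i$ and $\square_{i+1}$ are adjacent with one on the main diagonal and $i+2$ is between $i$ and $i+1$ in $\shword(T)$, the paper shows the three boxes must form the L-shape $(q,q),(q,q+1),(q+1,q+1)$; after applying $\fks_i$, the branch switches to ``$i$ is between $i+1$ and $i+2$,'' so the second application of $\fkd_i$ is $\fks_{i+1}$, not $\fks_i$. The paper verifies directly that $\fks_{i+1}\circ\fks_i = \mathrm{id}$ in these configurations. Your argument that $\shword$ is preserved ``up to the relevant reordering'' does not address this branch-switching.

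\textbf{Part (b).} Two problems. First, your reason for ``one primed, one unprimed'' in $\fks$ case (b) is wrong: it is \emph{not} a general fact about standard shifted tableaux that adjacent entries in a row or column have opposite priming (e.g.\ $i'$ and $(i+1)'$ can sit in the same row). The correct reason, used in the paper, is that the $\shword$ constraint ``$i+2$ between $i$ and $i+1$'' (or its symmetric version) forces the opposite priming. Second, and more seriously, you claim $\fks$ case (c) preserves $\primes$. It does not: case (c) reverses the prime on one off-diagonal box, changing $\primes$ by $\pm 1$, and the optional diagonal switch is prime-count neutral, so the net change is $\pm 1$. The paper's argument is instead that, under the hypothesis of part (b), case (c) of $\fks$ \emph{cannot occur}: the configurations forced by the diagonal-adjacent case in part (a) all have $\square_i,\square_{i+2}$ both on the diagonal, which is excluded in (b). You need this exclusion, not the (false) claim that case (c) is prime-preserving.

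\textbf{Part (c).} Your branch analysis is incorrect: when $\square_{i+1}=(q,q+1)$ carries $(i+1)'$, the $\shword$ order is $i+1,i+2,\dots,i$, so the $\fks_i$ branch fires, not $\fks_{i+1}$. The paper sidesteps this by observing that in this L-configuration $\fks_i(T)=\fks_{i+1}(T)$, so either branch gives the same result. Separately, you misread the trigger for the diagonal-switch step in $\fks_{i+1}$: the relevant condition is whether $\square_{(i+1)-1}=\square_i$ and $\square_{(i+1)+1}=\square_{i+2}$ are both diagonal, and here they are, so the switch \emph{does} fire. Your conclusion is still right, because the switch is prime-count neutral and the single off-diagonal prime reversal on $\square_{i+1}$ gives the $\pm 1$, but the reasoning needs correction.
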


\begin{proof}
Part (a) is clear if $i+1$ is between $i$ and $i+2$ in $\shword(T)$.
Suppose instead that  $i+2$ is between $i$ and $i+1$ in $\shword(T)$.
If $\square_i$ and $\square_{i+1}$ are not  in the same row or column, then 
$\shword(\fks_i(T))$ is formed from $\shword(T)$ by swapping the positions of $i$ and $i+1$,
so $i+2$ is also between $i$ and $i+1$ in $\shword(\fks_i(T))$ and we have $\fkd_i(\fkd_i(T))=\fks_i(\fks_i(T))=T$.
If $\square_i$ and $\square_{i+1}$ are in the same row or column but at least one of the boxes is on the main diagonal, then 
our assumption that $i+2$ is between $i$ and $i+1$ in $\shword(T)$ forces 
$\square_i$, $\square_{i+1}$, and $\square_{i+2}$ to be arranged in $T$ as 
\[{ 
\ytableausetup{boxsize = 1cm,aligntableaux=center}
\begin{ytableau}
\none & i + 2 \\
i & i+1' 
 \end{ytableau},
 \qquad
 \begin{ytableau}
\none & i + 2' \\
i & i +1'
 \end{ytableau},
  \qquad
 \begin{ytableau}
\none & i + 2 \\
i' & i +1'
 \end{ytableau},
  \quord
 \begin{ytableau}
\none & i + 2' \\
i' & i +1'
 \end{ytableau}}.
 \]
In each of these cases we have
 $\fkd_i(\fkd_i(T))= \fks_{i+1}(\fks_i(T)) = T$.
 
Finally, suppose $\square_i$ and $\square_{i+1}$ are in the same row or column but neither box is on the main diagonal.
Then the entry in one box must be primed and the other must be unprimed for 
$i+2$ to be between $i$ and $i+1$ in $\shword(T)$.
If $\square_i$ and $\square_{i+1}$ are in the same column, then they must be some adjacent positions $(j,k)$ and $(j+1,k)$,
and $\fkd_i$ acts as $\fks_i$ by reversing the primes on both positions.
In this case, consider the sequence of unprimed boxes to the right of $\square_{i+1}$
in row $j+1$, followed by the primed boxes in column $j$, and then the unprimed boxes to the left of $\square_i$ in row $j$.
For example, if $\square_i$ and $\square_{i+1}$  are the boxes containing $\ast$ in 
\[
\ytab{
\none & \none & \none & \ & \ & \ast & 1 & 2 & 3
\\
\none & \none & 6 & 7 & 8 & \ast & \ & \ & \
\\
\none & \ & 5 & \ & \ & \ & \ & \ & \
\\
\ & \ & 4 & \ & \ & \ & \ & \ & \
}
\]
then the relevant sequence is a subsequence of the positions labeled $1,2,\dots,8$.
It is impossible for $\square_{i+2}$ to occur in this sequence, and if we ignore the entries it contributes to the shifted reading word
then $\shword(\fks_i(T))$ is obtained from $\shword(T)$ by swapping $i$ and $i+1$.

Thus
if $\square_i$ and $\square_{i+1}$ are in the same column,
then
 $i+2$ still appears between $i$ and $i+1$ in the shifted reading word of $\fkd_i(T)=\fks_i(T)$
 so $\fkd_i(\fkd_i(T))=\fks_i(\fks_i(T))=T$.
The same conclusion follows when $\square_i$ and $\square_{i+1}$ are the adjacent positions $(j,k)$ and $(j,k+1)$,
if we instead consider the sequence of primed boxes above $\square_{i+1}$ in column $k+1$,
followed by the unprimed boxes in row $k+1$, and then the primed boxes below $\square_i$ in column $k$.

 The argument to show that $\fkd_i(\fkd_i(T))=T$ when $i$ is between $i+1$ and $i+2$ in $\shword(T)$
 is similar. This concludes the proof of  part (a) by Remark~\ref{3spot-rmk}.

For part (b), suppose $\square_i$ and $\square_{i+2}$ are not both on the main diagonal.
Then at most one of the three boxes $\square_i$, $\square_{i+1}$, $\square_{i+2}$ that could change in $\fkd_i(T)$ compared to $T$ is on the main diagonal.
Since the operator $\fks_j$ changes the primes on either zero or two main diagonal boxes, it follows 
that 
the main diagonal positions with primed entries in $\fkd_i(T)$ are  the same as those in $T$

Additionally,  
if $i+2$ is between $i$ and $i+1$ in $\shword(T)$ and $\square_i$ and $\square_{i+1}$ are in the same row or column,
then neither box can be on the main diagonal and exactly one   must have a primed entry, so  $\primes(T)  = \primes(\fks_i(T)) $.
Likewise,
if $i$ is between $i+1$ and $i+2$ in $\shword(T)$ and $\square_{i+1}$ and $\square_{i+2}$ are  in the same row or column,
then neither box can be on the main diagonal and exactly one   must have a primed entry, so  $\primes(T)  = \primes(\fks_{i+1}(T)) $.
Therefore $\primes(T)  = \primes(\fkd_i(T)) $. This proves part (b).

Finally, for part (c), observe that if $\square_i$ and $\square_{i+2}$ are both on the main diagonal,
then we must have $\square_i = (q-1,q-1)$, $\square_{i+1} = (q-1,q)$, and $\square_{i+2} = (q,q)$ for some $q$.
No matter how the entries in these boxes are primed, we have $\fkd_i(T) = \fks_i(T) = \fks_{i+1}(T)$
so  $\primes(T)  = \primes(\fkd_i(T)) \pm 1$.
  \end{proof}

Our proof of the following theorem occupies all of Section~\ref{proofs-sect}.

\begin{theorem}\label{ck-fkd-thm}
Suppose $i \in \ZZ$ and $a$ is a primed involution word for an element of $I_\ZZ$. 
Then it holds that $\PO(\ck_i(a)) = \PO(a)$ and $  \QO(\ck_i(a)) = \fkd_i(\QO(a)) .$
%
\end{theorem}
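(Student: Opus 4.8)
The plan is to reduce Theorem~\ref{ck-fkd-thm} to a local statement about at most three consecutive insertions, dispose of the boundary indices $i\in\{-1,0\}$ using results already proved, and then carry out a case analysis over the Coxeter--Knuth relations \eqref{ck-eq1} and \eqref{ck-eq2}, the hard part being the subcases in which a bumping path reaches the main diagonal.

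First I would note that $b:=\ck_i(a)$ differs from $a=a_1a_2\cdots a_n$ in at most the three consecutive letters $a_ia_{i+1}a_{i+2}$ (in $a_1$ alone when $i=-1$, in $a_1a_2$ when $i=0$). Put $T:=\PO(a_1\cdots a_{i-1})$; by Proposition~\ref{o-lem2} this is an increasing shifted tableau with no primed diagonal entries whose row reading word lies in $\iR^+(z')$ for the relevant $z'$. The insertions computing $\PO(a)$ and $\PO(b)$ agree through the first $i-1$ steps, so it suffices to establish: (i) $T\iarrow a_i\iarrow a_{i+1}\iarrow a_{i+2}=T\iarrow b_i\iarrow b_{i+1}\iarrow b_{i+2}$, after which the remaining insertions of $a_{i+3}\cdots a_n=b_{i+3}\cdots b_n$ are identical and so $\PO(b)=\PO(a)$; and (ii) the three boxes added by these middle insertions, each tagged by whether its insertion ends in row or column insertion, are related for $a$ versus $b$ by the same rule that $\fkd_i$ applies to the boxes $\square_i,\square_{i+1},\square_{i+2}$ of $\QO(a)$. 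Granting (i) and (ii): the entries $1,\dots,i-1$ and $i+3,\dots,n$ of $\QO(b)$ occupy the same boxes with the same primed/unprimed status as in $\QO(a)$, while by Remark~\ref{3spot-rmk} and Proposition~\ref{square-lem}(a) $\fkd_i(\QO(a))$ also modifies only $\square_i,\square_{i+1},\square_{i+2}$, whence $\QO(b)=\fkd_i(\QO(a))$.

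The boundary cases are immediate: when $i=-1$, $\ck_{-1}$ toggles the prime on $a_1$, so Proposition~\ref{first-toggle-obs} gives $\PO(b)=\PO(a)$ and shows $\QO(b)$ differs from $\QO(a)$ only by toggling the prime in box $(1,1)$, which is $\fkd_{-1}$; the case $i=0$ follows the same way from Proposition~\ref{second-toggle-obs}, giving the prime toggled in box $(1,2)$, i.e.\ $\fkd_0$. So assume $i\geq1$. Here I would use the unprimed picture as a skeleton: by Lemma~\ref{unpri-word-lem}, $\unprime(b)=\ck_i(\unprime(a))$, and by Proposition~\ref{unprime-tab-prop} the pair $(\PO(\unprime(a)),\QO(\unprime(a)))$ is obtained from $(\PO(a),\QO(a))$ by erasing primes (all of them, respectively those on the diagonal). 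The unprimed form of the theorem --- that $\ck_i$ fixes $\PO$ and acts by $\fkd_i$ on $\QO$ for unprimed involution words --- belongs to the unprimed orthogonal Edelman--Greene theory of \cite{HMP4,Marberg2019b,Marberg2019a}, and where a citation in exactly the needed form is lacking, the same (strictly simpler) analysis without primes applies. This pins down the underlying unprimed bumping geometry: the shapes of $\path^\leq$ and $\path^<$ for the three middle insertions, which boxes are added, and --- via Proposition~\ref{o-thm3} with $\Des(\QO(a))=\Des(a)=\Des(\unprime(a))$ --- the relevant descents. It then remains to track the primes. I would run Definition~\ref{iarrow-def} on the three letters over an arbitrary admissible $T$, relation by relation in \eqref{ck-eq2} (and, for $i=1$, the first-letter relations of \eqref{ck-eq1}), using Remark~\ref{iarrow-rmk}(a)--(d) and Proposition~\ref{despite-prop} to restrict which prime configurations can arise in steps (2)(a)--(c) and (3), and checking in each branch that the $a$-insertions and $b$-insertions produce literally the same primed tableau and that the triple of added boxes with their row/column labels transforms correctly. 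A bridge lemma will be needed, matching which of $i,i+1,i+2$ lies between the other two in $\shword(\QO(a))$ --- which selects the case of the definition of $\fkd_i$ --- with the nesting pattern of the three bumping paths; this can be read off from the descent dictionary together with the explicit row/column-insertion rules.

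The main obstacle will be the subcases where one of the three relevant boxes lies on, or immediately adjacent to, the main diagonal. There the insertion algorithm genuinely creates or destroys a prime: step (2)(b) replaces a diagonal entry $m$ by $\lceil w\rceil$ and pushes $m+1$ or $m'+1$ into the next column, step (2)(c) with $m$ on the diagonal switches primes and moves to the next column, and step (3) can strip a prime from a new diagonal box --- and correspondingly $\primes(\QO(a))$ can change by $\pm1$ under $\fkd_i$ (Proposition~\ref{square-lem}(c)). One must then verify that the ``switch the primes on the two diagonal boxes'' clause of case (c) in the definition of $\fks_i$ matches exactly the diagonal behaviour of Definition~\ref{iarrow-def}, including when two or three successive bumping paths each touch the diagonal. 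Throughout, Proposition~\ref{o-lem2} serves only as a consistency check --- it gives $\ck_i(a)\simICK a\simICK\row(\PO(a))$ and $\ck_i(a)\simICK\row(\PO(\ck_i(a)))$ --- but does not by itself prove $\PO(\ck_i(a))=\PO(a)$, since the implication from $\simICK$-equivalence of tableau reading words to equality of insertion tableaux is precisely what Theorem~\ref{ck-fkd-thm} is used to bootstrap (via the identification of the fibers of $\PO$ with $\simICK$-classes, from which Theorem~\ref{o-thm1} is recovered in Section~\ref{proof-sect4}).
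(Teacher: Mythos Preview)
Your treatment of $i\in\{-1,0\}$ matches the paper's exactly (Propositions~\ref{first-toggle-obs} and \ref{second-toggle-obs}). For $i\geq 1$, however, what you have written is an outline rather than a proof --- ``I would run Definition~\ref{iarrow-def} \dots relation by relation'' and ``a bridge lemma will be needed'' mark precisely the places where all the work lies --- and the paper takes a genuinely different route.

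The paper does \emph{not} attempt a direct local comparison of $T\iarrow a_i\iarrow a_{i+1}\iarrow a_{i+2}$ with $T\iarrow b_i\iarrow b_{i+1}\iarrow b_{i+2}$. Instead it introduces two invariants attached to $\hat a$ with $a=\unprime(\hat a)$: the set $\marked(\hat a)\subseteq\cyc(z)$ of \eqref{marked-def}, and a permutation $\tpi(a)$ of $\cyc(z)$ assembled from the diagonal changes in the sequence $\cseq_0(a),\dots,\cseq_n(a)$ (Section~\ref{cyc-sect2}). The structural pivot is Proposition~\ref{tau-prop}: the primed off-diagonal boxes of $\PO(\hat a)$ and the primed diagonal boxes of $\QO(\hat a)$ are exactly those $(i,j)$ with $\gamma_{ij}(\PO(a))\neq\emptyset$ and $\tpi(a)(\gamma_{ij}(\PO(a)))\in\marked(\hat a)$. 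Since $\marked(\ck_i(\hat a))=\marked(\hat a)$ for $i\geq 1$ (Proposition~\ref{marked-prop}(c)) and the unprimed result is already available (Proposition~\ref{unprime-prop}, from \cite{Marberg2019b}), the primed problem collapses to the purely \emph{unprimed} identity $\tpi(\ck_i(a))=\tpi(a)$. That is what Lemmas~\ref{bac-lem} and \ref{acb-lem} establish (with Lemma~\ref{diag-box-lem} handling the exceptional case where $\square_i$ and $\square_{i+2}$ are both diagonal), supported by the bumping-path constraints of Lemmas~\ref{bac-pre-lem}, \ref{bump-a-lem}, \ref{bump-b-lem}. The final assembly in Section~\ref{proof-sect4} is then short.

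Your direct approach is not obviously doomed, but you underestimate why it is hard. The primed entries of $T\iarrow a_i\iarrow a_{i+1}\iarrow a_{i+2}$ are not determined by a bounded neighbourhood of the three new boxes: a single insertion can trigger the prime-swap of step~(2)(c) at every row along an arbitrarily long bumping path, and can then transfer a prime across the diagonal via (2)(b) or (3). Comparing three such insertions in two orders means controlling how three long, interleaved paths exchange primes through an arbitrary $T$; this is exactly the global bookkeeping that the paper's $\tpi$/$\marked$ formalism is designed to absorb. Without a substitute invariant your ``case analysis over an arbitrary admissible $T$'' is not finite in the way you suggest, and the diagonal subcases you flag as the main obstacle already occupy most of Section~\ref{proofs-sect} even \emph{after} the paper's reduction to the unprimed $\tpi$-statement.
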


When $a $ has no primed letters,
this theorem is equivalent to results in \cite{Marberg2019b}; see Proposition~\ref{unprime-prop}.
Extending these identities to primed involution words is surprisingly involved.
The proof of the unprimed version of Theorem~\ref{ck-fkd-thm} in \cite{Marberg2019b}
relies heavily on the \defn{involution Little map}, 
which gives a family of bijections $\bigsqcup_{z \in X} \iR(z) \leftrightarrow \bigsqcup_{z \in Y} \iR(z)$ for certain finite subsets  $X,Y\subset I_\ZZ$.
Describing a ``primed involution Little map'' does not appear to be straightforward; one difficulty is that
with primes allowed, the unions $\bigsqcup_{z \in X} \iR^+(z) $ and $ \bigsqcup_{z \in Y} \iR^+(z)$ often have different sizes.
As such, proving Theorem~\ref{ck-fkd-thm} requires a quite different strategy compared to \cite{Marberg2019b}.

\begin{corollary}\label{o-cor2}
Two primed involution words satisfy $a\simICK b$
      if and only if $\PO(a) =\PO(b)$.
 \end{corollary}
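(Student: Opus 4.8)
The plan is to read off the corollary directly from Proposition~\ref{o-lem2} and Theorem~\ref{ck-fkd-thm}, which together do all of the work, so no new argument is really needed.

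First I would handle the direction $a \simICK b \Rightarrow \PO(a) = \PO(b)$. Since $\simICK$ is by definition the transitive closure of the symmetric relation with $c \simICK \ck_i(c)$ for $i \in \ZZ$, an equivalence $a \simICK b$ means there is a finite chain $a = c_0, c_1, \dots, c_k = b$ in which each $c_{j+1}$ equals $\ck_{i_j}(c_j)$ for some $i_j \in \ZZ$. The lemma stated just before Proposition~\ref{o-lem2} shows that $\simICK$ preserves the underlying involution, so every $c_j$ lies in $\iR^+(z)$ for the same $z \in I_\ZZ$ and each $\PO(c_j)$ is defined. Theorem~\ref{ck-fkd-thm} then gives $\PO(c_{j+1}) = \PO(\ck_{i_j}(c_j)) = \PO(c_j)$ for each $j$, so telescoping yields $\PO(a) = \PO(b)$.

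For the converse, suppose $\PO(a) = \PO(b)$. Proposition~\ref{o-lem2} tells us that any primed involution word $c$ satisfies $c \simICK \row(\PO(c))$. Applying this to $a$ and to $b$, and using that $\simICK$ is an equivalence relation,
\[
a \;\simICK\; \row(\PO(a)) \;=\; \row(\PO(b)) \;\simICK\; b,
\]
so $a \simICK b$, as desired.

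I expect no genuine obstacle at this stage: the entire difficulty has been front-loaded into Theorem~\ref{ck-fkd-thm}, whose proof fills Section~\ref{proofs-sect}, and into Proposition~\ref{o-lem2}. The only point needing a moment's care is checking that $\PO$ is defined at every word appearing in the chain in the first direction, which is exactly the content of the lemma preceding Proposition~\ref{o-lem2}.
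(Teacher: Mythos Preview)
Your proof is correct and matches the paper's own argument essentially line for line: both directions invoke exactly Proposition~\ref{o-lem2} and Theorem~\ref{ck-fkd-thm} in the same way, with the paper only being slightly terser about the chain argument. Your extra remark about the lemma preceding Proposition~\ref{o-lem2} (ensuring every $c_j$ stays in $\iR^+(z)$) is a nice bit of added care but is implicit in the paper's version.
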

 
 \begin{proof} 
 Let $a$ and $b$ be two primed involution words.
 If $\PO(a) = \PO(b)$ then $a \simICK \row(\PO(a)) = \row(\PO(b)) \simICK b$ by Proposition~\ref{o-lem2}.
Conversely, if  $a \simICK b$
then $b = \ck_{i_1}\ck_{i_2}\cdots \ck_{i_k}(a)$ for some $i_1,i_2,\dots,i_k \in \ZZ$, so
$\PO(b) = \PO(\ck_{i_1}\ck_{i_2}\cdots \ck_{i_k}(a))  = \PO(a)$ by Theorem~\ref{ck-fkd-thm}.
\end{proof}

Recall the definition of the relation $\simK$ from Lemma~\ref{simK-lem}.

\begin{corollary}
Suppose $T$ is an increasing shifted tableau with $\row(T) \in \iR^+(z)$ for some $z \in I_\ZZ$.
Then $\row(T) \simK \col(T) \in \iR^+(z)$  and 
$\PO(\row(T)) = \PO(\col(T)) =\unprime_{\diag}(T)$.
\end{corollary}

\begin{proof}
We have $\row(T) \simK \col(T) \in \iR^+(z)$ by Lemma~\ref{simK-lem} so $\PO(\row(T)) = \PO(\col(T)) $.
When we compute $\PO(\col(T))$ using Definition~\ref{iarrow-def},
each column of $T$ contributes the same column to the output but with primes removed from any diagonal entries,
resulting in  
$\unprime_{\diag}(T)$. 
\end{proof}
 
 \subsection{Properties of marked cycles}\label{marked-sect}
 
 On standard shifted tableaux with no primes on the main diagonal, the operators $\fkd_i$ for $i>0$ coincide
with the maps $\psi_{i+1}$   in \cite[\S6]{Assaf14}.
The definitions of $\fkd_i$ and $\psi_{i+1}$ diverge when there are primed entries on the main diagonal,
as $\psi_{i+1}$ never changes the locations of these entries. However, 
 \cite[Thm. 6.3]{Assaf14} (stating that 
 $\{ \psi_{i}\}_{1<i<n}$ is a \defn{dual equivalence} for 
standard shifted tableaux)
is still true if one replaces $\psi_i$ by $\fkd_{i-1}$, as we explain in this section.
The results here will also be of use in Section~\ref{proofs-sect}.
 
 Let $\cyc(z) = \{ \{i,j\} : i<j=z(i)\} $ denote the set of 2-cycles in $z$.
Then for each (unprimed) involution word $a=a_1a_2\cdots a_n \in \iR(z)$ and  $i\in [n]$, let
\be\label{gammagamma-def}
\gamma_i(a) :=
\begin{cases}
 s_{a_n}\cdots s_{a_{i+2}} s_{a_{i+1}} (\{a_i,1+a_{i}\})
 &\text{if $i$ is a commutation in $a$} \\ 
 \emptyset &\text{otherwise}.
 \end{cases}
\ee
For example, if $z = 654321\in I_\ZZ$, then $\cyc(z) =\{\{1,6\},\{2,5\},\{3,4\}\}$
and for $a=513243541 \in \iR(z) $,
we have  $\gamma_1(a) = \{3,4\}$, $\gamma_2(a) = \{2,5\}$, $\gamma_3(a) = \{1,6\}$,
and $\gamma_i(a) = \emptyset$ for $i \in \{4,5,6,7,8,9\}$.

\begin{proposition}
The map
$i \mapsto \gamma_i(a)$
is a bijection from the set of commutations in $a$ to $\cyc(z)$.
\end{proposition}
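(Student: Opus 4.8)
The plan is to track how the $2$-cycles evolve along the partial Demazure products that build $z$ from $a$. Set $z_0 := 1$ and $z_j := s_{a_j}\circ z_{j-1}\circ s_{a_j}$ for $j\in[n]$, so that each $z_j\in I_\ZZ$, one has $z_j = s_{a_j}\circ\cdots\circ s_{a_1}\circ\cdots\circ s_{a_j}$, and $z_n = z$. Because $a\in\iR(z)$, the criterion recalled after the discussion before Lemma~\ref{new-lem} gives $z_{j-1}(a_j) < z_{j-1}(1+a_j)$ for every $j\in[n]$, so the case analysis stated just before Lemma~\ref{new-lem} leaves two possibilities: either $a_j$ and $1+a_j$ are both fixed points of $z_{j-1}$ --- equivalently, $j$ is a commutation in $a$ --- and then $z_j = z_{j-1}s_{a_j} = s_{a_j}z_{j-1}$; or else $z_j = s_{a_j}z_{j-1}s_{a_j}$.

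The key step is to show that, in either case, $C\mapsto s_{a_j}(C)$ is a well-defined injection $\theta_j\colon\cyc(z_{j-1})\to\cyc(z_j)$. In the second case this is clear, since $z_j$ is an honest conjugate of $z_{j-1}$ and $\theta_j$ is even a bijection. In the first case, $a_j$ and $1+a_j$ lie in no $2$-cycle of $z_{j-1}$, so $s_{a_j}$ fixes each $C\in\cyc(z_{j-1})$ setwise, while $z_j = z_{j-1}s_{a_j}$ agrees with $z_{j-1}$ off $\{a_j,1+a_j\}$ and acquires the new $2$-cycle $\{a_j,1+a_j\}$; thus $\cyc(z_j) = \cyc(z_{j-1})\sqcup\{\{a_j,1+a_j\}\}$ and $\theta_j$ is the inclusion. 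Granting this, if $i$ is a commutation then $\{a_i,1+a_i\}\in\cyc(z_i)$ by the first case, and applying $\theta_{i+1},\dots,\theta_n$ successively gives $\gamma_i(a) = (\theta_n\circ\cdots\circ\theta_{i+1})(\{a_i,1+a_i\})\in\cyc(z_n)=\cyc(z)$; so $i\mapsto\gamma_i(a)$ is a well-defined map from the commutations of $a$ to $\cyc(z)$.

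For injectivity, let $i<i'$ both be commutations and put $C := (\theta_{i'-1}\circ\cdots\circ\theta_{i+1})(\{a_i,1+a_i\})\in\cyc(z_{i'-1})$ (the empty composition when $i'=i+1$). Since $a_{i'}$ and $1+a_{i'}$ are fixed points of $z_{i'-1}$, they do not lie in the $2$-cycle $C$, so $\theta_{i'}(C)=C$; hence $(\theta_{i'}\circ\cdots\circ\theta_{i+1})(\{a_i,1+a_i\})=C$ is a $2$-cycle of $z_{i'-1}$ and is therefore \emph{not} equal to $\{a_{i'},1+a_{i'}\}$, whose two elements are fixed by $z_{i'-1}$. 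Thus $C$ and $\{a_{i'},1+a_{i'}\}$ are distinct elements of $\cyc(z_{i'})$, and applying the injections $\theta_{i'+1},\dots,\theta_n$ yields $\gamma_i(a)\ne\gamma_{i'}(a)$. Finally, as observed above the number of commutations in $a$ equals the number of $2$-cycles of $z$, so the domain and codomain are finite sets of equal size and an injection between them is a bijection.

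The main obstacle is really just the bookkeeping in the injectivity argument --- recognizing that at the later step $i'$ the cycle inherited from $i$ is left untouched by $s_{a_{i'}}$ while a genuinely new cycle $\{a_{i'},1+a_{i'}\}$ appears --- together with pinning down the three-way case behavior of a single step $z_{j-1}\rightsquigarrow z_j$ exactly; once those are settled, everything else is formal.
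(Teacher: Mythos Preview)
Your proof is correct and follows essentially the same approach as the paper's: both track how the collection of $2$-cycles evolves step by step, splitting into the commutation case (a new cycle $\{a_j,1+a_j\}$ is adjoined while $s_{a_j}$ fixes the old cycles setwise) and the non-commutation case (conjugation by $s_{a_j}$ is a bijection on cycles). The only organizational difference is that the paper packages this as an induction on $n$, peeling off the last letter and invoking the inductive hypothesis for $b=a_1\cdots a_{n-1}$ to get bijectivity directly, whereas you unfold the same recursion into the explicit chain of injections $\theta_j$, prove injectivity by hand, and then appeal to the equality of cardinalities (already noted in the paper) for surjectivity.
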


\begin{proof}
We prove this by induction on the length $n$ of $a$.
The base case when $n=0$ holds trivially.
Assume $n>0$, define $b=a_1a_2\cdots a_{n-1}$,  and let $y \in I_\ZZ$ be such that $b \in \iR(y)$.
Suppose the result holds when $a$ and $z$ are replaced by $b$ and $y$.

If $n$ is a commutation in $a$ then 
$a_n$ and $1+a_n$ are fixed points of $y$, and the commutations in $a$
are just the commutations of $b$ plus $n$.
In this case we have  $z= ys_{a_n}$ and
 $\cyc(z) = \cyc(y) \sqcup \{ \{ a_n,1+a_n\}\}$,
along with
$\gamma_i(a) = s_{a_n}(\gamma_i(b)) = \gamma_i(b)$ for each commutation $i \in [n-1]$
(since $\gamma_i(b) \in \cyc(y)$ by induction)
and $\gamma_n(a) =\{ a_n,1+a_n\}$.
As $i\mapsto \gamma_i(b)$ is a bijection from commutations in $b$ to $\cyc(y)$,
it follows that 
$i \mapsto \gamma_i(a)$
is a bijection from commutations in $a$ to $\cyc(z)$.

If $n$ is not a commutation in $a$ then $z = s_{a_n} y s_{a_n}$
so $\cyc(z) = s_{a_n}(\cyc(y))$,
and the commutations in $a$ are the same as in $b$. 
As 
$\gamma_i(a) = s_{a_n}(\gamma_i(b))$ for $i \in [n-1]$,
the desired property clear.
\end{proof}

The following lemma lets us relate $\gamma_i(a)$ and $\gamma_i(b)$
when $a\iisim b$ in the sense of Proposition~\ref{mat3-prop}.

\begin{lemma}\label{marked-lem}
Suppose $a \in \iR(z)$ is an unprimed involution and $n=\ell(a)$. Fix $i\in[n]$.
\ben\item[(a)] If $j \in [n-1]$ and $|a_j - a_{j+1}| > 1$ then 
\[
\gamma_i(a_1\cdots a_{j-1} a_{j+1} a_j a_{j+2}\cdots a_n) = \begin{cases}
 \gamma_{j+1}(a) & \text{if $i=j$} \\
 \gamma_j(a) & \text{if $i=j+1$} \\
 \gamma_i(a) &\text{otherwise}.
 \end{cases}
 \]
\item[(b)] If $j \in [n-2]$ and $a_j = a_{j+2} = a_{j+1} \pm 1$ then 
\[
\gamma_i(a_1\cdots a_{j-1} a_{j+1} a_j a_{j+1} a_{j+3}\cdots a_n) = \begin{cases}
 \gamma_{j+2}(a) & \text{if $i=j$} \\
 \gamma_j(a) & \text{if $i=j+2$} \\
 \gamma_i(a) &\text{otherwise}.
 \end{cases}
 \]
\item[(c)] If $n\geq 2$ and $|a_1-a_2| = 1$ then 
$\gamma_i (a_2a_1a_3\cdots a_n) = \gamma_i(a)$
for all values of $i$.
\een
\end{lemma}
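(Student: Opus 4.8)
\textbf{Proof plan for Lemma~\ref{marked-lem}.}
The plan is to treat the three parts as essentially the same computation, since in each case we are replacing $a$ by a word $b$ obtained from a single elementary move (a far commutation, a braid move, or a near commutation at the start), and in all three cases $b \in \iR(z)$ as well by Proposition~\ref{mat3-prop}. The key observation is that $\gamma_i$ depends only on the suffix $a_{i+1}a_{i+2}\cdots a_n$ (as the Demazure product of those generators acting on the two-element set $\{a_i,1+a_i\}$), together with whether index $i$ is a commutation. So I would split into the indices $i$ for which the suffix is literally unchanged by the move, and the few indices where it changes.

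For part (a): when $i \leq j-1$ or $i \geq j+2$, the suffix of $b$ at position $i$ is either identical to that of $a$ (when $i \ge j+2$) or has only an adjacent transposition of two far-apart letters applied somewhere in the suffix; since $s_{a_j}$ and $s_{a_{j+1}}$ commute, the Demazure product of the suffix is unchanged, and whether $i$ is a commutation is also unchanged (the product $s_{a_{i-1}}\circ\cdots\circ s_{a_1}\circ\cdots\circ s_{a_{i-1}}$ is literally the same word up to the far commutation), so $\gamma_i(b)=\gamma_i(a)$. For $i=j$: the suffix of $b$ is $a_{j}a_{j+2}\cdots a_n$ (the $(j+1)$st letter of $b$ is $a_j$), so the index $j$ in $b$ plays the role that index $j+1$ played in $a$; one checks $j$ is a commutation in $b$ iff $j+1$ is a commutation in $a$ and then the two-element sets match, giving $\gamma_j(b)=\gamma_{j+1}(a)$. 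Symmetrically $\gamma_{j+1}(b)=\gamma_j(a)$. Part (b) is the same bookkeeping, now using the braid relation $s_Xs_Ys_X=s_Ys_Xs_Y$ for $|X-Y|=1$ to see the Demazure products of the relevant suffixes agree: for $i\notin\{j,j+1,j+2\}$ the suffix is unchanged (for $i\ge j+3$) or changed only by a braid move inside it (for $i\le j-1$); the delicate point is that $a_{j+1}$ can never be a commutation in either word (it lies between two copies of $a_j=a_{j+2}=a_{j+1}\pm1$, so by Proposition~\ref{despite-prop}(b) applied to the underlying word, or directly, it is not), which is why only the swap $j\leftrightarrow j+2$ occurs. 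Part (c) is easiest: when $|a_1-a_2|=1$ the suffixes $a_2a_3\cdots a_n$ and $a_1 a_3\cdots a_n$ have the same Demazure product, so $\gamma_i(b)=\gamma_i(a)$ for $i\ge 2$; and for $i=1$ one has $\gamma_1(b)=\emptyset=\gamma_1(a)$ because neither $1$ is a commutation — indeed since $|a_1-a_2|=1$, in $a$ the generators $s_{a_1}$ and $s_{a_2}$ do not commute, so $\{a_1,1+a_1\}$ is not fixed by $s_{a_2}\circ\cdots\circ s_{a_n}$, forcing index $1$ to not be a commutation, and the same holds with the first two letters swapped.

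The main obstacle I anticipate is not the suffix computation itself but the verification that ``commutation'' status transfers correctly under each move for the swapped indices in parts (a) and (b) — i.e.\ that $j$ is a commutation in $b$ exactly when $j+1$ (resp.\ $j+2$) is a commutation in $a$. This requires unwinding the definition of commutation (that $s_{a_i}$ commutes with the conjugated Demazure product of the earlier letters) and checking it is stable under the relevant far commutation or braid move applied to those earlier letters. For this I would invoke that $a_1\cdots a_{i-1}$ and its image under the move are $\iisim$-equivalent hence have the same Demazure-conjugate $y$, so the fixed-point condition defining commutation is literally the same; the case $i=j$ in part (a) needs the extra remark that $y$ is unchanged because the letters at positions $\le j-1$ are untouched. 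Once that is in place, the formulas for $\gamma$ follow by directly plugging in the (identical) suffix products.
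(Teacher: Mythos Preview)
Your approach is the same as the paper's, but there is a genuine error in part (c). You claim that index $1$ is not a commutation, arguing from the suffix; this misreads the definition. Commutation at index $i$ requires that $s_{a_i}$ commute with the conjugate $s_{a_{i-1}}\circ\cdots\circ s_{a_1}\circ\cdots\circ s_{a_{i-1}}$ built from the \emph{prefix}, and for $i=1$ this conjugate is the identity, so index $1$ is \emph{always} a commutation (the paper states this explicitly just before defining $\iR^+(z)$). Hence $\gamma_1(a)$ and $\gamma_1(b)$ are nonempty $2$-cycles, not $\emptyset$. The correct check (the paper simply reduces to $n=2$ and cites \eqref{gammagamma-def}) is the direct computation $s_{a_2}(\{a_1,a_1+1\}) = s_{a_1}(\{a_2,a_2+1\})$: both sides equal $\{a_1,a_1+2\}$ if $a_2=a_1+1$ and $\{a_1-1,a_1+1\}$ if $a_2=a_1-1$, so $\gamma_1(a)=\gamma_1(b)$.

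Your reasoning for $i\geq 2$ in part (c) is also muddled: the words $a_2a_3\cdots a_n$ and $a_1a_3\cdots a_n$ do \emph{not} have the same product (already for $n=2$). What is actually true is that for $i\geq 3$ the words $a$ and $b$ agree at position $i$ and beyond and are $\iisim$-equivalent before position $i$, so $\gamma_i$ is unchanged; while for $i=2$ the prefix condition fails in both words (since $s_{a_1}$ and $s_{a_2}$ do not commute), giving $\gamma_2(a)=\gamma_2(b)=\emptyset$. Your treatments of parts (a) and (b) are essentially in line with the paper's.
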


\begin{proof}
Suppose $j \in [n-1]$ and $|a_j - a_{j+1}| > 1$. 
Let $b= a_1\cdots a_{j-1} a_{j+1} a_j a_{j+2}\cdots a_n \iisim a$.
Since $s_{a_j}$ and $s_{a_{j+1}}$ commute, we have $\gamma_i(a) = \gamma_i(b)$ 
for $i \notin \{j,j+1\}$. In addition, the index $j$ (respectively, $j+1$) is a commutation in $a$
if and only if $j+1$ (respectively, $j$) is a commutation in $b$,
and the permutations
  $s_{a_j}$ and $s_{a_{j+1}}$ each preserve both of the sets $\{ a_{j} ,1+ a_{j} \}$ and $\{ a_{j+1} , 1+ a_{j+1} \}$. It follows from  \eqref{gammagamma-def} in this case that 
$\gamma_j(b) = \gamma_{j+1}(a)$ and $\gamma_{j+1}(b) = \gamma_{j}(a)$.

Next, suppose $j \in [n-2]$ and $a_j = a_{j+2} = a_{j+1} \pm 1$.
Let $b = a_1\cdots a_{j-1} a_{j+1} a_j a_{j+1} a_{j+3}\cdots a_n \iisim a$.
Then $i$ (respectively, $i+2$) is a commutation in $a$ if and only if $i+2$ (respectively, $i$) is a commutation on $\ck_i(a)$,
 while $i+1$ is not a commutation in either word,
 by Propositions~\ref{despite-prop} and \ref{imat-prop}.
The permutation $s_{a_{i+2}}s_{a_{i+1}}=s_{a_{i}}s_{a_{i+1}}$ transforms $\{  a_i  ,1+  a_i  \}$ to $\{  a_{i+1} ,1+  a_{i+1}  \}$
while $s_{a_{i+1}} s_{a_i}$  transforms $\{  a_{i+1}  ,1+  a_{i+1}  \}$ to  $\{  a_i  ,1+  a_i  \}$,
so it follows from  \eqref{gammagamma-def} that 
$\gamma_i(b) = \gamma_{i+2}(a)$ and $\gamma_{i+2}(b) = \gamma_{i}(a)$.

For part (c) we may assume that $n=2$, and then the desired result is clear from \eqref{gammagamma-def}.
\end{proof}

For a primed involution word $\hat a = \hat a_1\hat a_2\cdots \hat a_n \in \iR^+(z)$ with   $a = \unprime(\hat a)$, let  
\be\label{marked-def}\marked(\hat a) := \{ \gamma_i(a) : i \in[n]\text{ with } \hat a_i \in \ZZ'\}.\ee

\begin{proposition}\label{marked-prop}
Suppose $\hat a\in \iR^+(z)$ for $z \in I_\ZZ$ and $a = \unprime(\hat a)$. Let $i \in\ZZ$. 
\ben
\item[(a)] If $i=-1$ then $\marked(\ck_{i}(\hat a))=\marked(\hat a)\mathbin{\triangle}  \{ \gamma_1(a)\}$,
where  $\triangle$ is symmetric set difference.
\item[(b)] Suppose $i=0$ and $\hat a$ has at least two letters. If $| a_1  -  a_2  |>1$
 and exactly one of $\hat a_1$ or $\hat a_2$ is primed, then exactly one of $ \gamma_1(a)$ or $\gamma_2(a)$ belongs to $\marked(\hat a)$
and  it holds that $\marked(\ck_i(\hat a))=\marked(\hat a)\mathbin{\triangle}  \{ \gamma_1(a), \gamma_2(a)\} $.
\item[(c)] In all other cases $\marked(\ck_i(\hat a)) = \marked(\hat a)$.
\een
\end{proposition}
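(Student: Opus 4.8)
The plan is to prove Proposition~\ref{marked-prop} by carefully tracking how each $\ck_i$ operation changes the set of primed positions, and then applying Lemma~\ref{marked-lem} to see how the corresponding $\gamma$-values transform. Recall that $\marked(\hat a)$ records the $2$-cycles $\gamma_i(a)$ as $i$ ranges over the primed positions of $\hat a$; since $i \mapsto \gamma_i(a)$ is a bijection from commutations of $a$ to $\cyc(z)$, each primed position (which is necessarily a commutation) contributes a distinct $2$-cycle, so $\marked$ is genuinely a subset of $\cyc(z)$ of size equal to the number of primes.

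First I would dispose of the two special cases. For (a), $i=-1$: applying $\ck_{-1}$ toggles the prime on the first letter $\hat a_1$ while leaving $\unprime(\hat a)=a$ and all $\gamma_j(a)$ unchanged; the index $1$ is always a commutation, so $\gamma_1(a)$ is a well-defined $2$-cycle, and $\marked(\ck_{-1}(\hat a))$ differs from $\marked(\hat a)$ exactly by adding or removing $\gamma_1(a)$, i.e.\ by symmetric difference with $\{\gamma_1(a)\}$. For (b), $i=0$: here $\ck_0$ swaps the first two letters and switches their primes, so the underlying word changes from $a=a_1a_2\cdots$ to $b = a_2a_1a_3\cdots$ (using $|a_1-a_2|>1$). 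By Lemma~\ref{marked-lem}(a), $\gamma_1(b)=\gamma_2(a)$, $\gamma_2(b)=\gamma_1(a)$, and $\gamma_j(b)=\gamma_j(a)$ for $j\geq 3$. Since exactly one of $\hat a_1,\hat a_2$ is primed (the hypothesis; note Proposition~\ref{despite-prop} is irrelevant since $|a_1-a_2|>1$), exactly one of $\gamma_1(a),\gamma_2(a)$ lies in $\marked(\hat a)$, and after the swap-and-switch exactly the \emph{other} one lies in $\marked(\ck_0(\hat a))$ — positions $\geq 3$ are untouched. This is precisely symmetric difference with $\{\gamma_1(a),\gamma_2(a)\}$.

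For part (c) — all other cases — I would go through the remaining forms of $\ck$. For the $3$-letter relations in~\eqref{ck-eq2} with $i\geq 1$: the relation $XYX\leftrightarrow YXY$ and $X'YX\leftrightarrow YXY'$ with $|X-Y|=1$ changes the underlying word by the braid-type move of Lemma~\ref{marked-lem}(b), which transposes $\gamma_i$ and $\gamma_{i+2}$ (and fixes $\gamma_{i+1}=\emptyset$, as the middle index is never a commutation there by Proposition~\ref{despite-prop}); meanwhile $\ck$ moves the (at most one) prime from position $i$ to position $i+2$ or vice versa, so the contributed $2$-cycle is unchanged. For the Knuth-type relations $ACB\leftrightarrow CAB$ and $BCA\leftrightarrow BAC$ with $\lceil A\rceil<\lceil B\rceil<\lceil C\rceil$: the underlying word undergoes a commutation-type swap of two non-adjacent letters (their ceilings differ by at least $2$), so Lemma~\ref{marked-lem}(a) transposes the two corresponding $\gamma$-values; but $\ck$ carries each prime along with its letter, so the \emph{set} $\marked$ is unchanged. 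For the $2$-letter relations $XY\leftrightarrow YX$ etc.\ applied at positions beyond the start, and for $i=0$ in the case $|a_1-a_2|=1$: Lemma~\ref{marked-lem}(c) gives $\gamma_j(b)=\gamma_j(a)$ for all $j$, and since $|a_1-a_2|=1$ forces at most one of $\hat a_1,\hat a_2$ to be primed (Proposition~\ref{despite-prop}(a)) and $\ck$ preserves which position carries that prime relative to its letter, $\marked$ is unchanged; similarly for $i=-1$ excluded only when... (actually $i=-1$ is always case (a)). Finally, the degenerate subcases where $\ck$ acts as the identity give $\marked(\ck_i(\hat a))=\marked(\hat a)$ trivially.

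The main obstacle I expect is the bookkeeping in part (c) for the Knuth-type moves $ACB\leftrightarrow CAB$ and $BCA\leftrightarrow BAC$: one must verify that the two swapped positions both remain commutations (or both remain non-commutations) after the move, so that Lemma~\ref{marked-lem}(a) applies cleanly and the primes genuinely travel with fixed $\gamma$-values — this requires invoking Proposition~\ref{despite-prop} to rule out the configurations where ceilings would coincide or be adjacent, and then checking that ``switching primes'' in the $2$-letter rules of~\eqref{ck-eq1} does not secretly create or destroy a prime. A careful case analysis, organized by which of the finitely many forms in~\eqref{ck-eq1} and~\eqref{ck-eq2} applies, together with Lemma~\ref{marked-lem} and the bijection $i\mapsto\gamma_i(a)$, should make all of this routine.
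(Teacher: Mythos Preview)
Your approach is correct and matches the paper's, which proves (a) and (b) in one sentence (using that $\gamma_1(a)\in\cyc(z)$ is always a genuine $2$-cycle) and dispatches (c) with the single phrase ``follows directly from Lemma~\ref{marked-lem}.'' You have simply spelled out the case analysis that the paper leaves implicit.

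Two minor points to tighten: first, your remark about ``the $2$-letter relations $XY\leftrightarrow YX$ etc.\ applied at positions beyond the start'' is incoherent as written, since $\ck_i$ for $i\geq 1$ always acts via the $3$-letter rules; presumably you mean the underlying unprimed swap inside the Knuth moves, which you already handled. Second, you omitted the $i=0$ subcase where $|a_1-a_2|>1$ and \emph{both or neither} of $\hat a_1,\hat a_2$ are primed; here $\ck_0$ is not the identity, but the prime-switching operation leaves the set of primed positions unchanged while Lemma~\ref{marked-lem}(a) transposes $\gamma_1$ and $\gamma_2$, so $\marked$ is preserved. This fills the gap in your case list and the ``obstacle'' you anticipate about commutations surviving the Knuth moves is indeed routine: Lemma~\ref{marked-lem}(a) already tells you that $\gamma_j$ is nonempty after the swap exactly when the corresponding $\gamma_{j\pm 1}$ was nonempty before, so primed positions remain commutations automatically.
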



\begin{proof}
Parts (a) and (b) hold as   $\emptyset \neq \gamma_1(a) \in \cyc(z)$.
Part (c) follows directly from Lemma~\ref{marked-lem}.
 \end{proof}

Fix a strict partition $\lambda$ and define $z_\lambda$ as in Lemma~\ref{zdom-lem}.
For each   $S \subseteq \cyc(z_\lambda)$, let 
$\cA^\lambda_S$ be the set of standard shifted tableaux $\QO(a)$
for $a \in \iR^+(z_\lambda)$ with $\marked(a) = S$.
Proposition~\ref{unprime-tab-prop} implies that 
 $\cA^\lambda_\varnothing$ is set of  all standard shifted tableaux of shape $\lambda$
with no primed diagonal entries.

\begin{corollary}
Fix $S\subseteq \cyc(z_\lambda)$ and $1\leq i \leq |\lambda|-2$. Then $\fkd_i$ restricts to an involution of $\cA^\lambda_S$ and $\unprime_{\diag}$ defines a descent-preserving
bijection $\cA^\lambda_S \to \cA^\lambda_{\varnothing}$ that commutes with  $\fkd_i$.
\end{corollary}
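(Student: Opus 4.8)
The plan is to verify each of the three assertions in turn, using Theorem~\ref{ck-fkd-thm} and Proposition~\ref{square-lem} together with the book-keeping of marked cycles from Proposition~\ref{marked-prop}.

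First I would check that $\fkd_i$ maps $\cA^\lambda_S$ into itself. By Lemma~\ref{zdom-lem} every $a \in \iR^+(z_\lambda)$ has $\PO(a) = T_\lambda$, so $\cA^\lambda_S$ consists exactly of those standard shifted tableaux $Q$ of shape $\lambda$ for which there is $a \in \iR^+(z_\lambda)$ with $\QO(a) = Q$ and $\marked(a) = S$; note that by Theorem~\ref{o-thm1} such an $a$ is unique. Given $Q = \QO(a) \in \cA^\lambda_S$ with $1 \le i \le |\lambda|-2$, Theorem~\ref{ck-fkd-thm} gives $\PO(\ck_i(a)) = \PO(a) = T_\lambda$, so $\ck_i(a) \in \iR^+(z_\lambda)$, and $\QO(\ck_i(a)) = \fkd_i(\QO(a)) = \fkd_i(Q)$. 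Thus $\fkd_i(Q) = \QO(\ck_i(a))$ is a standard shifted tableau of shape $\lambda$; it remains to see that $\marked(\ck_i(a)) = S$. Since $i \ge 1$, we are in case (c) of Proposition~\ref{marked-prop}, so $\marked(\ck_i(a)) = \marked(a) = S$. Hence $\fkd_i(Q) \in \cA^\lambda_S$. That $\fkd_i$ is an involution on this set is immediate from Proposition~\ref{square-lem}(a) (or from $\ck_i \circ \ck_i = \mathrm{id}$ together with the bijectivity of $\QO$ on $\iR^+(z_\lambda)$).

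Next I would treat the map $\unprime_{\diag} : \cA^\lambda_S \to \cA^\lambda_\varnothing$. For $Q = \QO(a) \in \cA^\lambda_S$ with $a \in \iR^+(z_\lambda)$, Proposition~\ref{unprime-tab-prop} gives $\unprime_{\diag}(\QO(a)) = \QO(\unprime(a))$, and $\unprime(a) \in \iR(z_\lambda) \subseteq \iR^+(z_\lambda)$ has no primed letters, so $\marked(\unprime(a)) = \varnothing$ by \eqref{marked-def}; hence $\unprime_{\diag}(Q) \in \cA^\lambda_\varnothing$ and the map is well-defined. It is descent-preserving by Lemma~\ref{udiag-lem}. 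For bijectivity I would argue as follows: every $R \in \cA^\lambda_\varnothing$ equals $\QO(b)$ for a unique unprimed $b = b_1\cdots b_m \in \iR(z_\lambda)$ (uniqueness by Theorem~\ref{o-thm1}); to build a preimage in $\cA^\lambda_S$, prescribe the set of commutation-indices of $b$ that should be primed to be exactly $\{\, i : \gamma_i(b) \in S \,\}$ (using that $i \mapsto \gamma_i(b)$ is a bijection from commutations of $b$ onto $\cyc(z_\lambda)$ by Proposition~\ref{marked-prop}'s supporting proposition), obtaining $\hat b \in \iR^+(z_\lambda)$ with $\unprime(\hat b) = b$ and $\marked(\hat b) = S$. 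Then $\unprime_{\diag}(\QO(\hat b)) = \QO(b) = R$ by Proposition~\ref{unprime-tab-prop}, giving surjectivity; injectivity follows because $\unprime_{\diag}(\QO(a)) = \unprime_{\diag}(\QO(a'))$ forces $\QO(\unprime(a)) = \QO(\unprime(a'))$, hence $\unprime(a) = \unprime(a')$, and then the common value of $\marked = S$ together with the bijection $i \mapsto \gamma_i(\unprime(a))$ pins down which letters are primed, so $a = a'$ and $\QO(a) = \QO(a')$.

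Finally, commutation with $\fkd_i$: for $Q = \QO(a) \in \cA^\lambda_S$ and $i \ge 1$ we must show $\unprime_{\diag}(\fkd_i(Q)) = \fkd_i(\unprime_{\diag}(Q))$. This is exactly \eqref{up-fkd-eq}, which holds for $i \neq -1$; since here $i \ge 1$ it applies directly. (Alternatively one chases: $\unprime_{\diag}(\fkd_i(\QO(a))) = \unprime_{\diag}(\QO(\ck_i(a))) = \QO(\unprime(\ck_i(a))) = \QO(\ck_i(\unprime(a)))$ by Lemma~\ref{unpri-word-lem}, which equals $\fkd_i(\QO(\unprime(a))) = \fkd_i(\unprime_{\diag}(\QO(a)))$ by Theorem~\ref{ck-fkd-thm} applied to the unprimed word $\unprime(a)$ and Proposition~\ref{unprime-tab-prop}.) The main obstacle, as usual here, is the surjectivity/injectivity of $\unprime_{\diag}$, i.e. making precise that a standard shifted tableau with no primed diagonal entries lifts to $\iR^+(z_\lambda)$ in exactly $2^{|\cyc(z_\lambda)|}$ ways indexed by subsets of $\cyc(z_\lambda)$ via the $\gamma_i$ bijection; everything else is a direct appeal to Theorem~\ref{ck-fkd-thm}, Proposition~\ref{unprime-tab-prop}, Lemma~\ref{udiag-lem}, and \eqref{up-fkd-eq}.
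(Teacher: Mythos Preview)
Your proof is correct and follows essentially the same approach as the paper's: both use Proposition~\ref{marked-prop} (together with Theorem~\ref{ck-fkd-thm}) for the $\fkd_i$-invariance of $\cA^\lambda_S$, Lemma~\ref{udiag-lem} for descent preservation, and \eqref{up-fkd-eq} for commutation with $\fkd_i$. The only real difference is the bijectivity of $\unprime_{\diag}$: you construct an explicit inverse via the $\gamma_i$ bijection, while the paper simply notes $|\cA^\lambda_S| = |\cA^\lambda_\varnothing| = |\iR(z_\lambda)|$ (both approaches ultimately rest on the same fact that primed involution words with prescribed $\marked$-set biject with unprimed ones).
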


\begin{proof}
We have $\fkd_i(\cA^\lambda_S) = \cA^\lambda_S$ by Proposition~\ref{marked-prop}.
The map $\unprime_{\diag}$ is a bijection since  $|\cA^\lambda_S| = |\cA^\lambda_\varnothing|= |\iR(z_\lambda)|$. It is descent-preserving by Proposition~\ref{udiag-lem} and 
 commutes with  $\fkd_i$  by \eqref{up-fkd-eq}.
 \end{proof}
 
 Assaf's result \cite[Thm. 6.3]{Assaf14} asserts that the maps $\{\fkd_{i-1} : 1<i<|\lambda|\}$ 
  give a \defn{dual equivalence} for $\cA^\lambda_\varnothing$.
The preceding corollary shows that these maps define isomorphic dual equivalences for each $\cA^\lambda_S$,
and therefore give a dual equivalence for all standard shifted tableaux of shape $\lambda$.
  
\section{Proofs of the two main theorems}\label{proofs-sect}

This section is devoted to proving Theorem~\ref{ck-fkd-thm}. We will also
end up deriving Theorem~\ref{o-thm1} as a corollary of our methods;
the proofs of these theorems are in Section~\ref{proof-sect4}.

\begin{remark*}
Many of the results leading up to these proofs only apply to unprimed words.
Accordingly, just for this section, we adopt the convention of  writing all primed words with $\hat{\ }$ symbols
(that is, as $\hat a$, $\hat b$, etc.) to distinguish them from unprimed words (which we write as $a$, $b$, etc.).
\end{remark*}

An outline of our proof strategy is as follows.
Underpinning everything is the following result,
which says that Theorem~\ref{ck-fkd-thm} holds for unprimed words.

\begin{proposition}[\cite{Marberg2019b}] \label{unprime-prop}
Suppose $i \geq 0$ and $a=\unprime(a)\in \iR(z)$ for $z \in I_\ZZ$.
Then \[\PO(\ck_i(a)) = \PO(a)\quand  \QO(\ck_i(a)) = \fkd_i(\QO(a)) .\]
\end{proposition}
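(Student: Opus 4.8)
The plan is to deduce Proposition~\ref{unprime-prop} from the structural results about orthogonal Edelman--Greene insertion already established in \cite{Marberg2019b}, once we check that the operators of Sections~\ref{ock-sect} and \ref{dual-sect} restrict to the ones used there. By Remark~\ref{iarrow-rmk}, on unprimed inputs the map $a\mapsto(\PO(a),\QO(a))$ coincides with \cite[Def. 3.20]{Marberg2019b}, and by Proposition~\ref{unprime-tab-prop} the recording tableau $\QO(a)$ of an unprimed $a\in\iR(z)$ has no primed entries on its main diagonal. So it suffices to identify, for each $i\ge 0$, the restriction of $\ck_i$ to unprimed involution words and the restriction of $\fkd_i$ to standard shifted tableaux without diagonal primes with the corresponding ``Coxeter--Knuth'' and ``dual equivalence'' operators of \cite{Marberg2019b}, and then invoke the unprimed form of Theorem~\ref{ck-fkd-thm} proved there.

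The case $i=0$ can be treated directly, without reference to \cite{Marberg2019b}. If $a=\unprime(a)$ has at least two letters, then by \eqref{ck-eq1} the operator $\ck_0$ simply interchanges $a_1$ and $a_2$ (the instruction to ``switch their primes'' having no effect on unprimed letters), so $\ck_0(a)$ is exactly the word $b$ of Proposition~\ref{second-toggle-obs}. That proposition gives $\PO(\ck_0(a))=\PO(a)$ and states that $\QO(\ck_0(a))$ is obtained from $\QO(a)$ by toggling the prime on the entry in box $(1,2)$; since box $(1,2)$ is the second box of the first row, hence the unique position of $\QO(a)$ containing $2$ or $2'$, this is precisely $\fkd_0(\QO(a))$ by the definition of $\fkd_0$. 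When $\ell(a)<2$ both sides of both identities are trivially equal.

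For $i\ge 1$, if $a=\unprime(a)$ then none of the primed substitutions in \eqref{ck-eq1}--\eqref{ck-eq2} can be triggered, so $\ck_i$ acts on $a$ by one of the moves $XYX\leftrightarrow YXY$ with $|X-Y|=1$ or $ACB\leftrightarrow CAB$ or $BCA\leftrightarrow BAC$ with unprimed $A<B<C$ (and $A<B<C$ forces $C\ge A+2$, so the swapped letters commute); each such move is an instance of $\simA$, hence $\ck_i(a)\in\iR(z)$ by Proposition~\ref{mat3-prop}, and these are exactly the classical Coxeter--Knuth moves. Correspondingly, on a standard shifted tableau $T$ with no primed diagonal entries each $\fks_j$ acts only through cases (a) and (b) of its definition, or through case (c) without altering any diagonal entry (reversing the prime on the non-diagonal box keeps it off-diagonal, and ``switching'' primes on two unprimed diagonal boxes does nothing), so $\fks_j$ preserves the absence of diagonal primes and $\fkd_i$ on such $T$ reduces to the dual equivalence operator of \cite{Marberg2019b}. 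Under these identifications, Proposition~\ref{unprime-prop} for $i\ge 1$ is precisely the unprimed version of Theorem~\ref{ck-fkd-thm}, one of the main results of \cite{Marberg2019b} (proved there via the involution Little map). The main obstacle in writing this out is not a new idea but the careful matching of conventions between the two papers --- in particular the indexing of the operators and the exact form of the recording tableau and the dual equivalence maps near the main diagonal.
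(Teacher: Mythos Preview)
Your proposal is correct and follows essentially the same approach as the paper: both reduce the proposition to results already proved in \cite{Marberg2019b}, after noting that on unprimed inputs the map $a\mapsto(\PO(a),\QO(a))$ agrees with the insertion studied there. The paper's proof is simply a two-line citation of \cite[Thm.~3.31]{Marberg2019b} for the insertion tableau identity and \cite[Thm.~5.11]{Marberg2019b} for the recording tableau identity; your write-up supplies more of the connective tissue (verifying that $\ck_i$ restricts to Coxeter--Knuth moves and that $\fkd_i$ preserves the no-diagonal-primes condition), and handles $i=0$ directly via Proposition~\ref{second-toggle-obs} rather than by citation, but the substance is the same.
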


\begin{proof}
The assertion that $\PO(\ck_i(a)) = \PO(a)$ follows from \cite[Thm.~3.31]{Marberg2019b}.
The assertion that $  \QO(\ck_i(a)) = \fkd_i(\QO(a)) $ follows from \cite[Thm.~5.11]{Marberg2019b}.
\end{proof}

Let $\hat a$ be a primed involution word with unprimed form $a=\unprime(\hat a)$.
In view of Proposition~\ref{unprime-prop}, to prove Theorem~\ref{ck-fkd-thm} we just need to understand  
the relationship between the indices of the primed letters in $\hat a$ and the locations of the primed entries in $\PO(\hat a)$ and on the main diagonal of $\QO(\hat a)$.
Sections~\ref{bumping-sect}, \ref{cyc-sect1}, and \ref{cyc-sect2}
are devoted to proving a result that expresses the positions of the relevant primes 
in terms of the set $\marked(\hat a)$ and a permutation $\tpi(a)$  that can be read off from the successive tableaux
$\PO(a_1a_2\cdots a_i)$ for $i \in [\ell(a)]$. Then, in Sections~\ref{proof-sect3}, \ref{inter-sect} and \ref{tech-sect},
we will prove a series of lemmas clarifying the relationship between $\tpi(a)$ and $\tpi(\ck_i(a))$.

\subsection{Properties of bumping paths}\label{bumping-sect}

We start by listing some properties of the bumping paths  
in Definition~\ref{iarrow-def}.
In this subsection, let $T$ be an increasing shifted tableau with no primes on the main diagonal and let $u \in \ZZ\sqcup\ZZ'$ be such that 
$\row(T)u$ is a primed involution word for an element of $I_\ZZ$.
We will only apply the results here when $T=\unprime(T)$ and $u \in \ZZ$, but we
will allow primes in our initial statements since the proofs are identical to the unprimed case.
Write 
\be
\label{denote-bp-eq}
\path^\leq(T,u):=\((x_i,y_i) : i=1,2,\dots,N\)\quand \path^<(T,u):=\((\tilde x_i, \tilde y_i): i=1,2,\dots,N\)
\ee
for the weak and strict bumping paths specified in Definition~\ref{iarrow-def}.

The algorithm in Definition~\ref{iarrow-def} starts by inserting entries into successive rows, 
and at some point may switch to inserting into successive columns.
Each iteration contributes one position to the weak and strict bumping paths,
and the switch from row to column insertion takes place at most once, 
directly after the weak bumping path meets the main diagonal.
It follows that
both $\path^\leq(T,u)$ and $\path^<(T,u)$  contain at most one position on the main diagonal.
Let $p$ be the unique index of the diagonal position in $\path^\leq(T,u)$  (which will have  $x_p = y_p=p$),
or set $p:=N$ if no such index exists.

The following additional observations are straightforward to derive from the definitions
and Remark~\ref{iarrow-rmk}. We omit a detailed proof.
For  $(x,y) \in \ZZ\times \ZZ$,
let 
\[\regionSW(x,y) := \{ (i,j) \in \ZZ\times \ZZ : x \geq i\text{ and }y \geq j\}
\quand
\regionNE(x,y) := \{ (i,j) \in \ZZ\times \ZZ : x \leq i\text{ and }y \leq j\}.\]
Define
$
\regionSW(T,u) := \bigcup_{1\leq i\leq p} \regionSW(\tilde x_i, \tilde y_i)
$
and
$
\regionNE(T,u) := \bigcup_{p < k \leq N} \regionNE( x_i,  y_i).
$ 

\begin{proposition}\label{bumping-prop}
 The following properties hold:
\ben
\item[(a)] If $1\leq i\leq p$ then $x_i=\tilde x_i=i$ and $\tilde y_i \in \{ y_i, y_i+1\}$, while
\[ y_1 \geq y_2 \geq \dots \geq y_p\quand \tilde y_1 \geq  \tilde y_2 \geq \dots \geq \tilde y_p.\]

\item[(b)] If $p< k\leq N$ then $y_k=\tilde y_k = k$ and $\tilde x_k \in \{ x_k, x_k+1\}$, while 
\[ p \geq x_{p+1} \geq  x_{p+2} \geq \dots \geq x_N\quand p+1 \geq \tilde x_{p+1} \geq \tilde x_{p+2} \geq \dots \geq \tilde x_N.\]

\item[(c)] If $(x_p,y_p) \neq (\tilde x_p, \tilde y_p)$, then $p<N$ and
 $\regionSW(T,u) \cap \regionNE(T,u) = \{(p,p+1)\}$ and
\[
\begin{aligned} 
(p,p) &= (x_p,y_p), \\
(p,p+1) & = (\tilde x_p, \tilde y_p) = (x_{p+1},y_{p+1}), \\
(p+1,p+1) &= (\tilde x_{p+1},\tilde y_{p+1}).
\end{aligned}
\]
If instead $(x_p,y_p)  = (\tilde x_p, \tilde y_p)$, then $\regionSW(T,u) \cap \regionNE(T,u) =\varnothing$.

\een
\end{proposition}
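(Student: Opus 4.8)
The plan is to prove Proposition~\ref{bumping-prop} by a careful unwinding of Definition~\ref{iarrow-def} together with the structural facts already collected in Remark~\ref{iarrow-rmk}. Since the statement concerns the geometry of the two bumping paths, the natural proof is an induction on the iteration index, tracking at each step where $m$ and $M$ sit and in which direction the insertion proceeds. Throughout, recall from Remark~\ref{iarrow-rmk}(a) that every intermediate tableau $T_i$ is a shifted tableau with no primed diagonal entries and $\unprime(T_i)$ increasing, and from Remark~\ref{iarrow-rmk}(b),(c) that whenever $m \ne M$ in step~(2), the two boxes are horizontally adjacent, $M$ is unprimed, $\lceil w\rceil = \lceil m\rceil = \lceil M\rceil - 1$, and if $m$ is on the diagonal then in fact $m = T_{ii}$, $M = T_{i,i+1}$, and $T_{i+1,i+1} = m+2$.

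For part~(a), I would argue that during the ``row phase'' (iterations $1,\dots,p$), the current segment on iteration $i$ is row $i$, so by construction the box $(x_i,y_i)$ holding $m$ lies in row $i$, giving $x_i = i$; and $\tilde x_i = i$ likewise since $M$, when distinct from $m$, is in the box directly to the right of $m$ in the same row. That $\tilde y_i \in \{y_i, y_i+1\}$ is then immediate: either $m = M$ and $(\tilde x_i,\tilde y_i) = (x_i,y_i)$, or $M$ is in the box right after $m$ and $\tilde y_i = y_i + 1$. The monotonicity $y_1 \ge y_2 \ge \dots \ge y_p$ is the standard ``bumping paths move weakly left'' phenomenon: the entry $m$ bumped out of row $i$ equals some entry of row $i$ that was weakly to the left of (or at) the column where the new entry $w$ would be appended; when $m$ is reinserted into row $i+1$, since $\unprime(T_i)$ is column-increasing, $m$ lands weakly to the left of its former column. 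One must treat the prime-switching case~(2c) and the diagonal case~(2b), but in each of these the value being passed down a column or row is controlled by $\lceil w\rceil = \lceil m\rceil$, so the same column-comparison argument applies after removing primes; the monotonicity for the strict path follows from that of the weak path together with $\tilde y_i \in \{y_i, y_i+1\}$ and the fact that a strict-path column can exceed the weak-path column by at most one. Part~(b) is the mirror image: after the switch at iteration $p$ (which happens directly after the weak path hits the diagonal, so $x_p = y_p = p$, or no switch occurs and $p = N$), the current segment on iteration $k > p$ is column $k$, and the roles of rows/columns and the inequalities $x \le \cdot$ get transposed; the proof of~(a) ``conjugates'' to give~(b), using that $\unprime(T_k)$ has strictly increasing rows.

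For part~(c), the point is to pin down exactly what happens at the row-to-column transition. Suppose $(x_p,y_p) \ne (\tilde x_p, \tilde y_p)$. By part~(a), $x_p = \tilde x_p = p$ and $\tilde y_p = y_p + 1$; and since a switch occurs, iteration $p$ must end by placing $w+1$ into column $p+1$ (case~(2c) with $m$ on the diagonal, or more precisely the situation of Remark~\ref{iarrow-rmk}(c)), so $(x_p,y_p) = (p,p)$ forces $y_p = p$ and hence $(\tilde x_p,\tilde y_p) = (p,p+1)$. On iteration $p+1$ we insert into column $p+1$; by Remark~\ref{iarrow-rmk}(c) the entry $T_{p,p+1} = M = m+1$ currently occupies box $(p,p+1)$ and $T_{p+1,p+1} = m+2$, and the value being inserted, namely $\lceil w\rceil + 1 = m+1$, will displace $T_{p,p+1}$, so $(x_{p+1},y_{p+1}) = (p,p+1)$; moreover the strict box $(\tilde x_{p+1},\tilde y_{p+1})$ is the box directly below, namely $(p+1,p+1)$, since that is where $M$'s successor value $m+2$ sits. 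This gives the three displayed equalities. The claim $\regionSW(T,u) \cap \regionNE(T,u) = \{(p,p+1)\}$ then follows from the monotonicity in~(a) and~(b): $\regionSW(T,u)$ is the ``staircase'' union of lower-left quadrants anchored at the points $(\tilde x_1,\tilde y_1),\dots,(\tilde x_p,\tilde y_p)$, all of which have row index $\le p$, with the last one at $(p,p+1)$; $\regionNE(T,u)$ is the union of upper-right quadrants anchored at $(x_{p+1},y_{p+1}),\dots,(x_N,y_N)$, all with column index $\ge p+1$, the first at $(p,p+1)$. Because $y_1 \ge \dots \ge y_p = p$ on one side and $x_{p+1} \le p$, $x_{p+1} \ge \dots$ on the other, together with $\tilde y_p = p+1$ being the smallest relevant column for $\regionSW$ and $\tilde x_{p+1} = p+1$ being the largest relevant row for $\regionNE$, the only lattice point lying in both is $(p,p+1)$. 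If instead $(x_p,y_p) = (\tilde x_p,\tilde y_p)$, then no prime-switch-with-diagonal occurs at the transition; here either $p = N$ (no column phase, and $\regionNE(T,u) = \varnothing$), or the last row-phase box already lies strictly below-left of everything in the column phase, and a similar quadrant comparison shows the intersection is empty. I expect the main obstacle to be organizing the case analysis at the transition iteration cleanly — reconciling the three bookkeeping subcases~(2a),~(2b),~(2c) of Definition~\ref{iarrow-def} with the refined information in Remark~\ref{iarrow-rmk} — rather than any one computation; once the transition is understood, the monotonicity and the quadrant-intersection statements are routine. Given that the excerpt itself says ``We omit a detailed proof,'' I would present this as a concise verification rather than a fully spelled-out induction.
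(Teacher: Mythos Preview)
Your approach—unwind Definition~\ref{iarrow-def} and invoke Remark~\ref{iarrow-rmk}—is precisely what the paper intends; the paper gives no proof at all, saying only that the claims are ``straightforward to derive from the definitions and Remark~\ref{iarrow-rmk}.'' Your handling of parts~(a), (b), and the three displayed equalities in~(c) is correct (modulo the harmless slip ``directly below'' for ``directly above'' in French notation).

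There is one real gap, in the quadrant-intersection claim of~(c). You appeal only to the monotonicity of the path coordinates together with $\tilde y_p=p+1$ and $x_{p+1}=p$, but these facts alone do not exclude a point $(a,b)$ with $a<p$ and $b>p+1$: nothing you have written prevents $\tilde y_a\ge b$ and $x_b\le a$ from holding simultaneously. The missing ingredient is a \emph{value} argument. Write $w_j$ for the number inserted at iteration~$j$; then the ceilings $\lceil w_1\rceil<\lceil w_2\rceil<\cdots$ increase strictly through both the row and column phases. For $a<p$ and $p<k\le\tilde y_a$, the entry at position $(a,k)$ in the tableau just before iteration~$k$ has ceiling at most $\lceil w_{a+1}\rceil$ (it lies weakly left of the strict bump in row~$a$, and iterations $a+1,\dots,k-1$ do not touch it), which is strictly less than $\lceil w_k\rceil$ since $a+1\le p<k$; hence the column-$k$ bump occurs strictly above row~$a$, i.e.\ $x_k>a$. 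This forces $a=p$, whence $b\le\tilde y_p=p+1$ together with $b>p$ gives $(a,b)=(p,p+1)$. The empty-intersection case follows by the same reasoning using $\tilde y_p=p$.
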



We sometimes treat the sequences $\path^\leq(T,u)$ and $\path^<(T,u)$ as sets. 
This practice is justified as Proposition~\ref{bumping-prop} shows that 
the positions in each path are all distinct and  their order   is uniquely determined.

With $p$ as above, write \be
\rwpath(T,u):=\((x_i,y_i): i=1,2,\dots,p\)\quand \rspath(T,u):=\((\tilde x_i, \tilde y_i): i=1,2,\dots,p\)
\ee
for the first $p$ terms of $\wpath(T,u)$ and $\spath(T,u)$, and let
\be\label{cpath-eq}
\ba \cwpath(T,u)& :=\((x_i,y_i) : i=p+1,p+2,\dots,N\),
\\
 \cspath(T,u)&:=\((\tilde x_i, \tilde y_i) : i=p+1,p+2,\dots,N\).
 \ea
\ee
We think of these subsequences as the ``row-bumping paths'' and ``column-bumping paths''
from inserting $u$ into $T$.

Finally, if $\hat a$ is a primed involution word with $n=\ell(\hat a)$
and $i\in[n]$, then we let 
\[
\wpath_i(\hat a) := \path^\leq(T,\hat a_i)
\quand
\spath_i(\hat a) := \path^<(T,\hat a_i)
\quad\text{for }T := \PO(\hat a_1\hat a_2\cdots \hat a_{i-1}).\]
We define the sequences $\rwpath_i(\hat a)$, $\cwpath_i(\hat a)$, $\rspath_i(\hat a)$, and $\cspath_i(\hat a)$ analogously.

\begin{proposition}\label{bumping-prop2}
Let $\hat a=\hat a_1\hat a_2\cdots \hat a_n$  be a primed involution word and choose $i \in [n-1]$.
\ben

\item[(a)] Suppose $\hat a_{i+1} < \hat a_i$.
 In each row where
 $\rwpath_i(\hat a)$ and $\rwpath_{i+1}(\hat a)$ both have positions, the position in
 $\rwpath_i(\hat a)$
 is weakly to the right of the position in $\rwpath_{i+1}(\hat a)$.
Consequently, if $\wpath_i(\hat a)$ has a diagonal position,
then   $\wpath_{i+1}(\hat a)$ has a non-terminal diagonal position.

\item[(b)] Suppose $\hat a_i<\hat a_{i+1}$. In each row where
 $\rwpath_i(\hat a)$ and $\rwpath_{i+1}(\hat a)$ both have positions, the position in
 $\rwpath_i(\hat a)$
 is strictly to the left of the position in $\rwpath_{i+1}(\hat a)$.
Consequently, if $\wpath_{i+1}(\hat a)$ has a diagonal position,
then   $\wpath_i(\hat a)$ has a non-terminal diagonal position.

\een
\end{proposition}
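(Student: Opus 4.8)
The plan is to first reduce the statement to unprimed involution words and then prove it by induction down the rows, in the manner of the classical row-insertion (``row bumping'') lemma, while keeping track of the switch from row to column insertion. For the reduction, note that every test made in Definition~\ref{iarrow-def} — whether we are in case (2) or (3), which entries are $m$ and $M$ and where they sit, whether $m=M$, whether the relevant box is on the main diagonal, and where the new box of step (3) goes — depends only on the values $\lceil\cdot\rceil$ of the entries of the current segment and of the inserted letter (for instance, the condition ``$\lceil w\rceil$ is less than some entry'' is equivalent to ``$\lceil w\rceil < \lceil(\text{entry})\rceil$''). Combined with Remark~\ref{iarrow-rmk}(a), a routine induction shows that the sequences $\path^\leq(T,u)$, $\path^<(T,u)$ and the diagonal index $p$ depend only on $\unprime(T)$ and $\lceil u\rceil$; since $\unprime(\PO(\hat a_1\cdots\hat a_{i-1})) = \PO(\unprime(\hat a_1\cdots\hat a_{i-1}))$ by Proposition~\ref{unprime-tab-prop}, it follows that $\rwpath_i(\hat a) = \rwpath_i(\unprime(\hat a))$, $\wpath_i(\hat a) = \wpath_i(\unprime(\hat a))$, and likewise for $\cwpath_i$. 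As $a := \unprime(\hat a)$ is an involution word it has no equal adjacent letters, which rules out the tie $\lceil\hat a_i\rceil = \lceil\hat a_{i+1}\rceil$, so $\hat a_{i+1} < \hat a_i \iff a_{i+1} < a_i$ and $\hat a_i < \hat a_{i+1} \iff a_i < a_{i+1}$. Hence we may assume $\hat a = a$ is unprimed.

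In the unprimed case, set $u = a_i$, $v = a_{i+1}$, $T = \PO(a_1\cdots a_{i-1})$ and $T' = T\iarrow u = \PO(a_1\cdots a_i)$, so that $\rwpath_i(a)$ records the row-insertion phase of $T\iarrow u$ and $\rwpath_{i+1}(a)$ records the row-insertion phase of $T'\iarrow v$. I would prove the two horizontal-comparison claims by induction on the row number, imitating the Schensted/Edelman--Greene row bumping lemma: at each step one tracks both the relative left/right position of the two paths in the current row and the value each insertion carries down into the next row, using Proposition~\ref{bumping-prop} (the row phase occupies rows $1,\dots,p$ with $y_1 \ge y_2 \ge \cdots$) to recognize when an insertion leaves the row phase. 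The ``consequently'' clauses then follow by inspecting the row in which a path meets the diagonal. In case (a), if $\wpath_i(a)$ has a diagonal position $(p,p)$, the horizontal comparison forces $\rwpath_{i+1}(a)$ to pass through $(p,p)$ (the only box of row $p$ weakly left of $(p,p)$), and the corresponding step of the $v$-insertion must bump an already-present diagonal entry — the row in question is nonempty, again by Remark~\ref{iarrow-rmk}, because $\row(T)v$ is an involution word — rather than append a new box; the algorithm therefore passes to column insertion and the diagonal position is non-terminal. In case (b), the strict comparison prevents $\rwpath_i(a)$ from reaching the row of $\wpath_{i+1}(a)$'s diagonal position (no box lies strictly left of a diagonal box in its own row), so $\wpath_i(a)$ leaves its row phase earlier; a short case check (using that $\rwpath_i(a)$ cannot terminate at a diagonal box without contradicting the strict comparison there) shows $\wpath_i(a)$ meets the diagonal by bumping and then continues, hence has a non-terminal diagonal position.

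The hard part will be the bookkeeping around the main diagonal and the row-to-column switch inside the induction. The classical row bumping lemma has no diagonal, so the configurations peculiar to Definition~\ref{iarrow-def} — an insertion appending a new box on the diagonal via step (3), and an insertion bumping a diagonal entry and continuing into a column via case (2b) or (2c) — each have to be reconciled with the claimed inequalities and with the ``non-terminal'' conclusions. Making the inductive hypothesis strong enough to propagate simultaneously the horizontal comparison and the carried-down values (so one can detect where each path first terminates), while keeping it valid at the moment a path hits the diagonal, is the crux of the argument.
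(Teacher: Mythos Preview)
Your proposal is correct and follows essentially the same approach as the paper, which likewise invokes the classical row-bumping principle (together with Remark~\ref{iarrow-rmk} and Proposition~\ref{bumping-prop}) and leaves the detailed bookkeeping as a direct check; your explicit reduction to the unprimed case via Proposition~\ref{unprime-tab-prop} is a clean addition. One small correction: in your ``consequently'' argument for (a), the reason row $p$ of $T'$ is nonempty is simply that $(p,p)$ appears in $\wpath_i(\hat a)$ and hence is a box of $T' = T\iarrow u$ (either already present in $T$ and bumped, or newly added), not that ``$\row(T)v$ is an involution word.''
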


\begin{proof}
Both parts can be checked directly, using Remark~\ref{iarrow-rmk}
and Proposition~\ref{bumping-prop},
together with the general principle that in a given row, after inserting a number which bumps 
some box (and then possibly increasing entries to the right of this box as a result of subsequent
column insertions),
inserting a smaller number will always bump a box that is weakly farther to the left,
while inserting a larger number will always bump a box that is strictly farther to the right.
\end{proof}

\subsection{Controlling cycle migration}\label{cyc-sect1}

Fix $z \in I_\ZZ$ and recall the definition of $\gamma_i(a) \in \{\emptyset\}\sqcup\cyc(z)$ for   $a \in \iR(z)$ from \eqref{gammagamma-def}.
Suppose $T$ is a shifted tableau 
 and $b$ is a word 
such that $\row(T)b \in \iR(z)$.
For   $(i,j) \in \ZZ\times\ZZ$, 
let 
\be\label{garray-eq}\gamma_{ij}(T,b) := \begin{cases}
\emptyset &\text{if $(i,j)$ is not in the domain of $T$} \\
\gamma_k(\row(T)b) &\text{if $(i,j)$ is in the domain of $T$}, 
\end{cases}
\ee where 
 $k$ is the index of the letter in $\row(T)$ contributed by box $(i,j)$.
We also let $\gamma_{ij}(T) := \gamma_{ij}(T,\emptyset)$.

The main result of this section is a lemma that precisely describes 
how the values of \eqref{garray-eq} evolve when we insert the first letter of $b$ into $T$
via Definition~\ref{iarrow-def}. 
In Section~\ref{cyc-sect2},
we will use this lemma to explain how 
to compute 
 $\PO(\hat a)$ and $\QO(\hat a)$ 
from $\PO(a)$, $\QO(a)$, and the set $\marked(\hat a)$ when
$\hat a$ is primed involution word 
with $a=\unprime(\hat a)$.

\begin{example}
\label{Tb-ex-eq}
 If
 $T = \PO(51324) = \smalltab{ \none & 3 & 5 \\ 1 &2 & 4}
$
and $ b = 3154$
then 
we have
\[\ytableausetup{boxsize = 1.6cm,aligntableaux=center}
\begin{ytableau} \none & \gamma_{22}(T,b) & \gamma_{23}(T,b) \\ \gamma_{11}(T,b)  &\gamma_{12}(T,b)  & \gamma_{13}(T,b) 
\end{ytableau}
=
\begin{ytableau} \none &\{1,6\}& \{3,4\}\\ \{2,5\} & \emptyset & \emptyset
\end{ytableau}.\]
\end{example}

Below, we assume that the shifted tableau $T$ is increasing and the unprimed word $b$ is nonempty with first letter $u \in \ZZ$. 
Let $c$ be the subword of $b$
formed by removing its first letter. Denote  
the weak and strict bumping paths
resulting from inserting $u$ into $T$ as in \eqref{denote-bp-eq}, so that $N$ is 
the common length of both paths.
Set $u_0 = u$ and 
write $u_i$ for the entry of $T$ 
in position $(\tilde x_i, \tilde y_i)$ for $i \in [N-1]$. 
Then define $\theta_0 := \gamma_{|T|+1}(\row(T)b)$
where $|T|$ is the number of boxes in $T$
and let
\be\label{theta-def}
\theta_{i} := \begin{cases}
\gamma_{x_iy_i}(T,b) & \text{if }(x_i,y_i)=(\tilde x_i, \tilde y_i)\text{ and either }x_i \neq y_i\text{ or }u_{i-1} + 1 < u_i \\
\theta_{i-1} & \text{otherwise}\end{cases}
\ee
for $i \in [N-1]$. For each $0\leq i <N$ we have $\theta_{i} \in \{\emptyset\}\sqcup \cyc(z)$.

\begin{example}\label{Tb-ex-eq2}
Let $T  = \PO(51324) 
$
and $ b = 3154$
as in Example~\ref{Tb-ex-eq}. Then $u=3$  and
\[
\ba \path^\leq(T,u)&=\((x_i,y_i) : i=1,2,3\)
=\((1,3),(2,3),(3,3)\),
\\
 \path^<(T,u)&=\((\tilde x_i, \tilde y_i): i=1,2,3\)
 =\((1,3),(2,3),(3,3)\),
\ea
\]
so $u_0 = 3$, $u_1=4$, and $u_2=5$, while $\theta_0  = \emptyset$, $\theta_1 = \emptyset$, and $\theta_2 = \{3,4\}$.
\end{example}

\begin{lemma}\label{gamma-lem}  
For each position $(x,y)$ in the domain of $U := T\iarrow u$, the following holds:
\ben
\item[(a)] If $(x,y) = (x_i,y_i) = (\tilde x_i, \tilde y_i)$ for some $i \in [N]$, 
then
\[
U_{xy} = u_{i-1}\quand 
\gamma_{xy}(U, c) = \begin{cases} \gamma_{xy}(T, b) & \text{if $x=y$ and $i<N$ and $u_{i-1}+1=u_i$} \\
\theta_{i-1} &\text{otherwise}.\end{cases}\]

\item[(b)] If $(x,y) \in\{ (x_i,y_i) \neq (\tilde x_i, \tilde y_i)\}$ for some $i \in [N]$
with $x_i\neq y_i$ and $\tilde x_i \neq \tilde y_i$,
then
\[
U_{xy} = T_{xy}\quand
\gamma_{xy}(U, c) = \begin{cases} 
\gamma_{\tilde x_i \tilde y_i}(T,b) &\text{if }(x,y) = (x_i,y_i)
\\
\gamma_{x_i  y_i}(T,b) &\text{if }(x,y) = (\tilde x_i,\tilde y_i).
\end{cases}
\]

\item[(c)] If $(x,y) \in
\{(i,i),(i,i+1),(i+1,i+1)\}$ for  some $i \in [N]$
with $x_i= y_i \neq \tilde y_i$, 
then
\[
U_{xy} = T_{xy}\quand
\gamma_{xy}(U, c) = \begin{cases} 
\gamma_{i+1,i+1}(T,b) \neq \emptyset &\text{if }(x,y) = (i,i)
\\
\gamma_{i,i+1}(T,b) = \emptyset &\text{if }(x,y) = (i,i+1)
\\
\gamma_{ii}(T,b) \neq \emptyset &\text{if }(x,y) = (i+1,i+1).
\end{cases}
\]
In this case $(x_i,y_i) = (i,i)$, $(\tilde x_i, \tilde y_i) = (x_{i+1},y_{i+1}) = (i,i+1)$, and $(\tilde x_{i+1}, \tilde y_{i+1}) = (i+1,i+1)$.

\item[(d)] Otherwise,
$(x,y) \notin \path^\leq(T,u)\cup \path^<(T,u)$, 
$
U_{xy} = T_{xy}
$, and
$
\gamma_{xy}(U, c)  = \gamma_{xy}(T, b)
.$
\een
\end{lemma}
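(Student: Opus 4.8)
The plan is to track the arrays $\gamma_{\bullet\bullet}(\cdot,\cdot)$ through the insertion $T\iarrow u$ by following, step by step, the chain of shifted tableaux $\tilde T_0,\tilde T_1,\dots,\tilde T_N$ constructed in the proof of Proposition~\ref{o-lem2}. Recall that $\tilde T_0$ is $T$ with a copy of $u$ appended, that $\tilde T_N=U:=T\iarrow u$, and that for each $i$ the reading words satisfy $\row(\tilde T_{i-1})\simICK\row(\tilde T_i)$ (or the analogous statement with $\col$ once the bump has passed the main diagonal), the $\simICK$-chain between consecutive terms being a bounded sequence of braid and far-commutation moves localized in the row or column of the $i$-th bump. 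Appending the fixed tail $c$ to every one of these reading words leaves all the equivalences intact, and every move stays within the first $|U|$ letters, so none of them touches $c$; since $\row(T)b=\row(T)\,u\,c$ it therefore suffices to follow how the values $\gamma_k$ of the whole word evolve under each move. This is exactly what Lemma~\ref{marked-lem} supplies: a far-commutation at positions $j,j+1$ interchanges $\gamma_j$ and $\gamma_{j+1}$ (so, in effect, each $\gamma$-value simply travels with its letter), a braid at positions $j,j+1,j+2$ interchanges $\gamma_j$ and $\gamma_{j+2}$, and all other $\gamma$-values are unchanged.

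The first step is to prove, by induction on $i$ from $0$ to $N$, an explicit formula for the decorated tableau $(x,y)\mapsto\bigl((\tilde T_i)_{xy},\ \gamma_{xy}(\tilde T_i,c)\bigr)$: the appended cell of $\tilde T_i$ (when $i<N$) carries entry $u_i$ and value $\theta_i$; every cell of $T$ not yet met by the bump still carries $(T_{xy},\gamma_{xy}(T,b))$ (all of the moves so far either fix such a cell's position together with its $\gamma$, or merely slide the cell leftward carrying the same $\gamma$); and cells already crossed by a bumping path carry the values dictated by the applicable case of Definition~\ref{iarrow-def}(2). The base case $i=0$ is immediate, with $\theta_0=\gamma_{|T|+1}(\row(T)b)$ being by definition the value attached to the appended copy of $u$. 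In the inductive step $\tilde T_{i-1}\to\tilde T_i$ one reads off the block of braid/far-commutation moves dictated by the relevant case of Definition~\ref{iarrow-def}(2): in case (2a) --- a clean bump of a box $(x_i,y_i)=(\tilde x_i,\tilde y_i)$ lying off the diagonal --- that box's $\gamma$ migrates ``upward with the bumped entry'' and becomes the new $\theta_i=\gamma_{x_iy_i}(T,b)$, while the box itself inherits $\theta_{i-1}$, which matches \eqref{theta-def} and the off-diagonal instance of part (a); in the case $m\neq M$ with both of the adjacent boxes $(x_i,y_i),(\tilde x_i,\tilde y_i)$ off the diagonal (a far-commutation, then the braid $m\,M\,m\leftrightarrow M\,m\,M$, then another far-commutation) the two attached $\gamma$-values swap while $\theta_i=\theta_{i-1}$, giving part (b); and the terminal iteration $i=N$, which is step (3), contributes one new box carrying $\theta_{N-1}$ (equal to $\theta_0$ when $N=1$), the $i=N$ instance of part (a). Taking $i=N$ then yields the lemma, part (d) being precisely the statement that a box never crossed by a bumping path keeps $(T_{xy},\gamma_{xy}(T,b))$.

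The part I expect to be hardest is the behaviour at the main diagonal, which is where the on-diagonal instance of part (a), all of part (c), and the ``$u_{i-1}+1<u_i$'' clause of \eqref{theta-def} enter. There one must distinguish case (2b) ($m=M$ on the diagonal) from the case $m\neq M$ with $m=T_{pp}$, $M=T_{p,p+1}=m+1$, and $T_{p+1,p+1}=m+2$, the latter producing the three-box displacement $(p,p),(p,p+1),(p+1,p+1)$ of Proposition~\ref{bumping-prop}(c) and part (c); and one must establish the assertions there that $\gamma_{pp}(T,b)$ and $\gamma_{p+1,p+1}(T,b)$ are nonempty while $\gamma_{p,p+1}(T,b)=\emptyset$ --- this follows from Remark~\ref{iarrow-rmk}, Proposition~\ref{despite-prop}, and the observation that the relevant index of the reading word becomes a fresh commutation. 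One also has to check that the diagonal reading-word manipulation in the proof of Proposition~\ref{o-lem2} (the passage through the auxiliary tableaux $V$, $W$ and $\tilde V=\tilde W$) consists, once all primes are dropped, of braid and far-commutation moves only, so that Lemma~\ref{marked-lem} still applies verbatim. The remaining point --- that each individual row- or column-bump really does realize the claimed subchain of Coxeter--Knuth moves --- is routine but lengthy; it is the shifted analogue of the classical Edelman--Greene bumping lemma and is already contained, in its unprimed form, in \cite{Marberg2019b}.
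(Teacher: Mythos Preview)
Your approach is essentially the paper's: both track the arrays $\gamma_{xy}$ through the sequence $\tilde T_0,\dots,\tilde T_N$ from Proposition~\ref{o-lem2} using Lemma~\ref{marked-lem}, and both invoke the auxiliary tableaux $V$, $W$ and their diagonal reading words at the row-to-column transition.

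There is one point that needs correcting. You write that the diagonal reading-word manipulation ``consists, once all primes are dropped, of braid and far-commutation moves only.'' This is not so: reversing the main diagonal of $V$ requires swapping consecutive integers once they have been commuted to the \emph{front} of the word, and that swap is neither a braid nor a far commutation --- it is exactly the initial-two-letter swap handled by part (c) of Lemma~\ref{marked-lem}. The crucial feature of part (c) is that it \emph{fixes} every $\gamma_i$ rather than interchanging $\gamma_1$ and $\gamma_2$; this is precisely what produces the exceptional clause $\gamma_{xy}(U,c)=\gamma_{xy}(T,b)$ when $x=y$ and $u_{i-1}+1=u_i$ in part (a), and what forces the $(p,p)\leftrightarrow(p+1,p+1)$ swap in part (c) of the lemma while leaving the value at $(p,p+1)$ equal to $\emptyset$. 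If only parts (a) and (b) of Lemma~\ref{marked-lem} were in play, the $\gamma$-values would always travel with their letters and you would get the wrong answer in exactly these diagonal cases. The paper makes this explicit by formulating and proving three separate claims (1)--(3) for the transition $\Gamma^\row(\tilde T_{p-1})\to\Gamma^\col(\tilde T_p)$, each relying on part (c) at the key step. With that adjustment, your sketch lines up with the paper's argument.
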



\begin{proof}
Suppose $V$ is a shifted tableau with all entries in $\ZZ$.
If we are given a total ordering $(i_1,j_1) < (i_2,j_2) < (i_3,j_3) <\dots$ of the boxes of $V$
such that the entries read in this order form an involution word $a$,
then we can define a tableau $\Gamma$ of the same shape as $V$ whose 
entry in box $(i_k,j_k)$ is the value of $\gamma_k(a)$.
Let $\Gamma^\row(V)$, $\Gamma^\col(V)$, $\Gamma^\swdiag(V)$, and $\Gamma^\nediag(V)$ denote the tableaux constructed in this way relative to the row, column, southwest diagonal, and northeast diagonal reading orders, respectively. These tableaux are only well-defined when the corresponding reading words are involution words.

If $V$ is an increasing shifted tableau with $\row(V) \in \iR(z)$,
then $\col(V)$ is also in $\iR(z)$ by Lemma~\ref{simK-lem},
so $\Gamma^\row(V)$ and $\Gamma^\col(V)$ are both defined.
In this case, since $\row(V)$ is transformed by $\col(V)$ by a sequence of swaps involving non-consecutive letters in adjacent positions (which we will refer to as ``commutations'' for the rest of this proof, slightly abusing our previous terminology),
it follows from part (a) of Lemma~\ref{marked-lem}
that we actually have $\Gamma^\row(V)=\Gamma^\col(V)$.

We now turn to the claims in lemma. The assertions about the values of $U_{xy}$
are straightforward from Definition~\ref{iarrow-def} since there are no repeated positions in the relevant bumping paths. It remains to justify the formulas for $\gamma_{xy}(U,c)$.
Define $T=T_0,T_1,T_2,\dots ,T_N = T\iarrow u = U$
and $\tilde T_i$ 
 as in the proof of Proposition~\ref{o-lem2}, and suppose there are exactly $p \in [N]$ iterations involving row insertion in the process to construct $T\iarrow u$.
 Because all of these tableaux have only unprimed entries, the numbers 
$u_i$ defined in the proof  of Proposition~\ref{o-lem2} coincide with the numbers $u_i$ defined above in this section.

Now consider the tableaux $\Gamma^\row(\tilde T_i)$ for $i<p$ and $\Gamma^\col(\tilde T_i)$ for $i\geq p$,
which are all well-defined by \eqref{will-be-shown-eq0} and \eqref{will-be-shown-eq}.
Figure~\ref{running-fig2} shows two examples of this sequence.
We may assume without loss
of generality that $b$ has length one so that $c$ is empty. 
Then the first tableau $\Gamma^\row(\tilde T_0)$ has value $\gamma_{xy}(T,b)$ for all $(x,y) \in T$
and its last box in the first row (containing $u=u_0$ in $\tilde T_0$) has value $\theta_0$. 
On the other hand, we have $\Gamma^\col(\tilde T_N)=\Gamma^\row(\tilde T_N) = \Gamma^\row(U)$ 
as $T_N = T\iarrow u = U$ is increasing with row reading word in $\iR(z)$.
Thus, each box $(x,y)$ in $\Gamma^\row(\tilde T_N)=\Gamma^\col(\tilde T_N)$ has 
entry $\gamma_{xy}(U,c)$ and our goal is to show that this value is as described by the given formulas.

\begin{figure}[h]
\[
\(T= \ytab{ \none & 5 & 6 \\ 1 & 3 & 4}\ \)\iarrow \(u=2\) \leadsto\left\{ \ba
 \Gamma^\row(\tilde T_0)&= \Gamma^\row\( \ytab{ \none & 5 & 6 \\ 1 & 3 & 4&2  }\) =   
 {\ytableausetup{boxsize = 0.75cm,aligntableaux=center}\scriptsize\begin{ytableau} 
 \none & \{4,7\} & \emptyset \\ \{1,3\} & \{2,5\} & \emptyset & \emptyset  \end{ytableau}},
\\[-8pt]\\
 \Gamma^\row(\tilde T_1)&=  \Gamma^\row\(\ytab{ \none & 5 & 6 & 3 \\ 1 & 2 & 4   }\)= {\ytableausetup{boxsize = 0.75cm,aligntableaux=center}\scriptsize\begin{ytableau} \none & \{4,7\} & \emptyset & \{2,5\} \\ \{1,3\} & \emptyset & \emptyset   \end{ytableau}},
\\[-8pt]\\
 \Gamma^\col(\tilde T_2) &=  \Gamma^\col\( \ytab{\none &\none & 5 \\ \none & 3 & 6   \\ 1 & 2 & 4    }\)   ={\ytableausetup{boxsize = 0.75cm,aligntableaux=center}\scriptsize\begin{ytableau}
\none &\none & \{4,7\} \\
 \none & \{2,5\} & \emptyset  \\ \{1,3\} & \emptyset & \emptyset    \end{ytableau}},
\\[-8pt]\\
  \Gamma^\col(\tilde T_3)&=\Gamma^\col\( \ytab{ \none & 3 & 5   \\ 1 & 2 & 4  & 6}\)=
{\ytableausetup{boxsize = 0.75cm,aligntableaux=center}\scriptsize\begin{ytableau}
 \none & \{2,5\} & \{4,7\}   \\ \{1,3\} & \emptyset & \emptyset  & \emptyset
\end{ytableau}}.
\ea\right.
\]
\caption{Example for the proof of Lemma~\ref{gamma-lem}; compare with Figure~\ref{running-fig1}(a).}\label{running-fig2}
\end{figure}

\begin{figure}[h]
\[ 
\(T= \ytab{ \none & \none & 7 \\  \none & 5 & 6 \\ 1 & 3 & 5} \ \) \iarrow \(u=4\)\leadsto\left\{
\ba
 \Gamma^\row(\tilde T_0)&= \Gamma^\row\( \ytab{ \none & \none & 7 \\  \none & 5 & 6 \\ 1 & 3 & 5 & 4}\)  ={\ytableausetup{boxsize = 0.75cm,aligntableaux=center}
\scriptsize\begin{ytableau} \none & \none & \{4,8\} \\  \none & \{6,7\} & \emptyset \\ \{1,2\} & \{3,5\} & \emptyset & \emptyset \end{ytableau}},
\\[-8pt]\\
 \Gamma^\row(\tilde T_1)&= \Gamma^\row\(\ytab{ \none & \none & 7 \\  \none & 5 & 6 & 5 \\ 1 & 3 & 4}\)=  {\ytableausetup{boxsize = 0.75cm,aligntableaux=center}\scriptsize\begin{ytableau} \none & \none & \{4,8\} \\  \none & \{6,7\} & \emptyset & \emptyset \\ \{1,2\} & \{3,5\} & \emptyset\end{ytableau}},
\\[-8pt]\\
 \Gamma^\col(\tilde T_2)&=  \Gamma^\col\(\ytab{ \none & \none & 6 \\  \none & \none & 7 \\  \none & 5 & 6 \\ 1 & 3 & 4}\)= {\ytableausetup{boxsize = 0.75cm,aligntableaux=center}
\scriptsize\begin{ytableau} \none & \none & \emptyset \\  \none & \none & \emptyset \\  \none & \{4,8\} & \{6,7\} \\ \{1,2\} & \{3,5\} & \emptyset\end{ytableau}},
\\[-8pt]\\
   \Gamma^\col(\tilde T_3)&=  \Gamma^\col\(\ytab{ \none & \none & 7 \\  \none & 5 & 6 \\ 1 & 3 & 4 & 7}\)={\ytableausetup{boxsize = 0.75cm,aligntableaux=center}
\scriptsize\begin{ytableau} \none & \none & \{6,7\} \\  \none & \{4,8\} & \emptyset \\ \{1,2\} & \{3,5\}& \emptyset & \emptyset\end{ytableau}}.
  \ea\right.\]
\caption{Example for the proof of Lemma~\ref{gamma-lem}; compare with Figure~\ref{running-fig1}(b).}\label{running-fig3}
\end{figure}

For each $i$ let $\varphi_i$ be the entry of $\Gamma^\row(\tilde T_i)$ in the unique box that is not in $T$,
so that $\varphi_0 =\theta_0$.
First choose $i \in[p-1]$ so that $(x_i,y_i)$ is not on the main diagonal.
If $(x_i,y_i)=(\tilde x_i,\tilde y_i)$,
then we can transform $\row(\tilde T_{i-1})$ to $\row(\tilde T_{i})$ using only commutations, so
it follows from part (a) of Proposition~\ref{o-lem2}
that $\Gamma^\row(\tilde T_i)$ is formed from $\Gamma^\row(\tilde T_{i-1})$
by moving box $(x_i,y_i)$ to the end of row $i+1$
and then moving $\varphi_{i-1}$ from the end of row $i$ to replace box $(x_i,y_i)$.
Likewise, if $(x_i,y_i)\neq (\tilde x_1,\tilde y_1)$,
then transforming $\row(\tilde T_{i-1})$ to $\row(\tilde T_i)$ will involve one braid relation as we must have 
$(\tilde x_i,\tilde y_i) = (x_i,y_i+1)$ and
$u_{i-1} = T_{x_iy_i} = T_{\tilde x_i\tilde y_i}-1$.
In this case it follows using parts (a) and (b) of Proposition~\ref{o-lem2}
that  $\Gamma^\row(\tilde T_i)$ is formed from $\Gamma^\row(\tilde T_{i-1})$
by moving $\varphi_{i-1}$ from the end of row $i$ to the end of row $i+1$ 
and switching the entries in the adjacent boxes 
$(x_i,y_i)$ and $ (\tilde x_i,\tilde y_i)$.

It follows by induction that $\varphi_i = \theta_i$ for all $i \in [p-1]$.
When $p=N$, 
these observations describe a precise sequence of transitions that take us from 
$\Gamma^\row(\tilde T_{0})$ to $\Gamma^\row(\tilde T_{N})$.
Comparing this process with  the definition of $\theta_i$
shows that the desired formulas for $\gamma_{xy}(U,c)$ all hold.

Assume instead that $p<N$. 
It follows by similar reasoning that if $p<i\leq N$, 
then $\Gamma^\col(\tilde T_i)$ is formed from $\Gamma^\col(\tilde T_{i-1})$ in one of two ways. 
If $(x_i,y_i)=(\tilde x_i,\tilde y_i)$,
then we move box $(x_i,y_i)$ to the end of column $i+1$
and then move $\varphi_{i-1}$ from the end of column $i$ to replace box $(x_i,y_i)$.
If $(x_i,y_i)\neq (\tilde x_i,\tilde y_i)$,
then we move $\varphi_{i-1}$ from the end of column $i$ to the end of column $i+1$ and switch the entries in boxes
$(x_i,y_i)$ and $ (\tilde x_i,\tilde y_i)$.

It remains to compare $\Gamma^\row(\tilde T_{p-1})$ with $\Gamma^\col(\tilde T_{p})$.
We wish to justify the following claims:
\ben
\item[(1)] If $(x_p,y_p)=(\tilde x_p,\tilde y_p)=(p,p)$ and $u_{p-1}+1<u_p$,
then $\Gamma^\col(\tilde T_{p})$ is formed from $\Gamma^\row(\tilde T_{p-1})$
by moving box $(p,p)$ to the end of column $p+1$
and then moving $\varphi_{p-1}$ to replace box $(p,p)$.

\item[(2)] If $(x_p,y_p)=(\tilde x_p,\tilde y_p)=(p,p)$ and $u_{p-1}+1=u_p$,
then $\Gamma^\col(\tilde T_{p})$ is formed from $\Gamma^\row(\tilde T_{p-1})$
by moving $\varphi_{p-1}$ from the end of row $p$ to the end of column $p+1$.

\item[(3)] If $(x_p,y_p)=(p,p)$ and $(\tilde x_p,\tilde y_p)=(p,p+1)$,
then $\varphi_{p-1}$ and box $(p,p+1)$ of $\Gamma^\row(\tilde T_{p-1})$ are both the null element $\emptyset$, while boxes $(p,p)$ and $(p+1,p+1)$ are both present with respective non-null elements $\alpha$ and $\beta$. 
 In this case, 
$\Gamma^\col(\tilde T_{p})$ is formed from $\Gamma^\row(\tilde T_{p-1})$ 
by removing $\varphi_{p-1}$ and placing $\emptyset$ in boxes $(p+1,p+1)$ and $(p+2,p+1)$,
  $\alpha$ in box $(p,p+1)$, and $\beta $ in box $(p,p)$.

  \item[(4)] Together, (2) and (3) imply that if $(x_p,y_p)=(p,p)$ and $(\tilde x_p,\tilde y_p)=(p,p+1)$,
then $p<N$ and $\Gamma^\col(\tilde T_{p+1})$ is formed from $\Gamma^\row(\tilde T_{p-1})$ 
by moving $\varphi_{p-1} = \emptyset$ from the end of row $p$ to the end of column $p+2$
and then swapping the entries in boxes $(p,p)$ and $(p+1,p+1)$;
moreover, both tableaux have $\emptyset$ in position $(p,p+1)$.
\een
Putting together these claims with our observations about 
$\Gamma^\row(\tilde T_{i})$ for $i<p$ and $\Gamma^\col(\tilde T_{i})$ for $i>p$
completely describes how
$\Gamma^\row(\tilde T_{0})$ evolves into $\Gamma^\col(\tilde T_{N})=\Gamma^\row(\tilde T_{N})$
during the bumping process that defines $T\iarrow u$.
Once again, comparing this process with  the definition of $\theta_i$
shows that the desired formulas for $\gamma_{xy}(U,c)$ all hold.

It remains to prove claims (1), (2), and (3). 
The first two claims correspond to the case when $u_{p-1}  < T_{pp}$.
For this situation, define $V$ and $W$ as in the $\lceil u_{p-1} \rceil < T_{pp}$ case of the proof of Proposition~\ref{o-lem2}. 
Since $\row(\tilde T_{p-1}) = \row(V)$,
it follows that $\Gamma^\row(V)$ has the same entry in box $(2i,2j)$ (respectively, box $(2p-1,2p-1)$) as $\Gamma^\row(\tilde T_{p-1})$ does in each box $(i,j) \in T$ (respectively, the unique box not in $T$).
Likewise, as $\col(\tilde T_{p}) = \col(W)$,
it follows that $\Gamma^\col(W)$ has the same entry in each box $(2i,2j)$ (respectively, box $(2p+1,2p+1)$) as $\Gamma^\col(\tilde T_{p})$ does in each box $(i,j) \in T$ (respectively, the unique box not in $T$). Finally, since the row reading word of $V$ (respectively, the column reading word of $W$) can be transformed to its southwest (respectively, northeast) diagonal  reading word 
by a sequence of commutations as described in the proof of Proposition~\ref{o-lem2},
we deduce from part (a) of Lemma~\ref{marked-lem} that $\Gamma^\row(V)= \Gamma^\swdiag(V) $ and $\Gamma^\col(W)=\Gamma^\nediag(W) $.
One can observe these properties for the example in Figure~\ref{running-fig2}, where we have
 \[
 \Gamma^\row(V)= \Gamma^\swdiag(V)  =   \Gamma^\swdiag\(\smalltab{
     \none[\cdot] &  \none[\cdot] &  \none[\cdot] & \none[\cdot]&  \none[\cdot] & \none[\cdot] \\
  \none[\cdot] &  \none[\cdot] &  \none[\cdot] &5&  \none[\cdot] & 6 \\
  \none[\cdot] &  \none[\cdot] & 3&  \none[\cdot] &  \none[\cdot] &  \none[\cdot] \\
  \none[\cdot] & 1 &  \none[\cdot] & 2 &  \none[\cdot] &4 \\
\none[\cdot]&  \none[\cdot] &  \none[\cdot] &  \none[\cdot] &  \none[\cdot] &  \none[\cdot]
  }\)
  =
{\ytableausetup{boxsize = 0.75cm,aligntableaux=center}\scriptsize\begin{ytableau}
     \none[\cdot] &  \none[\cdot] &  \none[\cdot] & \none[\cdot]&  \none[\cdot] & \none[\cdot] \\
  \none[\cdot] &  \none[\cdot] &  \none[\cdot] &\{4,7\}&  \none[\cdot] & \emptyset \\
  \none[\cdot] &  \none[\cdot] & \{2,5\}&  \none[\cdot] &  \none[\cdot] &  \none[\cdot] \\
  \none[\cdot] & \{1,3\} &  \none[\cdot] & \emptyset &  \none[\cdot] & \emptyset \\
\none[\cdot]&  \none[\cdot] &  \none[\cdot] &  \none[\cdot] &  \none[\cdot] &  \none[\cdot]
  \end{ytableau}}
\]
and
\[
\Gamma^\col(W)=\Gamma^\nediag(W) =   \Gamma^\nediag\(\smalltab{
   \none[\cdot] &  \none[\cdot] &  \none[\cdot] & \none[\cdot]& 5 & \none[\cdot] \\
  \none[\cdot] &  \none[\cdot] &  \none[\cdot] &3&  \none[\cdot] & 6 \\
  \none[\cdot] &  \none[\cdot] & \none[\cdot]&  \none[\cdot] &  \none[\cdot] &  \none[\cdot] \\
  \none[\cdot] & 1 &  \none[\cdot] & 2 &  \none[\cdot] &4 \\
\none[\cdot]&  \none[\cdot] &  \none[\cdot] &  \none[\cdot] &  \none[\cdot] &  \none[\cdot]
  }\)
  =
  {\ytableausetup{boxsize = 0.75cm,aligntableaux=center}\scriptsize\begin{ytableau}
     \none[\cdot] &  \none[\cdot] &  \none[\cdot] & \none[\cdot]& \{4,7\} & \none[\cdot] \\
  \none[\cdot] &  \none[\cdot] &  \none[\cdot] &\{2,5\}&  \none[\cdot] & \emptyset \\
  \none[\cdot] &  \none[\cdot] & \none[\cdot]&  \none[\cdot] &  \none[\cdot] &  \none[\cdot] \\
  \none[\cdot] & \{1,3\} &  \none[\cdot] & \emptyset &  \none[\cdot] &\emptyset \\
\none[\cdot]&  \none[\cdot] &  \none[\cdot] &  \none[\cdot] &  \none[\cdot] &  \none[\cdot]
    \end{ytableau}}.
 \]
 
Given the observations in the preceding paragraph, to prove claims (1) and (2), we just need to check that $\Gamma^\nediag(W)$ is formed from $\Gamma^\swdiag(V)$
either by shifting boxes $(2p-1,2p-1)$ and $(2p,2p)$ up one row and one column when 
 $u_{p-1}+1<u_p$, or by moving box $(2p-1,2p-1)$ to $(2p+1,2p+1)$ when $u_{p-1}+1=u_p$.
This is equivalent to showing that   $\Gamma^\nediag(V)=\Gamma^\swdiag(V)$ when $u_{p-1}+1<u_p$
and that 
$\Gamma^\nediag(V)$ is formed from $\Gamma^\swdiag(V)$ by swapping boxes $(2p-1,2p-1)$ and $(2p,2p)$ when $u_{p-1}+1=u_p$.
In the first case, the diagonals of $V$ have no consecutive entries and so
 can be reordered using only commutations, so the identity $\Gamma^\nediag(V)=\Gamma^\swdiag(V)$ follows from part (a) of Lemma~\ref{marked-lem}.
When $u_{p-1}+1=u_p$, we can also reverse all diagonals in $V$ using only commutations
to go from  the southwest diagonal reading word to northeast diagonal reading word,
except for one step that exchanges the consecutive numbers in boxes $(2p-1,2p-1)$ and $(2p,2p)$ when these have been pulled to the start of the relevant word.
By part (c) of Lemma~\ref{marked-lem}, this has the effect of swapping boxes $(2p-1,2p-1)$ and $(2p,2p)$ in $\Gamma^\swdiag(V)$ to form $\Gamma^\nediag(V)$, as desired.
We conclude that our first two claims (1) and (2) both hold.

Suppose instead that we are in the situation of claim (3), so that $u_{p-1}  = T_{pp}$.
It follows from Remark~\ref{iarrow-rmk}(d) that $\varphi_{p-1}$ and box $(p,p+1)$ of $\Gamma^\row(\tilde T_{p-1})$ are both null.
Define $V$ as in the $\lceil u_{p-1} \rceil = T_{pp}$ case of the proof of Proposition~\ref{o-lem2}. 
Since we can transform $\row(\tilde T_{p-1})$ to $ \row(V)$ by a sequence of commutations
followed by one braid relation, it follows from Lemma~\ref{marked-lem}
that
\begin{itemize}
\item  box $(2p+1,2p+1)$ of 
$\Gamma^\row(V)$ has the same entry as the box of $\Gamma^\row(\tilde T_{p-1})$ not in $T$;

\item box $(2p,2p)$ of $\Gamma^\row(V)$ has the same entry as box $(p,p+1)$ of $\Gamma^\row(\tilde T_{p-1})$;

\item box $(2p,2p+2)$ of $\Gamma^\row(V)$ has the same entry as box $(p,p)$ of $\Gamma^\row(\tilde T_{p-1})$;

\item any other box $(2i,2j)$ of $\Gamma^\row(V)$ has the same entry as box $(i,j)$ of $\Gamma^\row(\tilde T_{p-1})$.

\end{itemize}
Alternatively, as $\col(\tilde T_{p}) = \col(V)$,
it follows that $\Gamma^\col(V)$ has the same entry in each box $(2i,2j)$ (respectively, box $(2p+1,2p+1)$) as $\Gamma^\col(\tilde T_{p})$ does in each box $(i,j) \in T$ (respectively, the unique box not in $T$). Finally, since the row reading word of $V$ (respectively, the column reading word of $V$) can be transformed to its southwest (respectively, northeast) diagonal  reading word 
by a sequence of commutations, we have $\Gamma^\row(V)= \Gamma^\swdiag(V) $ and $\Gamma^\col(V)=\Gamma^\nediag(V) $.
One can observe these properties in the example in Figure~\ref{running-fig3}, where we have
\[
 \Gamma^\row(V)= \Gamma^\swdiag(V)  =   \Gamma^\swdiag\(
 \smalltab{
     \none[\cdot] &  \none[\cdot] &  \none[\cdot] & \none[\cdot]&  \none[\cdot] & \none[\cdot] \\
     \none[\cdot] &  \none[\cdot] &  \none[\cdot] & \none[\cdot]&  \none[\cdot] & 7\\
     \none[\cdot] &  \none[\cdot] &  \none[\cdot] & \none[\cdot]& 6& \none[\cdot] \\
  \none[\cdot] &  \none[\cdot] &  \none[\cdot] &5&  \none[\cdot] & 6 \\
  \none[\cdot] &  \none[\cdot] & \none[\cdot]&  \none[\cdot] &  \none[\cdot] &  \none[\cdot] \\
  \none[\cdot] & 1 &  \none[\cdot] & 3 &  \none[\cdot] &4 \\
\none[\cdot]&  \none[\cdot] &  \none[\cdot] &  \none[\cdot] &  \none[\cdot] &  \none[\cdot]
  }\)
  =  {\ytableausetup{boxsize = 0.75cm,aligntableaux=center}\scriptsize\begin{ytableau}
       \none[\cdot] &  \none[\cdot] &  \none[\cdot] & \none[\cdot]&  \none[\cdot] & \none[\cdot] \\
     \none[\cdot] &  \none[\cdot] &  \none[\cdot] & \none[\cdot]&  \none[\cdot] & \{4,8\}\\
     \none[\cdot] &  \none[\cdot] &  \none[\cdot] & \none[\cdot]& \emptyset & \none[\cdot] \\
  \none[\cdot] &  \none[\cdot] &  \none[\cdot] &\emptyset&  \none[\cdot] & \{6,7\} \\
  \none[\cdot] &  \none[\cdot] & \none[\cdot]&  \none[\cdot] &  \none[\cdot] &  \none[\cdot] \\
  \none[\cdot] & \{1,2\} &  \none[\cdot] & \{3,5\} &  \none[\cdot] &\emptyset \\
\none[\cdot]&  \none[\cdot] &  \none[\cdot] &  \none[\cdot] &  \none[\cdot] &  \none[\cdot]
 \end{ytableau}}.
\]
and
\[
 \Gamma^\col(V)= \Gamma^\nediag(V)  =   \Gamma^\nediag\(
 \smalltab{
     \none[\cdot] &  \none[\cdot] &  \none[\cdot] & \none[\cdot]&  \none[\cdot] & \none[\cdot] \\
     \none[\cdot] &  \none[\cdot] &  \none[\cdot] & \none[\cdot]&  \none[\cdot] & 7\\
     \none[\cdot] &  \none[\cdot] &  \none[\cdot] & \none[\cdot]& 6& \none[\cdot] \\
  \none[\cdot] &  \none[\cdot] &  \none[\cdot] &5&  \none[\cdot] & 6 \\
  \none[\cdot] &  \none[\cdot] & \none[\cdot]&  \none[\cdot] &  \none[\cdot] &  \none[\cdot] \\
  \none[\cdot] & 1 &  \none[\cdot] & 3 &  \none[\cdot] &4 \\
\none[\cdot]&  \none[\cdot] &  \none[\cdot] &  \none[\cdot] &  \none[\cdot] &  \none[\cdot]
  }\)
  =  {\ytableausetup{boxsize = 0.75cm,aligntableaux=center}\scriptsize\begin{ytableau}
       \none[\cdot] &  \none[\cdot] &  \none[\cdot] & \none[\cdot]&  \none[\cdot] & \none[\cdot] \\
     \none[\cdot] &  \none[\cdot] &  \none[\cdot] & \none[\cdot]&  \none[\cdot] & \emptyset \\
     \none[\cdot] &  \none[\cdot] &  \none[\cdot] & \none[\cdot]& \emptyset & \none[\cdot] \\
  \none[\cdot] &  \none[\cdot] &  \none[\cdot] &\{4,8\}&  \none[\cdot] & \{6,7\} \\
  \none[\cdot] &  \none[\cdot] & \none[\cdot]&  \none[\cdot] &  \none[\cdot] &  \none[\cdot] \\
  \none[\cdot] & \{1,2\} &  \none[\cdot] & \{3,5\} &  \none[\cdot] &\emptyset \\
\none[\cdot]&  \none[\cdot] &  \none[\cdot] &  \none[\cdot] &  \none[\cdot] &  \none[\cdot]
 \end{ytableau}}.
\]

By the facts just listed, to prove claim (3), it suffices to check that $\Gamma^\nediag(V)$ is formed from $\Gamma^\swdiag(V)$ by swapping boxes $(2p,2p)$ and $(2p+2,2p+2)$.
For this, observe that we can reverse the diagonals of $V$ to go from the southwest diagonal reading word to the northeast diagonal reading word using only commutations, except when we need to reorder the consecutive entries in boxes $(2p,2p)$, $(2p+1,2p+1)$, and $(2p+2,2p+2)$ after these have been brought to the start of the relevant word.
Since this reordering is accomplished by the sequence of swaps 
$(u_p+2)(u_p+1) u_p\cdots \to  (u_p+1)(u_p+2) u_p\cdots \to (u_p+1)u_p(u_p+2)\cdots\to u_p(u_p+1)(u_p+2)\cdots$, it follows from parts (a) and (c) of Lemma~\ref{marked-lem} that exchanging boxes $(2p,2p)$ and $(2p+2,2p+2)$ 
in $\Gamma^\swdiag(V)$ produces $\Gamma^\nediag(V)$, as needed.
The completes the proof of claim (3), which also finishes the proof of the lemma.
\end{proof}

\subsection{A formula to compute primed boxes from marked cycles}\label{cyc-sect2}

Suppose $\hat a$ is a primed involution word with unprimed form $a= \unprime(\hat a)$.
In this section we will develop some notation to express a formula for 
$\PO(\hat a)$ and $\QO(\hat a)$ in terms of $\PO(a)$, $\QO(a)$, and the set of marked cycles $\marked(\hat a)$.

In more detail, 
if $a=a_1a_2\cdots a_n \in \iR(z)$ for some $z \in I_\ZZ$
and $T = \emptyset \iarrow a_1\iarrow \dots \iarrow a_i$ for some $i \in [n]$,
then the entries of $T$ on the main diagonal form a strictly increasing sequence
and the indices of these entries in $\row(T) a_{i+1}a_{i+2}\cdots a_n$
are a sequence of commutations that each contribute one 2-cycle of $z$.
Arranging these sequences into a two-line array gives 
what we   call the \defn{cycle sequence} $\cseq_i(a)$.
The successive values of $\cseq_i(a)$ for $i=1,2,\dots,n$
can only change in a small of number of ways. Our main formula will involve a permutation of $\cyc(z)$
defined by these changes.
 
 As in Section~\ref{cyc-sect1}, suppose
 $T$ is an increasing shifted tableau 
 and $b$ is a word with $\row(T)b \in \iR(z)$.
 If $T$ has exactly $q$ rows, then  the \defn{cycle sequence}
 $\cseq(T,b)$ is the two-line array
\be\cseq(T,b) := \left[\barr{llll} \gamma_{11}(T,b) & \gamma_{22}(T,b) & \dots & \gamma_{qq}(T,b)
\\
T_{11} & T_{22} & \dots & T_{qq}
\earr\right]
.\ee
If $T = \PO(51324)
$
and $ b = 3154$ as in Example~\ref{Tb-ex-eq}
then
\[
 \cseq(T,b) = \left[\barr{ll} \{2,5\} & \{1,6\} \\ 1 & 3 \earr\right] = \cseq_5(513243154)
 . \]
The second row of $\cseq(T,b)$ is strictly increasing
and the elements in the
 first row are distinct 2-cycles of $z$,
since  the index of
$T_{ii}$ in $\row(T)b$ is a commutation for all diagonal positions $(i,i)$ in $T$.
 For involution words $a=a_1a_2\cdots a_n$ and $0\leq i \leq n$, we define
$ \cseq_i(a) := \cseq(T,b)$
where $T = \PO(a_1a_2\cdots a_i)$ and $b=a_{i+1}a_{i+2}\cdots a_n.$

We introduce some auxiliary notation to help compare $ \cseq_i(a) $ with $ \cseq_{i-1}(a)$.
Assume $b$ is nonempty and 
let $u=u_0$ be its first letter. Denote 
the weak and strict bumping paths
resulting from inserting $u$ into $T$ as in \eqref{denote-bp-eq}.
Set $u_i := T_{\tilde x_i \tilde y_i}$ for $i \in [N-1]$ and define 
 $\theta_0 := \gamma_{|T|+1}(\row(T)b)$ and $\theta_i$ for $i \in [N-1]$ by \eqref{theta-def}.
Finally, define the sequence
\be\label{dbump-eq}
\dbump(T,b) := \( (y_i,\tilde y_i, u_{i-1}, \theta_{i-1}) : i=1,2,\dots,p\)\ee
where $p$ is the index of the unique diagonal position in $ \path^\leq(T,u)$
or else $p=N$.

Continuing from Example~\ref{Tb-ex-eq2},
we see that if $T  = \PO(51324) 
$
and $ b = 3154$
then $p=3$ and $\dbump(T,b) = \( (1,1,3,\emptyset), (2,2,4,\emptyset), (3,3,5,\{3,4\})\)$.
We think of $\dbump(T,b) $ as a record of the change between $T\iarrow u$ and $T$, 
and we can use it to compute successive values of $\theta_i$ by the formula
\be\label{succ-theta-eq} \theta_i = \begin{cases}
\gamma_{i,y_i}(T,b)
&\text{if $y_i=\tilde y_i$ and either $i \neq y_i$ or $u_{i - 1} +1 <u_i$} \\ 
\theta_{i-1}&\text{otherwise}
\end{cases}
\quad\text{for $i \in [p-1]$.}\ee
For any involution word $a=a_1a_2\cdots a_n \in \iR(z)$ and $j \in [n]$,
define $\dbump_j(a) := \dbump(T,b)$ where $T = \PO(a_1a_2\cdots a_{j-1})$
and $b=a_ja_{j+1}\cdots a_n$.
The following result shows that $ \cseq_{j}(a)$ 
is completely determined by $ \cseq_{j-1}(a)$ and $\dbump_j(a)$.


\begin{lemma}\label{cseq-lem}
 Let $a$ be an (unprimed) involution word and choose $j \in [\ell(a)]$. 
 Suppose 
 \[
 \cseq_{j-1}(a) = 
  \left[\barr{llll} \gamma_1 & \gamma_2 & \dots & \gamma_q
\\
c_1 & c_2 & \dots & c_q \earr\right]
\quand
\dbump_j(a)= \{ (y_i,\tilde y_i, u_{i-1}, \theta_{i-1})\}_{i \in [p]}
.\]
Exactly one of the following cases applies:
 \ben
  \item[(a)] The sequence $\wpath_j(a)$ ends before reaching the main diagonal if and only if $p<y_p$.
  In this case $i$ appears in $\QO(a)$ in an off-diagonal position and $\cseq_{j}(a) = \cseq_{j-1}(a)$.

  \item[(b)]  The sequence $\wpath_j(a)$ terminates on the main diagonal if and only if $p=y_p=\tilde y_p=q+1$.
  In this case  $i$ appears in $\QO(a)$ in position $(q+1,q+1)$ and
   \[
 \cseq_{j}(a) = 
  \left[\barr{lllll} \gamma_1 & \gamma_2 & \dots & \gamma_q & \theta_{q}
\\
c_1 & c_2 & \dots & c_q & u_{q} \earr\right]
.\]
  
\item[(c)] The sequences $\wpath_j(a)$ and  $\spath_j(a)$ reach (but do not terminate on) the main diagonal in the same row 
if and only if $p=y_p = \tilde y_p \leq q$. In this case $i'$ appears in $\QO(a)$ and we have 
\[u_{p-1}+1\leq   c_p\quand \cseq_{j}(a) = 
  \left[\barr{lllllll} \gamma_1  & \dots & \gamma_{p-1} & \eta & \gamma_{p+1} & \dots & \gamma_q
\\
c_1  & \dots & c_{p-1} & u_{p-1} & c_{p+1} & \dots & c_q \earr\right],
\]
where  $\eta:=\gamma_p$ if $u_{p-1}+1=c_p$ and $\eta:=\theta_{p-1}$ if $u_{p-1}+1<c_p$.

\item[(d)] The sequences $\wpath_j(a)$ and  $\spath_j(a)$ reach the main diagonal in different rows
if and only if
 $p=y_p < \tilde y_p=p+1 \leq q$. In this case $i'$ appears in $\QO(a)$ and we have 
 \[u_{p-1}=c_p \quand \cseq_{j}(a) = 
  \left[\barr{llllllll} \gamma_1  & \dots & \gamma_{p-1} & \gamma_{p+1} & \gamma_p & \gamma_{p+2} & \dots & \gamma_q
\\
c_1  & \dots & c_{p-1} & c_p & c_{p+1} &c_{p+2} & \dots & c_q \earr\right].
\]

 \een
 \end{lemma}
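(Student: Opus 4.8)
The plan is to unwind both sides through Lemma~\ref{gamma-lem}, which already records precisely how every value $\gamma_{xy}$ changes under one insertion step. Write $T := \PO(a_1a_2\cdots a_{j-1})$, $b := a_ja_{j+1}\cdots a_n$, $u := a_j$, $U := T\iarrow u = \PO(a_1a_2\cdots a_j)$, and $c := a_{j+1}\cdots a_n$, so that $\cseq_{j-1}(a) = \cseq(T,b)$, $\cseq_j(a) = \cseq(U,c)$, and $\dbump_j(a) = \dbump(T,b)$. Since the cycle sequence records only the main diagonal, the proof reduces to identifying which diagonal boxes of $U$ differ from those of $T$ (in entry or in $\gamma$-value), which new diagonal box if any is created, and whether the step ends in row or column insertion; each of the four cases then falls out.

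First I would pin down the case division purely from the geometry of $\wpath_j(a) = \path^\leq(T,u)$. By Proposition~\ref{bumping-prop} this path has at most one main-diagonal position; if it has none then $p = N$ and the terminal (new) box $(x_N,y_N) = (N,y_N)$ is strictly off the diagonal, i.e.\ $p < y_p$, and the algorithm never enters the column phase, so the new box is off-diagonal and the step ends in row insertion (case (a)). If the path does meet the diagonal at $(p,p)$, then either it terminates there, in which case the box must be the newly created $(q+1,q+1)$ (an existing diagonal box of $T$ is occupied, so step~(3) of Definition~\ref{iarrow-def} cannot place a box at $(p,p)$ for $p\le q$), forcing $p = y_p = \tilde y_p = q+1$ (case (b)); or it passes through an existing diagonal box $(p,p)$ with $p\le q$, so iteration $p$ is a bump governed by case (2)(b) or (2)(c) of Definition~\ref{iarrow-def}. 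In the former, $m = M = T_{pp}$, so $(\tilde x_p,\tilde y_p) = (p,p)$ and $\tilde y_p = p$ (case (c)); in the latter, $m = T_{pp}\ne M = T_{p,p+1}$, so by Remark~\ref{iarrow-rmk} and Proposition~\ref{bumping-prop}(c), $(\tilde x_p,\tilde y_p) = (p,p+1)$ and the column phase begins with strict position $(p+1,p+1)$, giving $\tilde y_p = p+1$ (case (d)). These conditions are mutually exclusive and exhaustive, and in cases (c) and (d) the off-diagonal terminal box is reached in the column phase (a second diagonal position in $\wpath_j(a)$ being forbidden), so the step ends in column insertion.

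Next I would compute $\cseq(U,c)$ from $\cseq(T,b)$ in each case via Lemma~\ref{gamma-lem}, together with the monotonicity of the coordinates in Proposition~\ref{bumping-prop}. In cases (a) and (b) every path position of row index $\le q$, and every column-phase position, is strictly off-diagonal, so each existing diagonal box of $T$ falls under Lemma~\ref{gamma-lem}(d) and is untouched; in case (b) the extra box $(q+1,q+1)$ is the terminal one, so Lemma~\ref{gamma-lem}(a) with $i = N = p = q+1$ (the clause ``$i<N$'' failing) gives entry $u_{p-1} = u_q$ and $\gamma$-value $\theta_{p-1} = \theta_q$, i.e.\ the column with $\theta_q$ over $u_q$ is appended. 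In case (c), box $(p,p)$ lies on both paths, so Lemma~\ref{gamma-lem}(a) with $i = p$ (valid since $p<N$ as the column phase is nonempty) gives $U_{pp} = u_{p-1}$, with $\gamma_{pp}(U,c)$ equal to $\gamma_{pp}(T,b)$ when $u_{p-1}+1 = u_p = c_p$ and to $\theta_{p-1}$ otherwise; the bump condition $\lceil w\rceil < \lceil m\rceil$ from Definition~\ref{iarrow-def}(2) gives $u_{p-1}+1\le c_p$, and all other diagonal boxes are unchanged, producing the stated column replacement. In case (d), the boxes $(p,p),(p,p+1),(p+1,p+1)$ form exactly the configuration of Lemma~\ref{gamma-lem}(c) with $i = p$: all three entries are preserved, $\gamma_{p,p+1}(U,c) = \emptyset$, and $\gamma_{pp}$ is swapped with $\gamma_{p+1,p+1}$; since $(p,p+1)$ is off-diagonal this simply transposes the two $\gamma$-entries in columns $p$ and $p+1$ of the cycle sequence while fixing its bottom row, and case (2)(c) of Definition~\ref{iarrow-def} forces $\lceil w\rceil = \lceil m\rceil$, hence $u_{p-1} = T_{pp} = c_p$. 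Finally the claim about where $\QO(a)$ receives $j$ or $j'$ is read off from Definitions~\ref{iarrow-def} and \ref{o'-eg-def} according to the insertion direction found in the previous step.

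The main obstacle is the bookkeeping in the last two steps: keeping the index $p$, the coordinates $(x_i,y_i),(\tilde x_i,\tilde y_i)$, the inserted values $u_i$, the records $\theta_i$, and the diagonal data $(\gamma_i,c_i)$ all correctly aligned, and especially verifying rigorously that in each case no diagonal box other than the named ones is disturbed. The crucial input for this is the ``at most one main-diagonal position per bumping path'' assertion of Proposition~\ref{bumping-prop}, together with its monotonicity statements; everything beyond that is a direct transcription of Lemma~\ref{gamma-lem}.
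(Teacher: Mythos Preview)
Your proposal is correct and follows essentially the same approach as the paper: the paper's proof simply cites Proposition~\ref{bumping-prop} for the exhaustive case division, Definition~\ref{iarrow-def} for the inequalities on $u_{p-1}$, and Lemma~\ref{gamma-lem} for the resulting formulas for $\cseq_j(a)$, and your argument is a faithful expansion of exactly these three citations. Your added verification that in each case no diagonal box other than the named ones lies on either bumping path (via the ``at most one diagonal position'' assertion preceding Proposition~\ref{bumping-prop} together with its monotonicity statements) is the right way to make the paper's ``immediate from the formulas in Lemma~\ref{gamma-lem}'' explicit.
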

 

 \begin{proof}
 The assertion that exactly one of these cases applies follows from Proposition~\ref{bumping-prop}.
The claims about $u_{p-1}$ in cases (c) and (d) are clear from how $\PO(a_1a_2\cdots a_{j_1}) \iarrow a_j$ is defined.
The description of
 $\cseq_j(a)$ is immediate from the formulas in  Lemma~\ref{gamma-lem}.
\end{proof}


Putting all of this together, we associate a permutation of $\binom{\ZZ}{2} := \{ \{ i,j\} : i,j \in \ZZ,\ i<j\}$
to each involution word. 
Let $a=a_1a_2\dots a_n$ be an (unprimed) involution word for some $z \in I_\ZZ$. 
For each $i \in[n]$, let $\tpi_i(a)$ be the following permutation of $\binom{\ZZ}{2}$ with support in $\cyc(z)$.
If $\cseq_{i-1}(a)$ and $\cseq_{i}(a)$ are equal or have different lengths then  $\tpi_i(a) := 1$.
 Otherwise,
writing 
\[\cseq_{i-1}(a) =   \left[\barr{llll} \gamma_1 & \gamma_2 & \dots & \gamma_q
\\
c_1 & c_2 & \dots & c_q \earr\right]
\quand
\cseq_{i}(a) =   \left[\barr{llll} \eta_1 & \eta_2 & \dots & \eta_q
\\
d_1 & d_2 & \dots & d_q \earr\right],\]
there is either a unique index $j \in [q]$ with $d_j < c_j$,
or a unique index $j \in [q-1]$ with $\gamma_{j+1}=\eta_j \neq \gamma_j=\eta_{j+1}$,
and in both cases
we define $\tpi_i(a)$ to be the transposition of $\binom{\ZZ}{2}$ that swaps  $\eta_j$ and $ \gamma_{j}$ while fixing all other elements.
We then let $\tpi(a) := \tpi_1(a) \tpi_2(a) \cdots \tpi_n(a)$.

\begin{example}\label{cseq-complete-ex}
Suppose $a = 513243154$.
This word belongs to $\iR(z)$ for $z=(1,6)(2,5)(3,4) \in I_\ZZ$.
The successive values of $\PO(a_1a_2\cdots a_i)$ are
\[ 
\ba
 \smalltab{   5}
&&  \smalltab{  1 & 5}
&& \smalltab{  \none & 5 \\ 1 & 3}
&& \smalltab{  \none & 3 \\ 1 & 2 & 5}
&&\smalltab{ \none & 3 & 5 \\ 1 & 2 & 4}
\\[-8pt]\\&&   \smalltab{ \none& \none & 5 \\ \none & 3 & 4 \\ 1 & 2 & 3}
&&\smalltab{ \none& \none & 5 \\ \none & 3 & 4 \\ 1 & 2 & 3 & 4}
&& \smalltab{ \none& \none & 5 \\ \none & 3 & 4 \\ 1 & 2 & 3 & 4 & 5}
&& \smalltab{ \none& \none & 5 \\ \none & 3 & 4 & 5 \\ 1 & 2 & 3 & 4 & 5}
 \ea
\]
and the successive values of $\gamma_{xy}(T,b)$
for $T = \PO(a_1a_2\cdots a_i)$ and $b=a_{i+1}a_{i+2}\cdots a_9$ are
\[
\ytableausetup{boxsize = 0.70cm,aligntableaux=center}
{\tiny
\ba
 \begin{ytableau}   \{3,4\}\end{ytableau}
&&    \begin{ytableau}   \{2,5\} & \{3,4\}\end{ytableau}
&&   \begin{ytableau}   \none & \{3,4\} \\ \{2,5\} & \{1,6\}\end{ytableau}
&&   \begin{ytableau}   \none & \{1,6\} \\ \{2,5\} & \emptyset & \{3,4\}\end{ytableau}
&&   \begin{ytableau} \none & \{1,6\} & \{3,4\} \\ \{2,5\} & \emptyset & \emptyset\end{ytableau}
\\[-8pt]\\
 &&   \begin{ytableau} \none& \none & \{3,4\} \\ \none & \{1,6\} & \emptyset \\ \{2,5\} & \emptyset & \emptyset\end{ytableau}
&&   \begin{ytableau} \none& \none & \{3,4\} \\ \none & \{2,5\} & \emptyset \\ \{1,6\} & \emptyset & \emptyset & \emptyset\end{ytableau}
&&   \begin{ytableau} \none& \none & \{3,4\} \\ \none & \{2,5\} & \emptyset \\ \{1,6\} & \emptyset & \emptyset & \emptyset & \emptyset\end{ytableau}
&&   \begin{ytableau} \none& \none & \{3,4\} \\ \none & \{2,5\} & \emptyset & \emptyset \\ \{1,6\} & \emptyset & \emptyset & \emptyset & \emptyset\end{ytableau}.
 \ea
}
\]
Thus, we have 
\[{\small
\ba \cseq_1(a) &=  \left[\barr{lllllllll} \{3,4\} \\ 5 \earr\right],
&
\cseq_4(a) =  \cseq_5(a) &=  \left[\barr{lllllllll}\{2,5\} & \{1,6\} \\ 1 & 3 \earr\right], 
\\
\cseq_2(a) &=  \left[\barr{lllllllll} \{2,5\} \\ 1 \earr\right],
&
\cseq_6(a) &=  \left[\barr{lllllllll}\{2,5\} & \{1,6\} & \{3,4\} \\ 1 & 3 & 5 \earr\right],
\\
\cseq_3(a) &=  \left[\barr{lllllllll} \{2,5\} & \{3,4\} \\ 1 & 5 \earr\right],
&
\cseq_7(a) =\cseq_8(a) =\cseq_9(a) &=  \left[\barr{lllllllll}\{1,6\} & \{2,5\} & \{3,4\} \\ 1 & 3 & 5 \earr\right],
\ea}
\]
which means that 
$\tpi_1(a)=\tpi_3(a)=\tpi_5(a)=\tpi_6(a)=\tpi_8(a)=\tpi_9(a) = 1$
while
\[
\tpi_2(a) = ( \{2,5\} \leftrightarrow \{3,4\}),
\quad
\tpi_4(a) = ( \{1,6\} \leftrightarrow \{3,4\}),
\quand
\tpi_7(a) = ( \{1,6\} \leftrightarrow \{2,5\}),
\]
so we have
$\tpi(a) =  ( \{1,6\} \leftrightarrow \{3,4\})$.
\end{example}

Suppose $\hat a$ is a primed involution word with $a = \unprime(\hat a)$. 
Recall the definition of the set  $\marked(\hat a)$ from \eqref{marked-def}.
The following result is complementary to Proposition~\ref{unprime-prop}
and gives the second key ingredient in our proof of Theorem~\ref{ck-fkd-thm}.
This proposition reduces the task of locating the (diagonal) primes in 
$\PO(\hat a)$ and $\QO(\hat a)$ to understanding  $\tau(a)$ and $\marked(\hat a)$.

\begin{proposition}\label{tau-prop}
Suppose $\hat a \in \iR^+(z)$ and $a =\unprime(\hat a)$. Let $(i,j) \in \ZZ\times \ZZ$ and $\theta= \gamma_{ij}(\PO(a))$.
 If $i \neq j$ (respectively, $i=j$), then the  entry of $\PO(\hat a)$ (respectively $\QO(\hat a)$) 
 in position $(i,j)$ is primed if and only if 
$\theta \neq \emptyset$ and $ \tpi(a)(\theta) \in \marked(\hat a)$.
\end{proposition}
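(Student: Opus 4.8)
The plan is to prove Proposition~\ref{tau-prop} by induction on $n = \ell(\hat a)$, peeling off the \emph{last} letter $\hat a_n$ and tracking how the insertion of $\hat a_n$ into $\PO(\hat a_1\cdots \hat a_{n-1})$ moves the relevant primed box around. The base case $n \le 1$ is trivial. For the inductive step, write $\hat b = \hat a_1\cdots \hat a_{n-1}$ and $b = \unprime(\hat b)$, and let $T = \PO(b)$, $U = \PO(a) = T \iarrow a_n$. The first key observation is that whether a given box of $U$ is primed (if off-diagonal) or a given diagonal entry of $\QO(a)$ is primed is governed \emph{entirely} by the set $\marked(\hat a)$ together with the array of cycle-labels $\gamma_{xy}(U)$, via the rule being proven; and by Lemma~\ref{gamma-lem} we know exactly how the array $\gamma_{xy}(U,c)$ (with $c$ empty here) is obtained from $\gamma_{xy}(T,b)$ under the bumping of $a_n$. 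So I would first establish, directly from Definition~\ref{iarrow-def} and Remark~\ref{iarrow-rmk}(d), that a box of $U$ receives a primed entry exactly when the letter $\hat a_n$ (or the off-diagonal number currently being inserted, which inherits its prime status) is primed and lands there, or when a previously-primed box is displaced without losing its prime; and symmetrically on the diagonal of $\QO(a)$, the new diagonal box is primed iff the insertion ends in column insertion with $\hat a_n$ primed. This connects ``being primed'' to ``the inserted prime'' and hence to whether the 2-cycle $\gamma_{|T|+1}(\row(T)a_n) = \theta_0$ lies in $\marked(\hat a) = \marked(\hat b) \sqcup \{\theta_0 \text{ if } \hat a_n \in \ZZ'\}$.

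Next I would compare $\tpi(a)$ with $\tpi(b)$. By definition $\tpi(a) = \tpi(b)\,\tpi_n(a)$, and $\tpi_n(a)$ is the transposition (or identity) read off from the single step $\cseq_{n-1}(a) \to \cseq_n(a)$, which is precisely the change recorded by $\dbump_n(a)$ and classified into cases (a)--(d) of Lemma~\ref{cseq-lem}. The heart of the argument is then a case analysis matching the four cases of Lemma~\ref{cseq-lem} against the four cases of Lemma~\ref{gamma-lem}: in each case one must check that for every box $(i,j)$, the ``new'' predicate ``$\gamma_{ij}(U) \ne \emptyset$ and $\tpi(a)(\gamma_{ij}(U)) \in \marked(\hat a)$'' agrees with the ``old'' predicate applied at the box of $T$ from which this box (or its label) descended, together with the correct bookkeeping for the one box genuinely created or re-primed by inserting $\hat a_n$. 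Concretely: when $\hat a_n$ is unprimed, $\marked(\hat a) = \marked(\hat b)$, no off-diagonal prime is created, and $\tpi_n(a)$ either fixes all labels in $\marked$ (cases (a),(b),(c) with $\eta = \gamma_p$ or $\theta_{p-1}$ inert) or transposes two labels in a way that exactly compensates for the relabelling $\gamma_{xy}(T,b) \rightsquigarrow \gamma_{xy}(U)$ dictated by Lemma~\ref{gamma-lem}(b)--(c); this is where the definition of $\tpi_n(a)$ as ``the transposition swapping $\eta_j$ and $\gamma_j$'' was engineered to cancel the cycle migration. When $\hat a_n \in \ZZ'$, additionally $\theta_0$ enters $\marked$, the final box of the insertion path becomes primed, and one checks using $\theta_N = \theta_0$ (since the prime rides along the strict bumping path and is deposited at the terminal box, with $\gamma$ at the terminal box equal to $\theta_{N-1}$, which in the relevant subcase equals $\theta_0$) that the predicate fires at exactly that box and nowhere else new.

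I expect the main obstacle to be the bookkeeping in case (c) of Lemma~\ref{cseq-lem} (equivalently case (c) of Lemma~\ref{gamma-lem}): when the weak and strict bumping paths both reach the diagonal in row $p$, a diagonal box of $\PO$ either gets its entry decremented (with label unchanged, the $u_{p-1}+1 = c_p$ subcase where $\eta = \gamma_p$) or gets replaced by a smaller value carrying the \emph{new} label $\theta_{p-1}$ (the $u_{p-1}+1 < c_p$ subcase where $\eta = \theta_{p-1}$), and simultaneously the box $(p,p+1)$ that receives a new $\ZZ$-entry, plus the box $(p+1,p+1)$, all have their $\gamma$-values permuted. Verifying that $\tpi_n(a)$ (which in this case is the transposition swapping $\gamma_p$ with $\theta_{p-1}$, or the identity) precisely undoes this permutation so that the ``primed iff $\tpi(a)(\theta) \in \marked$'' test is preserved box-by-box — including for the freshly primed diagonal box of $\QO(a)$ when $\hat a_n$ is primed and the insertion ends in column insertion there — requires carefully cross-referencing Remark~\ref{iarrow-rmk}(c)--(d), Proposition~\ref{despite-prop}, and Proposition~\ref{marked-prop}. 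Once the correspondence in this case is pinned down, cases (a), (b), (d) are comparatively routine, and the induction closes.
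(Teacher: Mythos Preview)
Your plan is close in spirit to the paper's proof --- both proceed step by step through the insertion --- but there is a real error in your setup, and the paper organizes the induction in a cleaner way that sidesteps it.

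The error is the identity ``by definition $\tpi(a)=\tpi(b)\,\tpi_n(a)$.'' This is false. Each factor $\tpi_i(a)$ is defined from $\cseq_{i-1}(a)$ and $\cseq_i(a)$, and $\cseq_i(a)$ uses the \emph{full} word $a$ (the suffix $a_{i+1}\cdots a_n$ enters through the cycle labels $\gamma_{jj}$). So $\tpi_i(a)$ is a permutation supported on $\cyc(z)$, whereas $\tpi_i(b)$, built from the shorter word $b=a_1\cdots a_{n-1}$, is supported on $\cyc(y)$ where $y$ is the involution with $b\in\iR(y)$. These are not the same permutation; the correct relation is $\tpi_i(a)=s_{a_n}\,\tpi_i(b)\,s_{a_n}^{-1}$ for $i<n$, where $s_{a_n}$ acts on $\binom{\ZZ}{2}$ in the obvious way. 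Likewise $\marked(\hat a)$ and $\marked(\hat b)$ live in $\cyc(z)$ and $\cyc(y)$ respectively and are related through $s_{a_n}$, not by a simple inclusion. Your box-by-box plan can be repaired by carrying this conjugation through every step, but you never mention it, and without it the predicates you are trying to match do not even live in the same set.

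The paper avoids this difficulty entirely by \emph{not} doing induction on $n$. Instead it works forward over $j=1,\dots,n$ while always keeping the suffix $b^j=\hat a_{j+1}\cdots\hat a_n$: writing $T^j=\bPO(\hat a_1\cdots\hat a_j)$, one shows the single set identity
\[
\marked(\row(T^j)b^j)=\tpi_j(a)\bigl(\marked(\row(T^{j-1})b^{j-1})\bigr)
\]
at each step. Both sides are subsets of $\cyc(z)$ throughout, so no $y$-versus-$z$ translation is needed. The verification of this identity is exactly the case split you identify (cases (a)--(d) of Lemma~\ref{cseq-lem}), but instead of chasing individual primed boxes, the paper observes that the word $\row(T^{j-1})b^{j-1}$ is carried to $\row(T^j)b^j$ (or, in case~(d), to the word with two diagonal primes switched) by a chain of $\ck_i$-moves that preserve $\marked$; this is read off from the proof of Proposition~\ref{o-lem2} combined with Proposition~\ref{marked-prop}. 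At the end, $\marked(\row(\bPO(\hat a)))$ literally \emph{is} the set of cycle labels $\gamma_{xy}(\PO(a))$ at primed boxes, so the proposition follows.

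Your description of how primes move is also slightly off: in step (2)(a) of Definition~\ref{iarrow-def} the inserted number's prime is \emph{deposited} and the bumped entry's (possibly different) prime is picked up, so the original prime on $\hat a_n$ does not ``ride along to the terminal box'' in general. What is true, and what makes either approach work, is that the prime and the $\theta$-label move \emph{together} along the bumping path --- this is precisely what the paper's set-valued invariant encodes in one stroke.
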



 \begin{proof}
One can define orthogonal Edelman-Greene insertion by a slightly different 
bumping process, in which an insertion tableau $\bPO(\hat a)$ is built up with diagonal
primes along with a recording tableau $\bQO(\hat a)$ having no diagonal primes, and 
then at the final stage $\PO(\hat a)$ and $\QO(\hat a)$ are formed by moving any diagonal primes in $\bPO(\hat a)$ to $\bQO(\hat a)$.
From this perspective the proposition is just locating the primes in $\bPO(\hat a)$. The following argument is organized around this observation.

Let $n = \ell(\hat a)=\ell(a)$ and
form $\bPO(\hat a)$  from $\PO(\hat a)$ by adding primes to the main diagonal positions that are primed in
$\QO(\hat a)$. 
Note that we have $\bPO(a) = \unprime(\bPO(\hat a))$ by Proposition~\ref{unprime-tab-prop}.
We will show that the entry in  position $(x,y)$ of $\bPO(\hat a)$ is 
primed if and only if  $\theta := \gamma_{xy}(\bPO(a))$ has
$\emptyset\neq \theta \in \cyc(z)$ and $ \tpi(a)(\theta) \in \marked(\hat a)$.
Define 
\[T^j := \bPO(\hat a_1\hat a_2\cdots \hat a_j)\quand b^j := \hat a_{j+1}\hat a_{j+2} \cdots \hat a_n\quad\text{for $0\leq j \leq n$,}\]
and abbreviate by writing $\marked(T^{j},b^{j}):=\marked(\row(T^{j})b^{j})$.
It suffices to check
 that \[ \marked(T^{j},b^{j}) = \left\{ \tpi_j(a)(\theta) : \theta \in  \marked(T^{j-1},b^{j-1})\right\}\text{ for all $j \in [n]$,}\]
 since this will imply that $\marked(\row(\bPO(\hat a))) =\left\{  \theta : \tpi(a)(\theta)\in \marked(\hat a)\right\}$.

Let $\sim$ be the transitive closure of the relation on primed involution words that has 
$\hat w \sim \ck_i(\hat w)$ for all $i \in \ZZ$ such that $\marked(\hat w) = \marked(\ck_i(\hat w))$.
In Lemma~\ref{cseq-lem}, 
if we are in case (a),  case (b), or case (c) with $\eta=\gamma_p$,
then $\tpi_j(a) = 1$ and it follows 
by tracing through the proof of Proposition~\ref{o-lem2} and using Proposition~\ref{marked-prop}
that $\row(T^{j-1}) b^{j-1} \sim \row(T^{j}) b^{j}$ as needed.

If we are in case (c) of Lemma~\ref{cseq-lem} with $\eta \neq \gamma_p$,
then $\tpi_j(a)$ is the transposition of $\cyc(z)$ 
interchanging $\eta \leftrightarrow \gamma_p$, and
it follows similarly that  $ \marked(T^{j},b^{j})$ is formed by 
applying this transposition to all elements of $\marked(T^{j-1},b^{j-1})$.

Finally, suppose we are in case (d) of Lemma~\ref{cseq-lem}, so that 
$\tpi_j(a) = (\gamma_{p} \leftrightarrow \gamma_{p+1})$.
Form $U^j$ from $T^j$ by switching the primes on the entries in positions $(p,p)$ and $(p+1,p+1)$.
Then, again following the proof of Proposition~\ref{o-lem2} and using Proposition~\ref{marked-prop}, one checks that 
$\row(T^{j-1}) b^{j-1} \sim \row(U^{j}) b^{j}$. 
Thus
$\marked(T^{j-1},b^{j-1}) = \marked(U^{j},b^{j}) = \left\{ \tpi_j(a)(\theta) : \theta \in  \marked(T^{j-1},b^{j-1})\right\}$ as desired.
\end{proof}

 As an application, we explain how to deduce Theorem~\ref{ck-fkd-thm} 
 in the case when inserting three consecutive letters in $\hat a$ contributes two diagonal positions to $\PO(\hat a)$.
   
\begin{lemma}\label{diag-box-lem}
Suppose
 $\hat a$ is a primed involution word and $n = \ell(\hat a)$.
Write $\square_j$ for $j\in[n]$ to denote the unique box of $\QO(\hat a)$ containing $j$ or $j'$.
Assume that $i\in [n-2]$ and $\square_i$ and $\square_{i+2}$ are both on the main diagonal.
Then $\PO(\ck_i(\hat a)) = \PO(\hat a)$ and $  \QO(\ck_i(\hat a)) = \fkd_i(\QO(\hat a)).$
\end{lemma}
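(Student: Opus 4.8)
The hypothesis is that $\square_i$ and $\square_{i+2}$ are both on the main diagonal of $\QO(\hat a)$. By Proposition~\ref{square-lem}(c), this forces $\square_i=(q-1,q-1)$, $\square_{i+1}=(q-1,q)$, $\square_{i+2}=(q,q)$ for some $q$, and $\primes(\QO(\hat a))$ differs from $\primes(\fkd_i(\QO(\hat a)))$ by exactly one. The idea is to work with the unprimed word $a=\unprime(\hat a)$, use Proposition~\ref{unprime-prop} to handle the $\PO$-statement and the ``shape/shword'' skeleton of the $\QO$-statement, and then use Proposition~\ref{tau-prop} together with Proposition~\ref{marked-prop} to account for the primes.

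\textbf{Step 1: the $\PO$ identity and the unprimed $\QO$ identity.} First I would invoke Lemma~\ref{unpri-word-lem}, which gives $\unprime(\ck_i(\hat a))=\ck_i(a)$ since the case $\lceil a_i\rceil=\lceil a_{i+2}\rceil$ is covered there. Then Proposition~\ref{unprime-prop} yields $\PO(\ck_i(a))=\PO(a)$ and $\QO(\ck_i(a))=\fkd_i(\QO(a))$. Combining with Proposition~\ref{unprime-tab-prop}, we get $\unprime(\PO(\ck_i(\hat a)))=\unprime(\PO(\hat a))$ and $\unprime_{\diag}(\QO(\ck_i(\hat a)))=\fkd_i(\unprime_{\diag}(\QO(\hat a)))=\unprime_{\diag}(\fkd_i(\QO(\hat a)))$, where the last equality is \eqref{up-fkd-eq}. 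In particular $\PO(\ck_i(\hat a))$ and $\PO(\hat a)$ have the same shape and agree after un-priming, as do $\QO(\ck_i(\hat a))$ and $\fkd_i(\QO(\hat a))$ after un-priming the diagonal. So both claimed identities reduce to matching up the \emph{locations of the primes}.

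\textbf{Step 2: locating the primes via Proposition~\ref{tau-prop}.} For each box $(x,y)$ of $\PO(a)$ set $\theta_{xy}=\gamma_{xy}(\PO(a))$. Proposition~\ref{tau-prop} says: for $x\neq y$ the entry of $\PO(\hat a)$ in $(x,y)$ is primed iff $\theta_{xy}\neq\emptyset$ and $\tpi(a)(\theta_{xy})\in\marked(\hat a)$; and for $x=y$ the entry of $\QO(\hat a)$ in $(x,x)$ is primed iff $\theta_{xx}\neq\emptyset$ and $\tpi(a)(\theta_{xx})\in\marked(\hat a)$. The same statement applies with $\hat a$ replaced by $\ck_i(\hat a)$; note that $\tpi$ only depends on the unprimed word, so $\tpi(\unprime(\ck_i(\hat a)))=\tpi(\ck_i(a))$. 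Since $\PO(\ck_i(a))=\PO(a)$, the arrays $\theta_{xy}$ are literally the same. So the difference between the prime patterns of $\PO(\ck_i(\hat a))$ vs.\ $\PO(\hat a)$ (and of the diagonal of $\QO$) is entirely controlled by comparing the pair $(\tpi(a),\marked(\hat a))$ with $(\tpi(\ck_i(a)),\marked(\ck_i(\hat a)))$. By Proposition~\ref{marked-prop}(c) we have $\marked(\ck_i(\hat a))=\marked(\hat a)$ for $i\geq 1$. So it remains to compare $\tpi(a)$ with $\tpi(\ck_i(a))$.

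\textbf{Step 3: computing $\tpi(\ck_i(a))$ vs.\ $\tpi(a)$ in this special configuration; the main obstacle.} This is the crux. Because $\square_i,\square_{i+1},\square_{i+2}$ are the three boxes $(q-1,q-1),(q-1,q),(q,q)$, the insertions of $a_i,a_{i+1},a_{i+2}$ into $\PO(a_1\cdots a_{i-1})$ create, in order, the diagonal box $(q-1,q-1)$, the off-diagonal box $(q-1,q)$, and the diagonal box $(q,q)$. In the language of Lemma~\ref{cseq-lem}, insertion $i$ falls in case (b) (path terminates on the diagonal at $(q-1,q-1)$), insertion $i+1$ in case (a) (path ends off-diagonal), and insertion $i+2$ in case (b) again (path terminates at $(q,q)$). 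One must check what $\ck$ does here: with $|\lceil a_i\rceil-\lceil a_{i+1}\rceil|$ and $|\lceil a_{i+1}\rceil-\lceil a_{i+2}\rceil|$ forced by Remark~\ref{iarrow-rmk} (one gets $a_i=\lceil a_i\rceil$, $a_{i+1}=a_i+1$, $a_{i+2}=a_i$ after unpriming, so $\unprime(a_ia_{i+1}a_{i+2})=X(X+1)X$ with $X=a_i$, i.e.\ the braid form), $\ck$ acts by $XYX\leftrightarrow YXY$. So $\unprime(\ck_i(a))$ replaces $X(X+1)X$ by $(X+1)X(X+1)$ in positions $i,i+1,i+2$. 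I would then compute the three cycle sequences $\cseq_{i-1},\cseq_i,\cseq_{i+1},\cseq_{i+2}$ for both $a$ and $\ck_i(a)$ using Lemma~\ref{cseq-lem}, observing that $\cseq_{i-1}(a)=\cseq_{i-1}(\ck_i(a))$ and $\cseq_{i+2}(a)=\cseq_{i+2}(\ck_i(a))$ (the overall insertion tableaux agree before position $i$ and after position $i+2$), and track the intermediate steps. The $\theta$-values coming from $\dbump$ in the $(q-1,q-1)$ and $(q,q)$ insertions must be identified; here the entries $u_{p-1},u_p$ in the relevant $\dbump$ tuple satisfy $u_{p-1}+1=u_p$ in one ordering and $u_{p-1}+1<u_p$ is impossible by the configuration, which pins down whether $\eta=\gamma_p$ or $\eta=\theta_{p-1}$ in Lemma~\ref{cseq-lem}(c)-type steps — actually since both steps are case (b) here, the relevant data are the $\theta_q$'s appended in Lemma~\ref{cseq-lem}(b). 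Carrying this through should show that $\tpi(\ck_i(a))=\tpi(a)\cdot\kappa$ where $\kappa$ is the transposition swapping the two cycles $\gamma_{q-1,q-1}(\PO(a))$ and $\gamma_{qq}(\PO(a))$ (equivalently, $\tpi(a)$ and $\tpi(\ck_i(a))$ differ by the transposition of the two 2-cycles attached to the boxes $(q-1,q-1)$ and $(q,q)$). I expect this bookkeeping — correctly threading $\dbump$, $\theta_i$, and the two-line arrays through exactly these three insertions — to be the main obstacle; everything hinges on getting it right.

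\textbf{Step 4: conclusion.} Granting Step 3, I finish as follows. By Proposition~\ref{tau-prop}, a box $(x,y)$ of $\PO(\hat a)$ with $x\neq y$ (resp.\ a diagonal box of $\QO(\hat a)$) is primed iff $\tpi(a)(\theta_{xy})\in\marked(\hat a)$, and the corresponding box of $\PO(\ck_i(\hat a))$ (resp.\ $\QO(\ck_i(\hat a))$) is primed iff $\tpi(a)(\kappa(\theta_{xy}))\in\marked(\hat a)$. For a box whose $\theta_{xy}$ is fixed by $\kappa$ — i.e.\ $\theta_{xy}\notin\{\gamma_{q-1,q-1}(\PO(a)),\gamma_{qq}(\PO(a))\}$ — the two conditions coincide, so $\PO(\ck_i(\hat a))$ and $\PO(\hat a)$ agree there, and the diagonal of $\QO(\ck_i(\hat a))$ agrees with that of $\QO(\hat a)$ there. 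The only $\theta_{xy}$ affected by $\kappa$ are those of the two diagonal boxes $(q-1,q-1)=\square_i$ and $(q,q)=\square_{i+2}$ themselves (there are no off-diagonal boxes with $\theta$ equal to one of these two cycles, since by the bijectivity in Proposition~\ref{marked-prop}'s preamble each nonempty $\gamma_{xy}$-value of an increasing tableau with reading word in $\iR(z)$ occurs exactly once). Hence $\PO(\ck_i(\hat a))=\PO(\hat a)$, proving the first identity. For the second, the diagonal entries of $\QO(\ck_i(\hat a))$ in positions $\square_i$ and $\square_{i+2}$ are obtained from those of $\QO(\hat a)$ by swapping the two primed/unprimed statuses governed by the cycles $\gamma_{q-1,q-1}$ and $\gamma_{qq}$; combined with Step 1 (which already matched shapes, un-primed diagonals, and entries, and with Lemma~\ref{udiag-lem}/the $\shword$ description placed $i,i+1,i+2$ so that $\fkd_i=\fks_i=\fks_{i+1}$ here as in Proposition~\ref{square-lem}(c)), this is exactly the effect of $\fkd_i$ on the three boxes $\square_i,\square_{i+1},\square_{i+2}$. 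Therefore $\QO(\ck_i(\hat a))=\fkd_i(\QO(\hat a))$, as claimed. I would double check the one edge case $i+1\notin[n]$ or $q$ at the boundary, but those are vacuous or immediate.
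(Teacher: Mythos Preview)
Your overall strategy—reducing to the unprimed case via Propositions~\ref{unprime-tab-prop} and~\ref{unprime-prop}, then using Proposition~\ref{tau-prop} together with $\marked(\ck_i(\hat a))=\marked(\hat a)$ to pin down the primes—is the same as the paper's. Steps 1, 2, and 4 are essentially correct.

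However, Step 3 contains a concrete error. You claim the configuration $\square_i=(q-1,q-1)$, $\square_{i+1}=(q-1,q)$, $\square_{i+2}=(q,q)$ forces $\unprime(a_ia_{i+1}a_{i+2})=X(X{+}1)X$, but this is false: for $a=132\in\iR((1,3)(2,4))$ with $i=1$ one gets exactly these boxes, yet $a_1a_2a_3=132$ is the $ACB$ pattern with $1<2<3$, and $\ck_1$ acts by $ACB\leftrightarrow CAB$ (giving $b=312$), not by a braid move. The paper avoids this by never asserting a specific form for the three letters. Instead it makes the WLOG assumption (valid since $\ck_i$ and $\fkd_i$ are involutions, so one may swap $\hat a\leftrightarrow\hat b$) that the off-diagonal box $(q-1,q)$ in $\QO(\hat a)$ carries $i{+}1$ rather than $(i{+}1)'$, and then reads off from Lemma~\ref{cseq-lem} that $\tpi_i(a)=\tpi_{i+1}(a)=\tpi_{i+2}(a)=1$ while $\tpi_i(b)=\tpi_{i+2}(b)=1$ and $\tpi_{i+1}(b)=(\gamma_{q-1},\gamma_q)$, where $\gamma_{q-1},\gamma_q$ are the last two entries in the first row of $\cseq_{i+2}(a)$. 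Your description of the three insertions as ``case (b), case (a), case (b)'' is likewise only correct under this WLOG; without it, insertion $i{+}1$ can fall in case (c) instead (as it does for $b=312$).

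There is also a missing step between Steps 3 and 4. The computation above only yields $\tpi(b)=\tpi(a)\cdot R^{-1}(\gamma_{q-1},\gamma_q)R$ with $R=\tpi_{i+3}(a)\cdots\tpi_n(a)$; to identify $R^{-1}(\gamma_{q-1}),R^{-1}(\gamma_q)$ with the \emph{final} diagonal cycles $\gamma_{q-1,q-1}(\PO(a)),\gamma_{q,q}(\PO(a))$ (which is what makes your Step 4 uniqueness argument work) one needs the observation that each $\tpi_j$ sends the $p$th first-row entry of $\cseq_{j-1}(a)$ to the $p$th first-row entry of $\cseq_j(a)$, which follows case-by-case from Lemma~\ref{cseq-lem} together with the distinctness of the first-row entries. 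The paper's final sentence is terse on this point too, but your sketch does not surface it, and without it the specific form of $\kappa$ that Step 4 requires is not established.
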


\begin{proof}
Write $\square_i = (q-1,q-1)$ and $Q = \QO(\hat a)$. Then
we must 
have $\square_{i+1}=(q-1,q)$ and $\square_{i+2} = (q,q)$,
and consequently
 $   \fkd_i(Q) = \fks_i(Q) = \fks_{i+1}(Q)$ 
is formed from $Q$ by swapping $i+1$ and $i'+1$,
and then reversing the primes on the entries in the diagonal boxes $(q-1,q-1)$ and $(q,q)$
if these entries are not both primed or both unprimed.

After possibly invoking Proposition~\ref{square-lem} to interchange $Q$ with $\fkd_i(Q)$, 
we may assume that the entry in position $(q-1,q)$
of $Q$ is $i+1$ rather than $i'+1$.
Let $ \hat b = \ck_i(\hat a)$ and define $a = \unprime(\hat a)$ and $b = \unprime(\hat b)$. 
Then $b = \ck_i(a)$ by Lemma~\ref{unpri-word-lem}.
It is evident from Lemma~\ref{cseq-lem}
that $\tpi_i(a) = \tpi_{i+1}(a) = \tpi_{i+2}(a) = 1$.
Since
we know from Proposition~\ref{unprime-prop} that
 $\QO(b)$ is formed by applying $\fkd_i$ to $\QO(a) = \unprime_{\diag}(Q)$,
which adds a prime to position $(q-1,q)$, 
it is also clear from Lemma~\ref{cseq-lem}
that $\tpi_i(b)  = \tpi_{i+2}(b) = 1$.


To compute $\tpi_{i+1}(b)$,
we
consider the weak bumping paths
$\wpath_i(b)$, $\wpath_{i+1}(b)$, and $\wpath_{i+2}(b)$
 that result from inserting $b_{i}$, $b_{i+1}$, and $b_{i+2}$
successively into $\PO(a_1a_2\cdots a_{i-1}) = \PO(b_1b_2\cdots b_{i-1})$.
In view of Proposition~\ref{bumping-prop},
the first path must terminate at position $(q-1,q-1)$,
the last two positions of the second path must be $(q-1,q-1)$ followed by $(q-1,q)$,
and the last two positions of the third path must be $(q-1,q)$ followed by $(q,q)$.

If the first row of
$\cseq_{i+2}(a)$ is $\left[\barr{cccc} \gamma_1 &\dots&\gamma_q\earr\right]$,
then since $\cseq_{i+2}(a) = \cseq_{i+2}(b)$ by Proposition~\ref{unprime-prop},
we deduce from Lemma~\ref{gamma-lem} that
the first rows of $\cseq_{i-1}(a)=\cseq_{i-1}(b)$,
$\cseq_{i}(b)$, and $\cseq_{i+1}(b) $
are 
$\left[\barr{cccc} \gamma_1 & \dots&\gamma_{q-2}\earr\right]$,
$\left[\barr{ccccc} \gamma_1 & \dots&\gamma_{q-2}& \gamma_q\earr\right]$,
and
$\left[\barr{ccccc} \gamma_1  & \dots & \gamma_{q-2}& \gamma_{q-1}\earr\right]$,
respectively.
Thus $\tpi_{i+1}(b) $ is the permutation of $\cyc(z) $
that swaps $\gamma_{q-1}$ and $\gamma_q$.
Multiplying $\tpi_1(a)\tpi_2(a)\cdots \tpi_{i+2}(a)$
on the right by this permutation gives $\tpi_1(b)\tpi_2(b)\cdots \tpi_{i+2}(b)$
and vice versa.

As we know that $\PO( a) = \PO( b)$ and $  \QO(b) = \fkd_i(\QO(a))   $ by Proposition~\ref{unprime-prop},
it follows 
from Proposition~\ref{tau-prop} that $\PO(\hat a) = \PO(\hat b)$ and $  \QO(\hat b) = \fkd_i(\QO(\hat a))   $.
\end{proof}

\subsection{Constraints on cycle sequences and the $213\leftrightarrow 231$ case of Theorem~\ref{ck-fkd-thm}}\label{proof-sect3}

 The next few sections prove a series of technical results constraining the values of $\cseq_i(a)$ and $\tpi_i(a)$ for an (unprimed) involution word $a$.

In the following lemma, let $\entries(T)\subset \ZZ \sqcup\ZZ'$ denote the set of entries in a shifted tableau $T$.
Also let $\entriesdiag(T)$ denote the subset of entries  appearing on the main diagonal of $T$.

\begin{lemma}\label{bac-pre-lem}
Suppose $a$ and $b$ are (unprimed) involution words for elements of $I_\ZZ$. 
Fix  $0 \leq i \leq \ell(a)-2$ with $a_{i+1}  < a_{i+2}$ and suppose $0\leq j \leq \ell(b)-2$
is an index
such that the following holds:
\ben
\item[(a)] $\cseq_i(a)= \cseq_j(b)$ and $\cseq_{i+2}(a)= \cseq_{j+2}(b)$,
\item[(b)]  $ |\entriesdiag(\QO(a)) \cap \{i+1,i+2\}| = |\entriesdiag(\QO(b)) \cap \{j+1,j+2\}|$, and
\item[(c)] $ |\entries(\QO(a)) \cap \{i+1,i+2\}| =| \entries(\QO(b)) \cap \{j+1,j+2\}|$.
\een
 Then $\tpi_{i+1}(a)\tpi_{i+2}(a) = \tpi_{j+1}(b)\tpi_{j+2}(b)$ as permutations of $\binom{\ZZ}{2}$.
\end{lemma}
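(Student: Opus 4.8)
The plan is to reduce the lemma to the analysis of how $\cseq$ and $\tpi$ are determined by the local ``transition data'' $\dbump$, using Lemma~\ref{cseq-lem} as the main engine. The hypotheses $\cseq_i(a)=\cseq_j(b)$ and $\cseq_{i+2}(a)=\cseq_{j+2}(b)$ mean that both pairs of insertions effect the same net change on cycle sequences; what I need to show is that the \emph{intermediate} permutations $\tpi_{i+1}\tpi_{i+2}$ agree, not just the net effect on the two-line arrays, which a priori could be realized in more than one way (e.g.\ a swap followed by the inverse swap would leave $\cseq$ unchanged but not be the identity). So the strategy is to enumerate which of the four cases (a)--(d) of Lemma~\ref{cseq-lem} can occur at step $i+1$ and step $i+2$, show that the descent pattern $a_{i+1}<a_{i+2}$ together with the auxiliary counts (b) and (c) pins down the case at each step, and then check case by case that $\cseq_i(a)\to\cseq_{i+2}(a)$ plus these constraints force $\tpi_{i+1}(a)\tpi_{i+2}(a)$ to be a specific permutation depending only on $\cseq_i(a)$ and $\cseq_{i+2}(a)$ (and the counts). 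Since every ingredient in that recipe is the same for $b$, the two products coincide.

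More concretely, first I would translate conditions (b) and (c) into statements about the bumping paths: by Lemma~\ref{cseq-lem}, whether step $i+1$ (resp.\ $i+2$) contributes a diagonal box to $\QO(a)$ is exactly whether $\wpath$ reaches the main diagonal, and whether the contributed entry is primed vs.\ unprimed corresponds to cases (c),(d) vs.\ (a),(b); condition (b) counts diagonal contributions among the two steps and condition (c) counts off-diagonal-or-diagonal contributions (i.e.\ how many of the two boxes lie in the already-existing diagonal versus are newly diagonal). Combined with the fact that $a_{i+1}<a_{i+2}$ is an ascent, Proposition~\ref{bumping-prop2}(b) controls the relative horizontal positions of the two row-bumping paths: if the later path reaches the diagonal, so does the earlier, and they interact in a constrained way. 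This should cut the casework down to a short list. For each surviving configuration I would write out $\cseq_i$, $\cseq_{i+1}$, $\cseq_{i+2}$ symbolically using the explicit formulas in parts (a)--(d) of Lemma~\ref{cseq-lem} (the $\eta = \gamma_p$ vs.\ $\eta=\theta_{p-1}$ dichotomy in case (c), and the transposition in case (d)), and read off $\tpi_{i+1}\tpi_{i+2}$ as a product of at most two transpositions of $\cyc(z)$. In each case the product turns out to be determined by: the lengths of $\cseq_i$ and $\cseq_{i+2}$, which new $2$-cycles appear and in which slots, and the counts (b),(c) — all of which are shared between $a$ and $b$ by hypothesis.

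The delicate point — and the main obstacle — is case (d) of Lemma~\ref{cseq-lem}, where $\tpi$ is a genuine transposition $(\gamma_p\leftrightarrow\gamma_{p+1})$ of two \emph{existing} diagonal entries (the ``cycle migration'' of Section~\ref{cyc-sect1}), because here $\cseq_{i+2}(a)$ alone need not determine whether a $(d)$-type swap happened at step $i+1$ or step $i+2$ (two adjacent transpositions can compose in subtle ways, and a $(c)$-then-$(d)$ or $(d)$-then-$(c)$ pattern might mimic a single move on the level of $\cseq$). Resolving this is precisely where the ascent hypothesis $a_{i+1}<a_{i+2}$ and Proposition~\ref{bumping-prop2}(b) are essential: an ascent forces the two bumping paths into a rigid ``nested'' relative position, so the diagonal rows they hit (the indices $p$ in Lemma~\ref{cseq-lem}) are forced to be equal or consecutive in a predictable way, which eliminates the ambiguous patterns. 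I would also lean on Remark~\ref{iarrow-rmk} and Proposition~\ref{bumping-prop}(c) to control exactly which boxes near the diagonal can change. Once the ambiguous $(c)/(d)$-mixtures are ruled out, the remaining verification is a finite, mechanical check that each allowed pattern yields a product $\tpi_{i+1}\tpi_{i+2}$ expressible purely in terms of data common to $a$ and $b$, completing the proof.
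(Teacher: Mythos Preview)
Your plan is essentially the paper's proof: set $r:=2-|\entries(\QO(a))\cap\{i+1,i+2\}|$ and $s:=|\entriesdiag(\QO(a))\cap\{i+1,i+2\}|$ (forced equal for $a$ and $b$ by (b) and (c)), then for each pair $(r,s)$ enumerate via Lemma~\ref{cseq-lem} the possible forms of $\cseq_{i+2}(a)=\cseq_{j+2}(b)$ relative to $\cseq_i(a)=\cseq_j(b)$, using the ascent $a_{i+1}<a_{i+2}$ through Proposition~\ref{bumping-prop2} only to order the two diagonal hits (e.g.\ to get $k<l$ when $r=2$).

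One correction to your outline: the hypotheses do \emph{not} always ``pin down the case at each step'' for $b$. In several subcases there are genuinely two possibilities for $\cseq_{j+1}(b)$, and the argument succeeds because the two transpositions $\tpi_{j+1}(b),\tpi_{j+2}(b)$ happen to commute (so either order gives the same product as $\tpi_{i+1}(a)\tpi_{i+2}(a)$). In the remaining, non-commuting subcases (e.g.\ the $(\gamma_k,\gamma_{k+1},\gamma_{k+2})$ three-cycle situations when $r=2$), one instead shows $\cseq_{i+1}(a)=\cseq_{j+1}(b)$ outright by reading it off from the two-line arrays---this is a purely algebraic check on the $\cseq$ data, not a geometric argument about bumping paths, so you will not need Remark~\ref{iarrow-rmk} or Proposition~\ref{bumping-prop}(c) here.
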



\begin{proof}
Let 
$s(a) := |\entriesdiag(\QO(a)) \cap \{i+1,i+2\}|\in \{0,1\}$ be the number of diagonal
entries in $\QO(a)$ equal to $i+1$ or $i+2$ and 
let
$r(a):=2 - |\entries(\QO(a)) \cap \{i+1,i+2\}|\in \{0,1,2\}$
 be the number of (necessarily off-diagonal) entries in $\QO(a)$ equal to $i'+1$ or $i'+2$.
Similarly let $s(b) \in \{0,1\}$ be the number of diagonal
entries in $\QO(b)$ equal to $j+1$ or $j+2$ and 
let
$r(b)\in \{0,1,2\}$
 be the number of entries in $\QO(a)$ equal to $j'+1$ or $j'+2$.

Conditions (b) and (c) imply that
$r(a)=r(b)$ and $s(a)=s(b)$.
The key idea in the proof of this lemma is to observe how this fact
 combined with 
Lemma~\ref{cseq-lem} 
 limits the possible values of  $\cseq_{i+1}(a)$
and 
 $\cseq_{j+1}(b)$
 once $\cseq_{i}(a) = \cseq_{j}(b)$
and $\cseq_{i+2}(a) = \cseq_{j+2}(b)$ are given.
We will then deduce that 
$\tpi_{i+1}(a)\tpi_{i+2}(a) = \tpi_{j+1}(b)\tpi_{j+2}(b)$
from these constraints. 

From now on set $r := r(a)=r(b)$ and $s:= s(a)=s(b)$.
The desired equality
 holds 
when $r=0$ since then $\tpi_{i+1}(a)=\tpi_{i+2}(a) = \tpi_{j+1}(b)=\tpi_{j+2}(b)=1$
by Lemma~\ref{cseq-lem}(a).

Assume $r=1$. Then, by Lemma~\ref{cseq-lem}(a), at least one of 
$\tpi_{i+1}(a)$ or $\tpi_{i+2}(a)$ is trivial, and likewise for $\tpi_{j+1}(b)$ or $\tpi_{j+2}(b)$.
Suppose further that $s=0$. Then 
$
\cseq_{i}(a) = \cseq_{j}(b)
$ and
$
\cseq_{i+2}(a) = \cseq_{j+2}(b)
$
have the same number of columns, so
we have  $
\cseq_{i}(a) = 
\cseq_{i+1}(a)$
or 
$
\cseq_{i+1}(a) = 
\cseq_{i+2}(a)$ (or both), as well as 
$
\cseq_{j}(b) = 
\cseq_{j+1}(b)$
or 
$
\cseq_{j+1}(b) = 
\cseq_{j+2}(b)$ (or both).
Write 
\be\label{write-these-eq}
\cseq_{i}(a)=\cseq_{j}(b) =   \left[\barr{llll} \gamma_1 &  \gamma_2  & \dots & \gamma_q
\\
c_1  & c_2 & \dots & c_q \earr\right]
\ee
and suppose the first row of $\cseq_{i+2}(a) =\cseq_{j+2}(b)$ is $\left[\barr{llll} \eta_1 & \eta_2  & \dots & \eta_q\earr\right]$.
If this is equal to the first row of $\cseq_{i}(a)=\cseq_{j}(b)$,
then  we must be in the ``or both'' case when
\[
\cseq_{i}(a) = 
\cseq_{i+1}(a)= 
\cseq_{i+2}(a)
\quand
\cseq_{j}(b) = 
\cseq_{j+1}(b)= 
\cseq_{j+2}(b),
\]
and then
$\tpi_{i+1}(a)=\tpi_{i+2}(a) = \tpi_{j+1}(b)=\tpi_{j+2}(b)=1$.
Otherwise, it follows by examining cases (c) and (d) in
Lemma~\ref{cseq-lem} 
that
there is either a unique index $p \in [q]$
with $\gamma_p \neq \eta_p$, or a unique  $p \in [q-1]$ with $\gamma_{p+1}=\eta_p \neq \gamma_p=\eta_{p+1}$,
and in either case $\tpi_{i+1}(a)\tpi_{i+2}(a) = \tpi_{j+1}(b)\tpi_{j+2}(b)$
is the permutation of $\binom{\ZZ}{2}$ swapping $\gamma_p$ and $\eta_p$.

Next suppose $r=s=1$.
Consider the weak bumping paths $\wpath_{i+1}(a)$ and $\wpath_{i+2}(a)$ that result from inserting $a_{i+1}$ and $a_{i+2}$
successively into $\PO(a_1a_2\cdots a_{i})$.
Since $a_{i+1} < a_{i+2}$, it follows from Proposition~\ref{bumping-prop2} that
 $\wpath_{i+2}(a)$ terminates at a diagonal position $(q+1,q+1)$
and $\wpath_{i+1}(a)$ contains a unique non-terminal diagonal position $(p,p)$ for some $p \in [q]$.
Denote $\cseq_{i}(a) =\cseq_{i}(b) $ as in \eqref{write-these-eq}.
There are four possibilities for $\cseq_{i+2}(a) = \cseq_{j+2}(b)$, namely:
\be\label{123-cseq-eq-a}
\ba
\left[\barr{llllll} \gamma_1  & \dots & \gamma_p &\dots & \gamma_q &\eta_{q+1}
\\
c_1  &\dots & c_p-1 &\dots & c_q & c_{q+1} \earr\right]
&\quord
\left[\barr{llllll} \gamma_1  & \dots & \eta_p &\dots & \gamma_q &\eta_{q+1}
\\
c_1  &\dots & d_p &\dots & c_q & c_{q+1} \earr\right]
\quord
\\
\left[\barr{lllllll} \gamma_1  & \dots & \gamma_{p+1} & \gamma_p&\dots & \gamma_q &\eta_{q+1}
\\
c_1  &\dots & c_p & c_{p+1} & \dots & c_q & c_{q+1} \earr\right]
&\quord
\left[\barr{llllll} \gamma_1  & \dots & \eta_p &\dots & \gamma_q &\gamma_{p}
\\
c_1  &\dots & d_p &\dots & c_q & c_{q+1} \earr\right],
\ea
\ee
where  $\eta_p,\eta_{q+1} \notin  \{\gamma_1,\gamma_2,\dots,\gamma_{q}\}$ and $d_p < c_p-1$.
 In each case, one can work out the unique possibility for $\cseq_{i+1}(a)$
 by examining cases (b), (c), and (d) in Lemma~\ref{cseq-lem}.

As we pass from $\cseq_{j}(b)$ to $\cseq_{j+1}(b)$ to $\cseq_{j+2}(b)$,
it follows from Lemma~\ref{cseq-lem} that
one step must add an extra column and the other must alter the first $q$ columns either by changing
a single column
or swapping adjacent entries in the first row.
From this observation, we deduce that if
$\cseq_{i+2}(a) = \cseq_{j+2}(b)$ has one of the first three forms in  \eqref{123-cseq-eq-a},  
 then
there are two possibilities for $\cseq_{j+1}(b)$, but in either case 
the factors 
$ \tpi_{j+1}(b)$ and $\tpi_{j+2}(b) $ commute and 
 $\tpi_{i+1}(a)\tpi_{i+2}(a) = \tpi_{j+1}(b)\tpi_{j+2}(b) $ is 
 respectively
 either
 the identity permutation, the transposition
   $(\gamma_p,\eta_{p})$, or the transposition $(\gamma_p,\gamma_{p+1})$.
If $\cseq_{i+2}(a) = \cseq_{j+2}(b)$ has the last form in \eqref{123-cseq-eq-a}
then 
\be\label{mmmust-eq}
\cseq_{i+1}(a) = \cseq_{j+1}(b)=
\left[\barr{llllll} \gamma_1  & \dots & \eta_p &\dots & \gamma_q 
\\
c_1  &\dots & d_p &\dots & c_q  \earr\right]
\ee
so $\tpi_{i+1}(a)=\tpi_{j+1}(b)$ and $\tpi_{i+2}(a) =\tpi_{j+2}(b) $.\footnote{
If $p=q$, then
Lemma~\ref{cseq-lem} with our assumptions that  
$\cseq_{i}(a) = \cseq_{j}(b)$ and $\cseq_{i+2}(a) = \cseq_{j+2}(b)$
does not uniquely determine the first row of $\cseq_{j+1}(b)$.
But considering  the arrays' second rows shows that \eqref{mmmust-eq}
must hold.}

Finally suppose $r=2$ so that $s=0$. 
Then $\cseq_{i}(a) = \cseq_{j}(b)$ and
$\cseq_{i+2}(a) = \cseq_{j+2}(b)$ have the same number of columns
but
$\cseq_{i}(a)\neq \cseq_{i+1}(a)  \neq \cseq_{i+2}(a) $ 
and
$\cseq_{j}(b)\neq \cseq_{j+1}(b)  \neq \cseq_{j+2}(b) $.
Denote $\cseq_{i}(a) = \cseq_{j}(b)$ as in \eqref{write-these-eq}
and 
consider the  weak bumping paths $\wpath_{i+1}(a)$ and $\wpath_{i+2}(a)$ that result from inserting $a_{i+1}$ and $a_{i+2}$
successively into $\PO(a_1a_2\cdots a_{i})$.
Both paths now must contain unique non-terminal diagonal positions $(k,k)$ and $(l,l)$,
and it follows from Proposition~\ref{bumping-prop2} that
 $k<l$ since we assume $a_{i+1} < a_{i+2}$.
We may therefore list the possibilities for 
 $\cseq_{i+2}(a) = \cseq_{j+2}(b)$
 as follows. To start, this array could be
\begin{itemize}
\item[(1)]
$\left[\barr{lllllll} \gamma_1  & \dots & \eta_k & \dots & \eta_{l} &\dots  &\gamma_{q}
\\
c_1  &\dots & d_k & \dots & d_{l}  &\dots & c_q  \earr\right]
$ or
$\left[\barr{llllllll} \gamma_1  & \dots & \eta_k & \dots & \gamma_{l+1} & \gamma_{l} &\dots  &\gamma_{q}
\\
c_1  &\dots & d_k & \dots & c_{l} & c_{l+1} &\dots & c_q  \earr\right]$, 
\end{itemize}
where in these cases for each $p\in\{k,l\}$  either $d_p =  c_p-1$ and $\eta_p=\gamma_p$
or $d_p < c_p-1$ and $\eta_p \notin \{\gamma_1,\gamma_2,\dots,\gamma_q\}$.
When $k+1<l$, the array could also be
\begin{itemize}
\item[(2)] {\footnotesize $\left[\barr{llllllll} \gamma_1  & \dots & \gamma_{k+1} & \gamma_k & \dots & \eta_{l} &\dots  &\gamma_{q}
\\
c_1  &\dots & c_k & c_{k+1}  & \dots & d_l &\dots & c_q  \earr\right]$} or
{\footnotesize
$\left[\barr{lllllllll} \gamma_1  & \dots & \gamma_{k+1} & \gamma_k & \dots & \gamma_{l+1} & \gamma_{l} &\dots  &\gamma_{q}
\\
c_1  &\dots & c_k & c_{k+1}  & \dots & c_l & c_{l+1}  &\dots & c_q  \earr\right]$},
\end{itemize}
where again either $d_l =  c_l-1$ and $\eta_l=\gamma_l$
or $d_l < c_l-1$ and $\eta_l \notin \{\gamma_1,\gamma_2,\dots,\gamma_q\}$.
Finally, if $k+1=l$ then  $\cseq_{i+2}(a) = \cseq_{j+2}(b)$ could also be either
\begin{itemize}
\item[(3)] $\left[\barr{llllll} \gamma_1  & \dots & \gamma_{k+1} & \gamma_{k} &\dots  &\gamma_{q}
\\
c_1  &\dots & c_k & c_{k+1}-1  &\dots & c_q  \earr\right]
$, or
\item[(4)]
$\left[\barr{lllllll} \gamma_1  & \dots & \gamma_{k+1} & \gamma_{k+2} & \gamma_{k} &\dots  &\gamma_{q}
\\
c_1  &\dots & c_k & c_{k+1} &c_{k+2}  &\dots & c_q  \earr\right]
,$ or
\item[(5)] $\left[\barr{llllll} \gamma_1  & \dots & \gamma_{k+1} & \eta_{k} &\dots  &\gamma_{q}
\\
c_1  &\dots & c_k & d_{k+1}  &\dots & c_q  \earr\right]
$,
\end{itemize}
where $d_{k+1} < c_{k+1}-1$ and $\eta_k \notin \{\gamma_1,\gamma_2,\dots,\gamma_q\}$, or
the array could be
\begin{itemize}
\item[(6)] 
$\left[\barr{llllll} \gamma_1  & \dots & \eta_k & \gamma_{k} &\dots  &\gamma_{q}
\\
c_1  &\dots & d_k & d_{k+1}  &\dots & c_q  \earr\right]
,$
\end{itemize}
where  $d_k <  c_k-1$ and $\eta_k \notin \{\gamma_1,\gamma_2,\dots,\gamma_q\}$
and $d_{k+1} < c_{k+1}-1$.
In each case, one can again work out the unique possibility 
for $\cseq_{i+1}(a)$ 
by examining cases (c) and (d) in Lemma~\ref{cseq-lem}.

The values for $\cseq_{j+1}(b)$
 are constrained by Lemma~\ref{cseq-lem}
 and the fact that   $\cseq_{i}(a)=\cseq_{j}(b)\neq \cseq_{j+1}(b)  \neq \cseq_{j+2}(b)  =\cseq_{i+2}(a)$.
In cases (1)-(3) there are two possibilities for $\cseq_{j+1}(b)$ 
but for either one 
$ \tpi_{j+1}(b)$ and $\tpi_{j+2}(b) $ commute and 
 $\tpi_{i+1}(a)\tpi_{i+2}(a) = \tpi_{j+1}(b)\tpi_{j+2}(b) $.
 In case (4), we must have 
 \[
 \cseq_{i+1}(a) = \cseq_{j+1}(b)=
 \left[\barr{lllllll} \gamma_1  & \dots & \gamma_{k+1} & \gamma_{k} & \gamma_{k+2} &\dots  &\gamma_{q}
\\
c_1  &\dots & c_k & c_{k+1} &c_{k+2}  &\dots & c_q  \earr\right].
\]
In case (5), we must have 
\[
\cseq_{i+1}(a) = \cseq_{j+1}(b)=
\left[\barr{llllll} \gamma_1  & \dots & \gamma_{k+1} & \gamma_{k} &\dots  &\gamma_{q}
\\
c_1  &\dots & c_k & c_{k+1}  &\dots & c_q  \earr\right].
\]
In case (6), we must have 
\[
\cseq_{i+1}(a) = \cseq_{j+1}(b)=
\left[\barr{llllll} \gamma_1  & \dots & \eta_k & \gamma_{k+1} &\dots  &\gamma_{q}
\\
c_1  &\dots & d_k & c_{k+1}  &\dots & c_q  \earr\right].
\]
In each situation   $\tpi_{i+1}(a)=\tpi_{j+1}(b)$ and $\tpi_{i+2}(a) =\tpi_{j+2}(b) $,
so
 $\tpi_{i+1}(a)\tpi_{i+2}(a) = \tpi_{j+1}(b)\tpi_{j+2}(b) $.
\end{proof}

The action of $\ck_i$ comes in three different forms: either 
$\ck_i$ transforms a ``$213$-pattern'' to a ``$231$-pattern'',
a ``$121$-pattern'' to a ``$212$-pattern'',
or a ``$132$-pattern'' to a ``$312$-pattern''.
 We can use the lemmas in this section
to derive the following result. This lemma, combined with Proposition~\ref{tau-prop}, will be used to prove Theorem~\ref{ck-fkd-thm}
 when $\ck_i$ acts a $213 \leftrightarrow 231$ transformation.

\begin{lemma}\label{bac-lem}
Suppose $a=a_1a_2\cdots a_n$ is an (unprimed) involution word for an element of $I_\ZZ$.
 Assume   $i\in[n-2]$ and  $ a_{i+1} <  a_{i}  <  a_{i+2}$. Then 
 $\tpi(\ck_i(a)) = \tpi(a)$.
\end{lemma}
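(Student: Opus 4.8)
The statement asserts that when $\ck_i$ acts on an unprimed involution word $a$ as a $213\leftrightarrow 231$ transformation (i.e.\ $a_{i+1}<a_i<a_{i+2}$), the associated permutation $\tpi(a)$ of $\binom{\ZZ}{2}$ is unchanged. Since $\tpi(a)=\tpi_1(a)\tpi_2(a)\cdots\tpi_n(a)$, and $\ck_i$ only disturbs the letters in positions $i,i+1,i+2$, the first task is to confirm that the factors $\tpi_k(a)$ for $k\le i-1$ and for $k\ge i+3$ are untouched. This follows because $\tpi_k(a)$ depends only on $\cseq_{k-1}(a)$ and $\cseq_k(a)$, and these intermediate tableaux (and hence cycle sequences) for $k\notin\{i,i+1,i+2\}$ agree for $a$ and $\ck_i(a)$: for $k\le i-1$ this is immediate from $\PO(a_1\cdots a_{k})=\PO(b_1\cdots b_k)$ (same prefix), and for $k\ge i+3$ it follows from Proposition~\ref{unprime-prop}, which gives $\PO(a)=\PO(\ck_i(a))$ together with the fact that the insertion tableaux after step $i+2$ coincide. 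Thus the problem reduces to showing
\[
\tpi_{i+1}(a)\,\tpi_{i+2}(a)\;=\;\tpi_{i+1}(b)\,\tpi_{i+2}(b),
\qquad b:=\ck_i(a),
\]
where I have also used $\tpi_i(a)=\tpi_i(b)=1$: indeed, inserting $a_i=b_i$ into $\PO(a_1\cdots a_{i-1})=\PO(b_1\cdots b_{i-1})$ produces the same step, so $\cseq_{i-1}$ and $\cseq_i$ are the same for both words and the corresponding $\tpi$-factor is trivial. (Here one should double-check via Lemma~\ref{cseq-lem} that the $i$-th insertion cannot contribute a new diagonal box in a way that would make $\tpi_i$ nontrivial; the $213\leftrightarrow231$ hypothesis guarantees the relevant bumping path behavior.)

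The second and main step is to verify the hypotheses of Lemma~\ref{bac-pre-lem} for the pair $(a,b)$ with the indices $i$ (for $a$) and $i$ (for $b$). Hypothesis~(a) requires $\cseq_i(a)=\cseq_i(b)$ and $\cseq_{i+2}(a)=\cseq_{i+2}(b)$; the former holds since $a,b$ share the prefix $a_1\cdots a_i=b_1\cdots b_i$ (because $\ck$ on $a_ia_{i+1}a_{i+2}$ when $a_{i+1}<a_i<a_{i+2}$ is the $ACB\leftrightarrow CAB$ move, which fixes the first letter), while the latter holds by Proposition~\ref{unprime-prop}, since $\PO(a_1\cdots a_{i+2})=\PO(b_1\cdots b_{i+2})$. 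Hypotheses~(b) and~(c) ask that the number of diagonal entries, and the total number of entries, of $\QO$ among $\{i+1,i+2\}$ match for $a$ and $b$; these also follow from Proposition~\ref{unprime-prop}, since $\QO(b)=\fkd_{i}(\QO(a))$ and $\fkd_i$ in the $213\leftrightarrow231$ situation acts on $\QO$ as a case-(a) $\fks$-type move (swapping the positions of $i+1$ and $i+2$ when they lie in no common row or column), which preserves the set of boxes occupied by $\{i+1,i+2\}$ and whether each is on the diagonal. I need to check carefully that we are genuinely in the ``not both on the diagonal'' regime — this is exactly where the $213\leftrightarrow231$ hypothesis is used, as opposed to the $121\leftrightarrow212$ case handled separately in Lemma~\ref{diag-box-lem} — so that Lemma~\ref{bac-pre-lem} applies. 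With all hypotheses in hand, Lemma~\ref{bac-pre-lem} yields $\tpi_{i+1}(a)\tpi_{i+2}(a)=\tpi_{i+1}(b)\tpi_{i+2}(b)$, and combining with the untouched factors gives $\tpi(a)=\tpi(b)$.

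\textbf{Main obstacle.} The delicate point is not the bookkeeping of which $\tpi$-factors survive, but establishing hypotheses~(b) and~(c) of Lemma~\ref{bac-pre-lem}, i.e.\ pinning down precisely how $\fkd_i$ acts on $\QO(a)$ when $a_{i+1}<a_i<a_{i+2}$. One must rule out the possibility that $\square_i,\square_{i+1},\square_{i+2}$ in $\QO(a)$ sit in a configuration (shared row/column, or involving the diagonal) that would put us in case (b) or (c) of the definition of $\fks$, because then the count of primed/diagonal entries could change and Lemma~\ref{bac-pre-lem} would not be directly applicable. The resolution is to translate the word-level hypothesis $a_{i+1}<a_i<a_{i+2}$, via Proposition~\ref{o-thm3} (descents of $a$ equal descents of $\QO(a)$) and the descent characterization of $\fkd_i$ in terms of the position of $i+1$ relative to $i$ and $i+2$ in $\shword$, into the statement that $i+1$ lies strictly between $i$ and $i+2$ in $\shword(\QO(a))$ — wait, one must instead read off that $i$ is \emph{not} a descent and $i+1$ \emph{is} a descent (or the appropriate pattern), forcing the case-(a) branch of $\fkd_i$, hence an honest transposition of labels with no change to occupied boxes. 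Once this is nailed down the rest is a routine invocation of the quoted lemmas.
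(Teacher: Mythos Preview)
Your overall architecture matches the paper's: reduce to $\tpi_{i+1}(a)\tpi_{i+2}(a)=\tpi_{i+1}(b)\tpi_{i+2}(b)$ and invoke Lemma~\ref{bac-pre-lem}. But there is a genuine gap, and it is precisely the step you flag and then leave unproved: that $\square_i$ and $\square_{i+2}$ are not both on the main diagonal of $\QO(a)$. This is the real content of the lemma, and your descent-based sketch cannot establish it. From $a_{i+1}<a_i<a_{i+2}$ and Proposition~\ref{o-thm3} you get that $i\in\Des(\QO(a))$ and $i+1\notin\Des(\QO(a))$, i.e.\ in $\shword(\QO(a))$ the letter $i{+}1$ precedes both $i$ and $i{+}2$. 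But this is compatible with $\square_i=(q{-}1,q{-}1)$, $\square_{i+2}=(q,q)$, $\square_{i+1}=(q{-}1,q)$ carrying the primed entry $(i{+}1)'$: then $\shword$ reads $\dots,i{+}1,i{+}2,\dots,i,\dots$, which has exactly the required descent pattern. So descents alone do not force the ``not both diagonal'' conclusion, and hence do not put you in case~(a) of $\fks$ as you hope. The paper instead rules out this configuration by a direct bumping-path contradiction: if $\square_i=(q{-}1,q{-}1)$ and $\square_{i+2}=(q,q)$, then $\wpath_i(a)$ ends at $(q{-}1,q{-}1)$, $\wpath_{i+1}(a)$ ends at $(q{-}1,q)$ with penultimate term $(q{-}1,q{-}1)$ (using $a_{i+1}<a_i$ and Proposition~\ref{bumping-prop2}), and then because $a_{i+1}<a_i<a_{i+2}$ the path $\wpath_{i+2}(a)$ stays strictly right of $\wpath_i(a)$ in rows $1,\dots,q{-}1$ and strictly right of $(q{-}1,q)$ if it reaches row $q$, so it cannot terminate at $(q,q)$. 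This argument is not reproducible from descent data.

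A few smaller issues. First, $\ck_i$ here is the $BAC\leftrightarrow BCA$ move, not $ACB\leftrightarrow CAB$; your conclusion $b_i=a_i$ is correct but the stated reason is not. Second, there is no reason for $\tpi_i(a)$ to equal $1$; what you need (and what holds trivially since $a_1\cdots a_i=b_1\cdots b_i$ and the suffixes differ by commuting $s_{a_{i+1}},s_{a_{i+2}}$, so even the $\gamma$-values match) is only $\tpi_i(a)=\tpi_i(b)$. Third, once ``not both diagonal'' is established, hypotheses (b) and (c) of Lemma~\ref{bac-pre-lem} do follow, but via Proposition~\ref{square-lem}(b) (total primes preserved) together with the observation that $\square_i^a=\square_i^b$ with the same prime status and $\{\square_{i+1}^a,\square_{i+2}^a\}=\{\square_{i+1}^b,\square_{i+2}^b\}$ as sets of boxes---not via any claim that $\fkd_i$ acts as a ``case-(a) $\fks$''. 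Your identification of the main obstacle is therefore inverted: (b) and (c) are routine once the diagonal claim is in hand, and the diagonal claim is the substance.
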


 
\begin{proof}
Let $b := \ck_i(a) = a_1 \cdots a_i a_{i+2} a_{i+1}\cdots a_n$.
We wish to prove that $\tpi(a) = \tpi(b)$.
Write $\square_j$ for $j\in[n]$ to denote the box of $\QO(a)$ containing $j$ or $j'$.
We first check that $\square_i$ and $\square_{i+2}$ are not both on the main diagonal.
Arguing by contradiction, we observe that
these positions could only both be on the  diagonal if the weak bumping paths $\wpath_i(a)$, $\wpath_{i+1}(a)$, and $\wpath_{i+2}(a)$ that result from inserting $a_{i}$, $a_{i+1}$, and $a_{i+2}$
successively into $\PO(a_1a_2\cdots a_{i-1})$ respectively terminate at $(q-1,q-1)$, $(q-1,q)$, and $(q,q)$ for some $q>0$.
Assume this is the case,
so that we have $\wpath_{i}(a) =\rwpath_{i}(a)$ and $\wpath_{i+1}(a) =\rwpath_{i+2}(a)$.

Since $a_i > a_{i+1}$,
Proposition~\ref{bumping-prop2} implies
 that the positions in $\rwpath_{i+1}(a)$ 
are all weakly to the left of the corresponding positions in $\rwpath_i(a)$. The second to last position in $\wpath_{i+1}(a)$ must therefore be $(q-1,q-1)$, so the entry in position $(q-1,q)$ of $\PO(a_1a_2\cdots a_{i+1})$
is the same as the entry in position $(q-1,q-1)$ of $\PO(a_1a_2\cdots a_{i})$. Since $a_{i+1} < a_i < a_{i+2}$, it is  easy to check 
that the first $q-1$ positions in $\wpath_{i+2}(a)$
are strictly to the right of the corresponding positions in $\wpath_i(a)$, and that if $\wpath_{i+2}(a)$ reaches row $q$ then its position in that row 
must be strictly to the right of $(q-1,q)$. But this makes it impossible for $\wpath_{i+2}(a)$ to terminate at $(q,q)$.

Thus $\square_i$ and $\square_{i+2}$ are not both on the main diagonal.
By Proposition~\ref{unprime-prop}
$\PO( a_1a_2 \cdots  a_{j}) = \PO( b_1 b_2\cdots b_{j})$
for all $j \in [n]\setminus\{i+1\}$
along with $\QO(b) = \fkd_i(\QO(a))$, so $\tpi_j(a) = \tpi_j(b)$ for all $j \in [n] \setminus \{i + 1, i+2\}$.
It remains
to show that $\tpi_{i+1}(a)\tpi_{i+2}(a) = \tpi_{i+1}(b)\tpi_{i+2}(b)$.
Evidently $\cseq_{i}(a) = \cseq_{i}(b)$
and $\cseq_{i+2}(a) = \cseq_{i+2}(b)$ and $a_{i+1} < a_{i+2}$.
Since $\QO(b) = \fkd_i(\QO(a))$
and $\square_i$ and $\square_{i+2}$ are not both on the main diagonal,
it follows from Proposition~\ref{square-lem}
that conditions (b) and (c) in Lemma~\ref{bac-pre-lem} also  hold, so that result implies that
  $\tpi_{i+1}(a)\tpi_{i+2}(a) = \tpi_{i+1}(b)\tpi_{i+2}(b)$.
\end{proof}

\subsection{Constrains from intersecting and non-intersecting bumping paths}\label{inter-sect}

This section contains two technical lemmas that
constrain how $\cseq_i(a)$ and $\QO(a)$ can change when adjacent letters are swapped and 
the successive bumping paths associated to these letters either intersect or remain disjoint.

\begin{lemma}\label{bump-a-lem}
Let $a,b,c$ be unprimed words with $n:=\ell(a)$. Suppose $X,Y\in\ZZ$
are such that 
\ben
\item[(a)] $XYb$ and $YXc$ are reduced words for the same permutation in $S_\ZZ$, and
\item[(b)] $aXYb$ and $aYXc$ are involution words (necessarily for the same element  in $I_\ZZ$).
\een
Let $T := \PO(a)$. 
If $\rspath(T,X) \cap \rspath(T,Y)$ is nonempty then its first position is also in $\rwpath(T,X) \cap \rwpath(T,Y)$.
If $\rspath(T,X) \cap \rspath(T,Y)$
has an off-diagonal position
then
\bei
\item $\cseq_{n+1}(aXYb)=\cseq_{n+1}(aYXc)$;
\item $n+1$ is on the diagonal in $\QO(aXYb)$
if and only if $n+1$ is on the diagonal in $\QO(aYXc)$;
\item $n'+1$ is in $\QO(aXYb)$
if and only if
$n'+1$ is in $\QO(aYXc)$.
\eei
\end{lemma}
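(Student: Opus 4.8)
The statement concerns two involution words $aXYb$ and $aYXc$ that differ by a commutation at the "top" (positions $n{+}1$ and $n{+}2$), when the strict bumping paths $\rspath(T,X)$ and $\rspath(T,Y)$ (for $T=\PO(a)$) intersect. The first claim, that the first common position of the strict paths is also common to the weak paths, should follow directly from Proposition~\ref{bumping-prop}(a): since $X,Y$ are inserted into the same tableau $T$ and the strict path coordinates satisfy $\tilde y_i\in\{y_i,y_i{+}1\}$ while the weak and strict paths share row indices, a position that is the \emph{first} intersection of the strict paths cannot have arisen from a split box (one where $m<M$ in step (2) of Definition~\ref{iarrow-def}) for both insertions simultaneously without the weak positions already agreeing earlier; so I would argue that at the first strict-collision row, both insertions bump the same box and hence the weak positions coincide there too. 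I expect this to be a short argument using Proposition~\ref{bumping-prop2}: before the collision the path for the larger of $X,Y$ lies weakly right (or strictly right) of the path for the smaller, and equality of strict positions forces equality of the bumped boxes.

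\textbf{Main argument via the cycle-sequence machinery.} For the three bulleted conclusions I would invoke Lemma~\ref{gamma-lem} and its cycle-sequence consequence Lemma~\ref{cseq-lem}. The key point is that the data $\cseq_{n+1}(\cdot)$, together with whether $n{+}1$ lands on the diagonal and whether $n'{+}1$ appears in $\QO$, are all governed by how the weak and strict bumping paths for the $(n{+}1)$-st insertion interact with the main diagonal (cases (a)--(d) of Lemma~\ref{cseq-lem}). So the plan is: start from $\cseq_n(aXYb)=\cseq_n(aYXc)=\cseq(T,\cdot)$, which holds since both equal $\cseq(\PO(a),\,\text{rest})$ up to the trailing word (and $\gamma$ arrays only depend on the reading word class, which is the same for $aXYb$ and $aYXc$ by Proposition~\ref{mat3-prop}); then use the hypothesis that the strict paths intersect in an \emph{off-diagonal} position to show that inserting $X$ first versus $Y$ first produces the same tableau $T\iarrow X\iarrow Y = T\iarrow Y\iarrow X$ through the point where the paths have merged, and in particular the combined effect on the diagonal entries (governed by $\theta_i$ in \eqref{theta-def}) is the same. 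Concretely: after the strict paths meet off-diagonal, the remaining trajectory of the bumping into later rows/columns is identical for the two orderings (this is the content of the "paths merge and then coincide" principle, which one reads off from Remark~\ref{iarrow-rmk} and Proposition~\ref{bumping-prop}); hence whichever diagonal box gets modified, and whether the insertion ends in row or column insertion, is the same for $aXYb$ and $aYXc$.

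\textbf{Assembling the three bullets.} Given that the weak/strict paths coincide from the first common position onward, I would conclude: (i) the diagonal portion of $\PO$ and the associated $\gamma$-values are updated identically, giving $\cseq_{n+1}(aXYb)=\cseq_{n+1}(aYXc)$ by Lemma~\ref{cseq-lem} (the case among (a)--(d) that applies is the same, and the resulting array is the same); (ii) whether case (b) applies (path terminates on diagonal, so $n{+}1$ is a diagonal entry of $\QO$) is determined by whether $p=y_p=\tilde y_p=q{+}1$, which depends only on the shared path data, so $n{+}1$ is diagonal in one iff in the other; (iii) whether the insertion "ends in column insertion" (equivalently $n'{+}1\in\QO$ rather than $n{+}1$) is likewise read off from the same path data via Definition~\ref{iarrow-def}(3) and cases (c)--(d) of Lemma~\ref{cseq-lem}. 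The hypothesis that the common position is \emph{off-diagonal} is what rules out the delicate diagonal-collision scenario (case where $u_{p-1}=T_{pp}$ or the $\square_i,\square_{i+2}$ both-diagonal situation of Lemma~\ref{diag-box-lem}), where the two orderings could genuinely diverge; so I would isolate that exclusion explicitly at the start.

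\textbf{Expected obstacle.} The hard part is the "paths merge then coincide" step: showing rigorously that once $\rspath(T,X)$ and $\rspath(T,Y)$ share a box, the entire subsequent bumping behavior (including the switch from row- to column-insertion, and the final terminal box) is insensitive to whether $X$ or $Y$ was inserted first. This is intuitively clear from the monotonicity principles in Proposition~\ref{bumping-prop2} and the fact that after a shared box the "bumped-out" value is the same, but making it precise requires carefully tracking $u_i$ and the interplay with Remark~\ref{iarrow-rmk}(b)--(d), especially near the diagonal. I would handle it by an induction on the row/column index, showing that the intermediate tableaux $\tilde T_k$ (in the notation of the proof of Proposition~\ref{o-lem2}) agree for the two orderings from the merge point on, and that the $\theta_k$ sequences therefore agree, which is exactly what feeds into Lemma~\ref{cseq-lem}.
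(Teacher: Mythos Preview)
Your overall strategy for the three bulleted conclusions is right in spirit, but you are working much harder than necessary, and your argument for the first assertion has a real gap.

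\textbf{First assertion.} Invoking Proposition~\ref{bumping-prop2} is a mistake: that result compares $\rwpath_i(\hat a)$ with $\rwpath_{i+1}(\hat a)$, which are bumping paths into \emph{different} tableaux. Here both $X$ and $Y$ are inserted into the \emph{same} tableau $T$, so that proposition does not apply. The paper's argument is short and direct: write $X_0:=X$, $Y_0:=Y$ and let $X_i,Y_i$ be the entries of $T$ in the $i$th strict-path positions. At the first collision row $j$ we have $X_{j-1}<Y_{j-1}$ and both bump the same entry $X_j=Y_j$, so row $j$ of $T$ contains no entry in the interval $(X_{j-1},Y_{j-1}]$. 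The point you are missing is that row $j$ then also cannot contain $X_{j-1}$: if it did, the weak and strict positions for the $X$-insertion would differ, forcing (by Remark~\ref{iarrow-rmk}(b)) the next entry in row $j$ to be $X_{j-1}+1\le Y_{j-1}$, contradicting what we just said. Hence weak equals strict for both insertions at row $j$.

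\textbf{Main argument.} You are tracking too much. The quantity $\cseq_{n+1}(aXYb)$ depends only on $\cseq_n(aXYb)$ and on $\dbump_{n+1}(aXYb)$, i.e.\ on the \emph{single} insertion of $X$ into $T$ (with trailing word $Yb$); it has nothing to do with the composite $T\iarrow X\iarrow Y$. Likewise $\cseq_{n+1}(aYXc)$ depends only on inserting $Y$ into $T$. Once the strict paths for $X$ and $Y$ into $T$ meet at the off-diagonal box $(j,k)$, they bump the same entry $T_{jk}$ and therefore coincide for all subsequent rows; this is immediate from Definition~\ref{iarrow-def}, not an obstacle requiring induction on the intermediate tableaux $\tilde T_k$. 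Since $(j,k)$ is off-diagonal, both paths continue past row $j$, so the diagonal index $p$ and the values $y_p,\tilde y_p,u_{p-1}$ agree. The remaining ingredient $\theta_{p-1}$ also agrees: at row $j$ we have $\theta_j=\gamma_{jk}(T,XYb)=\gamma_{jk}(T,YXc)$ because $XYb$ and $YXc$ are reduced words for the same permutation, and thereafter the recursion \eqref{succ-theta-eq} uses only the (now identical) path data. Thus the final tuple of $\dbump$ matches, and Lemma~\ref{cseq-lem} yields all three bullets at once. Your proposed induction on $\tilde T_k$ and comparison of $T\iarrow X\iarrow Y$ with $T\iarrow Y\iarrow X$ would eventually get there, but it is a detour around a two-line argument.
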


\begin{proof}
Suppose $\rspath(T,X) \cap \rspath(T,Y)$ is nonempty and the first position in this intersection
is $(j,k)$.
To show that $(j,k) $ also belongs to $\rwpath(T,X) \cap \rwpath(T,Y)$,
 write $X_0:=X<Y_0:=Y$
and let $X_i$ and $Y_i$ be the entries of $T$ in the $i$th positions of 
$\spath(T,X) $ and $ \spath(T,Y)$ respectively.
Then $X_{j-1} < Y_{j-1}$ and the smallest entry of $T$ in row $j$ that
is greater than both of these numbers is $X_j=Y_j$ by definition.
This means that row $j$ of $T$ cannot contain any entry $w$ with $X_{j-1} < w \leq Y_{j-1}$,
 so by Remark~\ref{iarrow-rmk}, row $j$ of $T$ also cannot contain $X_{j-1}$.
Hence $(j,k)\in \rwpath(T,X) \cap \rwpath(T,Y)$ as desired.

It is clear from Definition~\ref{iarrow-def}
that $\rspath(T,X)$ and $\rspath(T,Y)$ coincide after their first $j-1$ positions, 
and it follows by our claim that $\rwpath(T,X)$ and $\rwpath(T,Y)$ also coincide after their first $j-1$ positions.
If $j\neq k$, then all of these paths continue after row $j$,
and
we have $\gamma_{xy}(T,XYb) = \gamma_{xy}(T,YXc)$ for all positions $(x,y)$
since $XYb$ and $YXc$ are reduced words for the same permutation.
Given these observations,
the result follows from Lemma~\ref{cseq-lem}.
\end{proof}

The next lemma gives us precise control over 
cycle sequences and diagonal entries when swapping adjacent letters in an involution word
that are ``far apart'' and have disjoint bumping paths.

\begin{lemma}\label{bump-b-lem}
Suppose $a,b$ are unprimed words and $X,Y\in\ZZ$
are such that $X+1<Y$ and $aXYb$ is an involution word for an element of $I_\ZZ$.
Let $T = \PO(a)$ and $n=\ell(a)$,
and assume $\rwpath(T,X)$ and $\rwpath(T,Y)$ are disjoint.
Then $\cseq_{n+2}(aXYb)=\cseq_{n+2}(aYXb)$, and for each
$\epsilon \in\{0,1\}$, the number
$n+1+\epsilon$ is on the main diagonal in $\QO(aXYb)$
if and only if $n+2-\epsilon$ is on the main diagonal in $\QO(aYXb)$,
while $n'+1+\epsilon$ is in $\QO(aXYb)$
if and only if
$n'+2-\epsilon$ is in $\QO(aYXb)$.
\end{lemma}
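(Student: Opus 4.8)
The plan is to reduce Lemma~\ref{bump-b-lem} to Lemma~\ref{bump-a-lem} and the underlying unprimed facts from \cite{Marberg2019b} (encoded in Proposition~\ref{unprime-prop}), exploiting the hypothesis $X+1<Y$, which means the insertion of $X$ and then $Y$ into $T=\PO(a)$ behaves like two independent row insertions into \emph{disjoint} regions of the tableau. First I would recall that since $|X-Y|>1$, the words $aXYb$ and $aYXb$ are both involution words for the same element of $I_\ZZ$ (the relation $aXYb\iisim aYXb$ is one of the defining moves of $\iisim$, and the letter $X$ is a commutation exactly when $Y$ is not, and vice versa — or more precisely Proposition~\ref{despite-prop}(a) applies only when $\lceil X\rceil=\lceil Y\rceil\pm1$, which is excluded here, so no commutation constraints interfere). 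In particular $\cyc(z)$ is the same for both, and the base-case identity $\PO(aXYb)=\PO(aYXb)$ together with $\QO(aYXb)=\fkd_n(\QO(aXYb))$ holds by Proposition~\ref{unprime-prop} applied to the index $n+1$ (note $\ck_n$ acting at positions $n+1,n+2$ performs exactly the swap $XY\leftrightarrow YX$ when $|X-Y|>1$, via the relation $ACB\leftrightarrow$ etc. or just $XY\leftrightarrow YX$).

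Next I would analyze the two bumping paths. Because $\rwpath(T,X)$ and $\rwpath(T,Y)$ are assumed disjoint, and since $X<Y$, Proposition~\ref{bumping-prop2}(b) tells us that in every common row the position on $\rwpath(T,X)$ is strictly left of that on $\rwpath(T,Y)$; disjointness then forces these paths never to touch the same box, and in particular at most one of them can reach the main diagonal at a terminal position, and if both do they terminate at distinct diagonal boxes $(q,q)$ and $(q+1,q+1)$ in the appropriate order. The key structural point is that inserting $Y$ into $T\iarrow X$ follows exactly the same path $\rwpath(T,Y)$ as inserting $Y$ into $T$ — because the only boxes changed by the $X$-insertion lie strictly to the left of the $Y$-path in every row, hence never affect which box $Y$ bumps — and symmetrically inserting $X$ into $T\iarrow Y$ follows $\rwpath(T,X)$. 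Consequently $\PO(aXYb)$ and $\PO(aYXb)$ differ from $T$ in exactly the union of the two disjoint paths, with the \emph{same} cumulative effect, which re-derives $\PO(aXYb)=\PO(aYXb)$ directly and, more importantly, shows that the diagonal boxes of these two tableaux are modified in "swapped order": whichever of $\{n+1,n+2\}$ records the box along the $X$-path in $\QO(aXYb)$ corresponds to the box recorded by $\{n+2,n+1\}$ along the $X$-path in $\QO(aYXb)$. This is precisely the $\epsilon$-statement about $n+1+\epsilon$ versus $n+2-\epsilon$ lying on the main diagonal (resp. being primed, via the eventual link to $\marked$).

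For the cycle-sequence claim $\cseq_{n+2}(aXYb)=\cseq_{n+2}(aYXb)$, I would invoke Lemma~\ref{gamma-lem} twice. Since $XYb$ and $YXb$ are reduced words for the same permutation in $S_\ZZ$ (swapping non-adjacent $X,Y$ is a commutation of simple reflections), we have $\gamma_{xy}(T,XYb)=\gamma_{xy}(T,YXb)$ for every box $(x,y)\in T$ and likewise the "overflow" values $\theta_0$ agree. Applying Lemma~\ref{gamma-lem} to the $X$-insertion into $T$ computes $\gamma_{xy}$ on $T\iarrow X$; applying it again to the $Y$-insertion (which, as argued, traverses a region disjoint from the boxes altered by the $X$-insertion, except possibly at adjacent diagonal boxes) computes $\gamma_{xy}$ on $\PO(aXYb)$. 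Running the same two steps in the order $Y$ then $X$ yields $\PO(aYXb)$ with the \emph{same} array of $\gamma$-values on all boxes off the two paths, and on the union of the two paths the values get reshuffled but the \emph{diagonal restriction} — which is what $\cseq_{n+2}$ records — comes out identical because the two paths' diagonal contributions are independent. The main obstacle I anticipate is the bookkeeping in the boundary case where one path terminates on the diagonal at $(q,q)$ and the other passes through or terminates at an adjacent diagonal box $(q\pm1,q\pm1)$: here the $X$- and $Y$-insertions are \emph{not} fully independent (the column-insertion triggered by hitting the diagonal, cases (b)/(c) of Definition~\ref{iarrow-def} and clauses (2)--(4) of the proof of Lemma~\ref{gamma-lem}, can shift an adjacent diagonal entry), so I would handle this case separately by a direct appeal to the explicit formulas in Lemma~\ref{gamma-lem}(c) and Lemma~\ref{cseq-lem}(c)--(d), checking that the two orders of insertion produce arrays related exactly by the advertised $\epsilon\leftrightarrow 1-\epsilon$ swap on the two affected diagonal columns while leaving the $\cseq$ data unchanged.
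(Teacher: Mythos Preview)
Your proposal has a genuine gap at its central structural claim. You assert that ``inserting $Y$ into $T\iarrow X$ follows exactly the same path $\rwpath(T,Y)$ as inserting $Y$ into $T$ --- because the only boxes changed by the $X$-insertion lie strictly to the left of the $Y$-path in every row.'' This is false: once the $X$-insertion reaches the main diagonal it switches to \emph{column} insertion and then proceeds \emph{rightward}, modifying boxes in $\cwpath(T,X)\cup\cspath(T,X)$ that can lie in or to the right of $\rwpath(T,Y)\cup\rspath(T,Y)$. This interaction is not a boundary phenomenon confined to ``adjacent diagonal boxes $(q\pm1,q\pm1)$'' as you suggest; it can occur in arbitrary off-diagonal rows where $X$'s column phase crosses $Y$'s row phase. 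The paper's proof is almost entirely devoted to this issue: it first establishes the easy identity $\dbump_{n+1}(aXYb)=\dbump_{n+2}(aYXb)$ (your disjointness picture), and then spends subcases (a)--(h) carefully comparing $\dbump_{n+1}(aYXb)$ with $\dbump_{n+2}(aXYb)$ term by term across every possible pattern of intersection between $\cwpath(T,X,i)\cup\cspath(T,X,i)$ and $\rwpath(T,Y)\cup\rspath(T,Y)$, ultimately showing that either the terminal tuples agree or neither $Y$-path reaches the diagonal. Your sketch does not supply any of this analysis.

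A secondary issue: your appeal to Proposition~\ref{unprime-prop} is misapplied. There is in general no index $i$ with $\ck_i(aXYb)=aYXb$: the operator $\ck_n$ acts on the three positions $n,n+1,n+2$ and only swaps positions $n+1,n+2$ when $a_n$ lies strictly between $X$ and $Y$ (the $ACB\leftrightarrow CAB$ or $BCA\leftrightarrow BAC$ patterns), which is not part of the hypothesis here. So you cannot deduce $\PO(aXYb)=\PO(aYXb)$ or $\QO(aYXb)=\fkd_n(\QO(aXYb))$ from that proposition; indeed the lemma only claims equality of the diagonal data $\cseq_{n+2}$, not of the full insertion tableaux after $n+2$ steps, and the paper's argument works entirely at the level of $\dbump$ and $\cseq$ via Lemma~\ref{cseq-lem} rather than by invoking any $\ck$-invariance.
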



\begin{proof}
Again write $X_0:=X<Y_0:=Y$
and let $X_i$ and $Y_i$ be the entries of $T$ in the $i$th positions of 
$\spath(T,X) $ and $ \spath(T,Y)$ respectively.
Suppose $\rwpath(T,X) $ and $ \rwpath(T,Y)$ are disjoint.
Lemma~\ref{bump-a-lem} with $b=c$ implies that
$\rspath(T,X) $ and $ \rspath(T,Y)$ must also be disjoint.
We argue that since $X+1<Y$, it must further hold that $\rspath(T,X) $ and $\rwpath(T,Y) $
are disjoint. To see this, note that if $X_i=Y_i-1$ in some row $i> 0$ of $T$ occupied by both
$\rspath(T,X) $ and $ \rspath(T,Y)$, then this row of $T$ must also contain $X_i-1$
and we must have $X_{i-1} = X_i-1 $ and $Y_{i-1} = X_i$,
since otherwise $\rwpath(T,X) $ and $ \rwpath(T,Y)$ would intersect in the position of $X_i$ in row $i$.
But this means that if $X_i= Y_i-1$ for any row $i>0$ then we also have
$X_0 =X_0-1$, which is a contradiction since $X_0=X$ and $Y_0=Y$.


From these properties, we deduce that in any given row occupied by 
all four paths, the position in 
$\rwpath(T,X) $  is weakly to the left of the position in $ \rspath(T,X)$,
which is strictly to the left of the position in $ \rwpath(T,Y)$,
which finally is weakly to the left of the position in $ \rspath(T,Y)$.
It follows that if $(i,i) \in \rwpath(T,X) \cap \rspath(T,X)$
then any diagonal position $(j,j) \in \rwpath(T,Y)$ must have $i<j$,
while if $(i,i) \in \rwpath(T,X) $ and $(i,i+1)\in \rspath(T,X)$
then any diagonal position $(j,j) \in \rwpath(T,Y)$ must have $i+1<j$.


In addition, $T_{xy}$ and $\gamma_{xy}(T)$
only differ from $(T\iarrow w)_{xy}$
and $\gamma_{xy}(T\iarrow w)$
 at positions $(x,y) \in \wpath(T,w)\cup \spath(T,w)$ by Lemma~\ref{gamma-lem}.
Since $\gamma_{n+1}(aXYb) = \gamma_{n+2}(aYXb)$ 
 and
 $\gamma_{n+1}(aYXb) = \gamma_{n+2}(aXYb)$ 
 as $X+1< Y$,
 it follows in view of Proposition~\ref{bumping-prop} that 
 \be\label{dddbbb-eq} \dbump_{n+1}(aXYb) = \dbump_{n+2}(aYXb).\ee
To prove the lemma, it suffices to show that
 $\dbump_{n+1}(aYXb)$ and $\dbump_{n+2}(aXYb)$
 end with the same tuple, or that $\rwpath(T,Y)$ and $\rwpath(T\iarrow u,Y)$ both never reach the main diagonal.
 In the former situation 
Lemma~\ref{cseq-lem} implies the desired result.
In the latter situation
Lemma~\ref{cseq-lem} implies  
\[\cseq_n(aXYb)=\cseq_n(aYXb)=\cseq_{n+1}(aYXb),\]
which means that
$\cseq_{n+1}(aXYb)=\cseq_{n+2}(aYXb)$ in view of \eqref{dddbbb-eq},
along with \[\cseq_{n+1}(aXYb) =\cseq_{n+2}(aXYb) ,\]
so
$\cseq_{n+2}(aXYb)=\cseq_{n+2}(aYXb)$ holds.
The other assertions about the locations of $n+1$, $n+2$, $n'+1$, and $n'+2$
in $\QO(aXYb)$ and $\QO(aYXb)$ are easy to deduce from Lemma~\ref{cseq-lem}.

To this end, recall the definitions of $ \cwpath(T,X) $ and $ \cspath(T,X) $ from \eqref{cpath-eq}.
If the positions in $\cwpath(T,X) \cup \cspath(T,X)$
are disjoint from $\rwpath(T,Y) \cup \rspath(T,Y)$,
then the latter union is disjoint from
 $\wpath(T,X) \cup \spath(T,X)$,
and so the stronger property 
$\dbump_{n+1}(aYXb) = \dbump_{n+2}(aXYb)$
holds in view of Lemma~\ref{gamma-lem}.

Instead suppose
$\cwpath(T,X)\cup \cspath(T,X)$ and $\rwpath(T,Y)\cup \rspath(T,Y)$ are not disjoint.
For each $i>0$, let $\cspath(T,X,i)$ be the set of positions in $\cspath(T,X)$ in row $i$,
and let 
\[ \cwpath(T,X,i) :=\{ (i-1,j) \in \cwpath(T,X) :
(i,j) \in \cspath(T,X)\}.\]
Then each position in $\cwpath(T,X)\cup \cspath(T,X)$ belongs to $\cwpath(T,X,i)\cup \cspath(T,X,i)$
for a unique value of $i$,
and every position in $\cwpath(T,X,i)\cup \cspath(T,X,i)$ occurs in a column strictly to the left of
every position in $\cwpath(T,X,i+1)\cup \cspath(T,X,i+1)$ by Proposition~\ref{bumping-prop}.

Let $i$ be minimal such that 
$\cwpath(T,X,i)\cup \cspath(T,X,i)$ and $\rwpath(T,Y)\cup \rspath(T,Y)$ intersect.
Assume the leftmost position in $\cwpath(T,X,i) \cup \cspath(T,X,i)$ is in column $j+1$ while
\[ |\cwpath(T,X,i)| = l
\quand |\cspath(T,X,i)| = k + l\] for some integers $k,l \geq0$ with $k+l>0$.
If $i=1$ then we must have $l=0$ and $j+k-1$ must be the length of the first row of $T$.
If $i>1$ then we must have $Y_{j+k+t}= Y_{j+k}+t$ for $t \in[l]$.
Finally, all positions in $\cwpath(T,X,i) \cup \cspath(T,X,i)$ must be occupied in $T$,
except that when $l=0$
the single position $(i,j+k) $ may be outside the domain of $ T$.

First assume all positions in $\cwpath(T,X,i) \cup \cspath(T,X,i)$ are occupied in $T$.
Then we must have 
 $i>1$, so the entries of $T$ in positions $\{i-1,i\}\times \{j+1,j+2,\dots,j+k+l\}$ are
\[{\scriptsize
\ytableausetup{boxsize = 1.7cm,aligntableaux=center}
\begin{ytableau}
 X_{j+1} & X_{j+2} & \cdots & X_{j+k} &  X_{j+k}+1 &  X_{j+k}+2 & \cdots &  X_{j+k}+l \\
 T_{i-1,j+1}  &   T_{i-1,j+2}& \cdots & T_{i-1,j+k}  &   X_{j+k}&  X_{j+k}+1 & \cdots &  X_{j+k}+l-1 
\end{ytableau}}
\]
while the corresponding entries of $T\iarrow u$ are\footnote{Most of the boxes labeled by question marks in $T\iarrow X$ contain the same entries
as the corresponding positions of $T$. Such an entry could be different
if its position belongs to $\rwpath(T,X)\cap \rspath(T,X)$.
A given row has at most one such position,
which must be strictly to the left of any terms of $\rwpath(T,Y)$ in the same row.}
\[{\scriptsize
\ytableausetup{boxsize = 1.7cm,aligntableaux=center}
\begin{ytableau}
 X_{j} & X_{j+1} & \cdots & X_{j+k-1} &  X_{j+k}+1 &  X_{j+k}+2 & \cdots &  X_{j+k}+l \\
 ?  &   ?& \cdots & ?  &   X_{j+k}&  X_{j+k}+1 & \cdots &  X_{j+k}+l-1 
\end{ytableau}}.
\]
In this case
one of the following holds:
\ben
\item[(1)] $i=j$ and $T_{ii} = X_j$, 
\item[(2)] $i=j+1$ and $k=0$ and $T_{i-1,i-1} + 1 = T_{i-1,i} =T_{ii}-1 = X_j$,
or 
\item[(3)] $i < j$ and $X_j$ appears in column $j$ of $T$ above row $i$.
 \een
Position $(i-1,j+k+l+1)$ in $T$ must be
unoccupied or contain an entry greater than $X_{j+k}+l$, so
position $(i,j+k+l+1)$ in $T$ 
is unoccupied or contains an entry greater than $X_{j+k}+l+1$.
This implies that 
neither $(i-1,j+k+l)$ nor $(i,j+k+l)$ can belong to $\rwpath(T,Y) \setminus \rspath(T,Y)$.
Therefore
if $(x,y)$ is in the intersection of $\rwpath(T,Y) $ and 
$\cwpath(T,X,i)\cup \cspath(T,X,i)$
then $(x,y)$ or $(x,y+1)$ must be in the intersection of 
$\rspath(T,Y) $ and 
$\cwpath(T,X,i)\cup \cspath(T,X,i)$.
Furthermore, if $(i-1,y) \in \rspath(T,Y) \cap \cwpath(T,X,i)$
then
$(i,y) \in  \rspath(T,Y) \cap \cspath(T,X,i)$.


So we may assume that $(i,j+\delta) \in  \rspath(T,Y) \cap \cspath(T,X,i)$
for some $ \delta \in [k+l]$.
If $ k < \delta \leq l$ then we also have 
 $(i-1,j+\delta) \in  \rspath(T,Y) \cap \cwpath(T,X,i)$.
 In view of the minimality of $i$,
 apart from these one or two positions there are no other elements in the intersection
 of $\rspath(T,Y)$ and $\cwpath(T,X) \cup \cspath(T,X)$,
 since $\rspath(T,Y)$
 contains at most one position in each row,   
 and since all positions  
 of $\rspath(T,Y)$ above row $i$ contain entries of $T$ that are greater than $X_{j+\delta}$
 while all positions $\cwpath(T,X) \cup \cspath(T,X)$ above row $i$ contain entries of $T$
 that are at most $X_j$.
To proceed, we divide our analysis into six subcases:
 \ben
\item[(a)] If $k+1 < \delta\leq l$ then  Lemma~\ref{gamma-lem}
implies 
$\dbump_{n+1}(aYXb) = \dbump_{n+2}(aXYb)$ which suffices.
%

\item[(b)] Suppose $k>0$ and $\delta = k+1$, so that $l>0$ while $(i-1,j+k+1)$ and $(i,j+k+1)$ are both in $\rspath(T,Y)$.
We cannot have $T_{i-1,j+k} = X_{j+k} - 1$,
since then $(i-1,j+k)$ would be in $\rwpath(T,Y)$ and not $\rspath(T,X)$,
meaning that $(i-1,j+k)$ would have to belong to $\cwpath(T,X,i)$.
Therefore $(i-1,j+k+1)$ is also in $\rwpath(T,Y)$.
This means that terms $i$ and $i+1$ of $\dbump_{n+1}(aYXb)$ are 
\[ (j+k,j+k+1,X_{j+k}, \theta)\quand  (y,\tilde y , X_{j+k}+1, \theta)\]
for the 2-cycle $\theta := \gamma_{i-1,j+k+1}(T,XYb) = \gamma_{i-1,j+k+1}(T,YXb)$ and some columns 
$y\leq \tilde y\leq j + k +1$.
By Lemma~\ref{gamma-lem}, terms $i$ and $i+1$ of $\dbump_{n+2}(aXYb)$ are 
\[ (j+k+1,j+k+1,X_{j+k}, \eta)\quand  (y,\tilde y , X_{j+k}+1, \theta)\] 
for $\eta := \gamma_{i,j+k+1}(T,XYb) = \gamma_{i,j+k+1}(T,YXb)$ and
the same values of $\theta$, $y$, $\tilde y$.
Thus $\dbump_{n+1}(aYXb)$ and $\dbump_{n+2}(aXYb)$ only differ in their $i$th terms, 
so their final terms coincide as needed.


\item[(c)]
Suppose $k=0$ and $\delta=k+1=1$, so that again $l>0$. Then cases (1) and (2) 
would each lead to a contradiction of our assumption that $\rwpath(T,X) \cap \rwpath(T,Y)$ is empty:
case (1) would imply that this intersection contains $(i,i)$ while case (2)
could imply that it contains $(i-1,i-1)$.
Therefore we are in case (3) so 
position $(i,j)$ in $T$ contains an entry that is at most
$X_j-1$ while position $(i+1,j)$   in $T$ contains an entry that is at most
$X_j$.
It follows that terms $i$ and $i+1$ of $\dbump_{n+1}(aYXb)$ have the form 
\[ (j+1,j+1,X_{j}, \theta)\quand  (j+1,j+1 , X_{j}+1, \eta)\]
while terms $i$ and $i+1$ of $\dbump_{n+2}(aXYb)$ have the form
\[ (j+1,j+1,X_{j}, \eta)\quand  (j+1,j+1 , X_{j}+1, \theta)\] 
for 
$\theta := \gamma_{i-1,j+1}(T,XYb) = \gamma_{i-1,j+1}(T,YXb)$
and
$\eta := \gamma_{i,j+1}(T,XYb) = \gamma_{i,j+1}(T,YXb)$.
As in the previous paragraph,
it follows that $\dbump_{n+1}(aYXb)$ and $\dbump_{n+2}(aXYb)$ do not differ outside these two terms,
so either both sequences end in the same tuple
in view of \eqref{succ-theta-eq}
 or
 $\rwpath(T,Y)$ and $\rwpath(T\iarrow X,Y)$ never reach the main diagonal 
since $(i+1,j+1)$ is not a diagonal position.
This is again sufficient to conclude that the lemma holds.

\item[(d)] 
The case
$\delta = k>1$ cannot occur, as in this event, it would follow in view of Proposition~\ref{bumping-prop}
 that
  $(i-1,j+k)$ and $(i,j+k)$ are both in $\rspath(T,Y)$ with $X_{j+k-1} \leq T_{i-1,j+k}< X_{j+k}$, 
  which contradicts the fact that  $T_{i-1,j+k} < X_{j+k-1}$
as $(i-1,j+k) \notin \rspath(T,X)$.
%

\item[(e)] 
If $k>0$ and $1 < \delta < k$,
then it follows from Lemma~\ref{gamma-lem} that 
$\dbump_{n+1}(aYXb)$ and $\dbump_{n+2}(aXYb)$
differ only in their $i$th term, 
where if this term of $\dbump_{n+1}(aYXb)$
is $(y,\tilde y,d,\eta)$ then the corresponding term of 
$\dbump_{n+2}(aXYb)$ is $(1+y,1+\tilde y,d,\eta)$.
Both sequences then have more than $i$ terms so 
they  end with the same tuple as needed.


\item[(f)]
Next suppose $k>0$ and $\delta=1$. If $X_j < Y_{i-1}$ then the
argument in subcase (e) still applies.

Assume $Y_{i-1} \leq X_j$. Then we cannot be in cases (1) or (2) without contradicting 
$\rwpath(T,X) \cap \rwpath(T,Y)=\varnothing$,
so $X_j$ appears in column $j$ of $T$ above row $i$ and
position $(i+1,j)$   in $T$ contains an entry that is at most
$X_j$. 
The entry in position $(i,j)$ of $T$ cannot be greater than
$Y_{i-1}$ since $(i,j+1) \in \rspath(T,Y)$,
and this entry must also not be equal to $Y_{i-1}$ since then we would have $ X_{j+1} = Y_{i-1} + 1$
which can only hold if $X_j = Y_{i-1}$, in which case column $j$ of $T$ would have two equal entries,
contradicting the fact that all columns of $T$ are strictly increasing.
Thus position $(i,j)$   in $T$ contains an entry that is less than $Y_{j-1}$.

It follows that 
$\dbump_{n+1}(aYXb)$ and $\dbump_{n+2}(aXYb)$ only differ in terms $i$ and $i+1$:
while these terms in $\dbump_{n+1}(aYXb)$ must have the form
$ (j+1,j+1,Y_{i-1},\theta) $ and $ (j+1,j+1,X_{j+1},\eta)$
 for some 2-cycles $\theta$ and $\eta$, the corresponding terms of $\dbump_{n+2}(aXYb)$ are  
\[ (j+1,j+2,Y_{i-1}, \theta)\quand (j+1,j+1,X_{j+1},\theta)\]
when $Y_{i-1} =X_j$, or 
\[ (j+1,j+1,Y_{i-1}, \theta)\quand (j+1,j+1,X_{j},\phi)\]
when $Y_{i-1} < X_j$, where we may have $\phi \neq \eta$.
As in our earlier cases, we conclude  
that either both sequences end in the same tuple in view of \eqref{succ-theta-eq},
or we observe that
$(i+1,j+1)$ is not a diagonal position 
so  $\rwpath(T,Y)$ and $\rwpath(T\iarrow X,Y)$ never reach the main diagonal.
\een
This completes our argument if all positions in $\cwpath(T,X,i) \cup \cspath(T,X,i)$ are occupied in $T$.

When this does not occur, we must have $l=0$ and $(i,j+k)\notin T$.
In this case row $i$ of $T$ is 
\[{\scriptsize
\ytableausetup{boxsize = 1.1cm,aligntableaux=center}
\begin{ytableau}
T_{i1} &T_{i2}& \cdots & T_{ij} &
 X_{j+1} & X_{j+2} & \cdots & X_{j+k-1}& \none \end{ytableau}} 
\]
while row $i$ of $T\iarrow X$ is
\[{\scriptsize
\ytableausetup{boxsize = 1.1cm,aligntableaux=center}
\begin{ytableau}
T_{i1} &T_{i2} & \cdots & T_{ij} &
 X_j & X_{j+1}  & \cdots & X_{j+k-2} & X_{j+k-1} \end{ytableau}}.
\]
Here, cases (1) or (3) from above must apply.
We cannot have $(i, j+k) \in \rwpath(T,Y) \setminus \rspath(T,Y)$ if $(i,j+k) \notin T$,
so  again $(i,j+\delta) \in  \rspath(T,Y) \cap \cspath(T,X,i)$
for some $ \delta \in [k]$.
By the minimality of $i$, this position is the unique element in both 
 $ \rspath(T,Y)$ and
$\cwpath(T,X)\cup \cspath(T,X)$,
since  $ \rspath(T,Y)$ contains at most one position in each row, and 
since all positions of $ \rspath(T,Y)$ above row $i$ contain entries greater than $X_{j+\delta}$ while
 all positions of $\rwpath(T,Y)\cup \rspath(T,Y)$ above row $i$ contain entries that are at most $X_j$.
 We are left with two further subcases:

\ben
\item[(g)]
If  $X_j < Y_{i-1}$, then 
it follows from Lemma~\ref{gamma-lem} as in subcase (e) that
$\dbump_{n+1}(aYXb)$ and $\dbump_{n+2}(aXYb)$
differ only in their $i$th term, 
where if this term of $\dbump_{n+1}(aYXb)$
is $(y,\tilde y,d,\eta)$ then the corresponding term of 
$\dbump_{n+2}(aXYb)$ is $(1+y,1+\tilde y,d,\eta)$.
In this event, both sequences have more than $i$ terms unless $y=\tilde y = j + k$.
Since $(j,j+k)$ is not a diagonal position, we conclude that the lemma holds holds either way.

 
\item[(h)] 
Assume $Y_{i-1} \leq X_j$. Then we cannot be in case (1) without contradicting 
$\rwpath(T,X) \cap \rwpath(T,Y)=\varnothing$,
so $i<j$ and $X_j$ appears in column $j$ of $T$ above row $i$.
If $\delta < k$ then we can repeat the argument given in subcase (f)
to deduce our result.
If $\delta=k$
then we must have $k=1$ and $Y_{i-1} < X_j$.
In this situation, 
$\dbump_{n+1}(aYXb)$ has only $i$ terms
and ends with a term  of the form $(j+1,j+1,Y_{i-1},\theta)$
for some 2-cycle $\theta$,
and   $\dbump_{n+2}(aXYb)$ is formed from $\dbump_{n+1}(aYXb)$ by appending the tuple $(j+1,j+1,X_j,\phi)$
for some 2-cycle $\phi$.
Since neither $(i,j+1)$ nor $(i+1,j+1)$ is a diagonal position,
this shows that $\rwpath(T,Y)$ and $\rwpath(T\iarrow X,Y)$ never reach the main diagonal so the lemma again holds.
\een
This completes our proof of the lemma.
\end{proof}

\subsection{The $121\leftrightarrow 212$ and $132\leftrightarrow 312$ cases of Theorem~\ref{ck-fkd-thm}}\label{tech-sect}

In this section we prove one final lemma to help prove Theorem~\ref{ck-fkd-thm}
in the case when $\ck_i$ acts by transforming a ``$121$-pattern'' to a ``$212$-pattern''
or a ``$132$-pattern'' to a ``$312$-pattern''. This is our most technical result; it is the main application of the lemmas 
in the previous section.

\begin{lemma}\label{acb-lem} Suppose $a=a_1a_2\cdots a_n$ is an (unprimed) involution word for an element of $ I_\ZZ$.
Write $\square_j$ for $j\in[n]$ to denote the box of $\QO(a)$ containing $j$ or $j'$.
Suppose  $i\in [n-2]$ is such that $ a_{i} \leq  a_{i+2} <  a_{i+1}$,
but $\square_i$ and $\square_{i+1}$ are not both  
 on the main diagonal.
 Then  $\tpi(\ck_i(a)) = \tpi(a)$.
\end{lemma}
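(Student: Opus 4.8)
\textbf{Proof plan for Lemma~\ref{acb-lem}.}

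The plan is to reduce the statement to the structural results of Section~\ref{inter-sect} exactly as Lemma~\ref{bac-lem} was reduced to Lemma~\ref{bac-pre-lem} in the $213 \leftrightarrow 231$ case. Write $b := \ck_i(a)$, which has $b_i = a_{i+1}$, $b_{i+1} = a_i$, $b_{i+2} = a_{i+2}$ in the $121\leftrightarrow 212$ situation (when $a_i = a_{i+2} = a_{i+1} \pm 1$), or $b_i = a_{i+1}$, $b_{i+1} = a_i$, $b_{i+2} = a_{i+2}$ with $a_i < a_{i+2} < a_{i+1}$ and $|a_i - a_{i+1}| > 1$ in the $132 \leftrightarrow 312$ situation. (Proposition~\ref{despite-prop} forces $a_{i+1} \in \ZZ$ with $a_i = a_{i+1}\pm 1$ whenever $a_i = a_{i+2}$, so these are exactly the two forms $\ck$ can take under the hypothesis $a_i \le a_{i+2} < a_{i+1}$.) By Proposition~\ref{unprime-prop} we have $\PO(a_1\cdots a_j) = \PO(b_1\cdots b_j)$ for all $j \notin \{i, i+1\}$ and $\QO(b) = \fkd_i(\QO(a))$, so $\tpi_j(a) = \tpi_j(b)$ for every $j \notin \{i+1, i+2\}$ and it suffices to prove $\tpi_{i+1}(a)\tpi_{i+2}(a) = \tpi_{i+1}(b)\tpi_{i+2}(b)$. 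Here the exclusion ``$\square_i$ and $\square_{i+1}$ not both on the main diagonal'' does double duty: it is precisely the hypothesis that $\fkd_i$ does not toggle a diagonal prime pair (so Proposition~\ref{square-lem}(b) gives $\primes(\QO(b)) = \primes(\QO(a))$ and preserves diagonal primes), and — translated through $\QO$ — it says that the weak bumping paths for $a_i$ and $a_{i+1}$ do not both terminate on the main diagonal in adjacent diagonal boxes.

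First I would set $T := \PO(a_1\cdots a_{i-1}) = \PO(b_1\cdots b_{i-1})$ and $X := a_{i+1}$ (the larger letter, inserted first in $a$), $Y := a_i$; so inserting into $T$ we compare $\rwpath(T,X), \rwpath(T,Y)$. Two regimes arise according to whether $\rwpath(T,X) \cap \rwpath(T,Y) = \varnothing$. If the paths are disjoint and the letters are ``far apart'' ($X + 1 < Y$ fails here since $X > Y$; rather the relevant gap is $|a_i - a_{i+1}|$, which is $>1$ in the $132\leftrightarrow 312$ case and $=1$ in the $121\leftrightarrow 212$ case) one applies Lemma~\ref{bump-b-lem} after the intervening insertion of $a_{i+2}$ has been accounted for; if the paths intersect, one applies Lemma~\ref{bump-a-lem}. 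The genuine work is bookkeeping: the three letters $a_i, a_{i+1}, a_{i+2}$ are inserted in a fixed order, and I need to track how the bumping path of $a_{i+2}$ interleaves with those of $a_i$ and $a_{i+1}$, both before and after the swap. I would organize this by observing that in $b$ the pair $(b_i, b_{i+1}) = (a_{i+1}, a_i)$ is inserted in swapped order while $b_{i+2} = a_{i+2}$ still comes third, and that by Remark~\ref{iarrow-rmk} together with Proposition~\ref{bumping-prop2} the relative horizontal positions of these three paths in each row are completely determined by the relative sizes $a_i \le a_{i+2} < a_{i+1}$. The conclusion $\tpi_{i+1}(a)\tpi_{i+2}(a) = \tpi_{i+1}(b)\tpi_{i+2}(b)$ then follows by feeding the resulting $\dbump$ data into Lemma~\ref{cseq-lem} and checking, case by case on where the paths first meet the main diagonal, that the products of transpositions agree — exactly the style of argument carried out inside the proof of Lemma~\ref{bump-b-lem}.

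The main obstacle I anticipate is the proliferation of subcases when the bumping path of the middle-valued letter $a_{i+2}$ threads between the paths of $a_i$ and $a_{i+1}$: because $a_{i+2}$ can equal $a_i$ (the $121$ case) or lie strictly between (the $132$ case), and because after the first insertion the second insertion's path can either nest inside, cross, or run parallel to it, one is forced into a genuine enumeration much like the subcases (a)--(h) in the proof of Lemma~\ref{bump-b-lem}. The key simplification I would exploit is that $\tpi_{i+1}$ and $\tpi_{i+2}$ each swap at most one pair of $2$-cycles, and in most configurations at least one of the four factors $\tpi_{i+1}(a), \tpi_{i+2}(a), \tpi_{i+1}(b), \tpi_{i+2}(b)$ is the identity (by Lemma~\ref{cseq-lem}(a), whenever the corresponding weak bumping path terminates off the diagonal), so the identity $\tpi_{i+1}(a)\tpi_{i+2}(a) = \tpi_{i+1}(b)\tpi_{i+2}(b)$ reduces to matching a single transposition against a single transposition. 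The remaining hard case is when $\square_i$ and $\square_{i+2}$ (not $\square_{i+1}$) are both diagonal, so that $\QO(a)$ actually has a primed diagonal box available to move; this is handled by invoking Lemma~\ref{diag-box-lem}'s analysis of $\cseq$ near a double-diagonal insertion, and then the hypothesis that $\square_i, \square_{i+1}$ are not both diagonal guarantees we never hit the one configuration that Lemma~\ref{diag-box-lem} already covers separately, avoiding any circularity.
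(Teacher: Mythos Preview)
Your reduction is off by one position. In Lemma~\ref{bac-lem}, $\ck_i$ fixes $a_i$ and swaps $a_{i+1},a_{i+2}$, so $\PO(a_1\cdots a_i)=\PO(b_1\cdots b_i)$ and the problem reduces to matching $\tpi_{i+1}\tpi_{i+2}$. Here, by contrast, $b_i=a_{i+1}\ne a_i$, so $\PO(a_1\cdots a_i)\ne\PO(b_1\cdots b_i)$ in general, hence $\cseq_i(a)\ne\cseq_i(b)$ and $\tpi_i(a)\ne\tpi_i(b)$. The correct target is the \emph{three}-factor identity $\tpi_i(a)\tpi_{i+1}(a)\tpi_{i+2}(a)=\tpi_i(b)\tpi_{i+1}(b)\tpi_{i+2}(b)$, which Lemma~\ref{bac-pre-lem} by itself cannot deliver.

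The paper does use your intended tools, but only to dispose of two ``easy'' regimes. If $\rspath(T,a_i)$ and $\rspath(T,a_{i+1})$ share an off-diagonal box, Lemma~\ref{bump-a-lem} forces $\cseq_i(a)=\cseq_i(b)$, so $\tpi_i(a)=\tpi_i(b)$ and Lemma~\ref{bac-pre-lem} handles the remaining two factors; if $a_i<a_{i+2}$ and the weak row paths are disjoint, Lemma~\ref{bump-b-lem} forces $\cseq_{i+1}(a)=\cseq_{i+1}(b)$, so $\tpi_{i+2}(a)=\tpi_{i+2}(b)$ and Lemma~\ref{bac-pre-lem} handles $\tpi_i\tpi_{i+1}$. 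The residual case --- where the strict paths meet only (if at all) on the diagonal while the weak paths do intersect --- is not covered by either lemma, and this is where the bulk of the argument lies. There the paper explicitly computes the entries and the $\gamma_{xy}$-values of all six tableaux $\PO(a_1\cdots a_{i-1+t})$, $\PO(b_1\cdots b_{i-1+t})$ for $t\in\{0,1,2,3\}$ in a window around the diagonal box $(k,k)$ where the paths first reach the diagonal, splitting on the value of $T_{kk}$ relative to the bumped entries and on the number of columns of $\cseq_{i-1}(a)$. In several subcases all six of $\tpi_i,\tpi_{i+1},\tpi_{i+2}$ on both sides are nontrivial transpositions and the identity is a genuine three-transposition computation whose product is a $3$- or $4$-cycle on $\cyc(z)$; your two-factor framework cannot see this. (Aside: the exclusion actually needed is that $\square_i$ and $\square_{i+2}$ are not both diagonal --- this is what Lemma~\ref{diag-box-lem} covers and what Proposition~\ref{square-lem}(b) requires --- since $\square_i$ and $\square_{i+1}$ can never both be diagonal in a standard shifted tableau.)
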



\begin{proof}
Define $
b = \ck_i(a) $. Our goal is to show that that $\tpi(a)=\tpi(b)$.
We have
either 
$
a_i < a_{i+2} < a_{i+1}
$
and
$
b =  a_1 \cdots a_{i+1}a_{i} a_{i+2}\cdots a_n
$,
or 
$
a_i = a_{i+2} < a_{i+1}
$
and
$ 
b =  a_1 \cdots a_{i+1}a_{i} a_{i+1}\cdots a_n
.$
In either case,
Proposition~\ref{unprime-prop} implies
that
$\PO( a_1a_2\cdots a_j ) = \PO( b_1b_2\cdots b_j)$ for $j \in [n]\setminus\{i,i+1\}$
 so we have $\cseq_j(a)= \cseq_j(b)$ for $j \in [n]\setminus\{i,i+1\}$.
Thus $\tpi_j(a) = \tpi_j(b)$ for $j \in [n] \setminus\{i,i+1,i+2\}$ and 
it is enough to show that 
$
\tpi_{i}(a)\tpi_{i+1}(a)\tpi_{i+2}(a) =\tpi_{i}(b) \tpi_{i+1}(b)\tpi_{i+2}(b).
$

Let $s(a)$ be the number of diagonal entries in $\QO(a)$ equal to $i$, $i+1$, or $i+2$.
We must have $s(a) \in \{0,1\}$ since $i$ and $i+2$ are not both on the main diagonal.
Let $r(a) \in \{0,1,2\}$ be the number of (off-diagonal) entries in $\QO(a)$ equal to $i'$, $i'+1$, or $i'+2$.
Since $\QO(b) = \fkd_i(\QO(a))$ by Proposition~\ref{unprime-prop},
we deduce from Proposition~\ref{square-lem} that $s(a) = s(b)$ and $r(a) = r(b)$.

\begin{claim}
If $\rspath_i(a)$ and $\rspath_i(b)$ intersect off the main diagonal then $\tau(a)=\tau(b)$.
\end{claim}

\begin{proof}[Proof of the claim]
In case, Lemma~\ref{bump-a-lem}
implies that $\cseq_i(a) = \cseq_i(b)$ so $\tpi_i(a) = \tpi_i(b)$.
As $s(a) = s(b)$ and $r(a) = r(b)$,
we can use
 Lemma~\ref{bac-pre-lem}
to deduce that $\tpi_{i+1}(a)\tpi_{i+2}(a) = \tpi_{i+1}(b)\tpi_{i+2}(b)$.
\end{proof}
 
 \begin{claim}
If $a_i < a_{i+2} $ and the paths $\rwpath_i(a)$ and $\rwpath_i(b)$ are disjoint then $\tau(a)=\tau(b)$.
\end{claim}

\begin{proof}[Proof of the claim]
In this case Lemma~\ref{bump-b-lem} implies that
 $\cseq_{i+1}(a) =\cseq_{i+1}(b)$ so   $\tpi_{i+2}(a) = \tpi_{i+2}(b)$.
As $s(a) = s(b)$ and $r(a) = r(b)$,
 Lemma~\ref{bac-pre-lem}
again implies $\tpi_{i}(a)\tpi_{i+1}(a) = \tpi_{i}(b)\tpi_{i+1}(b)$.
 \end{proof}
 
Thus, we may assume that 
$\rspath_i(a)$ and $\rspath_i(b)$ intersect in at most one position, which is on the main diagonal,
and that if $a_i < a_{i+2} $ then $\rwpath_i(a)$ and $\rwpath_i(b)$ intersect in at least one position.
For the next part of our argument, we will assume that if $a_i < a_{i+2} $ 
then the first position in the (nonempty) intersection of $\rwpath_i(a)$ and $\rwpath_i(b)$ is off the main diagonal.

We define an index $j$ and a number $u$ in the following way.
If $a_i=a_{i+2} <a_{i+1}$, then we set $j:=0$ and $u:=a_i$.
If instead
$a_i < a_{i+2} < a_{i+1}$, then let $j>0$ be the row index of the 
the first position in the intersection of $\rwpath_i(a)$ and $\rwpath_i(b)$.
This position 
cannot belong to $\rspath_i(a) \cap \rspath_i(b)$, so
it must be occupied in $T$, and we define $u$ to be its entry.

Define $k$ to be the row index of
 the last position in $\rwpath_i(a)$.
Then $j<k$ and 
the following observations are consequences of our assumption that 
$\rspath_i(a)$ and $\rspath_i(b)$
do not intersect off the main diagonal:
\ben
 \item[(A1)] Suppose $t \in \{1,2,\dots,k-j-1\}$ or $t = 0  < j$.
 Then row $j+t$ of $T$ contains both $u+t$ and $u+t+1$,
and the positions of $u+t$ and $u+t+1$ in row $j+t$ of $T$  are in $\rspath_i(a) \cap \rwpath_i(b)$
and  $\rspath_i(b)$, respectively.
 %
Moreover, if 
 row $j+t$ of $T$  contains $u+t-1$ then its position is in $\rwpath_i(a)$, and 
 otherwise the position of $u+t$ in row $j+t$ of $T$ is in $\rwpath_i(a)$.
It follows that if $j>0$ then row $j$ of $T$ does not contain $u-1$,
since $\rwpath_i(a)$ and $\rwpath_i(b)$ share a position in this row.
 

\item[(A2)] The position $(k,k)$ is in $\rwpath_i(a)$, since otherwise the last position in $\rwpath_i(a)$
would be an off-diagonal element of $\rspath_i(a)\cap \rspath_i(b)$.
If occupied, the entry of position $(k,k)$ in $T$  must be at least $ u + k - j-1$.


\een

Suppose $x,y \in \ZZ$ are such that $\row(T)xy$ is an involution word.
 The tableau $T \iarrow x$ differs from $T$ only in the positions
 that belong to  $\spath(T,x)$, which contain successively increasing entries until the last position which is not in $T$.
 
 If we know only the first $k-1$ positions of $\wpath(T,x)$ and $\spath(T,x)$,
 but we know that the entry of $T$ in the $k$th term of $\spath(T,x)$ is bounded below by some number $N$ when this position is present in $T$,
 then we can compute the subtableau of $T \iarrow x$ formed by omitting all entries greater than $N$.
 In this event, we can then also compute the initial subsequences of
 $\wpath(T \iarrow x,y)$ and $\spath(T \iarrow x, y)$ that consist of positions with entries of $T \iarrow x$ that are bounded above by $N$.
These observations let us deduce 
the following additional properties:
 \ben
 \item[(A3)] The first $k-1$ terms of  $\rspath_i(a)$ and $\rspath_{i+1}(b)$ coincide, 
 as do the  first $k-1$ terms of
$\rspath_{i}(b)$ and $\rspath_{i+1}(a)$, as do the  first $k-1$ terms of $\rwpath_i(a)$ and $\rwpath_{i+1}(b)$.
 

 \item[(A4)]
  The first $k-1$ terms of $\rwpath_{i}(b)$ and $\rwpath_{i+1}(a)$ 
  are the same except in the rows $j+t$  
  where $T$ does not contain  $u+t-1$, for $t \in \{1,2,\dots, k-j-1\}$ or $t=0<j$.
In these rows,
$\rwpath_{i+1}(a) $ contains the position of $u+t+1$ in $T$,
rather than the position of $u+t$  which is in $\rwpath_{i}(b)$.


   \item[(A5)] The first $j$ terms of $\spath_{i+2}(a)$ and $\spath_{i+2}(b)$ coincide, as do
   the first $j$ terms of $\wpath_{i+2}(a)$ and $\wpath_{i+2}(b)$.
If $j>0$ then term $j$ of all four paths is the position of $u+1$ in row $j$ of $T$.


   \item[(A6)] If $t \in [ k-j-1]$, then the $(j+t)$th terms of 
$\wpath_{i+2}(a), $  $ \spath_{i+2}(a), $ $ \wpath_{i+2}(b), $ and $ \spath_{i+2}(b)$
are  
 either the respective positions in row $j+t$ of $T$ of  
$u+t-1$, $ u+t$, $ u+t$, and $ u+t+1$
  when row $j+t$ of $T$ contains the entry $u+t-1$,
or   the respective positions of  
$u+t$, $ u+t+1$, $ u+t+1$, and $ u+t+1$
when the same  row does not contain $u+t-1$.
  

\een
Combining the preceding observations, we arrive at the following key claim:
\ben
\item[(A7)] Let $v=u+k-j-1$ and assume $k>1$. 
Then
the entries of 
the shifted tableaux 
$T, $ $ T \iarrow a_i, $ and $ T\iarrow a_i \iarrow a_{i+1}$
in the $(k-1)$th positions of 
$ \rspath_i(a), $ $ \rspath_{i+1}(a), $ and $ \rspath_{i+2}(a)$
are $v$, $v+1$, and $v$, respectively.
Likewise, the entries of 
the shifted tableaux 
$T$, $ T \iarrow b_i$, and $ T\iarrow b_i \iarrow b_{i+1}$
in the $(k-1)$th positions of 
$\rspath_i(b), $ $ \rspath_{i+1}(b), $ and $ \rspath_{i+2}(b)$
are $v+1$, $v$, and $v+1$, respectively.

\een
This last property still makes sense when $j=0$ and $k=1$ if we 
define the entries in the ``$0$th position'' of $\rspath_{m}(a)$ and $\rspath_{m}(b)$ to be $a_m$ and $b_m$, respectively.


We need just one other observation.
Let $U$ be the shifted tableau formed from $T$ by omitting the first $k-1$ rows.
Using Proposition~\ref{o-lem2} and property (A7), 
one can check that $a$ is equivalent under $\simICK$ to a word that begins
with $\row(U) \hs v\hs (v+1)\hs v $. If $U$ were empty or if all entries in $U$ were greater than $v+2$ then this word 
is an involution equivalent 
under $\iisim$ to $v\hs (v+1)\hs v\hs \row(U)$ which is impossible by Proposition~\ref{mat3-prop}.
Thus: 
 \ben
 \item[(A8)] The entry of $T$ in position $(k,k)$ is occupied by $v$, $v+1$, or $v+2$.
 \een
 
We can now reason precisely about the 
 possibilities for $\tpi_i(a)$, $\tpi_{i+1}(a)$, $\tpi_{i+2}(a)$, $\tpi_i(b)$, $\tpi_{i+1}(b)$, and $\tpi_{i+2}(b)$.
Below, we will refer to the entries of the shifted tableaux arranged in the  diagram
\def\AA{\PO(a_1\cdots a_{i-1})}
\def\AAA{\PO(a_1\cdots a_{i})}
\def\AAAA{\PO(a_1\cdots a_{i+1})}
\def\AAAAA{\PO(a_1\cdots a_{i+2})}
\def\BBB{\PO(b_1\cdots b_{i})}
\def\BBBB{\PO(b_1\cdots b_{i+1})}
\be\label{6-eq}{\small
\ytableausetup{boxsize = 0.8cm,aligntableaux=center}
\begin{tikzcd}[ampersand replacement=\&]
 \AA   \arrow[rd, "b_{i}"'] \arrow[r, "a_i"]  \& 				\AAA \arrow[rr, "a_{i+1}"] \&\& \AAAA \arrow[r, "a_{i+2}"] \& \AAAAA  \\
    \& 				\BBB \arrow[rr, "b_{i+1}"'] \&\& \BBBB \arrow[ru, "b_{i+2}"'] \& 
\end{tikzcd}}
\ee
where in this picture, an arrow $\xrightarrow{\ u\ }$ connects two tableaux if inserting $u$ into the first tableau according 
to Definition~\ref{iarrow-def} gives the second.
We also write 
\be\label{acb-cseq-eq}
 \cseq_{i-1}(a)= \cseq_{i-1}(b) = 
  \left[\barr{llll} \gamma_1 & \gamma_2 & \dots & \gamma_q
\\
c_1 & c_2 & \dots & c_q \earr\right].\ee

\begin{claim}
Assume that 
$\rspath_i(a)$ and $\rspath_i(b)$ intersect in at most one position, which is on the main diagonal,
and that if $a_i < a_{i+2} $ then 
the intersection of $\rwpath_i(a)$ and $\rwpath_i(b)$ is nonempty and its first position
is off the main diagonal. Then $\tau(a) =\tau(b)$.
\end{claim}

\begin{proof}[Proof of the claim]
As in earlier claims, it suffices to show  $\tpi_i(a)\tpi_{i+1}(a)\tpi_{i+2}(a) = \tpi_i(b)\tpi_{i+1}(b)\tpi_{i+2}(b)$.
As noted above, there are three possibilities for  the entry of $T$ in position $(k,k)$. 
First suppose the entry of $T$ in position $(k,k)$ is $v$.
Then, in view of Remark~\ref{iarrow-rmk},  the entries of  $T$ in positions $\{k,k+1,k+2\}\times\{k,k+1,k+2\}$ must be
$T_{k+i,k+j} = v+ i + j$ for all $0 \leq i \leq j \leq 2$.
Using Lemma~\ref{cseq-lem} and property (A7), one checks  
that the entries in these positions are the same for all six tableaux in \eqref{6-eq},
and that 
$\tpi_i(a) = (\gamma_k,\gamma_{k+1}) = \tpi_{i+2}(b)$ and
$ \tpi_{i+1}(a) = (\gamma_{k},\gamma_{k+2}) = \tpi_{i+1}(b)$ and
$ \tpi_{i+2}(a) = (\gamma_{k+1},\gamma_{k+2}) = \tpi_i(b)$.
Thus
 $\tpi_i(a)\tpi_{i+1}(a)\tpi_{i+2}(a) = \tpi_i(b)\tpi_{i+1}(b)\tpi_{i+2}(b) = (\gamma_k,\gamma_{k+2})$.


Suppose next that the entry of $T$ in position $(k,k)$ is $v + 1$.
Then, again in view of Remark~\ref{iarrow-rmk},  the entries of  $T$ in positions $\{k,k+1\}\times\{k,k+1\}$ must be
$T_{k+i,k+j} = v+ i + j$ for all $0 \leq i \leq j \leq 1$.
Assume $k>1$. Then row $k-1$ of $T$ contains   $v$ and $v+1$ in off-diagonal positions, so 
the entry in position $(k-1,k+1)$ of $T$ is at most $v+1$.
If equality holds, then the entries of the six tableaux in \eqref{6-eq}
in positions $\{k-1,k,k+1\}\times \{k,k+1\}$
must be 
\def\AA{ {\begin{ytableau} \none & v+3  \\  v+1 & v + 2  \\ v & v +1  \end{ytableau}}}
\def\AAA{ {\begin{ytableau} \none & v+3  \\ v & v + 2  \\  ? & v +1 \end{ytableau}}}
\def\AAAA{ {\begin{ytableau} \none & v+2  \\ v & v + 1 \\ ? & v   \end{ytableau}}}
\def\AAAAA{ {\begin{ytableau} \none & v+2  \\ v & v + 1  \\ ? & ?  \end{ytableau}}}
\def\BBB{ {\begin{ytableau} \none & v+3  \\ v+1 & v + 2  \\ v & v +1 \end{ytableau}}}
\def\BBBB{ {\begin{ytableau} \none & v+3  \\ v & v + 2  \\ ? & v +1 \end{ytableau}}}
\[
\ytableausetup{boxsize = 0.8cm,aligntableaux=center}
\scriptsize
\begin{tikzcd}[ampersand replacement=\&]
 \AA   \arrow[rd, "b_{i}"'] \arrow[r, "a_i"]  \& 				\AAA \arrow[rr, "a_{i+1}"] \&\& \AAAA \arrow[r, "a_{i+2}"] \& \AAAAA  \\
    \& 				\BBB \arrow[rr, "b_{i+1}"'] \&\& \BBBB \arrow[ru, "b_{i+2}"'] \& 
\end{tikzcd}.
\]
On the other hand,
if the entry in position $(k-1,k+1)$ of $T$ is less than $v+1$ then
position $(k-1,k+2)$ of $T$ must have an entry less than $v+2$.
When this happens or when $k=1$,
the entries in the six tableaux in \eqref{6-eq} in positions $\{k,k+1\}\times\{k,k+1,k+2\}$
must instead be
\def\AA{{\begin{ytableau} \none & v+3 & ?  \\  v+1 & v + 2 &?   \end{ytableau}}}
\def\AAA{{\begin{ytableau} \none & v+3  & ? \\ v & v + 1 & v+2  \end{ytableau}}}
\def\AAAA{{\begin{ytableau} \none & v+2 & v+3  \\ v & v + 1 & v+2  \end{ytableau}}}
\def\AAAAA{\AAAA}
\def\BBB{{\begin{ytableau} \none & v+3 & ? \\ v+1 & v + 2 &? \end{ytableau}}}
\def\BBBB{{\begin{ytableau} \none & v+3 & ?  \\ v & v + 1 & v+2  \end{ytableau}}}
\[
\ytableausetup{boxsize = 0.8cm,aligntableaux=center}
\scriptsize
\begin{tikzcd}[ampersand replacement=\&]
 \AA   \arrow[rd, "b_{i}"'] \arrow[r, "a_i"]  \& 				\AAA \arrow[rr, "a_{i+1}"] \&\& \AAAA \arrow[r, "a_{i+2}"] \& \AAAAA  \\
    \& 				\BBB \arrow[rr, "b_{i+1}"'] \&\& \BBBB \arrow[ru, "b_{i+2}"'] \& 
\end{tikzcd}
\]
where $\boxed{?}$ denotes a position that may be unoccupied.
In both cases, it follows using Lemmas~\ref{gamma-lem} 
that the values of $\gamma_{xy}$ applied to the  six tableaux in \eqref{6-eq} in positions $\{k,k+1\}\times\{k,k+1\}$
are
\def\AA{{\begin{ytableau} \none & \gamma_{k+1}   \\   \gamma_{k} & \emptyset     \end{ytableau}}}
\def\AAA{{\begin{ytableau}\none & \gamma_{k+1}   \\   \gamma_{k} & \emptyset   \end{ytableau}}}
\def\AAAA{{\begin{ytableau} \none & \gamma_{k+1}   \\   \gamma_{k} & \emptyset   \end{ytableau}}}
\def\AAAAA{{\begin{ytableau} \none & \gamma_{k}   \\   \gamma_{k+1} & \emptyset   \end{ytableau}}}
\def\BBB{{\begin{ytableau} \none & \gamma_{k}   \\   \gamma_{k+1} & \emptyset  \end{ytableau}}}
\def\BBBB{{\begin{ytableau} \none & \gamma_{k}   \\   \gamma_{k+1} & \emptyset   \end{ytableau}}}
\[
\ytableausetup{boxsize = 0.8cm,aligntableaux=center}
\scriptsize
\begin{tikzcd}[ampersand replacement=\&]
 \AA   \arrow[rd, "b_{i}"'] \arrow[r, "a_i"]  \& 				\AAA \arrow[rr, "a_{i+1}"] \&\& \AAAA \arrow[r, "a_{i+2}"] \& \AAAAA  \\
    \& 				\BBB \arrow[rr, "b_{i+1}"'] \&\& \BBBB \arrow[ru, "b_{i+2}"'] \& 
\end{tikzcd}.
\]
Thus, it follows by Lemma~\ref{cseq-lem} 
 that 
$
 \tpi_i(a) = \tpi_{i+1}(a) = \tpi_{i+1}(b) = \tpi_{i+2}(b) = 1
 $
and
$
\tpi_{i+2}(a) = \tpi_{i}(b) = (\gamma_k,\gamma_{k+1}),
$
so  $\tpi_i(a)\tpi_{i+1}(a)\tpi_{i+2}(a) = \tpi_i(b)\tpi_{i+1}(b)\tpi_{i+2}(b)=(\gamma_k,\gamma_{k+1})$ as needed.
  

Finally, 
suppose the entry of $T$ in position $(k,k)$ is $v + 2$.
If $k>1$ then row $k-1$ of $T$ contains $v$ and $v+1$ off the main diagonal,
so the entry in position $(k-1,k+1)$ of $T$ must be less than $v+2$.
There are two subcases depending on the entry in position $(k-1,k+2)$ of $T$.
If $k>1$ and this position contains a number less than $v+2$, or if $k=1$,
then
the entries in the six tableaux in \eqref{6-eq} in positions $\{k,k+1\}\times\{k,k+1,k+2\}$
are
\def\AA{{\begin{ytableau} \none & ? & ?  \\  v+2 & ?  & ? \end{ytableau}}}
\def\AAA{{\begin{ytableau} \none & ? & ? \\  v & v+2& ?  \end{ytableau}}}
\def\AAAA{{\begin{ytableau} \none & v+2 & ? \\  v & v+1& ?  \end{ytableau}}}
\def\AAAAA{{\begin{ytableau} \none & v+2 & ? \\  v & v+1& v+2  \end{ytableau}}}
\def\BBB{{\begin{ytableau} \none & ?& ?  \\  v+1 & v+2& ? \end{ytableau}}}
\def\BBBB{{\begin{ytableau}  \none & ?& ?  \\  v & v+1& v+2  \end{ytableau}}}
\[
\ytableausetup{boxsize = 0.8cm,aligntableaux=center}
\scriptsize
\begin{tikzcd}[ampersand replacement=\&]
 \AA   \arrow[rd, "b_{i}"'] \arrow[r, "a_i"]  \& 				\AAA \arrow[rr, "a_{i+1}"] \&\& \AAAA \arrow[r, "a_{i+2}"] \& \AAAAA  \\
    \& 				\BBB \arrow[rr, "b_{i+1}"'] \&\& \BBBB \arrow[ru, "b_{i+2}"'] \& 
\end{tikzcd}.
\]
If $k>1$ and position $(k-1,k+2)$ of $T$ is unoccupied or contains a number greater than or equal to $v+2$,  
then positions $(k-1,k)$ and $(k-1,k+1)$ of $T$ must contain the numbers $v$ and $v+1$.
In this case 
the entries in the six tableaux in \eqref{6-eq} in positions $\{k-1,k,k+1\}\times\{k,k+1\}$
are
\def\AA{{\begin{ytableau} \none & ?   \\  v+2 & ?    \\ v & v +1  \end{ytableau}}}
\def\AAA{{\begin{ytableau} \none & ?  \\  v & v+2 \\ ? & v +1  \end{ytableau}}}
\def\AAAA{{\begin{ytableau} \none & v+2  \\  v & v+1 \\ ? & v     \end{ytableau}}}
\def\AAAAA{{\begin{ytableau} \none & v+2  \\  v & v+1 \\ ? & ?    \end{ytableau}}}
\def\BBB{{\begin{ytableau} \none & ?  \\  v+1 & v+2 \\ v & v +1   \end{ytableau}}}
\def\BBBB{{\begin{ytableau}  \none & ?  \\  v & v+2   \\ ? & v +1   \end{ytableau}}}
\[
\ytableausetup{boxsize = 0.8cm,aligntableaux=center}
\scriptsize
\begin{tikzcd}[ampersand replacement=\&]
 \AA   \arrow[rd, "b_{i}"'] \arrow[r, "a_i"]  \& 				\AAA \arrow[rr, "a_{i+1}"] \&\& \AAAA \arrow[r, "a_{i+2}"] \& \AAAAA  \\
    \& 				\BBB \arrow[rr, "b_{i+1}"'] \&\& \BBBB \arrow[ru, "b_{i+2}"'] \& 
\end{tikzcd}.
\]
Write $\eta_k$ and $\eta_{k+1}$ for the entries in the first row of $\cseq_{i+2}(a)$
in columns $k$ and $k+1$.
The following assertions apply equally to both of the cases above.
First, since $\cseq_{i-1}(a)=\cseq_{i-1}(b)$ and
$\cseq_{i+2}(a)=\cseq_{i+2}(b)$, one can check using
   Lemmas~\ref{gamma-lem} and \ref{cseq-lem}  that  $\gamma_k = \eta_k$.
If  $\cseq_{i-1}(a)$ has only $k$ columns, then
it follows similarly that
 the values of $\gamma_{xy}$ applied to the  six tableaux in \eqref{6-eq} in positions $\{k,k+1\}\times\{k,k+1\}$
are
\def\AA{{\begin{ytableau} \none & \none  \\   \gamma_{k} & ?     \end{ytableau}}}
\def\AAA{{\begin{ytableau}  \none & \none  \\  \eta_{k+1}  & \gamma_{k}    \end{ytableau}}}
\def\AAAA{{\begin{ytableau}    \none & \gamma_{k}  \\  \eta_{k+1}  & \emptyset  \end{ytableau}}}
\def\AAAAA{{\begin{ytableau}  \none & \eta_{k+1}   \\  \gamma_{k}  & \emptyset  \end{ytableau}}}
\def\BBB{{\begin{ytableau}     \gamma_{k} & \emptyset  \end{ytableau}}}
\def\BBBB{{\begin{ytableau}     \gamma_{k} & \beta   \end{ytableau}}}
\[
\ytableausetup{boxsize = 0.8cm,aligntableaux=center}
\scriptsize
\begin{tikzcd}[ampersand replacement=\&]
 \AA   \arrow[rd, "b_{i}"'] \arrow[r, "a_i"]  \& 				\AAA \arrow[rr, "a_{i+1}"] \&\& \AAAA \arrow[r, "a_{i+2}"] \& \AAAAA  \\
    \& 				\BBB \arrow[rr, "b_{i+1}"'] \&\& \BBBB \arrow[ru, "b_{i+2}"'] \& 
\end{tikzcd}
\]
where we set $\beta:=\emptyset$ in the first subcase above and $\beta:=\eta_{k+1}$ in the second.
Thus  
$ \tpi_i(a) = \tpi_{i+2}(a)=(\gamma_k,\eta_{k+1})$
and
$ \tpi_{i+1}(a)=\tpi_{i}(b) =\tpi_{i+1}(b) = \tpi_{i+2}(b)=1,$
 giving
  $\tpi_i(a)\tpi_{i+1}(a)\tpi_{i+2}(a) = \tpi_i(b)\tpi_{i+1}(b)\tpi_{i+2}(b) = 1$ as desired.
If $\cseq_{i-1}(a)$ has at least $k+1$ columns, then
it follows likewise that
the values of $\gamma_{xy}$ applied to the  six tableaux in \eqref{6-eq} in positions $\{k,k+1\}\times\{k,k+1\}$
are
\def\AA{{\begin{ytableau} \none & \gamma_{k+1}  \\   \gamma_{k} & ?     \end{ytableau}}}
\def\AAA{{\begin{ytableau}   \none & \gamma_{k+1}  \\   \eta_{k+1} & \gamma_{k}   \end{ytableau}}}
\def\AAAA{{\begin{ytableau} \none & \gamma_{k}  \\   \eta_{k+1} & \emptyset \end{ytableau}}}
\def\AAAAA{{\begin{ytableau}  \none & \eta_{k+1}   \\  \gamma_{k}  & \emptyset  \end{ytableau}}}
\def\BBB{{\begin{ytableau} \none & \gamma_{k+1}  \\   \gamma_{k} & \emptyset  \end{ytableau}}}
\def\BBBB{{\begin{ytableau} \none & \gamma_{k+1}  \\   \gamma_{k} & \beta   \end{ytableau}}}
\[
\ytableausetup{boxsize = 0.8cm,aligntableaux=center}
\scriptsize
\begin{tikzcd}[ampersand replacement=\&]
 \AA   \arrow[rd, "b_{i}"'] \arrow[r, "a_i"]  \& 				\AAA \arrow[rr, "a_{i+1}"] \&\& \AAAA \arrow[r, "a_{i+2}"] \& \AAAAA  \\
    \& 				\BBB \arrow[rr, "b_{i+1}"'] \&\& \BBBB \arrow[ru, "b_{i+2}"'] \& 
\end{tikzcd}
\]
where $\beta$ has the same definition as before.
Thus Lemma~\ref{cseq-lem} gives
 $
 \tpi_i(a) = \tpi_{i+2}(a)=(\gamma_k,\eta_{k+1})
$
and
$
\tpi_{i+1}(a) = (\gamma_k,\gamma_{k+1})
$
while 
$ \tpi_{i}(b) =\tpi_{i+1}(b) = 1 $ and $ \tpi_{i+2}(b)= (\gamma_{k+1},\eta_{k+1})$,
so
 $\tpi_i(a)\tpi_{i+1}(a)\tpi_{i+2}(a) = \tpi_i(b)\tpi_{i+1}(b)\tpi_{i+2}(b) = (\gamma_{k+1},\eta_{k+1})$
 as needed.
This completes our proof of the claim.
\end{proof}

It remains to consider the case when 
$a_i < a_{i+2}$ and
$\rspath_i(a)$ and $\rspath_i(b)$
 do not intersect off the main diagonal, but $\rwpath_i(a)$ and $\rwpath_i(b)$ intersect in a unique position which is on the main diagonal.
Suppose this position is $(k,k)$.
This position must be occupied in $T$, 
since otherwise one can check using Remark~\ref{iarrow-rmk} that
both $i$ and $i+2$ would be on the main diagonal of $\QO(a)$.
The reasoning we used to justify  (A3)  
lets us similarly derive the following claims:
\ben
\item[(B1)] The first $k-1$ terms of  $\spath_i(a)$ and $\spath_{i+1}(b)$ coincide,
as do the first $k-1$ terms of $\spath_{i+1}(a)$ and $\spath_{i}(b)$.
Each of the first $k-1$ terms of the first two paths 
is strictly to the right of the main diagonal
and strictly to the left of the corresponding term in the second two paths.
 The same statements hold for the 
 corresponding weak bumping paths.
 
  
 \item[(B2)] The first $k-1$ terms of $\spath_{i+2}(a)$ and $\spath_{i+2}(b)$ coincide.
 Each of the first $k-1$ terms of these paths 
is strictly to the right of the corresponding term in $\spath_i(a)$ or $\spath_{i+1}(b)$,
 and weakly to the left of corresponding term in  $\spath_{i+1}(a)$ or $\spath_{i}(b)$.
 The same statements hold for the 
  corresponding weak bumping paths. 
  \een
If $k=1$ then let $u:=a_i=b_{i+1}<v:= a_{i+2}=b_{i+2}<w:= a_{i+1} = b_i.$ If $k>1$ 
then define $u$, $v$, and $w$ to be the entries of 
$ T$, $T\iarrow a_{i}\iarrow a_{i+1}$, and $T\iarrow a_{i}$, respectively,
in position $k-1$ of $\spath_i(a)$, $\spath_{i+2}(a)$, and $\spath_{i+1}(a)$ respectively.
It follows from (B1) and (B2) that:
\begin{itemize}
\item[(B3)] Assume $k>1$. Then $u$ is also the entry of $T\iarrow b_{i}$ in position $k-1$ of $\spath_{i+1}(b)$.
Likewise, $v$ is also the entry of $T\iarrow b_{i}\iarrow b_{i+1}$  in position $k-1$ of $\spath_{i+2}(b)$.
In turn, $w$ is also the entry of $T$ in position $k-1$ of $\spath_{i}(b)$, and $u<v<w$.

\item[(B4)] The entry of $T$ in position $(k,k)$ is at least $w$ since $(k,k) \in \rwpath_i(b)$.
\end{itemize}
This leaves us with three possibilities $\tpi_i(a)$, $\tpi_{i+1}(a)$, $\tpi_{i+2}(a)$, $\tpi_i(b)$, $\tpi_{i+1}(b)$, and $\tpi_{i+2}(b)$,
as we discuss in the proof of our final claim.

\begin{claim}
Assume $a_i < a_{i+2} $ and
$\rspath_i(a)$ and $\rspath_i(b)$
 have no main diagonal intersection, but $\rwpath_i(a)$ and $\rwpath_i(b)$ intersect in a unique diagonal position $(k,k)$.
Then $\tau(a) =\tau(b)$.
\end{claim}

\begin{proof}[Proof of the claim]
Denote $\cseq_{i-1}(a)= \cseq_{i-1}(b) $ as in \eqref{acb-cseq-eq} above.
Again write $\eta_k$ and $\eta_{k+1}$ for the entries in the first row of $\cseq_{i+2}(a)$
in columns $k$ and $k+1$.

First suppose the entry in position $(k,k)$ of $T$
is $w$. 
Then, in view of Remark~\ref{iarrow-rmk}, the entries of $T$  in positions $\{k,k+1\}\times\{k,k+1\}$ must be
$T_{k+i,k+j} = w+ i + j$ for all $0 \leq i \leq j \leq 1$.
If $k>1$, then row $k-1$ of $T$ contains both $u$ and $w$ in positions off the main diagonal,
so the entry in position $(k-1,k+1)$ of $T$ is at most $w$.
If $k>1$ and this entry is equal to $w$, then the entries of the six tableaux in \eqref{6-eq}
in positions $\{k-1,k,k+1\}\times\{k,k+1\}$
are
\def\AA{{\begin{ytableau} \none & w+2  \\  w & w + 1  \\ u & w  \end{ytableau}}}
\def\AAA{{\begin{ytableau} \none & w+2  \\  u & w + 1  \\ ? & w \end{ytableau}}}
\def\AAAA{{\begin{ytableau}  \none & w+1  \\  u & w   \\ ? & v \end{ytableau}}}
\def\AAAAA{{\begin{ytableau}  \none & w  \\  u & v   \\ ? & ?  \end{ytableau}}}
\def\BBB{{\begin{ytableau} \none & w+2  \\  w & w + 1  \\ u & v  \end{ytableau}}}
\def\BBBB{{\begin{ytableau}\none & w+2  \\  u & w  \\ ? & v \end{ytableau}}}
\[
\ytableausetup{boxsize = 0.8cm,aligntableaux=center}
\scriptsize
\begin{tikzcd}[ampersand replacement=\&]
 \AA   \arrow[rd, "a_{i+1}"'] \arrow[r, "a_i"]  \& 				\AAA \arrow[rr, "a_{i+1}"] \&\& \AAAA \arrow[r, "a_{i+2}"] \& \AAAAA  \\
    \& 				\BBB \arrow[rr, "a_{i}"'] \&\& \BBBB \arrow[ru, "a_{i+2}"'] \& 
\end{tikzcd}.
\]
Alternatively, if $k>1$ and the entry in position $(k-1,k+1)$ of $T$ is less than $w$,
then the entry of $T$ in position $(k-1,k+2)$ must be occupied by a number less than $w+1$.
In this case, or if $k=1$,
 the entries of the six tableaux in \eqref{6-eq}
in positions $\{k,k+1\}\times\{k,k+1,k+2\}$
are
\def\AA{{\begin{ytableau} \none & w+2 &?  \\  w & w + 1 &?    \end{ytableau}}}
\def\AAA{{\begin{ytableau} \none & w+2 & ?  \\  u & w & w +1    \end{ytableau}}}
\def\AAAA{{\begin{ytableau}  \none & w+1 & w+2  \\  u & w & w+1    \end{ytableau}}}
\def\AAAAA{{\begin{ytableau}   \none & w & w+2  \\  u & v & w+1     \end{ytableau}}}
\def\BBB{{\begin{ytableau}\none & w+2 &?  \\  w & w + 1 &w+2  \end{ytableau}}}
\def\BBBB{{\begin{ytableau}\none & w+2 &?  \\  u & w  &w+1\end{ytableau}}}
\[
\ytableausetup{boxsize = 0.8cm,aligntableaux=center}
\scriptsize
\begin{tikzcd}[ampersand replacement=\&]
 \AA   \arrow[rd, "a_{i+1}"'] \arrow[r, "a_i"]  \& 				\AAA \arrow[rr, "a_{i+1}"] \&\& \AAAA \arrow[r, "a_{i+2}"] \& \AAAAA  \\
    \& 				\BBB \arrow[rr, "a_{i}"'] \&\& \BBBB \arrow[ru, "a_{i+2}"'] \& 
\end{tikzcd}.
\]
In both situations, it follows by Lemma~\ref{gamma-lem} 
that
the values of $\gamma_{xy}$ applied to the  six tableaux in \eqref{6-eq} in positions $\{k,k+1\}\times\{k,k+1\}$
are
\def\AA{{\begin{ytableau} \none & \gamma_{k+1}  \\   \gamma_{k} & \emptyset     \end{ytableau}}}
\def\AAA{{\begin{ytableau}   \none & \gamma_{k+1}  \\   \eta_k  & ?    \end{ytableau}}}
\def\AAAA{{\begin{ytableau}  \none & \gamma_{k+1}  \\   \eta_k  & ? \end{ytableau}}}
\def\AAAAA{{\begin{ytableau}   \none & \gamma_{k+1}  \\   \eta_k  & ? \end{ytableau}}}
\def\BBB{{\begin{ytableau}  \none & \gamma_{k}  \\   \gamma_{k+1} & \emptyset  \end{ytableau}}}
\def\BBBB{{\begin{ytableau}  \none & \gamma_{k}  \\   \eta_{k} & \gamma_{k+1} \end{ytableau}}}
\[
\ytableausetup{boxsize = 0.8cm,aligntableaux=center}
\scriptsize
\begin{tikzcd}[ampersand replacement=\&]
 \AA   \arrow[rd, "a_{i+1}"'] \arrow[r, "a_i"]  \& 				\AAA \arrow[rr, "a_{i+1}"] \&\& \AAAA \arrow[r, "a_{i+2}"] \& \AAAAA  \\
    \& 				\BBB \arrow[rr, "a_{i}"'] \&\& \BBBB \arrow[ru, "a_{i+2}"'] \& 
\end{tikzcd}
\]
so by Lemma~\ref{cseq-lem}
we have
$ \tpi_i(a) = (\gamma_k,\eta_k) $ and $ \tpi_{i+1}(a) = \tpi_{i+2}(a) =1$
while
$\tpi_i(b) = \tpi_{i+2}(b) = (\gamma_k,\gamma_{k+1}) $ and $ \tpi_{i+1}(b)= (\eta_k,\gamma_{k+1})$,
so
$\tpi_i(a)\tpi_{i+1}(a)\tpi_{i+2}(a) = \tpi_i(b)\tpi_{i+1}(b)\tpi_{i+2}(b) = (\gamma_k,\eta_k)$
as desired.


Suppose next that the entry in position $(k,k)$ of $T$
is $ w+1$. 
If $k>1$ then the entry in position $(k-1,k+1)$ of $T$ is at most $w$, so 
the entries of the six tableaux in \eqref{6-eq}
in positions $\{k,k+1\}\times\{k,k+1\}$
are
\def\AA{{\begin{ytableau} \none & ?  \\  w+1 & ?  \end{ytableau}}}
\def\AAA{{\begin{ytableau} \none & ?  \\  u & w+1\end{ytableau}}}
\def\AAAA{{\begin{ytableau}  \none & w+1  \\  u & w  \end{ytableau}}}
\def\AAAAA{{\begin{ytableau}   \none & w  \\  u & v  \end{ytableau}}}
\def\BBB{{\begin{ytableau}  \none & ?  \\  w & w+1  \end{ytableau}}}
\def\BBBB{{\begin{ytableau} \none & ?  \\  u & w  \end{ytableau}}}
\[
\ytableausetup{boxsize = 0.8cm,aligntableaux=center}
\scriptsize
\begin{tikzcd}[ampersand replacement=\&]
 \AA   \arrow[rd, "a_{i+1}"'] \arrow[r, "a_i"]  \& 				\AAA \arrow[rr, "a_{i+1}"] \&\& \AAAA \arrow[r, "a_{i+2}"] \& \AAAAA  \\
    \& 				\BBB \arrow[rr, "a_{i}"'] \&\& \BBBB \arrow[ru, "a_{i+2}"'] \& 
\end{tikzcd}.
\]
First assume  the array $\cseq_{i-1}(a)$ has only $k$ columns.
Then
it follows by Lemma~\ref{gamma-lem} 
that
the values of $\gamma_{xy}$ applied to the  six tableaux in \eqref{6-eq} in positions $\{k,k+1\}\times\{k,k+1\}$
are
\def\AA{{\begin{ytableau} \none & \none  \\   \gamma_{k} & ?     \end{ytableau}}}
\def\AAA{{\begin{ytableau}   \none &\none  \\   \eta_k  & \gamma_{k}    \end{ytableau}}}
\def\AAAA{{\begin{ytableau}  \none & \gamma_{k}  \\   \eta_k  & \emptyset \end{ytableau}}}
\def\AAAAA{{\begin{ytableau}  \none & \gamma_{k}  \\   \eta_k  & ?   \end{ytableau}}}
\def\BBB{{\begin{ytableau}    \gamma_{k} & \emptyset  \end{ytableau}}}
\def\BBBB{{\begin{ytableau}    \eta_{k} & \gamma_k \end{ytableau}}}
\[
\ytableausetup{boxsize = 0.8cm,aligntableaux=center}
\scriptsize
\begin{tikzcd}[ampersand replacement=\&]
 \AA   \arrow[rd, "a_{i+1}"'] \arrow[r, "a_i"]  \& 				\AAA \arrow[rr, "a_{i+1}"] \&\& \AAAA \arrow[r, "a_{i+2}"] \& \AAAAA  \\
    \& 				\BBB \arrow[rr, "a_{i}"'] \&\& \BBBB \arrow[ru, "a_{i+2}"'] \& 
\end{tikzcd}
\]
so by Lemma~\ref{cseq-lem} we have
 $
 \tpi_i(a) = \tpi_{i+1}(b)=(\gamma_k,\eta_{k})
$
and
$
 \tpi_{i+1}(a)=\tpi_{i+2}(a)=\tpi_{i}(b) = \tpi_{i+2}(b)=1,
 $
 so
  $\tpi_i(a)\tpi_{i+1}(a)\tpi_{i+2}(a) = \tpi_i(b)\tpi_{i+1}(b)\tpi_{i+2}(b) = (\gamma_k,\eta_{k})$ as needed.
 If $\cseq_{i-1}(a)$ has at least $k+1$ columns, then
it follows likewise that
the values of $\gamma_{xy}$ applied to the  six tableaux in \eqref{6-eq} in positions $\{k,k+1\}\times\{k,k+1\}$
are
\def\AA{{\begin{ytableau} \none & \gamma_{k+1}  \\   \gamma_{k} & ?     \end{ytableau}}}
\def\AAA{{\begin{ytableau}   \none & \gamma_{k+1}  \\   \eta_k  & \gamma_{k}    \end{ytableau}}}
\def\AAAA{{\begin{ytableau}  \none & \gamma_{k}  \\   \eta_k  & \emptyset \end{ytableau}}}
\def\AAAAA{{\begin{ytableau}  \none & \gamma_{k}  \\   \eta_k  & ?   \end{ytableau}}}
\def\BBB{{\begin{ytableau}  \none &  \gamma_{k+1}  \\   \gamma_{k} & \emptyset  \end{ytableau}}}
\def\BBBB{{\begin{ytableau} \none &  \gamma_{k+1}  \\   \eta_{k} & \gamma_k \end{ytableau}}}
\[
\ytableausetup{boxsize = 0.8cm,aligntableaux=center}
\scriptsize
\begin{tikzcd}[ampersand replacement=\&]
 \AA   \arrow[rd, "a_{i+1}"'] \arrow[r, "a_i"]  \& 				\AAA \arrow[rr, "a_{i+1}"] \&\& \AAAA \arrow[r, "a_{i+2}"] \& \AAAAA  \\
    \& 				\BBB \arrow[rr, "a_{i}"'] \&\& \BBBB \arrow[ru, "a_{i+2}"'] \& 
\end{tikzcd}
\]
so by Lemma~\ref{cseq-lem} we have
 $
 \tpi_i(a) = \tpi_{i+1}(b)=(\gamma_k,\eta_{k})
$
and
$
 \tpi_{i+1}(a)=\tpi_{i+2}(b)=(\gamma_k,\gamma_{k+1})
 $
 and
 $
 \tpi_{i+2}(a)=\tpi_{i}(b) =1,
 $
 so
  $\tpi_i(a)\tpi_{i+1}(a)\tpi_{i+2}(a) = \tpi_i(b)\tpi_{i+1}(b)\tpi_{i+2}(b) = (\gamma_k,\gamma_{k+1},\eta_{k})$
  as desired.

  Finally suppose that the entry in position $(k,k)$ of $T$
is $x > w+1$. 
If $k>1$ then the entry in position $(k-1,k+1)$ of $T$ is at most $w$, so 
the entries of the six tableaux in \eqref{6-eq}
in positions $\{k,k+1\}\times\{k,k+1\}$
are
\def\AA{{\begin{ytableau} \none & ?  \\  x & ?  \end{ytableau}}}
\def\AAA{{\begin{ytableau} \none & ?  \\  u & x\end{ytableau}}}
\def\AAAA{{\begin{ytableau}  \none & x  \\  u & w  \end{ytableau}}}
\def\AAAAA{{\begin{ytableau}   \none & w  \\  u & v  \end{ytableau}}}
\def\BBB{{\begin{ytableau}  \none & ?  \\  w & x  \end{ytableau}}}
\def\BBBB{{\begin{ytableau} \none & ?  \\  u & w  \end{ytableau}}}
\[
\ytableausetup{boxsize = 0.8cm,aligntableaux=center}
\scriptsize
\begin{tikzcd}[ampersand replacement=\&]
 \AA   \arrow[rd, "a_{i+1}"'] \arrow[r, "a_i"]  \& 				\AAA \arrow[rr, "a_{i+1}"] \&\& \AAAA \arrow[r, "a_{i+2}"] \& \AAAAA  \\
    \& 				\BBB \arrow[rr, "a_{i}"'] \&\& \BBBB \arrow[ru, "a_{i+2}"'] \& 
\end{tikzcd}.
\]
 If  the array $\cseq_{i-1}(a)$ has only $k$ columns,
the values of $\gamma_{xy}$ applied to the  six tableaux in \eqref{6-eq} in positions $\{k,k+1\}\times\{k,k+1\}$
are
\def\AA{{\begin{ytableau} \none & \none  \\   \gamma_{k} & ?     \end{ytableau}}}
\def\AAA{{\begin{ytableau}   \none &\none  \\   \eta_k  & \gamma_{k}    \end{ytableau}}}
\def\AAAA{{\begin{ytableau}  \none & \gamma_{k}  \\   \eta_k  & \eta_{k+1} \end{ytableau}}}
\def\AAAAA{{\begin{ytableau}  \none & \eta_{k+1}  \\   \eta_k  & ?   \end{ytableau}}}
\def\BBB{{\begin{ytableau}    \eta_{k+1} & \gamma_k  \end{ytableau}}}
\def\BBBB{{\begin{ytableau}    \eta_{k} & \eta_{k+1} \end{ytableau}}}
\[
\ytableausetup{boxsize = 0.8cm,aligntableaux=center}
\scriptsize
\begin{tikzcd}[ampersand replacement=\&]
 \AA   \arrow[rd, "a_{i+1}"'] \arrow[r, "a_i"]  \& 				\AAA \arrow[rr, "a_{i+1}"] \&\& \AAAA \arrow[r, "a_{i+2}"] \& \AAAAA  \\
    \& 				\BBB \arrow[rr, "a_{i}"'] \&\& \BBBB \arrow[ru, "a_{i+2}"'] \& 
\end{tikzcd}
\]
so by Lemma~\ref{cseq-lem} we have
 $
 \tpi_i(a) =(\gamma_k,\eta_{k})
 $
 and
 $
 \tpi_{i+1}(a)=1
 $
 and
 $ 
 \tpi_{i+2}(a) = (\gamma_k,\eta_{k+1})
$
while
$\tpi_i(b)= (\gamma_k,\eta_{k+1})$
and
$
 \tpi_{i+1}(b)=( \eta_{k}, \eta_{k+1})
$
and
$
    \tpi_{i+2}(b)=1,
 $
 so we have
  $\tpi_i(a)\tpi_{i+1}(a)\tpi_{i+2}(a) = \tpi_i(b)\tpi_{i+1}(b)\tpi_{i+2}(b) = (\gamma_k, \eta_{k+1},\eta_k)$ as needed.
If  $\cseq_{i-1}(a)$ has at least $k+1$ columns, then
the values of $\gamma_{xy}$ applied to the  six tableaux in \eqref{6-eq} in positions $\{k,k+1\}\times\{k,k+1\}$
are
\def\AA{{\begin{ytableau} \none & \gamma_{k+1}  \\   \gamma_{k} & ?     \end{ytableau}}}
\def\AAA{{\begin{ytableau}   \none &\gamma_{k+1}  \\   \eta_k  & \gamma_{k}    \end{ytableau}}}
\def\AAAA{{\begin{ytableau}  \none & \gamma_{k}  \\   \eta_k  & \eta_{k+1} \end{ytableau}}}
\def\AAAAA{{\begin{ytableau}  \none & \eta_{k+1}  \\   \eta_k  & ?   \end{ytableau}}}
\def\BBB{{\begin{ytableau}  \none & \gamma_{k+1}  \\  \eta_{k+1} & \gamma_k  \end{ytableau}}}
\def\BBBB{{\begin{ytableau} \none & \gamma_{k+1}  \\   \eta_{k} & \eta_{k+1} \end{ytableau}}}
\[
\ytableausetup{boxsize = 0.8cm,aligntableaux=center}
\scriptsize
\begin{tikzcd}[ampersand replacement=\&]
 \AA   \arrow[rd, "a_{i+1}"'] \arrow[r, "a_i"]  \& 				\AAA \arrow[rr, "a_{i+1}"] \&\& \AAAA \arrow[r, "a_{i+2}"] \& \AAAAA  \\
    \& 				\BBB \arrow[rr, "a_{i}"'] \&\& \BBBB \arrow[ru, "a_{i+2}"'] \& 
\end{tikzcd}
\]
so by Lemma~\ref{cseq-lem} we have
 $
 \tpi_i(a) =(\gamma_k,\eta_{k})
 $
 and
 $
 \tpi_{i+1}(a)=(\gamma_k,\gamma_{k+1})
 $
 and
 $ 
 \tpi_{i+2}(a) = (\gamma_k,\eta_{k+1})
$
while
$\tpi_i(b)= (\gamma_k,\eta_{k+1})
$
and
$
 \tpi_{i+1}(b)=( \eta_{k}, \eta_{k+1})
$
and
$
    \tpi_{i+2}(b)=(\gamma_{k+1},\eta_{k+1}),
$
 so
  $\tpi_i(a)\tpi_{i+1}(a)\tpi_{i+2}(a) = \tpi_i(b)\tpi_{i+1}(b)\tpi_{i+2}(b) = (\gamma_k, \eta_{k+1},\gamma_{k+1},\eta_k)$
  as desired.
  This completes our proof of the claim.
  \end{proof}
  Combining our successive claims also completes the proof of the lemma.
\end{proof}

\subsection{Proofs of Theorems~\ref{o-thm1} and \ref{ck-fkd-thm}}\label{proof-sect4}

Combining all of the results above now lets us
fill in the proofs to 
Theorems~\ref{o-thm1} and \ref{ck-fkd-thm}.

 \begin{proof}[Proof of Theorem~\ref{o-thm1}]
 Remark~\ref{iarrow-rmk} and Proposition~\ref{o-lem2} imply that 
if $\hat a \in \iR^+(z)$ for some $z \in I_\ZZ$,
then $\PO(\hat a)$ is an increasing shifted tableau with no primes on the main diagonal whose row reading word is in $\iR^+(z)$. 
In this case it follows by definition that $\QO(\hat a)$ is a standard shifted tableau of the same shape.
 
Let $(P,Q)$ be an arbitrary pair of shifted tableaux of the same shape,
such that $Q$ is standard and $P$ increasing with no primed on the main diagonal and $\row(P) \in \iR^+(z)$.
The unprimed form \cite[Thm. 5.19]{HMP4} of  Theorem~\ref{o-thm1}  asserts that there is a unique
unprimed word $a \in \iR(z)$ with $\PO(a) = \unprime(P)$ and $\QO(a) = \unprime_{\diag}(Q)$.
Since we have $\gamma_{ii}(P) \in \cyc(z)$ for all diagonal positions $(i,i)$ in $P$,
Proposition~\ref{tau-prop} implies that there is a unique way to assign primes to the commutations in $a$
to obtain a primed word $\hat a \in \iR^+(z)$ with $\PO(\hat a) = P$ and $\QO(\hat a) = Q$.
We conclude that $\hat a \mapsto (\PO(\hat a),\QO(\hat a))$ is a bijection from $\iR^+(z)$ to the desired image.
\end{proof}


 \begin{proof}[Proof of Theorem~\ref{ck-fkd-thm}]
Suppose $\hat a$ is a primed involution word with $n = \ell(\hat a)$ and $a = \unprime(\hat a)$. 
Choose $i \in \ZZ$ with $i+2 \in [n]$ and let $ \hat b = \ck_i(\hat a)$.
We wish to show that
$\PO(\hat a) = \PO(\hat b)$ and   $\QO(\hat b) = \fkd_i(\QO(\hat a))$. 
This holds if $i \leq 0$ by Propositions~\ref{first-toggle-obs} and \ref{second-toggle-obs}.
Assume $i\in[n-2]$ and let $b = \unprime(\hat b)$. Then $b = \ck_i(a)$ by Lemma~\ref{unpri-word-lem}
and we have 
\be\label{final-eq1}\ba
\unprime(\PO(\hat a)) &=\PO(a) =  \PO(b) = \unprime(\PO(\hat b))\ea
\ee
by  Proposition~\ref{unprime-tab-prop} for the first and last equalities and Proposition~\ref{unprime-prop} for the second equality.
Likewise, we have
\be\label{final-eq2}\ba 
\unprime_{\diag}(\fkd_i(\QO(\hat a))) 
&=\fkd_i(\unprime_{\diag}(\QO(\hat a))) 
=\fkd_i(\QO(a)) 
\\&=\QO(b)
= \unprime_{\diag}(\QO(\hat b))
\ea
\ee
by \eqref{up-fkd-eq} for the first equality, Proposition~\ref{unprime-tab-prop} for the second and last equalities,
and
Proposition~\ref{unprime-prop} for the third equality.

As usual write $\square_j$ for the box of $\QO(\hat a)$
containing $j$ or $j'$.
If $\square_i$ and $\square_{i+2}$ are both on the main diagonal, then 
we have $\PO(\hat a) = \PO(\hat b)$  by Lemma~\ref{diag-box-lem}.
Otherwise, we have $\tau(a) = \tau(b)$ by Lemmas~\ref{bac-lem} and \ref{acb-lem},
so $\PO(\hat a) = \PO(\hat b)$ follows from Proposition~\ref{tau-prop} and \eqref{final-eq1}.

It follows from the definitions of $\fkd_i$ and $\QO$ that   $\fkd_i(\QO(\hat a))$ and
$\QO(\hat b)$ each only differ from $\QO(\hat a)$ in their entries in positions
$\square_i$, $\square_{i+1}$,  and $\square_{i+2}$. In view of \eqref{final-eq2},
the only possible difference between $\fkd_i(\QO(\hat a))$ and $\QO(\hat b)$ is whether there are primes in 
whichever of $\square_i$, $\square_{i+1}$, or $\square_{i+2}$ are also on the main diagonal.

If all three of $\square_i$, $\square_{i+1}$, and $\square_{i+2}$ are off the diagonal 
then necessarily $\fkd_i(\QO(\hat a)) = \QO(\hat b)$.
If exactly two of these positions are on the main diagonal then the same conclusion holds by 
Lemma~\ref{diag-box-lem}.
We cannot have all three of $\square_i$, $\square_{i+1}$, and $\square_{i+2}$ on the main diagonal,
and if exactly one of these positions is on the main diagonal then
we just need
to show that its entry is primed in $\fkd_i(\QO(\hat a))$ if and only if it is primed in $\QO(\hat b)$,
or equivalently that 
$\primesdiag(\fkd_i(\QO(\hat a))) = \primesdiag(\QO(\hat b))$.
This holds since \eqref{up-fkd-eq}
asserts that $\primesdiag(\fkd_i\QO(\hat a)) = \primesdiag((\QO(\hat a)))$,
and by definition 
$\primes(\PO(\hat a)) + \primesdiag(\QO(\hat a))=\primes(\hat a)
=\primes(\hat b) = \primes(\PO(\hat b)) + \primesdiag(\QO(\hat b)).$
But $\PO(\hat a) = \PO(\hat b)$,
so $\primesdiag(\fkd_i(\QO(\hat a))) 
= \primesdiag(\QO(\hat b))$.
\end{proof}

\section{Other insertion algorithms}\label{other-sect}

In this final section, we discuss some apparently novel ``primed'' variations
of \defn{Sagan-Worley insertion} (see \cite[\S8]{Sag87}
or \cite[\S6.1]{Worley}) 
and \defn{shifted mixed insertion algorithm} (see \cite[Def. 6.7]{HaimanMixed}).
 The domains of these maps are similar to various  \defn{super-RSK correspondences} (see, e.g., \cite{LNS,Muth,ShimWhite}).
Sections~\ref{modif-sect}, \ref{ok-sect}, and \ref{bij-sect} focus on Sagan-Worley insertion, while
Sections~\ref{mixed-sect} and \ref{relat-sect}  discuss shifted mixed insertion.
This section is mostly independent of the earlier parts of this paper, with the exception of Proposition~\ref{sw-prop} and Corollary~\ref{o-cor3}.

\subsection{Modifying Sagan-Worley insertion}\label{modif-sect}

This section presents the definitions of two versions of the \defn{Sagan-Worley insertion algorithm},
which sends primed {\biwords} to pairs of shifted tableaux.  
A \defn{\biword} is a two-line array of positive integers
\be\label{biword-eq}
\phi = \left[\barr{llll} i_1 & i_2 & \dots & i_n \\ a_1 & a_2 & \dots & a_n\earr\right]
\ee
 where the entries in the top row are weakly increasing
 and
 such that if $i_j = i_{j+1}$ then $a_j \leq  a_{j+1}$.
 We call the top row $i_1i_2\cdots i_n$ of $\phi$ its \defn{index}
 and we call the bottom row $a_1a_2\cdots a_n$ its \defn{value}.
A \defn{primed \biword} is a two-line array 
  satisfying the same conditions,
except its value may have entries $0< a_j \in \ZZ\sqcup\ZZ'$ 
 if no column  $\left[\barr{l} i \\ a\earr\right]$  with  $a\in\ZZ'$ is repeated.
Thus
 \[
 \left[\barr{llllll} 1 & 1 & 1 & 2 & 2 & 3 \\ 4 & 4 & 5 & 5 & 6 & 1\earr\right]
 \quand
 \left[\barr{llllll} 1 & 1 & 1 & 2 & 2 & 3 \\ 4' & 4 & 5' & 5' & 6 & 1\earr\right]
 \]
 are primed {\biwords} while the following are not:
  \[
 \left[\barr{llllll} 1 & 1 & 1 & 2 & 2 & 3 \\ 4 & 4' & 5 & 5 & 6 & 1\earr\right]
 \quand
 \left[\barr{llllll} 1 & 1 & 1 & 2 & 2 & 3 \\ 4' & 4' & 5 & 5 & 6 & 1\earr\right].
 \]
 When given as an input to an insertion algorithm, the
 index of a (primed) {\biword} will give the labels of the recording tableau.
 The condition ``if $i_j = i_{j+1}$ then $a_j \leq  a_{j+1}$'' is designed to ensure that this tableau will be semistandard.
 
We identify a (primed) word $a=a_1a_2\cdots a_n$ with the (primed) {\biword} whose value is $a$ and whose index is   $1,2,3,\dots,n$.
If we never have $a_i=a_{i+1} \in \ZZ'$, then we can form a primed {\biword} $\phi$ with value $a$ from each increasing factorization in  $\Incr_N(a)$
by placing $i$ above all letters in the $i$th factor.
The increasing factorization 
\[a=(45',\emptyset, 2'37')\quad\text{corresponds to}\quad
  \phi=\left[\barr{lllll} 1 & 1 & 3 & 3 &3 \\ 4 & 5' & 2' & 3 & 7'\earr\right]
\] in this way.
This gives a bijection from $\Incr_N(a)$ (when $a$ has no adjacent equal primed letters)
to primed {\biwords} with value $a$ and whose index does not exceed $N$ (when $N$ is finite). 

\begin{definition}
\label{sw-def}
Suppose $\phi$ is a primed {\biword} written in the form \eqref{biword-eq}.
We construct a sequence of increasing shifted tableaux with no primed entries on the main diagonal  
$\emptyset = P_0,P_1,\dots,P_n$ 
 in which
 $P_j$ is formed from $P_{j-1}$ as follows:
\begin{itemize}
\item[(1)] 
On each iteration, an entry $u \in \ZZ\sqcup \ZZ'$ is inserted into a row or column of a shifted tableau.
The process begins 
with $a_j$ inserted into the first row of $P_{j-1}$.
\item[(2)]
If inserting into a row when $u\in \ZZ$, or into a column when $u \in \ZZ'$,
locate the first entry $v$ in the row or column such that $u < v$;
otherwise, locate the first entry $v$ such that $u\leq v$.
When such an entry exists, we say that $u$ ``bumps'' $v$ from its position.

\item[(3)]
If no such $v$ exists then $u$ is added to the end of the row or column to form $P_j$.
If $u$ is primed and the added position is on the main diagonal, then we change its value to $\lceil u \rceil$
and say that  the insertion process ends in column insertion.
Otherwise, we say that the process ends in column (respectively, row) insertion if we are inserting into a column (respectively, row).

\item[(4)]
If $v$ is not on the main diagonal, then 
 replace $v$ by $u$ and  insert $v$ into either the next
 row (if we were inserting into a row) or next column (if we were inserting into a column). 

\item[(5)]
Assume $v$ is on the main diagonal.\footnote{
In this setting the diagonal entry $v $ will always be unprimed and therefore equal to $\lceil v\rceil$, but we do not draw attention to this property
as it will not hold in a modified version of this algorithm described below.
}
If $\lceil u \rceil = \lceil v\rceil $ then continue by inserting $\lceil v\rceil $ into the next column.
If $\lceil u \rceil \neq \lceil v\rceil $ then replace $v$ by $\tilde u$ and insert $\tilde v$ into the next column,
where $\tilde u$ and $\tilde v$ are given by switching the primes 
of $u$ and $v$.

 \end{itemize}
Now define $\oPSW(\phi):=P_n$ and let $\oQSW(\phi)$ be the shifted tableau with the same shape 
whose entry in the unique box of $P_j$ that is not in $P_{j-1}$
is either $i_j$ (when adding $a_j$ to $P_{j-1}$
ends in row insertion) or  $i_j'$ (when adding $a_j$ to $P_{j-1}$ ends in column insertion).

\end{definition}
This slightly modifies the original definition of Sagan-Worley insertion 
from \cite[\S8]{Sag87}
or \cite[\S6.1]{Worley}. 
The latter map, which we will denote by
$\phi \mapsto (\spPSW(\phi), \spQSW(\phi)),$ is given
by repeating Definition~\ref{sw-def}
with two changes: 
\begin{itemize}
\item first, in step (3) we do not remove the prime from a newly added diagonal entry
and we say that the insertion process ends in column insertion only if the last step inserts into a column;

\item second, in step (5) when  $\lceil u \rceil \neq \lceil v\rceil $, 
we redefine $\tilde u$ and $\tilde v$ to be $\tilde u := u$ and $\tilde v :=v$. 
\end{itemize}
It is convenient to think of these maps 
as ``orthogonal'' and ``symplectic'' versions of the same algorithm. Proposition~\ref{tech-sw-lem} will make the basis for this parallelism more precise. 
Primes may occur on the main diagonal of $\spPSW(\phi)$ or $\oQSW(\phi) $ but not on the main diagonal  of $\spQSW(\phi)$ or $\oPSW(\phi)$.

\begin{example}\label{sw-ex2}
Suppose $\phi = \left[\barr{lllll} 1 & 1 & 2 & 2 &2 \\ 4 & 5' & 2' & 3 & 7'\earr\right]$. Then in the notation of Definition~\ref{sw-def}
\[
P_1 = \ytab{4},\quad
P_2 =  \ytab{4 & 5'},\quad
P_3 =\ytab{2 & 4' & 5'},\quad
P_4 = \ytab{\none & 4 \\ 2 &  3 & 5'},\quad
P_5 = \ytab{\none & 4 \\ 2 &  3 & 5' & 7'},
\]
so we have
\[
\oPSW(\phi) =\ytab{\none & 4 \\ 2 & 3 & 5' & 7'} 
\quand
\oQSW(\phi)=\ytab{\none & 2' \\ 1 & 1 & 2' & 2}.
\] 
On the other hand, one can check that
\[
\spPSW(\phi) =\ytab{\none & 4 \\ 2' & 3 & 5' & 7'} 
\quand
\spQSW(\phi)=\ytab{\none & 2 \\ 1 & 1 & 2' & 2}.
\]
Similarly, if
 $\phi = \left[\barr{llllllllll} 1 & 1 &1 & 1 & 3 & 3 &3& 5 & 5 &5 \\ 4 & 4 & 5' & 5& 2'&2 & 3& 3 & 7'& 7\earr\right]$ then
\[
\ba\oPSW(\phi) &=\ytab{\none & 4  & 4 & 5'\\ 2 & 2 & 3 & 3 & 5 & 7' & 7} \\
\oQSW(\phi)&=\ytab{\none &3' & 3 & 5 \\ 1 & 1 &1 & 1& 3' &5 & 5}\ea
\qquand
\ba\spPSW(\phi) &=\ytab{\none & 4  & 4 & 5'\\ 2' & 2 & 3 & 3 & 5 & 7' & 7} \\
\spQSW(\phi)&=\ytab{\none &3 & 3 & 5 \\ 1 & 1 &1 & 1& 3' &5 & 5}.\ea
\]
Finally, comparing with Example~\ref{po-ex}, 
if $c= 4  1'  3  5 4'  2$
then
\[
\ba \oPSW(c) &=\ytab{\none & 3 & 4\\ 1 & 2 & 4' & 5} \\ 
\oQSW(c)&=\ytab{\none & 3' & 5 \\ 1 & 2' &4&6'}\ea
\qquand
\ba\spPSW(c) &=\ytab{\none & 3 & 4\\ 1' & 2 & 4' & 5} \\ 
\spQSW(c)&=\ytab{\none & 3 & 5 \\ 1 & 2' &4&6'}.\ea
\]
\end{example}

The following example illustrates some more differences between these two algorithms.

\begin{example}\label{new-ex}
For $x,y \in \ZZ\sqcup \ZZ'$ 
 identify the word $xy$ with  $ \left[\barr{lllll} 1 & 2 \\ x & y\earr\right]$.
If $x \in \ZZ$ then 
\[ 
\ba 
\oPSW(xx) &= \ytab{ x & x},\\
\oQSW(xx) &= \ytab{ 1 & 2},
\ea
\quad
\ba 
\oPSW(xx') &= \ytab{ x & x},\\
\oQSW(xx') &= \ytab{ 1 & 2'},
\ea
\quad
\ba 
\oPSW(x'x') &= \ytab{ x & x},\\
\oQSW(x'x') &= \ytab{ 1' & 2'},
\ea
\quad
\ba 
\oPSW(x'x) &= \ytab{ x & x},\\
\oQSW(x'x) &= \ytab{ 1' & 2},
\ea\]
while 
\[ 
\ba 
\spPSW(xx) &= \ytab{ x & x},\\
\spQSW(xx) &= \ytab{ 1 & 2},
\ea
\quad
\ba 
\spPSW(xx') &= \ytab{ x & x},\\
\spQSW(xx') &= \ytab{ 1 & 2'},
\ea
\quad
\ba 
\spPSW(x'x') &= \ytab{ x' & x},\\
\spQSW(x'x') &= \ytab{ 1 & 2'},
\ea
\quad
\ba 
\spPSW(x'x) &= \ytab{ x' & x},\\
\spQSW(x'x) &= \ytab{ 1 & 2}.
\ea\]
Alternatively, if $ x,y \in \ZZ$ and $x<y$ then
\[ 
\ba 
\oPSW(yx) &= \ytab{  x  & y},\\
\oQSW(yx) &= \ytab{ 1 & 2'},
\ea
\quad
\ba 
\oPSW(yx') &= \ytab{ x & y'},\\
\oQSW(yx') &= \ytab{ 1 & 2'},
\ea
\quad
\ba 
\oPSW(y'x') &= \ytab{ x & y'},\\
\oQSW(y'x') &= \ytab{ 1' & 2'},
\ea
\quad
\ba 
\oPSW(y'x) &= \ytab{ x & y},\\
\oQSW(y'x) &= \ytab{ 1' & 2'},
\ea\]
while 
\[ 
\ba 
\spPSW(yx) &= \ytab{ x & y},\\
\spQSW(yx) &= \ytab{ 1 & 2'},
\ea
\quad
\ba 
\spPSW(yx') &= \ytab{ x' & y},\\
\spQSW(yx') &= \ytab{ 1 & 2'},
\ea
\quad
\ba 
\spPSW(y'x') &= \ytab{ x' & y'},\\
\spQSW(y'x') &= \ytab{ 1 & 2'},
\ea
\quad
\ba 
\spPSW(y'x) &= \ytab{ x & y'},\\
\spQSW(y'x) &= \ytab{ 1 & 2'}.
\ea\]
\end{example}

We can derive some nontrivial properties of 
Sagan-Worley insertion by observing that its bumping mechanics are identical to shifted Edelman-Greene insertion 
applied to  $\simICK$-equivalence classes of primed involution words involving no braid relations.
One can try to convert a primed word 
to an element of such a class by  ``doubling'' every letter, so that distinct adjacent letters always differ by more than one.
This is our motivation for the following definition.

Given a primed word $a=a_1a_2\cdots a_n$, form $\double(a) $ by applying the map with $i\mapsto 2i$ and $i' \mapsto (2i)'  $ for $i \in \ZZ$ to the letters 
of $a$.
If $\phi$ is a primed {\biword} then define $\double(\phi)$ by applying $\double$ to its value.
For a shifted tableau $T$, construct $\double(T)$
by applying $\double$  
to all of its entries.

A primed word $a$ is a \defn{partial signed permutation} if $\unprime(a)$ has all distinct letters.\footnote{
This terminology is motivated by the fact that if $\unprime(a)$ is a permutation of $1,2,3,\dots,n$ then $a$ is the one-line representation of 
a signed permutation, that is, an element of the hyperoctahedral group.
}
Define a primed {\biword} to be \defn{value-strict} if its value is a partial signed permutation.

\begin{proposition}\label{sw-prop}
Suppose $\phi$ is a primed {\biword} that is value-strict. Then the value of $\double(\phi)$ is a primed involution word,
and it holds that
\[\double\circ\oPSW(\phi) =  \PO \circ \double(\phi)
\quand \oQSW(\phi)  = \QO\circ\double(\phi).\]
\end{proposition}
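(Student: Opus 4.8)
The plan is to show that the ``doubling'' map $\double$ turns Sagan--Worley insertion (Definition~\ref{sw-def}) into a special case of orthogonal Edelman--Greene insertion (Definition~\ref{iarrow-def}), so that the identities follow by comparing the two bumping procedures step by step. First I would verify that if $\phi$ is value-strict then the value $w$ of $\double(\phi)$ is indeed a primed involution word. Since $\unprime(\phi)$ has all distinct letters, $\unprime(w) = \double(\unprime(\phi))$ is a word in which every two adjacent letters differ by at least $2$; applying the second assertion of Proposition~\ref{mat3-prop} (or directly: no $\iisim$-move can create equal adjacent letters because all letters are even and pairwise non-consecutive), the $\iisim$-class of $\unprime(w)$ contains no word with equal adjacent letters, so $\unprime(w) \in \iR(z)$ for some $z \in I_\ZZ$. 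Moreover \emph{every} index of such a word is a commutation, because after any sequence of commuting moves $s_{a_i}$ is sandwiched between permutations that move only even-labeled points whose supports are disjoint from $\{a_i, a_i+1\}$. Hence adding primes to the letters of $w$ originating from primed letters of $\phi$ gives a legitimate element of $\iR^+(z)$; value-strictness guarantees no primed letter is repeated, so the result is a well-defined primed \biword$\mapsto$primed involution word correspondence.

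Next I would prove the key structural comparison: \emph{inserting a letter $u$ into a row or column during Definition~\ref{sw-def} produces exactly the same sequence of positions, entries, and prime-bookkeeping as inserting $\double(u)$ into the corresponding doubled tableau via Definition~\ref{iarrow-def}}, step (2) of the latter being carried out with $m = M$ always (no braid case). The point is that in $\double(T)$ all entries are even, so when we insert an even number $\double(u)$ there is never an entry $M$ with $\lceil \double(u)\rceil = \lceil m\rceil = M-1$ (that would require an odd entry $M$); thus case (2b)/(2c) of Definition~\ref{iarrow-def} never triggers and we are always in case (2a), which is precisely the off-diagonal bump rule of Definition~\ref{sw-def}(4). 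The one subtlety is the \emph{diagonal} rule. In Definition~\ref{iarrow-def}(2b) a diagonal entry $m$ with $\lceil w\rceil = \lceil m\rceil$ causes us to replace $m$ by $\lceil w\rceil$ and insert $m+1$ (if $w\in\ZZ$) or $m'+1$ (if $w\in\ZZ'$) into the next column; in Definition~\ref{sw-def}(5) a diagonal entry $v$ with $\lceil u\rceil = \lceil v\rceil$ causes us to continue by inserting $\lceil v\rceil$ into the next column if $\lceil u\rceil = \lceil v\rceil$, and to switch primes and insert $\tilde v$ into the next column otherwise. I would check that, under doubling, ``$\lceil\double(u)\rceil = \lceil\double(v)\rceil$ and $u,v$ have the same prime status'' matches the $\lceil u\rceil = \lceil v\rceil$ subcase of Definition~\ref{sw-def}(5), and the prime-switching subcase of Definition~\ref{sw-def}(5) matches Definition~\ref{iarrow-def}(2c) with $m$ on the diagonal — recalling from Remark~\ref{iarrow-rmk}(d) that when $m$ is on the diagonal the new number $w$ can only be primed if $m$ is unprimed and off the diagonal, so the only live possibilities in the doubled setting are exactly those Definition~\ref{sw-def}(5) handles. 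Finally, the termination rules match: adding a new box off the diagonal is the same in both; adding a new box on the diagonal while inserting a primed letter causes both algorithms to strip the prime and declare ``column insertion'', and inserting into a column that terminates off-diagonal is ``column insertion'' in both. Hence each insertion $P_{j-1}\xrightarrow{a_j} P_j$ of Definition~\ref{sw-def} corresponds under $\double$ to the insertion $\double(P_{j-1})\iarrow\double(a_j) = \double(P_j)$, and it ends in row (resp.\ column) insertion on the Sagan--Worley side exactly when the corresponding orthogonal Edelman--Greene step does.

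With this correspondence in hand the two identities follow by induction on $n = \ell(\phi)$. The base case $n=0$ is trivial. For the inductive step, the structural comparison gives $\double(P_n) = \double(P_{n-1})\iarrow \double(a_n) = \PO(\double(\phi_{<n}))\iarrow\double(a_n) = \PO(\double(\phi))$, which is $\double\circ\oPSW(\phi) = \PO\circ\double(\phi)$; and since the new box added at step $n$ is the same position on both sides, and ``ends in column insertion'' is preserved, the recording-tableau entry $i_n$ or $i_n'$ is placed in the same box with the same prime status, giving $\oQSW(\phi) = \QO\circ\double(\phi)$ (here one uses that $\QO$ on a primed \biword is built exactly as in the definition preceding Theorem~\ref{o-summary-thm}, with the index labels of $\phi$ as the weakly increasing factorization). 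I expect the \textbf{main obstacle} to be the careful bookkeeping of the diagonal cases — making airtight the claim that under doubling the orthogonal Edelman--Greene diagonal rules (2b), (2c), and Remark~\ref{iarrow-rmk}(d)'s constraint on where primes can land collapse precisely onto Sagan--Worley's step (5) — since this is where the ``orthogonal'' vs.\ the soon-to-be-discussed ``symplectic'' variants genuinely diverge, and it is the one place where primes interact nontrivially with the diagonal rather than just being carried along passively.
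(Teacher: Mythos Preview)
Your overall strategy---show that doubling turns Sagan--Worley insertion into a special case of orthogonal Edelman--Greene insertion, then conclude by a step-by-step comparison---is correct and is exactly what the paper's two-line proof has in mind. However, your case bookkeeping is muddled. Case~(2c) of Definition~\ref{iarrow-def} is the $m\neq M$ case, and you are right that it never fires in the doubled setting since $M=\lceil m\rceil+1$ would have to be odd. But case~(2b) is the $m=M$ \emph{on-diagonal} case, which certainly does fire whenever the bumping path hits the main diagonal; your assertion that ``(2b)/(2c) never triggers and we are always in case~(2a)'' cannot be right, since~(2a) is explicitly off-diagonal. The diagonal rule in Definition~\ref{sw-def}(5) must therefore be matched with~(2b), not with~(2c). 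Also, because $\phi$ is value-strict, all entries in the intermediate tableaux have distinct ceilings, so the $\lceil u\rceil=\lceil v\rceil$ branch of step~(5) is vacuous; only the prime-switching branch is live, and it is this branch that coincides with~(2b). (A word of caution: the stated rule in~(2b) appears to contain a typo---comparing with parts~(c) and~(d) of the example following Definition~\ref{iarrow-def}, the number inserted into the next column is $m$ or $m'$ carrying the prime of $w$, not $m+1$ or $m'+1$.) Once these identifications are straightened out your inductive argument goes through.
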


\begin{proof}
Let $\phi$ be as in \eqref{biword-eq}.
The first claim holds since $\unprime(\double(a_1a_2\cdots a_n))$ is an involution word
where every index  is a commutation.
 This ensures that $ \PO \circ \double(\phi)$ and $\QO \circ \double(\phi)$ are defined,
and that the first tableau coincides with  $ \oPSW \circ \double(\phi) =\double\circ \oPSW(\phi)$
while the second coincides with  $\oQSW \circ \double(\phi) =\oQSW(\phi) $.
\end{proof}

\begin{example} We compute $\QO\circ\double(\phi)$ by viewing    $\double(\phi)$
as an element of $\Incr_\infty(\iR^+(z))$ for some $z \in I_\ZZ$.
If $
\phi =  \left[\barr{lllll} 1 & 1 & 3 & 3 &3 \\ 4 & 5' & 2' & 3 & 7'\earr\right] \leftrightarrow (45',\emptyset, 2'37')
$
then $\double(\phi) \leftrightarrow (8\ 10', \emptyset, 4'\ 6\ 14')$
so
\[
\PO \circ \double(\phi) = \ytab{ \none & 8 \\ 4  & 6 &  10' & 14'}
\quand
\QO\circ \double(\phi)=\ytab{\none & 3' \\ 1 & 1 & 3' & 3}.
\]
\end{example}

 \subsection{Bijective properties}\label{bij-sect}

In this section we derive a formula analogous to Proposition~\ref{tau-prop}
which relates our two versions of Sagan-Worley insertion.
Then we use this result to show that orthogonal Sagan-Worley insertion defines a bijective mapping.

Let $a=a_1a_2\cdots a_n$ be a primed word,
so that
$\oPSW(a) := \oPSW\( \left[\barr{llll} 1 & 2 & \dots & n \\ a_1 & a_2 & \dots & a_n\earr\right]\)$
via our identification of primed words with primed {\biwords}.
For each $j \in [n]$, consider 
the shifted tableaux
 $\oPSW(a_1a_2\cdots a_{j-1})$ and $\oPSW(a_1a_2\cdots a_{j})$.
If these tableaux have different numbers of rows or the same entries in all diagonal positions, then define $\tpi^\SW_j(a)$ to be the 
identity permutation of $\ZZ$. Otherwise, there is a unique diagonal position with different entries in the two tableaux, and   
we let $\tpi^\SW_j(a)$ be the transposition interchanging these.
If $a=45'2'37'$ as in Example~\ref{sw-ex2},
then $\tpi^\SW_3(a) = (2,4)$ and $\tpi^\SW_j(a) = 1$ for $j \in \{1,2,4,5\}$.
Let $\tpi^\SW(a) := \tpi^\SW_1(a)\tpi^\SW_2(a)\cdots \tpi^\SW_n(a)$.
For a primed {\biword} $\phi$ whose value is $a_1a_2\cdots a_n$ define
$ \tpi^\SW(\phi):=\tpi^\SW(a_1a_2\cdots a_n)$.

Let $T$ be a semistandard shifted tableau.
A position $(i,j)$ in $T$ is \defn{free} if 
$\lceil T_{ij} \rceil \neq \lceil T_{xy}\rceil $ whenever $x > i$ or $y < j$,
which in French notation means that $(x,y)$ lies strictly above or strictly to the left of $(i,j)$.
Every diagonal position in $T$ is free.
Adding or removing primes from free positions does not change whether
$T$ is semistandard.
If $(i-1,j-1)$ and $(i,j)$ are both positions in $T$, then we must have $\lceil T_{i-1,j-1}\rceil < \lceil T_{ij}\rceil$.
It follows that if $u\in \ZZ$ is the unprimed form of the entry of $T$ in some position $(i,j)$,
then $(i,j)$ is free if and only if it contributes the first letter equal to $u$ or $u'$ in the reading word $\row(T)$.
Consequently, if $u$ and $v$ are the entries in distinct free positions in $T$, then $\lceil u \rceil \neq \lceil v \rceil$.
Let $\unprime_{\free}(T)$ be the tableau formed from $T$ by removing the primes from   all free positions.
 This is called the \defn{canonical form} of $T$ in \cite[Def. 2.6]{GLP}.

We say that  $u \in \ZZ$ is \defn{initially primed} (respectively, \defn{initially unprimed})
in a primed word if $u'$ (respectively, $u$) appears in the word
and is before any other letters equal to $u$ (respectively $u'$).
Form $\unprime_{\init}(a)$ from a primed word $a$ by unpriming the first
 appearance of $u'$ for each initially primed letter $u \in \ZZ$.
 This is called the \defn{canonical form} of $a$ in \cite[Def. 2.1]{GLP}.
The previous paragraph implies that
  $ \unprime_{\init}(\row(T))=\row(\unprime_\free(T)) $ for any semistandard shifted tableau $T$.

\begin{proposition}
\label{tech-sw-lem}
 Suppose $\phi$ is a primed {\biword} written as in \eqref{biword-eq}.
\ben
\item[(a)] The shifted tableaux $\oPSW(\phi)$ and $\spPSW(\phi)$ are semistandard with the same free positions, and
\be\label{tech-sw-eq} \unprime_{\free}(\oPSW(\phi)) = \unprime_{\free}(\spPSW(\phi))\quand  \unprime_{\diag}(\oQSW(\phi)) = \spQSW(\phi).\ee
\item[(b)] Let $(i,j)$ be a free position in $\spPSW(\phi)$  and 
let $ u \in \ZZ$ be this position's value with its prime removed.
The entry of $\spPSW(\phi)$ in position $(i,j)$  is primed if and only if $u$ is initially primed in the value of $\phi$.
If $i\neq j$ (respectively, $i=j$), then the entry of $\oPSW(\phi)$ (respectively, $\oQSW(\phi)$)
in position $(i,j)$ is primed if and only if $\tpi^\SW(\phi)(u)$ is initially primed in the value of  $\phi$.
\een
\end{proposition}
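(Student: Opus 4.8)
The plan is to reduce everything to the ``doubling'' trick of Proposition~\ref{sw-prop} combined with the already-proved formula in Proposition~\ref{tau-prop}, after first disposing of the easy structural claims in part~(a). First I would verify that $\oPSW(\phi)$ and $\spPSW(\phi)$ are semistandard: this is a direct inductive check on the insertion steps in Definition~\ref{sw-def} and its modified version, using that in either algorithm a newly bumped entry is reinserted into the next row or column in a way that preserves weak increasingness along rows and columns, no repeated primed number in a row, and no repeated unprimed number in a column. The key observation for the rest of part~(a) is that the two algorithms differ \emph{only} in how they treat diagonal entries (step (3) and step (5) when $\lceil u\rceil\neq\lceil v\rceil$), and that a diagonal position is always free; moreover the set of free positions is determined by $\unprime(\,\cdot\,)$ of the tableau, which is the same for both since the unprimed bumping routes coincide. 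Thus $\oPSW(\phi)$ and $\spPSW(\phi)$ have the same underlying unprimed tableau and the same free positions, giving $\unprime_\free(\oPSW(\phi)) = \unprime_\free(\spPSW(\phi))$; the identity $\unprime_\diag(\oQSW(\phi)) = \spQSW(\phi)$ follows because the two recording tableaux record the same boxes in the same order and differ only in whether a diagonal box gets a prime (in $\oQSW$ it can, in $\spQSW$ it never does), which is exactly the difference killed by $\unprime_\diag$.

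\textbf{Reducing part (b) to the value-strict case.} For part~(b) the strategy is to pass from an arbitrary primed {\biword} to a value-strict one by standardization, so that Proposition~\ref{sw-prop} applies. Given $\phi$, let $\std(\phi)$ be the standardization of its value (replacing the weakly increasing run of entries equal to $i$ by $i_1 < i_2 < \cdots$ in order of appearance, and similarly primes tracked along); this is value-strict, and one checks that $\unprime_\free$ and the locations of primes in $\oPSW$, $\spPSW$, $\oQSW$ commute with standardization, and that a number $u$ is initially primed in the value of $\phi$ if and only if the corresponding standardized letter is initially primed (equivalently unprimed) in $\std(\phi)$. The same must be checked for $\tpi^\SW$: since the diagonal-entry changes that define $\tpi^\SW_j$ are governed by the unprimed bumping mechanics, $\tpi^\SW$ is compatible with standardization up to relabeling. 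This reduces the claim to the case where $\phi$ is value-strict.

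\textbf{The value-strict case via Proposition~\ref{tau-prop}.} Now assume $\phi$ is value-strict with value $a = a_1\cdots a_n$, so by Proposition~\ref{sw-prop} the value $\hat a := \double(a)$ is a primed involution word with $\double\circ\oPSW(\phi) = \PO(\hat a)$ and $\oQSW(\phi) = \QO(\hat a)$. I would first identify, for each free position $(i,j)$ of $\spPSW(\phi)$ carrying unprimed value $u$, the corresponding $2$-cycle: namely $\gamma_{ij}(\PO(\double(a)))$ equals $\{2u-1, 2u\}$ — this is because, after doubling, each letter of $a$ becomes its own commutation and the cycle it contributes at its free position is precisely $\{2u-1,2u\}$ (using the analysis in Section~\ref{marked-sect}). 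The set $\marked(\hat a)$ then consists exactly of the cycles $\{2u-1,2u\}$ for $u$ \emph{initially primed} in $a$, because $\double$ sends the first $u'$ to the first $(2u)'$ and the definition of $\marked$ in~\eqref{marked-def} picks out the primed commutations. Next I would show that $\tpi(a) = \tpi^\SW(a)$ under the identification $u \leftrightarrow \{2u-1,2u\}$: both permutations are read off from the successive changes of diagonal entries in the sequence of insertion tableaux, and since $\double$ is an order-isomorphism onto its image and commutes with the $\PO$-insertion (Proposition~\ref{sw-prop}), the cycle-sequence transpositions $\tpi_j(\double(a))$ correspond exactly to the diagonal-swap transpositions $\tpi^\SW_j(a)$. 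With these three identifications in hand, Proposition~\ref{tau-prop} applied to $\hat a$ says: for $(i,j)$ with $\theta = \gamma_{ij}(\PO(\double(a))) = \{2u-1,2u\}$, the entry of $\PO(\hat a)$ (if $i\neq j$) or $\QO(\hat a)$ (if $i=j$) in position $(i,j)$ is primed iff $\tpi(\double(a))(\theta) \in \marked(\hat a)$, i.e. iff $\tpi^\SW(a)(u)$ is initially primed in $a$ — which is exactly the claim, after translating primes in $\double\circ\oPSW(\phi)$ back to primes in $\oPSW(\phi)$ and noting that $\spPSW(\phi)$ has its entry in $(i,j)$ primed iff $u$ itself is initially primed (the ``symplectic'' algorithm, by its modified steps (3) and (5), never migrates a prime off the letter that first carried it, so a free position is primed precisely when its letter was initially primed).

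\textbf{Expected main obstacle.} The routine semistandardness and standardization-compatibility checks are straightforward but tedious; the genuine difficulty is the identification $\tpi(\double(a)) = \tpi^\SW(a)$ and, relatedly, pinning down that the free-position cycle is $\{2u-1,2u\}$ and that $\spPSW$ keeps primes on their original letters. These require carefully matching the column-insertion bookkeeping of Definition~\ref{sw-def} with the diagonal-bumping behavior analyzed in Lemma~\ref{cseq-lem} and Section~\ref{cyc-sect2}; I expect this matching — essentially verifying that the modified Sagan--Worley column-insertion step is the ``un-doubled'' shadow of the $\iarrow$ diagonal step — to be where the real work lies, even though conceptually it is forced by Proposition~\ref{sw-prop}.
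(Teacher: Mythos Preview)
Your approach is genuinely different from the paper's, which argues directly by induction on the insertion process, tracking how primes at free positions migrate under bumping without ever invoking doubling or Proposition~\ref{tau-prop}. Your idea of handling the value-strict case via Proposition~\ref{sw-prop} and Proposition~\ref{tau-prop} is sound: after doubling, every index is a commutation, every box $(i,j)$ has $\gamma_{ij}=\{2u,2u+1\}$ (not $\{2u-1,2u\}$) where $2u$ is its entry, and the first row of each $\cseq_j$ is determined by the second, so $\tpi_j(\double(a))$ swaps $\{2u,2u+1\}\leftrightarrow\{2v,2v+1\}$ exactly when $\tpi^{\SW}_j(a)$ swaps $u\leftrightarrow v$. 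This gives an elegant alternative for partial signed permutations.

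However, there are real gaps elsewhere. In part~(a) you conclude $\unprime_\free(\oPSW(\phi))=\unprime_\free(\spPSW(\phi))$ from ``the algorithms differ only at diagonal steps, and diagonal positions are free.'' That inference is incomplete: when a diagonal bump occurs, the two versions send the element onward with \emph{different} primes, and this element then deposits its prime at the next position it bumps, which is typically off-diagonal. So the two tableaux can differ at off-diagonal boxes, and one must show that any such box is free. The paper proves exactly this by arguing that when a free position is bumped, the next bumped position is again free (except in one diagonal edge case); you have not supplied this argument. The same issue underlies your closing parenthetical about $\spPSW$: the claim ``the symplectic algorithm never migrates a prime off the letter that first carried it, so a free position is primed precisely when its letter was initially primed'' hides the nontrivial step of identifying \emph{which} occurrence of $u$ ends up at the free position, which again needs the paper's free-position tracking. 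Finally, the standardization reduction you outline for general $\phi$ requires checking that $\tpi^{\SW}$, ``initially primed,'' and the locations of free primes all commute with standardization; these checks are not routine and essentially reconstruct the direct inductive analysis. In short, your doubling argument is a nice shortcut for the value-strict case, but to cover arbitrary $\phi$ and to establish the $\unprime_\free$ identity you still need the paper's core observation about how bumping preserves freeness.
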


\begin{proof}
It is known that $\spPSW(\phi)$ is always a semistandard shifted tableau \cite[Thm. 8.1]{Sag87}.
Suppose during the insertion process that defines $\spPSW(\phi)$, 
 a free position $(x,y)$ with entry $v$ is bumped by a number $u$. The sequence of insertions leading to this 
 point starts with some number inserted into a semistandard shifted tableau.
 It follows that we can only have $\lceil u \rceil = \lceil v \rceil$ if $u$ bumps $v$ when inserted into a row,
 since otherwise $u$ would have been bumped on the previous iteration from a position contributing an 
 earlier letter in the row reading word,
 contradicting our assumption that the position of $v$ is free.
 From this observation, it also follows that $u$ would still bump the position $(x,y)$ if we toggled the prime on its entry $v$:
 this is clear if $\lceil u \rceil < \lceil v \rceil$ or if $v$ is primed, 
 and it holds if $\lceil u \rceil = v \in \ZZ$  as then we must be inserting into a row with $u=v'$.
Another relevant property is that
the position which $v$ subsequently bumps on the next iteration (or the new position added to the tableau if $v$ is placed at the end of a row or column) only depends on $\lceil v\rceil $. 
This position is also free unless $v$ is on the main diagonal with $\lceil u\rceil =\lceil v\rceil $,
in which case the free entry is unchanged (as is illustrated in Example~\ref{new-ex}).
Finally, 
if  $T=\spPSW(a_1a_2\cdots a_{j-1})$ has no entries equal to $\lceil a_j\rceil$ or $\lceil a_j\rceil'$,
then when $a_j$ is inserted into $T$ it is placed into the first row and is automatically free.

Given these observations, it follows by induction on the number of columns of $\phi$ that  
$\spPSW(\phi)$ contains $u'$ in a free position for some $u \in \ZZ$ if and only if $u$ is initially primed in the value of $\phi$.
Moreover, we see in this way that the tableau $\oPSW(\phi)$ is formed from $\spPSW(\phi)$ by toggling the primes on certain free positions,
and that the identities \eqref{tech-sw-eq} hold. We already know that $\spPSW(\phi)$ is semistandard, so
$\oPSW(\phi)$ is also a semistandard shifted tableau.

For the last part of the result,
consider a semistandard shifted tableau $T$ and let $\square_u$ for $u \in \ZZ$ denote the free position of $T$ containing $u$ or $u'$,
if this exists.
If $\square_u$ and $\square_v$ are both defined, then 
let $(u,v)\in  S_\ZZ$ act on $T$ by reversing the primes on the entries in these positions if they are not both primed or both unprimed,
and otherwise leaves $T$ unchanged.
This operation extends to an action of the group of permutations of the entries of $\unprime(T)$.

Let $a=a_1a_2\cdots a_n$ be the value of $\phi$.
Form $\bbPSW(a)$ from $\oPSW(a)$ by adding primes to all diagonal positions that are primed in $\oQSW(a)$.
Then $\bbPSW(a)$ is constructed by the same insertion process as the one that defines 
$\spPSW(a)$, except that whenever an inserted number $u$ is about to bump a diagonal entry $v$ with $\lceil u \rceil < \lceil v \rceil$
and $\{u,v\} \not \subset \ZZ$ and $\{u,v\} \not \subset \ZZ'$,
we reverse the primes on $u$ and $v$.  In the exceptional case $\tpi^\SW_j(a)$ is the transposition exchanging $\lceil u\rceil$ and $\lceil v \rceil$,
and outside this case $\tpi^\SW_j(a) = 1$. Thus, with respect to the action defined in the previous paragraph,
it follows that $\tpi^\SW(a) : \bbPSW(a) \mapsto \spPSW(a)$.
This implies the rest of the desired result.
\end{proof}

 \begin{remark}\label{osp-remark}
Orthogonal and symplectic Sagan-Worley insertion restrict to the same map on all (unprimed) {\biwords}.
Proposition~\ref{tech-sw-lem} shows that  we also have
$\oPSW(a) = \spPSW(a)$ for all primed words that have $a = \unprime_{\init}(a)$.
Therefore both $a \mapsto \oPSW(a)$ and $a \mapsto \spPSW(a)$
descend to the same map from ``equivalence classes'' of words to ``equivalence classes''
of shifted tableaux in the sense of \cite[Defs. 2.1 and 2.6]{GLP}.
 \end{remark}
 
We may represent a primed {\biword} $\phi$ as the matrix  $A$
whose entry in position $(i,j)$ is the number of columns 
equal to $\left[\barr{l} i \\ j'\earr\right]$ or $\left[\barr{l} i \\ j\earr\right]$,
and where this number is circled 
 if the column $\left[\barr{l} i \\ j'\earr\right]$ appears.
This gives a bijection between primed {\biwords}  and $\NN$-valued matrices with finitely many nonzero entries,
in which nonzero entries be optionally circled.
Following \cite{Sag87}, we call the latter \defn{circled matrices}. 
For example,
\be\label{A-eq}
\phi=\left[\barr{lllllll} 1 & 1 & 1 &2 & 2 & 2 & 3 \\ 2' & 2 & 2 & 1 & 1 & 2' & 1\earr\right]
\quad\text{has associated circled matrix}
\quad
A=\left[\barr{ll} 0 & \circled{3} \\ 2 & \circled{1} \\ 1 & 0\earr\right].
\ee
This circled matrix $A$ has all entries $A_{ij} \in \{0,1,2,3\}$; that is, the circles have no effect on the value $A_{ij}$.
A primed {\biword} is value-strict if and only if its associated circled matrix has all entries in $\{0,1\}$
and at most nonzero entry in each column.

\begin{theorem}\label{circled-thm}
The map $\phi \mapsto (\oPSW(\phi),\oQSW(\phi))$
is a bijection from primed {\biwords} 
to pairs $(P,Q)$
of semistandard shifted tableaux of the same shape, where $P$ has no primes on the main diagonal
and where the number of times that $j$ or $j'$ (for any $j \in \ZZ$) appear in $P$
and in $Q$. Moreover, if  $A = [ A_{ij}]$ is the circled matrix of $\phi$ then 
each row sum $\sum_i  A_{ij}$ (respectively, column sum $\sum_k  A_{jk}$)
is the number of times that $j$ or $j'$ appear in $\oPSW(\phi)$ (respectively, in $\oQSW(\phi)$). 
\end{theorem}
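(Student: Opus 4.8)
The plan is to deduce Theorem~\ref{circled-thm} from the corresponding (known) statement for symplectic Sagan--Worley insertion together with Proposition~\ref{tech-sw-lem}. Recall that $\phi\mapsto(\spPSW(\phi),\spQSW(\phi))$ is the classical Sagan--Worley correspondence, for which \cite[Thm.~8.1]{Sag87} already establishes a bijection from primed {\biwords} to pairs $(P',Q')$ of semistandard shifted tableaux of the same shape with $Q'$ having no primes on the main diagonal, together with the stated weight-counting for the row and column sums of the circled matrix. So the content to prove is that the ``orthogonal'' variant produces an equivalent bijection, simply shifting the location of the diagonal primes from the insertion tableau to the recording tableau.

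First I would set up the codomain bookkeeping. Proposition~\ref{tech-sw-lem}(a) gives $\unprime_{\free}(\oPSW(\phi)) = \unprime_{\free}(\spPSW(\phi))$ and $\unprime_{\diag}(\oQSW(\phi)) = \spQSW(\phi)$, and that both $\oPSW(\phi)$ and $\spPSW(\phi)$ are semistandard with the same free positions. Since $\oPSW(\phi)$ differs from $\spPSW(\phi)$ only by toggling primes on free (in particular diagonal) positions, the multiset of values $\{\lceil j\rceil : j\text{ an entry}\}$ is unchanged, so the number of times $j$ or $j'$ appears in $\oPSW(\phi)$ equals the number in $\spPSW(\phi)$, which by \cite[Thm.~8.1]{Sag87} is the row sum $\sum_i A_{ij}$; and $\oQSW(\phi)$ has the same shape and the same count of $j$-or-$j'$ entries as $\spQSW(\phi)$ (it only differs by diagonal primes), giving the column-sum statement. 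This also shows $(\oPSW(\phi),\oQSW(\phi))$ lands in the claimed codomain: $P=\oPSW(\phi)$ is semistandard with no diagonal primes (Definition~\ref{sw-def}, step (3), removes them), $Q=\oQSW(\phi)$ is semistandard (again by Proposition~\ref{tech-sw-lem}(a), since $\unprime_{\diag}(\oQSW(\phi))=\spQSW(\phi)$ is semistandard and adding diagonal primes preserves semistandardness), and the weight-matching of $P$ and $Q$ follows from the two sum formulas, which agree because $A$ has equal total of row sums and column sums column-by-column\ldots more precisely, because both equal the relevant multiplicity in $\phi$'s value and index.

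Next I would prove bijectivity. Injectivity: suppose $\oPSW(\phi)=\oPSW(\psi)=:P$ and $\oQSW(\phi)=\oQSW(\psi)=:Q$. Applying $\unprime_{\free}$ and $\unprime_{\diag}$ and Proposition~\ref{tech-sw-lem}(a) gives $\unprime_\free(\spPSW(\phi))=\unprime_\free(\spPSW(\psi))$ and $\spQSW(\phi)=\spQSW(\psi)$. By Proposition~\ref{tech-sw-lem}(b), the primes on $\spPSW(\phi)$ in free positions are determined by which $u\in\ZZ$ are initially primed in the value of $\phi$, and by the first paragraph of the proof of Proposition~\ref{tech-sw-lem} the underlying unprimed data $\spQSW(\phi)$ together with the index of $\phi$ determine exactly which letters $u$ occur and in which order, hence which are initially primed; since $\unprime_\free(\spPSW(\phi))$ records the locations of those free positions, $\spPSW(\phi)$ is recovered from $(P,Q)$. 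Thus $\spPSW(\phi)=\spPSW(\psi)$ and $\spQSW(\phi)=\spQSW(\psi)$, so $\phi=\psi$ by injectivity of symplectic Sagan--Worley insertion. Surjectivity: given a pair $(P,Q)$ in the codomain, set $P':=\unprime_\free(P)$ and\ldots actually the cleaner route is to first move the diagonal primes of $Q$ back onto $P$ to build a candidate symplectic pair. Precisely, given $(P,Q)$ one reconstructs $\spQSW$-candidate $Q_0:=\unprime_{\diag}(Q)$, then reads off from $Q_0$ and its index the set of initially primed letters, then primes the corresponding free positions of $P$ to get a candidate $P_0$; \cite[Thm.~8.1]{Sag87} produces $\phi$ with $(\spPSW(\phi),\spQSW(\phi))=(P_0,Q_0)$, and Proposition~\ref{tech-sw-lem} then forces $\oPSW(\phi)=P$ and $\oQSW(\phi)=Q$. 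The one point requiring care is that $(P_0,Q_0)$ is a valid symplectic pair, i.e.\ $P_0$ is semistandard; this holds because priming free positions never breaks semistandardness.

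The main obstacle I anticipate is the surjectivity reconstruction: one must verify that the set of ``initially primed'' letters determined abstractly from $(P,Q)$ really does coincide with the one arising from the $\phi$ that symplectic insertion hands back, and that the map ``add diagonal primes of $Q$ back to $P$, remove free primes, re-derive'' is well defined and inverse to $\phi\mapsto(\oPSW(\phi),\oQSW(\phi))$. Both facts are essentially contained in the proof of Proposition~\ref{tech-sw-lem}(b) (the explicit action of $\tpi^\SW(a)$ and the inductive description of free-position primes), so the argument should reduce to carefully citing those statements rather than re-running the insertion analysis. I would present the theorem's proof as: (1) codomain membership and the weight formulas via Proposition~\ref{tech-sw-lem}(a); (2) a two-sided inverse $(P,Q)\leftrightarrow(P_0,Q_0)$ between orthogonal and symplectic pairs, using Proposition~\ref{tech-sw-lem}(b); (3) transport of bijectivity from \cite[Thm.~8.1]{Sag87}.
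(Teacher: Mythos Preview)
Your overall strategy---transport the symplectic bijection \cite[Thm.~8.1]{Sag87} through Proposition~\ref{tech-sw-lem}---is exactly the paper's approach, and the weight-formula paragraph is fine. But there is a genuine gap in both your injectivity and surjectivity arguments, and it is the same gap in each. You write that from $\spQSW(\phi)$ ``together with the index of $\phi$'' one can ``determine\ldots which [letters] are initially primed,'' and in the surjectivity direction that one ``reads off from $Q_0$ and its index the set of initially primed letters.'' This is false: the recording tableau $\spQSW(\phi)=\unprime_{\diag}(\oQSW(\phi))$ and the index row carry no information about which \emph{value} letters are primed. The initially-primed data lives in the free positions of the insertion side, and Proposition~\ref{tech-sw-lem}(b) tells you that the orthogonal and symplectic free-position primes are related by the permutation $\tpi^\SW(\phi)$, not by the identity. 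So you cannot simply ``move diagonal primes from $Q$ to $P$'' to recover $(\spPSW(\phi),\spQSW(\phi))$; that produces the wrong free-prime pattern whenever $\tpi^\SW(\phi)\neq 1$.

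The fix is the one-line observation the paper makes explicitly: toggling which letters are initially primed in $\phi$ does not change $\tpi^\SW(\phi)$ (this follows from Proposition~\ref{tech-sw-lem}, since the diagonal entries of each $\oPSW(a_1\cdots a_j)$ are unprimed and depend only on $\unprime_{\init}(\phi)$). Hence $\tpi^\SW(\phi)$ is a function of $\unprime_{\init}(\phi)$ alone, and $\unprime_{\init}(\phi)$ is recoverable from $(\unprime_{\free}(P),\unprime_{\diag}(Q))$ via the symplectic inverse. Once you have $\tpi^\SW(\phi)$ in hand, Proposition~\ref{tech-sw-lem}(b) lets you pass bijectively between the set of free positions primed in $(P,Q)$ and the set of initially primed letters of $\phi$. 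With this correction your steps (1)--(3) go through; the point is that the two-sided inverse in your step (2) must be twisted by $\tpi^\SW$, and the reason that twist is well defined on the codomain is precisely the invariance statement above.
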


\begin{remark}\label{circled-thm-rmk}
Theorem~\ref{circled-thm} remains true when the relevant map 
is replaced by $\phi \mapsto (\spPSW(\phi),\spQSW(\phi))$
if one requires $Q$ instead of $P$ to have no diagonal primes (see \cite[Thm. 8.1]{Sag87} or \cite[Thm. 6.1.1]{Worley}).
\end{remark}

\begin{proof}
Let $\phi$ be a primed {\biword}.
Toggling whether a given number in the value of $\phi$ is initially primed or not 
has no effect on $\tpi^\SW(\phi)$ by Proposition~\ref{tech-sw-lem}.
The result is therefore clear from the same result and \cite[Thm. 8.1]{Sag87}  or \cite[Thm. 6.1.1]{Worley}.
\end{proof}

If $\phi$ and $A$ are as in \eqref{A-eq} then $A$ has row sums $1$, $2$ and column sums $3$, $3$, $1$, while
\[\oPSW(\phi) = \ytab{ \none & 2 & 2 \\ 1 & 1  &1 & 2 &2 }\quand \oQSW(\phi) = \ytab{\none & 2 & 2 \\ 1' & 1 & 1 & 2' & 3'}.\]

\subsection{Orthogonal Knuth operators}\label{ok-sect}

There is a conjectural analogue of Theorem~\ref{ck-fkd-thm} for Sagan-Worley insertion, which we describe in this section.
 Let $\shk$ denote the operator that acts on 1- and 2-letter primed words 
by interchanging  
\[
X \leftrightarrow X',
\quad
XX \leftrightarrow XX',\quad
X'X' \leftrightarrow X'X,\]
\[
XY \leftrightarrow YX,\quad 
X'Y \leftrightarrow Y'X,\quad 
XY' \leftrightarrow YX',\quand 
X'Y' \leftrightarrow Y'X',
\]
for all distinct $X,Y \in \ZZ$. Let $\shk$ act on 3-letter primed words as the involution interchanging
\[
ACB \leftrightarrow CAB\quand 
YXZ \leftrightarrow YZX
\]
for all $A,B,C,X,Y,Z \in \ZZ\sqcup \ZZ'$ with $\lceil A \rceil \leq B \leq \lceil C \rceil -\frac{1}{2}$ and $X +\frac{1}{2}\leq  \lceil Y\rceil  \leq Z$,
while fixing 
any 3-letter words not of these forms.  
For a primed word $a=a_1a_2\cdots a_n$ and $i \in[n-2]$, define
\[ 
\ba
\shk_{-1}(a) &:=\shk(a_1) a_2a_3\cdots a_n, \\
\shk_0(a) &:= \shk(a_1a_2)a_3\cdots a_n, \\
\shk_i(a) &:= a_1\cdots a_{i-1} \shk(a_ia_{i+1}a_{i+2}) a_{i+3}\cdots a_n,
\ea
\]
while setting $\shk_i(a) := a$ for $ i \in \ZZ$ with $i+2 \notin[\ell(a)]$.
These \defn{orthogonal Knuth operators} coincide with $\ck_i$ on partial signed permutations.

\begin{conjecture}\label{sw-conj}
If $i \in \ZZ$ 
 then 
$
\oPSW(\shk_i(a))=\oPSW(a)$ and $ \oQSW(\shk_i(a))=\fkd_i(\oQSW(a)). $

\end{conjecture}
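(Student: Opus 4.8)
\textbf{Proof strategy for Conjecture~\ref{sw-conj}.}
The plan is to reduce the statement to Theorem~\ref{ck-fkd-thm} (which is already proved) by the ``doubling'' device from Section~\ref{modif-sect}, together with the canonical-form analysis of Proposition~\ref{tech-sw-lem}. The key point is that $\oPSW$ and $\oQSW$ are governed by exactly the same bumping mechanics as $\PO$ and $\QO$, once we pass to a regime in which adjacent distinct letters always differ by more than one. First I would handle the special cases $i \leq 0$ directly: these are the analogues of Propositions~\ref{first-toggle-obs} and \ref{second-toggle-obs} for Sagan--Worley insertion and follow immediately from Definition~\ref{sw-def}, since $\shk_{-1}$ toggles the prime on the first letter and $\shk_0$ acts on the first two letters in a way that does not change $\oPSW$ and toggles the prime in box $(1,2)$ of $\oQSW$. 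So assume $i \in [\ell(a)-2]$.

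The main step is to prove the \emph{value-strict} case, i.e.\ when $a$ is a partial signed permutation. In that situation the operators $\shk_i$ and $\ck_i$ agree on $a$ (as noted after the definition of $\shk$), so we may write $\ck_i(a)$. Applying $\double$ and invoking Proposition~\ref{sw-prop}, the value $\double(a)$ is a primed involution word with every index a commutation, and $\double(\ck_i(a)) = \ck_i(\double(a))$ since $\double$ is an order-preserving relabeling that carries a $121$, $212$, $132$, $312$, $213$ or $231$ pattern to the corresponding pattern with gaps of size $>1$; one must just check that $\ck$ applied to a doubled three-letter word produces the doubled image of $\ck$ applied to the original (the only subtlety is that $\ck$'s braid cases $XYX\leftrightarrow YXY$ cannot arise after doubling, which is fine because they never arise for partial signed permutations either). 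Then Theorem~\ref{ck-fkd-thm} gives $\PO(\double(\ck_i(a))) = \PO(\double(a))$ and $\QO(\double(\ck_i(a))) = \fkd_i(\QO(\double(a)))$. Translating back via Proposition~\ref{sw-prop} ($\double\circ\oPSW = \PO\circ\double$ and $\oQSW = \QO\circ\double$) and using that $\double$ is injective on shifted tableaux, we conclude $\oPSW(\ck_i(a)) = \oPSW(a)$ and $\oQSW(\ck_i(a)) = \fkd_i(\oQSW(a))$ — but here I need to know that $\fkd_i$ commutes with $\double$ on standard shifted tableaux, which should be routine from the definition of $\fkd_i$ via $\shword$ (doubling the entries of $T$ doubles the entries of $\shword(T)$, hence does not change which of the three cases in the definition of $\fkd_i$ applies, and $\fks_i$ manifestly commutes with relabeling).

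The remaining and genuinely new work is to bootstrap from the value-strict case to arbitrary primed words, using the canonical-form machinery. Given an arbitrary primed word $a$, one reduces modulo the equivalences of \cite[Defs.\ 2.1 and 2.6]{GLP}: by Remark~\ref{osp-remark} and Proposition~\ref{tech-sw-lem}, $\oPSW$ and $\oQSW$ are determined by the multiset of entries together with the canonical-form data, and the point would be that $\shk_i$ is \emph{compatible} with this reduction — i.e.\ $\shk_i$ either fixes the equivalence class or moves it in a way matching how $\fkd_i$ moves canonical forms. Concretely, I would (1) show $\shk_i$ preserves the content multiset and commutes with $\unprime_{\init}$ up to a controlled correction governed by $\tpi^\SW$, paralleling Lemma~\ref{unpri-word-lem}; (2) for words with repeated unprimed letters, stretch to a partial signed permutation by the standardization/doubling trick, apply the value-strict case there, and descend. \textbf{The main obstacle} I anticipate is exactly this last descent: unlike involution words, the fibers of $\oPSW$ over a fixed insertion tableau are \emph{not} simply $\shk$-equivalence classes in an obvious way when there are repeats, and $\shk$ (through its case $XX \leftrightarrow XX'$ and the three-letter cases with equalities) can change which positions are ``free''; controlling this interaction — essentially proving the Sagan--Worley analogue of Proposition~\ref{tau-prop} relating $\tpi^\SW$ to the locations of diagonal primes under $\shk_i$ — is where the real content lies, and it is presumably why the statement is only conjectured rather than proved in the paper. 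A clean resolution would likely require re-running the entire Section~\ref{proofs-sect} argument (cycle sequences, $\cseq$, bumping-path lemmas) in the Sagan--Worley setting, which is beyond a short reduction.
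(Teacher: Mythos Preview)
The statement is a \emph{Conjecture} in the paper and is not proved there; there is no proof to compare against. What the paper does establish is exactly the two special cases you handle: the text immediately following the conjecture notes that $i\in\{-1,0\}$ is trivial, and Corollary~\ref{o-cor3} gives the value-strict (partial signed permutation) case via precisely your doubling argument through Proposition~\ref{sw-prop} and Theorem~\ref{ck-fkd-thm}. One small correction to your value-strict argument: you do not need $\fkd_i$ to commute with $\double$. Proposition~\ref{sw-prop} says $\oQSW(a)=\QO(\double(a))$ as standard shifted tableaux (not $\double(\oQSW(a))=\QO(\double(a))$), so once you have $\double(\ck_i(a))=\ck_i(\double(a))$ the chain
\[
\oQSW(\ck_i(a))=\QO(\double(\ck_i(a)))=\QO(\ck_i(\double(a)))=\fkd_i(\QO(\double(a)))=\fkd_i(\oQSW(a))
\]
goes through directly.

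For the general case your diagnosis matches the paper's own. The discussion after Proposition~\ref{ree-prop} sketches essentially the same route you propose: assuming the symplectic analogue (Conjecture~\ref{sw-conj3}), use Proposition~\ref{tech-sw-lem} to reduce $\oPSW(a)=\oPSW(b)$ to $\tpi^{\SW}(a)=\tpi^{\SW}(b)$, and prove the latter by re-running the Section~\ref{proofs-sect} lemmas (cycle sequences, bumping-path constraints) in the Sagan--Worley setting. The obstacle you name --- that $\shk_i$ can alter which positions are free when letters repeat, so the interaction with $\tpi^{\SW}$ is not controlled by any existing lemma --- is exactly why the paper leaves this open. You have correctly located the boundary of what is proved; your ``bootstrap'' paragraph is a reasonable outline but, as you acknowledge, is not a proof.
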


It is trivial to verify  these identities when $i \in \{-1,0\}$.
As with Theorem~\ref{ck-fkd-thm}, the difficulty lies in the case when $1\leq i \in \ell(a)-2$. 
Let  $\simSW$  denote the transitive closure of the  relation
on primed words with $a \sim \shk_i(a)$ 
for all $i\in \ZZ$. 

\begin{proposition} 
If $a$ is a primed word then $a \simSW \row(\oPSW(a))$.
\end{proposition}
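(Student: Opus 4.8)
### Proof proposal

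The plan is to reduce the statement to the analogous fact for orthogonal Edelman-Greene insertion, namely Proposition~\ref{o-lem2}, by passing through the doubling map. Recall that for a value-strict primed {\biword} $\phi$ (in particular, for any partial signed permutation viewed as a word), Proposition~\ref{sw-prop} gives $\double\circ\oPSW(\phi) = \PO\circ\double(\phi)$, and that $\double(\phi)$ has value a primed involution word in which \emph{every} index is a commutation. The difficulty is that a general primed word $a$ need not be a partial signed permutation: it may have repeated (unprimed) letters, so $\double(a)$ need not be a primed involution word, and the identification with shifted Edelman-Greene insertion breaks down.

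First I would handle the case where $a$ is a partial signed permutation. Then $\double(a)$ is a primed involution word, so Proposition~\ref{o-lem2} gives $\double(a) \simICK \row(\PO(\double(a))) = \row(\double(\oPSW(a))) = \double(\row(\oPSW(a)))$, where the last equality holds because $\double$ is applied entrywise and commutes with forming the row reading word. I would then check that the relation $\simICK$ restricted to doubled primed words corresponds exactly to $\simSW$ on the original words: each elementary move defining $\simICK$ (the relations in \eqref{ck-eq1} and \eqref{ck-eq2}) pulls back under $\double$ to one of the moves defining $\shk$. The braid-type relation $XYX \leftrightarrow YXY$ with $|X-Y|=1$ cannot occur among doubled letters (consecutive doubled letters differ by at least $2$), so only the commutation-type relations $ACB \leftrightarrow CAB$, $BCA \leftrightarrow BAC$, the initial prime toggles, and the initial transpositions appear; each of these is visibly a $\shk$-move after halving. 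Hence $a \simSW \row(\oPSW(a))$ in this case.

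For a general primed word $a$, the idea is to ``standardize'' as in the proof of Theorem~\ref{o-summary-thm} and in the discussion around $\unprime_\free$/$\unprime_\init$ in Section~\ref{bij-sect}. Concretely, I would replace $a$ by a partial signed permutation $\std(a)$ obtained by relabelling equal unprimed letters $x,x,\dots$ from left to right by $x^{(1)} < x^{(2)} < \cdots$ in a refined ordering, chosen so that the bumping mechanics of $\oPSW$ are unchanged and so that $\oPSW(\std(a))$ is obtained from $\oPSW(a)$ by the same relabelling (this is routine from Definition~\ref{sw-def}, since in each row/column insertion step the comparison $u<v$ versus $u\leq v$ depends only on the relative order and on the primes). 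Applying the partial-signed-permutation case to $\std(a)$ gives $\std(a) \simSW \row(\oPSW(\std(a)))$, and then ``destandardizing'' each $\shk$-move — which is legitimate because the hypotheses defining the $\shk$-relations ($\lceil A\rceil \le B \le \lceil C\rceil - \tfrac12$ and $X+\tfrac12 \le \lceil Y\rceil \le Z$) only involve weak inequalities and so are preserved under collapsing a refinement back to equalities — yields $a \simSW \row(\oPSW(a))$.

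The main obstacle I expect is making the standardization/destandardization step fully rigorous: one must verify both that $\oPSW$ is equivariant under the refinement of the order (so $\oPSW(\std(a))$ really is the relabelled $\oPSW(a)$, including the correct behaviour of primes on free and diagonal positions) and that every elementary $\shk$-move in the chain for $\std(a)$ descends to a valid $\shk$-move for $a$ rather than to an equality of adjacent letters that is not one of the listed relations. The first point is a careful but mechanical induction on the length of $a$ using Definition~\ref{sw-def}; the second is a short case check against the definition of $\shk$, using that a collapse can only turn a strict inequality into an equality, never reverse an inequality, and that the relations $XX \leftrightarrow XX'$, $X'X' \leftrightarrow X'X$ (and their $2$-letter companions) are precisely the moves that appear when two refined letters collapse. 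Once these are in place the proposition follows.
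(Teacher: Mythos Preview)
Your argument for partial signed permutations is correct and essentially matches how the paper handles Corollary~\ref{o-cor3}: doubling identifies orthogonal Sagan--Worley insertion with orthogonal Edelman--Greene insertion (Proposition~\ref{sw-prop}), Proposition~\ref{o-lem2} supplies the $\simICK$-chain, and since doubled letters with distinct ceilings never differ by~$1$, no braid-type $\ck$-move occurs and each step pulls back to a $\shk$-move.

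The standardization step for general primed words does not work as described. First, a primed word may contain repeated \emph{primed} letters (e.g.\ $a = v'Cv'$ with $v<C$ unprimed), which your relabelling of ``equal unprimed letters'' does not address; and any relabelling making such a word into a partial signed permutation will in general fail to commute with $\oPSW$. Concretely, $\oPSW(v'Cv')$ has shape~$(3)$, but if one standardizes the two copies of $v'$ left-to-right to $p', q'$ with $p<q<C$, then $\oPSW(p'Cq')$ has shape~$(2,1)$. Standardizing right-to-left gives the correct shape, but the destandardized row word is then $vv'C$, not $\row(\oPSW(a))=vvC$. Second, your claim that the $\shk$-hypotheses ``only involve weak inequalities and so are preserved under collapsing'' is not correct: the condition $\lceil A\rceil \le B$ compares the \emph{ceiling} of one letter with another letter \emph{as is}, so collapsing two primed letters $p',q'$ (with $p<q$) to a common $v'$ turns the valid inequality $\lceil p'\rceil \le q'$ into $v\le v'$, which is false. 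Thus valid $\shk$-moves in the standardized chain need not descend to $\shk$-moves on $a$.

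The paper bypasses all of this by simply rerunning the argument of Proposition~\ref{o-lem2} directly, with the bumping rule of Definition~\ref{sw-def} in place of Definition~\ref{iarrow-def} and the relation $\simSW$ in place of~$\simICK$. The structure of that proof---commuting the inserted letter through a row or column via elementary moves, and handling the diagonal crossing via the diagonal reading word trick---carries over once one checks that the $\shk$-relations cover the needed Knuth-type swaps.
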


\begin{proof} 
One can mimick the proof of Proposition~\ref{o-lem2}, using the
relation $\simSW$ in place and $\simICK$, after rewriting Definition~\ref{sw-def}
in a form similar to Definitions~\ref{iarrow-def} and \ref{o'-eg-def}.
We omit the details.
\end{proof}

Thus, Conjecture~\ref{sw-conj} would imply the following:

\begin{conjecture}\label{sw-conj2}
Two primed words satisfy $a\simSW b$ if and only if $\oPSW(a) = \oPSW(b)$.
\end{conjecture}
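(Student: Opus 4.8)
The plan is to prove Conjecture~\ref{sw-conj2} as a straightforward corollary of Conjecture~\ref{sw-conj}, exactly parallel to how Corollary~\ref{o-cor2} follows from Theorem~\ref{ck-fkd-thm}. Since the statement we are asked to prove is explicitly flagged in the excerpt as a consequence of Conjecture~\ref{sw-conj} (''Conjecture~\ref{sw-conj} would imply the following''), I would structure the argument in two halves: the ''only if'' direction, which is already unconditionally available, and the ''if'' direction, which consumes the conjectured identity.

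For the forward direction, suppose $a \simSW b$. By the immediately preceding Proposition (''If $a$ is a primed word then $a \simSW \row(\oPSW(a))$''), we have both $a \simSW \row(\oPSW(a))$ and $b \simSW \row(\oPSW(b))$; but this direction does not by itself force $\oPSW(a) = \oPSW(b)$. Instead, I would argue directly: $a \simSW b$ means $b = \shk_{i_1}\shk_{i_2}\cdots \shk_{i_k}(a)$ for some sequence of indices $i_1,\dots,i_k \in \ZZ$, so by repeated application of the first identity in Conjecture~\ref{sw-conj} (namely $\oPSW(\shk_i(a)) = \oPSW(a)$ for every primed word $a$ and every $i \in \ZZ$) we get $\oPSW(b) = \oPSW(a)$. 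For the reverse direction, suppose $\oPSW(a) = \oPSW(b)$. Then the Proposition gives $a \simSW \row(\oPSW(a)) = \row(\oPSW(b)) \simSW b$, and transitivity of $\simSW$ yields $a \simSW b$. Combining the two halves gives the equivalence.

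Concretely, the write-up would be short:

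\begin{proof}
Suppose first that $a \simSW b$, so that $b = \shk_{i_1}\shk_{i_2}\cdots \shk_{i_k}(a)$ for some $i_1,i_2,\dots,i_k \in \ZZ$. Applying the first identity in Conjecture~\ref{sw-conj} repeatedly gives
\[
\oPSW(b) = \oPSW(\shk_{i_1}\shk_{i_2}\cdots \shk_{i_k}(a)) = \oPSW(a).
\]
Conversely, suppose $\oPSW(a) = \oPSW(b)$. By the preceding proposition, $a \simSW \row(\oPSW(a)) = \row(\oPSW(b)) \simSW b$, so $a \simSW b$ by transitivity of $\simSW$.
\end{proof}

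The main (indeed only) obstacle is that this is a \emph{conditional} result: it rests entirely on Conjecture~\ref{sw-conj}, which the excerpt leaves unproven. There is no genuine mathematical difficulty in the deduction itself — it is a verbatim transcription of the Corollary~\ref{o-cor2} argument with $\PO$ replaced by $\oPSW$, $\ck_i$ by $\shk_i$, $\simICK$ by $\simSW$, and Theorem~\ref{ck-fkd-thm} by Conjecture~\ref{sw-conj}. If one wanted an \emph{unconditional} theorem here, the real work would be to prove Conjecture~\ref{sw-conj}, which by the remarks in the paper would presumably require replaying the long chain of technical lemmas in Section~\ref{proofs-sect} (the cycle-sequence and bumping-path analysis, Lemmas~\ref{bac-lem}, \ref{acb-lem}, \ref{diag-box-lem}) in the Sagan-Worley setting; Proposition~\ref{sw-prop} (the ''doubling'' comparison $\double\circ\oPSW = \PO\circ\double$ on value-strict biwords) together with Proposition~\ref{tech-sw-lem} suggests that one could try to reduce the partial-signed-permutation case of Conjecture~\ref{sw-conj} to Theorem~\ref{ck-fkd-thm} directly, but extending past value-strict inputs to arbitrary primed words (where letters may repeat) would still need new arguments. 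Since the excerpt only asks us to record the implication, the proof above suffices as written.
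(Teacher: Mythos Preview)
Your proposal is correct and matches the paper's own treatment: the paper does not supply a proof of Conjecture~\ref{sw-conj2} but merely records it as a consequence of Conjecture~\ref{sw-conj} via the preceding Proposition (that $a \simSW \row(\oPSW(a))$), exactly as you have spelled out. Your observation that the ``if'' direction is already unconditional (needing only the Proposition) while only the ``only if'' direction consumes Conjecture~\ref{sw-conj} is accurate and slightly sharper than what the paper makes explicit.
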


A version of this property for the original ``symplectic'' form of Sagan-Worley insertion is already known. 
Modify the definition of $\shk_i$ by setting 
\[\op_{-1}(a) := a
\quand
\op_0(a):=a_2a_1a_3a_4\cdots a_n\text{ if $\lceil a_1 \rceil \neq \lceil a_2 \rceil$ and $n:=\ell(a) \geq 2$},\]
while defining $\op_i(a) := \shk_i(a)$ in all other cases.
Write   $\sim$  for the transitive closure of the  relation
 with $a \sim \op_i(a)$ 
for all $i \in \ZZ$.
Notice that if $X\in\ZZ$ then 
$XX \sim XX' \not\sim X'X' \sim X'X$ while
$XX \simSW XX' \simSW X'X' \simSW X'X.$

Worley \cite[Thm. 6.2.2]{Worley} shows that two
primed words satisfy $a\sim b$ if and only if $\spPSW(a) = \spPSW(b)$,
so in particular $
\spPSW(\op_i(a))=\spPSW(a)$ for all $i$.
 We do not know of a reference for this analogue of the second identity in Conjecture~\ref{sw-conj}:
 
 \begin{conjecture}\label{sw-conj3}
If $i > 0$ and $a$ is any primed word then 
$ \spQSW(\op_i(a))=\fkd_i(\spQSW(a)).$ 
\end{conjecture}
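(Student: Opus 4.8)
\textbf{Proof proposal for Conjecture~\ref{sw-conj3}.}

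The plan is to reduce Conjecture~\ref{sw-conj3} to the already-proven Theorem~\ref{ck-fkd-thm} by transporting statements about arbitrary primed words to statements about primed involution words via the ``doubling'' construction $\double$ from Section~\ref{modif-sect}. The key technical bridge is Proposition~\ref{sw-prop}, which says that $\double\circ\oPSW(\phi) = \PO\circ\double(\phi)$ and $\oQSW(\phi) = \QO\circ\double(\phi)$ whenever $\phi$ is value-strict (i.e., $\unprime$ of its value has distinct letters). The first step would be to establish the missing ``symplectic'' companion of Proposition~\ref{sw-prop}: for a value-strict primed biword $\phi$, one has $\double\circ\spPSW(\phi) = \PSp\circ\double(\phi)$ and $\spQSW(\phi) = \QSp\circ\double(\phi)$, where $\PSp,\QSp$ is symplectic Edelman-Greene insertion. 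This should follow from exactly the same reasoning as Proposition~\ref{sw-prop}: after doubling, consecutive distinct letters of the value differ by more than one, so no braid relations ever fire, and the symplectic bumping rules of \cite[Def. 3.23]{Marberg2019b} restricted to such words coincide termwise with symplectic Sagan-Worley insertion; one simply traces through the two algorithms in parallel. The diagonal-prime conventions match because both $\spQSW$ and $\QSp$ record diagonal entries as unprimed.

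Next I would handle the operators. On a partial signed permutation, $\op_i$ coincides with the Coxeter-Knuth operator $\ck_i$ used in the unprimed/symplectic setting (this is remarked after the definition of $\op_i$; one should double-check the boundary cases $i\in\{-1,0\}$, where $\op_{-1}$ is the identity and matches the symplectic convention that the first letter's prime is not toggled, and $\op_0$ matches the symplectic braid-free transposition). The doubling map intertwines $\op_i$ with $\ck_i$: applying $\op_i$ to a primed word $a$ and then doubling gives the same result as doubling $a$ and then applying $\ck_i$, since $\ck_i$ on a doubled word never invokes the braid relation $XYX\leftrightarrow YXY$ (all consecutive distinct letters are nonconsecutive integers) and the prime-switching rules in \eqref{ck-eq1}--\eqref{ck-eq2} are preserved under $x\mapsto 2x$, $x'\mapsto(2x)'$. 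Similarly, the dual equivalence operator $\fkd_i$ acting on $\spQSW(a)$ must correspond, under the identification $\spQSW(a) = \QSp(\double(a))$, to $\fkd_i$ acting on $\QSp(\double(a))$ — and here I would invoke \eqref{up-fkd-eq} and the fact that $\fkd_i$ depends only on the shifted reading word $\shword$, which is unaffected by doubling the entries. With these two intertwining facts in hand, one applies the symplectic form of Theorem~\ref{ck-fkd-thm} — namely the statement (from \cite[Thm. 5.11]{Marberg2019b}) that $\QSp(\ck_i(w)) = \fkd_i(\QSp(w))$ for primed involution words in the symplectic sense — to $w = \double(a)$, then pulls the identity back through $\spQSW(a) = \QSp(\double(a))$ to conclude $\spQSW(\op_i(a)) = \fkd_i(\spQSW(a))$.

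The main obstacle I anticipate is twofold. First, the paper only cites the \emph{orthogonal} version of Theorem~\ref{ck-fkd-thm} and the unprimed symplectic version (Proposition~\ref{unprime-prop}); a fully rigorous proof needs the \emph{primed symplectic} analogue of Theorem~\ref{ck-fkd-thm}, i.e.\ the recording-tableau identity for $\QSp$ on primed symplectic Hecke/involution words. If that is not available in \cite{Marberg2019b,Marberg2019a}, one must either prove it by the same Section~\ref{proofs-sect} strategy (defining a symplectic analogue of $\tpi(a)$ and of $\marked(\hat a)$, and redoing the cycle-migration analysis) or, more cleverly, deduce it from the orthogonal Theorem~\ref{ck-fkd-thm} using a structural correspondence between the orthogonal and symplectic pictures (cf.\ Remark~\ref{o-sp-rmk}). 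Second, one must verify carefully that doubling a \emph{general} primed word (not just a partial signed permutation) is not needed — Conjecture~\ref{sw-conj3} is stated for \emph{any} primed word, but $\double$ only produces a primed involution word when the input is value-strict, since otherwise repeated letters cause $\double$ to fail the no-repeated-primed-column condition. For non-value-strict $a$, the argument must instead factor through the semistandard machinery: write $a$ as the value of a primed biword, use the bijection of Theorem~\ref{o-summary-thm}/its symplectic analogue and the fact (Proposition~\ref{tech-sw-lem}) that $\spQSW$ is obtained from a standard recording tableau by the standardization/weak-composition procedure, and check that $\op_i$ on $a$ corresponds to $\ck_i$ on a standardization that \emph{is} a partial signed permutation. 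This descent-to-semistandard step, ensuring that $\op_i$ and $\fkd_i$ both commute with standardization in the required way, is where the real care is required; it is analogous to — but not identical with — the proof of Theorem~\ref{o-summary-thm}, because here we are tracking operators, not just the bijection itself.
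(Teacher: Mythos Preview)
The statement you are trying to prove is a \emph{conjecture} in the paper: it is not proved there in general. The paper establishes it only for partial signed permutations, as the unnamed corollary immediately following Corollary~\ref{o-cor3}. That special case is obtained by combining Proposition~\ref{sw-prop} (doubling identifies orthogonal Sagan--Worley with orthogonal Edelman--Greene on value-strict inputs), Theorem~\ref{ck-fkd-thm}, and then Proposition~\ref{ree-prop} to pass from the orthogonal to the symplectic recording tableau. Your outline for that special case is more circuitous and has a genuine problem: you propose a ``symplectic companion'' $\spQSW(\phi)=\QSp(\double(\phi))$, but symplectic Edelman--Greene insertion in \cite{Marberg2019b} is defined only on \emph{unprimed} fixed-point-free involution words, and doubling a partial signed permutation produces a primed word in the \emph{orthogonal} domain (all even letters, every index a commutation), not an element of the symplectic domain. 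There is no primed $\QSp$ in the literature to invoke, and the ``primed symplectic analogue of Theorem~\ref{ck-fkd-thm}'' you would need is itself an open problem of the same flavor.

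The real content of Conjecture~\ref{sw-conj3} is exactly the step you flag as your second obstacle: extending from partial signed permutations to arbitrary primed words. Your proposed standardization argument is not a routine bookkeeping matter. When $a$ has repeated letters, the operator $\op_i$ uses the weak inequalities $\lceil A\rceil \le B \le \lceil C\rceil - \tfrac12$ and $X+\tfrac12 \le \lceil Y\rceil \le Z$, and there is no reason a single application of $\op_i$ on $a$ should correspond to a single $\ck_j$ on a standardization of $a$; nor is it clear that $\fkd_i$ commutes with the destandardization map on recording tableaux in the required way. The paper explicitly notes (in the paragraph after Proposition~\ref{ree-prop}) that if Conjecture~\ref{sw-conj3} were known, Conjectures~\ref{sw-conj} and~\ref{sw-conj2} would follow by redoing the Section~\ref{proofs-sect} analysis---so the author regards this step as requiring substantial new work, not a transport-of-structure argument. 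Your proposal correctly identifies the obstacles but does not surmount them; as written it does not constitute a proof.
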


The case $i=-1$ is excluded from this conjecture because 
$ \spQSW(\op_{-1}(a))\neq \fkd_{-1}(\spQSW(a))$ whenever $a$ is nonempty, as then
 $\op_{-1}(a)=a$
but $ \spQSW(a)\neq \fkd_{-1}(\spQSW(a)).$
The case $i=0$ is excluded because one can check directly that $\spQSW(\op_0(a))=\fkd_0(\spQSW(a))$  for all primed words $a$.


\begin{proposition}\label{ree-prop}
 If $i>0$ and $\oQSW(\shk_i(a))=\fkd_i(\oQSW(a))$  
then $\spQSW(\op_i(a))=\fkd_i(\spQSW(a))$.
\end{proposition}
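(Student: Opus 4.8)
The two variants of Sagan--Worley insertion differ only in two respects: how a newly created diagonal box gets its prime, and what happens in step (5) when an inserted entry meets a diagonal entry of different unprimed value. Proposition~\ref{tech-sw-lem} already packages the precise relationship between $\oPSW$ and $\spPSW$: the insertion tableaux have the same free positions, $\unprime_\free(\oPSW(\phi)) = \unprime_\free(\spPSW(\phi))$, $\unprime_\diag(\oQSW(\phi)) = \spQSW(\phi)$, and the location of the diagonal primes is controlled by the permutation $\tpi^\SW(\phi)$ via the ``initially primed'' bookkeeping. The plan is to run the hypothesis $\oQSW(\shk_i(a)) = \fkd_i(\oQSW(a))$ through this dictionary and show that it forces $\spQSW(\op_i(a)) = \fkd_i(\spQSW(a))$.

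\textbf{Key steps.} First I would observe that for $i>0$ the operators $\shk_i$ and $\op_i$ coincide (they differ only in cases $i\in\{-1,0\}$), so we may write $b := \shk_i(a) = \op_i(a)$ throughout. Second, since $\shk_i$ for $i>0$ never changes whether a letter equal to some $u\in\ZZ$ is initially primed or initially unprimed in a word---it only permutes and reprimes three consecutive letters in a way that preserves, for each unprimed value, which of $u$ or $u'$ occurs first---the quantity ``which values are initially primed in the value'' is the same for $a$ and for $b$. Third, I would combine $\unprime_\diag(\oQSW(w)) = \spQSW(w)$ (applied to $w=a$ and $w=b$) with the identity $\unprime_\diag(\fkd_i(T)) = \fkd_i(\unprime_\diag(T))$ from \eqref{up-fkd-eq} (valid since $i>0\neq -1$). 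The hypothesis gives $\oQSW(b) = \fkd_i(\oQSW(a))$; applying $\unprime_\diag$ to both sides and using these two facts yields
\[
\spQSW(b) = \unprime_\diag(\oQSW(b)) = \unprime_\diag(\fkd_i(\oQSW(a))) = \fkd_i(\unprime_\diag(\oQSW(a))) = \fkd_i(\spQSW(a)).
\]
That already proves the statement, provided the ``$\unprime_\diag$'' versions of both sides determine the full tableaux---which they do, because $\spQSW$ by construction has no primes on the main diagonal (Remark~\ref{circled-thm-rmk} / the paragraph after Definition~\ref{sw-def}), so $\spQSW(w) = \unprime_\diag(\spQSW(w))$ for any primed word $w$.

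\textbf{Main obstacle.} The computation above is essentially forced once the dictionary is in place, so the only real content is checking the two auxiliary facts carefully: (a) that $\unprime_\diag\circ\spQSW = \spQSW$, which is immediate from the definition of the symplectic recording tableau, and (b) that $\fkd_i$ commutes with $\unprime_\diag$ for $i>0$, which is exactly \eqref{up-fkd-eq}. The subtler point worth a sentence in the writeup is that one must know $\spQSW(a)$ and $\spQSW(b)$ genuinely have no diagonal primes so that stripping diagonal primes loses no information; this is where the asymmetry between the orthogonal and symplectic conventions is used, and it is precisely the reason the orthogonal identity (where $\oQSW$ may carry diagonal primes) is the stronger statement that implies the symplectic one. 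I would also note explicitly, for completeness, why $i\in\{-1,0\}$ is excluded---as the paper already remarks after stating Conjecture~\ref{sw-conj3}---so the hypothesis ``$i>0$'' is genuinely needed, namely because $\op_{-1} = \mathrm{id} \neq$ the action of $\fkd_{-1}$ on $\spQSW$, and $\shk_0$ versus $\op_0$ differ.
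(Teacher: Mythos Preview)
Your proposal is correct and follows essentially the same route as the paper: identify $\op_i=\shk_i$ for $i>0$, apply $\unprime_\diag$ to both sides of the hypothesis, and use Proposition~\ref{tech-sw-lem} together with \eqref{up-fkd-eq} to commute $\unprime_\diag$ past $\fkd_i$. Your second step about $\shk_i$ preserving which values are initially primed is true but unnecessary---the chain of equalities you write at the end never invokes it, and the paper's proof omits it entirely.
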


\begin{proof} In this case
 $\spQSW(\op_i(a)) =\spQSW(\shk_i(a)) = \unprime_{\diag}(\oQSW(\shk_i(a)))$ by Proposition~\ref{tech-sw-lem},
and this is equal to $ \unprime_{\diag}(\fkd_i(\oQSW(a)))=\fkd_i(\spQSW(a)) $ 
via \eqref{up-fkd-eq} and the same lemma.
\end{proof}

If Conjecture~\ref{sw-conj3} were known, 
then one could derive Conjectures~\ref{sw-conj} and \ref{sw-conj2} 
by (a simplified version of) the strategy we used in Section~\ref{proofs-sect} to prove Theorem~\ref{ck-fkd-thm}.

In more detail, suppose $a$ is a primed word, $i \in [\ell(a)-2]$, and $b:= \shk_i(a)$.
The numbers that are initially primed in $a$ are the same as 
in $b$, so we have
  $\unprime_{\init}(b) = \shk_i(\unprime_{\init}(a))$ and 
$
  \unprime_{\free}(\oPSW(a)) 
  =
  \spPSW(\unprime_{\init}(a))
  =
  \spPSW(\unprime_{\init}(b))
  =
 \unprime_{\free}(\oPSW(b)) 
  $ by Proposition~\ref{tech-sw-lem} and \cite[Thm. 6.2.2]{Worley}.
To prove that $\oPSW(a) = \oPSW(b)$ it suffices by Proposition~\ref{tech-sw-lem} to show that $\tpi^\SW(a) = \tpi^\SW(b)$.
 This can be achieved by proving appropriate versions of the lemmas in Sections~\ref{proof-sect3} and \ref{proof-sect4}.
Then one can deduce $\oQSW(b)=\fkd_i(\oQSW(a))$
 from $\spQSW(b)=\fkd_i(\spQSW(a))$ by an argument similar to the proof of Theorem~\ref{ck-fkd-thm} in Section~\ref{proof-sect4}.

For partial signed permutations,  all of these conjectural results follow from
 Section~\ref{ock-sect}:

 \begin{corollary}\label{o-cor3}
Suppose $a$ and $b$ are partial signed permutations.
Then $a\simSW b$
      if and only if $\oPSW(a) =\oPSW(b)$. Moreover, 
  $\oQSW(\shk_i(a)) = \fkd_i(\oQSW(a))$ for all $i $.
       \end{corollary}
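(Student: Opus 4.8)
\textbf{Proof plan for Corollary~\ref{o-cor3}.}
The key observation is that for partial signed permutations, the ``doubling'' map $\double$ from Section~\ref{modif-sect} converts the problem about Sagan--Worley insertion into the already-solved problem about orthogonal Edelman--Greene insertion. First I would check that if $a$ is a partial signed permutation, then $a$ (viewed as a value-strict primed biword indexed by $1,2,\dots,n$) is in the scope of Proposition~\ref{sw-prop}, so that $\double(a)$ is a primed involution word with $\double\circ\oPSW(a) = \PO\circ\double(a)$ and $\oQSW(a) = \QO\circ\double(a)$. It is routine that the operators interact correctly under doubling: since $\shk_i$ and $\ck_i$ agree on partial signed permutations (as noted in the excerpt before Conjecture~\ref{sw-conj}), and $\double$ takes the defining relations of $\shk_i$ to the defining relations of $\ck_i$ (distinct adjacent letters of $a$ become letters of $\double(a)$ differing by at least $2$, so no braid relations ever apply), we get $\double(\shk_i(a)) = \ck_i(\double(a))$ for each $i$, hence $\double$ carries $\simSW$-equivalence of partial signed permutations to $\simICK$-equivalence of their images.

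With this dictionary in place the two claims follow quickly. For the equivalence $a \simSW b \iff \oPSW(a) = \oPSW(b)$: if $a \simSW b$ then $\double(a) \simICK \double(b)$, so $\PO(\double(a)) = \PO(\double(b))$ by Corollary~\ref{o-cor2}, and applying $\double^{-1}$ gives $\oPSW(a) = \oPSW(b)$; conversely, if $\oPSW(a) = \oPSW(b)$ then $\double(a)$ and $\double(b)$ have the same $\PO$-tableau, so $\double(a) \simICK \double(b)$ by Corollary~\ref{o-cor2}, and a short argument (tracing through the relations used, none of which can be braid relations since all letters of $\double(a)$, $\double(b)$ come in ``doubled'' form) shows this equivalence descends to $a \simSW b$. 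For the second assertion, $\oQSW(\shk_i(a)) = \QO(\double(\shk_i(a))) = \QO(\ck_i(\double(a))) = \fkd_i(\QO(\double(a))) = \fkd_i(\oQSW(a))$, where the middle equality is Theorem~\ref{ck-fkd-thm}. One must separately dispose of the degenerate cases $i \in \{-1,0\}$ and $i+2 \notin [\ell(a)]$, but these are immediate from the definitions (for $i+2\notin[\ell(a)]$ both sides equal $a$ resp. $\oQSW(a)$, and for $i\in\{-1,0\}$ one checks directly as remarked after Conjecture~\ref{sw-conj}).

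\textbf{The main obstacle.} The only genuinely delicate point is verifying that $a \simSW b$ and $\double(a) \simICK \double(b)$ are equivalent in the ``only if'' direction of the last equivalence needed above — that is, that the $\simICK$-chain connecting $\double(a)$ to $\double(b)$ can be pulled back through $\double$. Since $\double$ is not surjective, one cannot literally apply $\double^{-1}$ to an arbitrary intermediate word in the $\simICK$-chain. The fix is to observe that $\double(a)$ and $\double(b)$ are partial signed permutations in doubled form with every index a commutation, that $\PO$ of such a word never involves braid moves in its bumping process, and hence (via Proposition~\ref{o-lem2} applied in this restricted setting, or directly) the $\simICK$-equivalence class of $\double(a)$ consists entirely of words obtainable from $\double(a)$ by the commutation and prime-toggling relations that are in the image of $\shk$-moves under $\double$. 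I expect this to require care but no new ideas: it is essentially the statement that on the image of $\double$, the relation $\simICK$ restricts to $\double(\simSW)$, which can be checked relation by relation using Proposition~\ref{despite-prop} to rule out the ``$XYX$'' and ``$X'YX$'' braid relations of $\ck$ ever being applicable.
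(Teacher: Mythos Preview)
Your proposal is correct and follows essentially the same approach as the paper: use Proposition~\ref{sw-prop} to transport the problem via $\double$ to the setting of primed involution words, then invoke Theorem~\ref{ck-fkd-thm} and Corollary~\ref{o-cor2}. The paper's proof is a single sentence citing exactly these three results together with the observation that $\shk_i$ and $\ck_i$ (hence $\simSW$ and $\simICK$) coincide on partial signed permutations.

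The ``main obstacle'' you flag is real but more routine than you suggest, and the paper dispatches it implicitly. The point is not that the $\simICK$-chain for $\double(a)$ and $\double(b)$ must be pulled back through $\double$, but rather that the $\simICK$-equivalence class of $\double(a)$ is closed under all $\ck_i$ precisely because every unprimed letter of $\double(a)$ is even, so adjacent letters always differ by at least $2$ and the braid-type relations in $\ck$ are never applicable. This is immediate from the form of the letters and does not need Proposition~\ref{despite-prop}. Once you know the chain stays in the image of $\double$, halving it gives a $\shk_i$-chain from $a$ to $b$.
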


 \begin{proof}
 This follows from Proposition~\ref{sw-prop} given Theorem~\ref{ck-fkd-thm} and 
 Corollary~\ref{o-cor2}, since the operators $\shk_i$ and $\ck_i$
 coincide on partial signed permutations, as do the relations $\simSW$ and $\simICK$.
 \end{proof}
 
 Our two forms of Sagan-Worley insertion do not coincide on  partial signed permutations.
 However, because of Proposition~\ref{ree-prop}, the previous corollary implies the following:
 
 \begin{corollary}
 If $a$ is a partial signed permutation then $\spQSW(\op_i(a)) = \fkd_i(\spQSW(a))$ for all $i$.
 \end{corollary}


\subsection{Extending shifted mixed insertion}\label{mixed-sect}

We now discuss two similar ``primed'' extensions of Haiman's \defn{shifted mixed insertion algorithm} \cite[Def. 6.7]{HaimanMixed}.
These algorithms will turn out to be closely related to the forms of Sagan-Worley insertion analyzed above.
Define a primed {\biword} to be \defn{index-strict} if its index is strictly increasing.
A primed {\biword} is index-strict if and only if its associated circled matrix has all entries in $\{0,1\}$
and at most nonzero entry in each row.

\begin{definition}\label{hm-def}
Suppose $\phi$ is an index-strict primed {\biword} written as in \eqref{biword-eq}.
We construct  a sequence of shifted tableaux
$\emptyset = U_0,U_1,\dots,U_n=U$ whose entries are pairs $(\epsilon,u)$ where $\epsilon \in \{\pm\}$ and $u \in \ZZ\sqcup \ZZ'$.
These tableaux become weakly increasing with no primes on the main diagonal if every entry $(\epsilon,u)$ is replaced by $u$.
The tableau
 $U_j$   is formed from $U_{j-1}$ as follows:
\ben
\item[(1)] Define $\alpha \in \{\pm\} \times \ZZ$ to be 
  $(+,\lceil a_j \rceil)$ if $a_j \in \ZZ$ or $(-,\lceil a_j \rceil)$ if $a_j \in \ZZ'$.
%
Insert this pair into the first row of $U_{j-1}$ according to the following procedure.

\item[(2)] At each stage, a pair $\beta_1 = (\epsilon_1,u_1)$ with $u_1 \in \ZZ\sqcup \ZZ'$ is inserted into a row (when $u_1 \in \ZZ$) or 
a column (when $u_1 \in \ZZ'$).
If every pair $\beta_2=(\epsilon_2,u_2)$ in that row or column has $u_1\geq u_2$ then $\beta_1$ is added to the end;
in this case, the added box can only be on the main diagonal if $u_1 \in\ZZ$.\footnote{
If $u_1 \in \ZZ'$ then the previous iteration must have bumped a position in the preceding column,
so as our tableaux $U_i$ are weakly increasing (when ignoring signs), $\beta_1$
must be strictly bounded by some $\beta_2$ is the current column.}
Otherwise let $\beta_2= (\epsilon_2,u_2)$ be 
the leftmost pair in the row or column with $u_1< u_2$. 

\item[(3)] If  $\beta_2$ is on the main diagonal, then it will always holds that $u_2 \in \ZZ$, and we proceed by
replacing $\beta_2$ with $\beta_1$ and inserting  $(\epsilon_2, u_2')$ into the column
to the right of $\beta_2$.

\item[(4)] If  $\beta_2$ is not on the main diagonal, then replace $\beta_2$  with $(\epsilon_2,u_1)$
and 
 insert $(\epsilon_1,u_2)$ into
 either the row after $\beta_2$ when $u_2\in \ZZ$ or the column to the right of $\beta_2$ when $u_2\in \ZZ'$.
\een
Form $\PHM(\phi)$  from $U$ by replacing 
each main diagonal entry $(\epsilon,x)$ with $\epsilon={-}$ by $x'$, and all other entries $(\epsilon,x)$  by $x$.
Let  $\QHM(\phi)$ be the shifted tableau with the same shape whose entry 
 in the box of $U_j$ that is not in $U_{j-1}$ is either $i_j$ or $i_j'$, with a primed number occurring
 precisely when this box is off the main diagonal and its entry in $U_j$ has the form $(\epsilon,x)$ with  $\epsilon = {-}$.
 \end{definition}

Unlike earlier algorithms, here successive insertions do not always occur
in consecutive rows and columns; also, the orientation of insertion can switch multiple times from rows to columns
and from columns back to rows.
As our notation suggests, Definition~\ref{hm-def} has a ``symplectic'' variant.

\begin{definition}\label{hm-def2}
Given an index-strict primed {\biword} $\phi$ written as in \eqref{biword-eq}, define shifted tableaux
$\emptyset = U_0,U_1,\dots,U_n=U$ by repeating Definition~\ref{hm-def},
 but modifying step (3) so that the entry $\beta_2$ is replaced by $(\epsilon_2,u_1)$ while  $(\epsilon_1, u_2')$ is inserted into the next column.
Then:
\begin{itemize}
\item Form $\spPHM(\phi)$  from $U$ by replacing all entries $(\epsilon,x)$  by $x$.
\item Let  $\spQHM(\phi)$ be the shifted tableau with the same shape whose entry 
 in the box of $U_j$ that is not in $U_{j-1}$ is either $i_j$ or $i_j'$, with a primed number occurring
 precisely when the entry of $U_j$ in this box  has the form $(\epsilon,x)$ with  $\epsilon = {-}$.
 \end{itemize}
 \end{definition}
 
 \begin{remark}\label{hm-remark}
 When the index of $\phi$ consists of the numbers $1,2,3,\dots,n$ and the value of $\phi$ has no primed entries,
both $\phi \mapsto (\PHM(\phi),\QHM(\phi))$ and $\phi \mapsto (\spPHM(\phi),\spQHM(\phi))$ 
 reduce to \defn{shifted mixed insertion}   \cite[Def. 6.7]{HaimanMixed}.
 Neither extension seems to have appeared in the literature.
  We refer to these maps as \defn{orthogonal} and \defn{symplectic mixed insertion}.
  More generally, the two algorithms restrict to the same map on all index-strict (unprimed) {\biwords}.
  \end{remark}
  
\begin{example}\label{sw-ex1}
Suppose our index-strict primed {\biword} is $\phi= \left[\barr{lllll} 2 & 3& 4& 5 & 7 \\ 2' & 2 & 1 & 1' & 2' \earr\right].$ Then,
writing $\pm x$ in place of $(\pm, x)$, 
the sequence of shifted tableaux $U_j$ in 
Definition~\ref{hm-def} are
\[ 
\ytableausetup{boxsize = 0.70cm,aligntableaux=center}
U_1 = \begin{ytableau}-2\end{ytableau},
\quad
U_2= \begin{ytableau}-2 & +2\end{ytableau},
\quad
U_3= \begin{ytableau} \none & -2 \\ +1 & +2'\end{ytableau},
\quad
U_4= \begin{ytableau} \none & -2 \\ +1 & +1 & -2'\end{ytableau},
\quad
U_5= \begin{ytableau} \none & -2 \\ +1 & +1 & -2' & -2\end{ytableau},
\]
so 
$
\PHM(\phi) = 
 \smalltab{ \none & 2' \\ 1 & 1 & 2' & 2 }
$ and $
 \QHM(\phi) = 
 \smalltab{  \none & 4 \\ 2 & 3 & 5' & 7' }.
$ The tableaux $U_j$ in 
Definition~\ref{hm-def2} are
\[ 
\ytableausetup{boxsize = 0.70cm,aligntableaux=center}
U_1 = \begin{ytableau}-2\end{ytableau},
\quad
U_2= \begin{ytableau}-2 & +2\end{ytableau},
\quad
U_3= \begin{ytableau} \none & +2 \\ -1 & +2'\end{ytableau},
\quad
U_4= \begin{ytableau} \none & +2 \\ -1 & +1 & -2'\end{ytableau},
\quad
U_5= \begin{ytableau} \none & +2 \\ -1 & +1 & -2' & -2\end{ytableau},
\]
so we have 
$
\spPHM(\phi) = 
 \smalltab{ \none & 2 \\ 1 & 1 & 2' & 2 }
$ and $
 \spQHM(\phi) = 
 \smalltab{  \none & 4 \\ 2' & 3 & 5' & 7' }.
$
\end{example}

\subsection{Relating  shifted mixed insertion to Sagan-Worley insertion}\label{relat-sect}

The original forms of shifted mixed insertion and Sagan-Worley insertion
take permutations as inputs. Inverting these inputs exchanges the outputs of the two algorithms by \cite[Thm. 6.10]{HaimanMixed}.
 In this final section we show that this property extends to our primed forms of both insertion algorithms,
 with inversion 
replaced by a transpose operation $\phi \mapsto \phi^\top$ on primed {\biwords}. 

The relevant transpose operation is given as follows.
Starting from a primed {\biword} $\phi$,
first move any primes from the value to the entries directly above them,
then interchange the two rows and reorder the columns to be lexicographically increasing, and call the result $\phi^\top$. If 
\be\label{transpose-eq}
\phi= \left[\barr{lllll} 2 & 3 & 4 & 5 & 7 \\ 2' & 2 & 1 & 1' & 2' \earr\right]
 \quad\text{then}\quad
 \phi^\top=
 \left[\barr{lllll} 1 & 1 & 2 & 2 &2 \\ 4 & 5' & 2' & 3 & 7'\earr\right],
  \ee
  for example.
In terms of the associated circled matrices, this operation is just the matrix transpose,
so it interchanges index-strict and value-strict {\biwords}.

One can observe the identities in the following theorem by comparing
Examples~\ref{sw-ex2} and \ref{sw-ex1}. 
  
\begin{theorem}
\label{sw-hm-thm}
If $\phi$ is  index-strict, then it holds that
$\PHM(\phi) = \oQSW(\phi^\top)$ and $ \QHM(\phi) = \oPSW(\phi^\top)$,
and it also holds that
$ \spPHM(\phi) = \spQSW(\phi^\top)$ and $ \spQHM(\phi) = \spPSW(\phi^\top).$
\end{theorem}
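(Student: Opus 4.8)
\textbf{Proof strategy for Theorem~\ref{sw-hm-thm}.}
The plan is to reduce the claim to the classical unprimed case \cite[Thm. 6.10]{HaimanMixed} by carefully tracking how primes and signs propagate through the two algorithms, using the canonical-form maps $\unprime_{\free}$, $\unprime_{\diag}$, $\unprime_{\init}$ as the main bookkeeping device. First I would treat the ``symplectic'' identities $\spPHM(\phi) = \spQSW(\phi^\top)$ and $\spQHM(\phi) = \spPSW(\phi^\top)$, since the symplectic versions of both algorithms never alter prime locations during insertion (in Definitions~\ref{hm-def2} and \ref{sw-def}/\cite{Sag87} primes ride along passively on the inserted and bumped entries), so they are genuinely ``super'' analogues in the sense of the super-RSK correspondences cited in the introduction. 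The key observation is that the insertion \emph{paths} in $\spPHM(\phi)$ and in $\spPSW(\phi^\top)$ are determined entirely by the underlying unprimed data: applying $\unprime(\cdot)$ to the value of $\phi$ (and erasing the sign distinction) recovers an ordinary index-strict/value-strict pair of words to which \cite[Thm. 6.10]{HaimanMixed} and \cite[Thm. 6.1.1]{Worley} apply verbatim, giving $\unprime(\spPHM(\phi)) = \unprime(\spQSW(\phi^\top))$ and $\unprime(\spQHM(\phi)) = \unprime(\spPSW(\phi^\top))$ on the nose. It then remains to check that the primes land in the same boxes, which is a matter of matching the recording rule in Definition~\ref{hm-def2}(``a primed number occurs precisely when the entry has the form $(\epsilon,x)$ with $\epsilon = {-}$'') against the column-insertion rule in Definition~\ref{sw-def}, after noting that the transpose $\phi \mapsto \phi^\top$ is exactly the operation that swaps ``an entry of the value was primed'' with ``the corresponding index entry carries the minus sign.''

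For the orthogonal identities $\PHM(\phi) = \oQSW(\phi^\top)$ and $\QHM(\phi) = \oPSW(\phi^\top)$, the cleanest route is to \emph{deduce} them from the symplectic case together with Proposition~\ref{tech-sw-lem} and the analogous comparison between Definitions~\ref{hm-def} and \ref{hm-def2}. Concretely, I would first establish an orthogonal-versus-symplectic comparison for mixed insertion mirroring Proposition~\ref{tech-sw-lem}: namely that $\PHM(\phi)$ and $\spPHM(\phi)$ have the same free positions, with $\unprime_{\free}(\PHM(\phi)) = \unprime_{\free}(\spPHM(\phi))$ and $\unprime_{\diag}(\QHM(\phi)) = \spQHM(\phi)$, and that the location of each prime in $\PHM(\phi)$ is governed by a permutation $\tpi^{\HM}(\phi)$ read off from the successive diagonal-entry changes in the $U_j$. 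This is proved by exactly the induction used for Proposition~\ref{tech-sw-lem}: the only place Definitions~\ref{hm-def} and \ref{hm-def2} differ is step (3), which acts solely when a diagonal entry is bumped, and one checks that toggling a prime on a non-free entry does not change which box gets bumped next, so the two tableaux differ only by prime-toggles on free boxes. Then, since the transpose swaps the roles of the index strictness and value strictness, one verifies that $\tpi^{\HM}(\phi) = \tpi^\SW(\phi^\top)$ by comparing the diagonal-bump events in the two insertion processes step by step — this is where the correspondence between a ``$\beta_2$ on the main diagonal with $\lceil u_1\rceil \ne u_2$'' event in Definition~\ref{hm-def} and a ``diagonal entry bumped with distinct unprimed forms'' event in Definition~\ref{sw-def} must be matched. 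Combining the unprimed identity from \cite[Thm. 6.10]{HaimanMixed}, the free-position comparisons, and $\tpi^{\HM}(\phi) = \tpi^\SW(\phi^\top)$ then pins down the primes identically in $\PHM(\phi)$ and $\oQSW(\phi^\top)$, and in $\QHM(\phi)$ and $\oPSW(\phi^\top)$.

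The main obstacle I anticipate is the step-by-step synchronization of the two insertion processes under the transpose — that is, proving that the sequence of ``diagonal events'' (which boxes on the main diagonal change entry, and at which step) in $\PHM(\phi)$ matches the corresponding sequence in $\oQSW(\phi^\top)$, so that $\tpi^{\HM}(\phi) = \tpi^\SW(\phi^\top)$. In the unprimed setting this synchronization is precisely the content of \cite[Thm. 6.10]{HaimanMixed}, whose proof already requires a delicate ``diagonal lemma''; here one needs the stronger statement that the diagonal sub-structure evolves in the same way, not just the final tableaux. I would handle this by first reducing to $\unprime_{\init}$-canonical $\phi$ (legitimate because, as in the proof of Theorem~\ref{circled-thm}, toggling whether a value entry is initially primed changes neither $\tpi^\SW$ nor $\tpi^{\HM}$), for which the orthogonal and symplectic algorithms coincide, and then invoking the unprimed theorem together with the observation that on canonical input the diagonal events of the orthogonal algorithm are literally the diagonal events of the symplectic one. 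The remaining verifications — that $\phi^\top$ is well-defined on primed {\biwords} and that it interchanges index-strict with value-strict, and the routine prime-placement matching in the recording tableaux — are straightforward and I would state them without belaboring the details.
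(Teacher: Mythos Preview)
Your architecture --- reduce to the unprimed Haiman result and then account for primes --- is the right instinct, and it is what the paper does. But two steps in your plan do not work as written.

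First, you invoke \cite[Thm.~6.10]{HaimanMixed} directly after unpriming, but Haiman's theorem is for permutations (both index- and value-strict). An index-strict $\phi$ need not be value-strict, so $\unprime(\phi)$ is not in the domain of that result. The paper closes this gap with a standardization at the end: replacing the index of $\phi^\top$ by $1,2,\dots,n$ and re-transposing produces a $\psi$ that is value-strict, with $\oPSW(\psi^\top)=\oPSW(\phi^\top)$ and $\QHM(\psi)=\QHM(\phi)$, and the remaining tableaux related by an order-preserving relabeling. You do not mention this reduction.

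Second, and more seriously, the proposed identity $\tpi^{\HM}(\phi) = \tpi^\SW(\phi^\top)$ cannot be proved ``by comparing diagonal-bump events step by step'': mixed insertion processes the columns of $\phi$, while Sagan--Worley processes the columns of $\phi^\top$, so the $j$th step on one side has no counterpart on the other. Your fallback of reducing to $\unprime_{\init}$-canonical $\phi$ does not help either, since the value of $\phi^\top$ is built from the \emph{index} of $\phi$ with primes transferred from $\phi$'s value; canonicalizing $\phi$'s value does nothing to canonicalize $\phi^\top$'s value, so it does not force $\oPSW(\phi^\top)=\spPSW(\phi^\top)$. The paper bypasses this entirely: instead of synchronizing the two processes, it inducts on the columns of $\phi$. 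Removing the last column from $\phi$ removes the column of $\phi^\top$ whose value entry is maximal (since $i_n$ is the largest index and becomes a value under transpose). On the mixed-insertion side, the final insertion bumps a sequence of diagonal boxes $\cA_1,\dots,\cA_{p-1}$ before adding a new box $\cA_p$, and the sign of $\alpha$ cascades through them. On the SW side, one tracks how that single maximal entry $b_j$ gets bumped through diagonal positions $\cB_1,\dots,\cB_{q-1}$ as the remaining letters of $\phi^\top$ are inserted after it, with primes cascading analogously. The unprimed identities~\eqref{par2} then give $p=q$ and $\cA_i=\cB_i$, so the primes match by induction. This column-of-$\phi$ induction, with its asymmetric treatment of the SW side via the maximal entry, is the idea your proposal is missing.
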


\begin{proof}
The desired identities generalize \cite[Thm. 6.10]{HaimanMixed} in the following sense.
As noted in Remarks~\ref{osp-remark} and \ref{hm-remark}, on index-strict (unprimed) {\biwords},
orthogonal and symplectic Sagan-Worley insertion restrict to the same map $\phi \mapsto (P_{\mathsf{SW}}(\phi), Q_{\mathsf{SW}}(\phi))$,
while orthogonal and symplectic mixed insertion restrict to the same map $\phi \mapsto (P_{\mathsf{HM}}(\phi), Q_{\mathsf{HM}}(\phi))$.
  \cite[Thm. 6.10]{HaimanMixed} asserts that if the index of $\phi$ is $1,2,\dots,n$ and the value of $\phi$
is a permutation of $1,2,\dots,n$,
 then 
$P_{\mathsf{HM}}(\phi) =  Q_{\mathsf{SW}}(\phi^\top)$ and $Q_{\mathsf{HM}}(\phi) =  P_{\mathsf{SW}}(\phi^\top)$.
This property extends to the case when $\phi$ is any (unprimed) {\biword} that is both index- and value-strict,
since then all of the relevant tableaux are obtained from the permutation case by applying appropriate order-preserving bijections to their entries.
 
Let $\phi$ be a primed {\biword} written as in \eqref{biword-eq}.
We will only prove that $\PHM(\phi) = \oQSW(\phi^\top)$ and $ \QHM(\phi) = \oPSW(\phi^\top)$, 
as the argument for the symplectic case is similar.
We first assume $\phi$ is both index-strict and value-strict.
Then we have
\be \label{par2}\ba
\unprime(\QHM(\phi))=\QHM( \unprime(\phi))&=\oPSW( \unprime(\phi^\top)) = \unprime(\oPSW(\phi^\top))  , \\
\unprime_{\diag}(\PHM(\phi)) = \PHM( \unprime(\phi))&= \oQSW(\unprime(\phi^\top))= \unprime_{\diag}( \oQSW(\phi^\top)) ,
\ea
\ee
using the preceding paragraph for the middle equalities, and the definitions of our insertion algorithms for the others.
Thus, we already know that if we ignore all primes then the corresponding entries are equal in 
$\QHM(\phi)$ and $\oPSW(\phi^\top)$, and also in $\PHM(\phi)$ and $\oQSW(\phi^\top)$.
More specifically, since the outputs of $\QHM$ and $\oPSW$ never have primed entries on the main diagonal,
to prove that $\PHM(\phi) = \oQSW(\phi^\top)$ and $ \QHM(\phi) = \oPSW(\phi^\top)$
we just need to show that 
each off-diagonal box is primed in $\QHM(\phi)$ if and only if it is primed in $\oPSW(\phi^\top)$,
and each diagonal box is primed in $\PHM(\phi)$ if and only if it is primed in $\oQSW(\phi^\top)$.

 We will demonstrate this by an inductive argument. Let $\hat\phi$ be the {\biword} formed from $\phi$
 by omitting its last column $\left[\barr{c} i_n \\ a_n \earr\right]$. Then $\hat\phi$ is still index- and value-strict,
 so we may assume by induction that 
  $\QHM( \hat\phi)=\oPSW( \hat\phi^\top) 
$
and
$ \PHM( \hat\phi)= \oQSW(\hat\phi^\top)$.
To deduce that these identities also hold for $\phi$,
we must understand how the shifted tableaux
$ \PHM( \phi)$, $ \QHM( \phi)$, $ \oPSW( \phi^\top)$, and $ \oQSW( \phi^\top)$
are   constructed from 
$ \PHM( \hat\phi)$, $ \QHM( \hat\phi)$, $ \oPSW( \hat\phi^\top)$, and $ \oQSW( \hat\phi^\top)$.

We consider the mixed insertion case first.
Define $\hat U$ from $\PHM(\hat\phi)$ by replacing 
each main diagonal entry $x$ by $(\epsilon,\lceil x\rceil )$ where $\epsilon={+}$ (respectively, $\epsilon={-}$) if $x$ is unprimed
(respectively, primed),
and then replacing each off-diagonal entry $x$ by $(\epsilon,x)$ 
where $\epsilon={+}$ (respectively, $\epsilon={-}$) if
 the entry in the same position of $\QHM(\hat\phi)$ is unprimed (respectively, primed).
 Construct $U$ from $\PHM(\phi)$ analogously.
Each box in these tableaux contains an entry of the form $(\epsilon,x)$ and we refer to $\epsilon$ as the \defn{sign} of the box.
Finally, let $\alpha = (\epsilon, \lceil a_n \rceil)$ where $\epsilon={+}$ (respectively, $\epsilon={-}$) if $a_n$ is unprimed
(respectively, primed). 
Then $U$ is obtained by inserting $\alpha$ into the first row of $\hat U$ according to the procedure in Definition~\ref{hm-def}.

The set of boxes in $U$ (respectively, $\hat U$) with negative sign
is the union of the sets of primed positions in $\QHM(\phi)$ (respectively, $\QHM(\hat\phi)$)
and diagonal primed positions in $\PHM(\phi)$ (respectively, $\PHM(\hat\phi)$).
From Definition~\ref{hm-def}, we see that the signs of the boxes in $ U$ are the same in $\hat U$,
except that if inserting $\alpha$ successively bumps a sequence of diagonal boxes $\cA_1, \cA_2,\dots, \cA_{p-1}$
and eventually terminates at a new box $\cA_{p}$, then box $\cA_1$  adopts the sign of $\alpha$
and box $\cA_{i+1}$  adopts the sign of box $\cA_i$ in $\hat U$ for each $i \in [p-1]$.
Notice that boxes $\cA_1,\cA_2,\dots,\cA_{p-1}$ are the main diagonal positions where $\unprime_{\diag}(\PHM(\hat\phi))$ differs from $\unprime_{\diag}(\PHM(\phi))$,
and that $\cA_p$ is the unique box of the second tableau that is not in the first.

We now examine the Sagan-Worley insertion case.
For any primed {\biword} $\psi$ form $ \tPSW( \psi)$ from $\oPSW( \psi)$
by adding a prime to each main diagonal box that is primed in $\oQSW(\psi)$.
The set of primed boxes in $ \tPSW( \psi)$
is the union of the sets of primed positions in $\oPSW(\psi)$ 
and diagonal primed positions in $\oQSW(\psi)$.
Let $\emptyset = T_0, T_1, T_2,\dots, T_n $
be the sequence of shifted tableaux formed by successively inserting the entries in the second row $\phi^\top$
according to the bumping procedure in Definition~\ref{sw-def}, but modified so that we do not 
remove primes from new boxes added to the main diagonal in step (3). 
Then we have $T_n = \tPSW(\phi^\top)$.
Define $\emptyset = \hat T_0, \hat T_1, \hat T_2,\dots, \hat T_{n-1} = \tPSW(\hat \phi^\top)$
to be the analogous sequence of shifted tableaux formed by successively inserting the entries in the second row $\hat\phi^\top$
by the same modified bumping procedure.

 Suppose  $b_1,b_2,\dots,b_n$ are the entries in the second row of $\phi^\top$
and $b_j$ is the largest entry in this list. Note that $b_j$ is either $i_n'$ or $i_n$ according to whether  $a_n$ is primed or unprimed.
Then $\hat\phi^\top$ is formed from $\phi^\top$
by omitting column $j$, so $T_i = \hat T_i$ for $0\leq i < j$ and $T_j$ is formed from $\hat T_{j-1}$ by adding $b_j$ to the end of the first row.
As we insert the remaining entries $b_{j+1}, b_{j+2},\dots, b_n$ into $T_j$ to form $T_k$ for $j<k\leq n$,
the maximal entry $b_j$ may be bumped to a new position but the remaining entries are almost the same as in $\hat T_{k-1}$.
The only difference is that whenever the unique maximal entry is bumped from a main diagonal position,
its prime is switched with the entry replacing it. 

Thus if the maximal entry is successively bumped 
from a sequence of main diagonal boxes $\cB_1,\cB_2,\dots, \cB_{q-1}$ and eventually ends up in some box $\cB_q$,
then box $\cB_1$ in $\tPSW(\phi^\top)$ retains the prime of $b_j$ (which is the prime of $a_n$),
while box $\cB_{i+1}$ in $\tPSW(\phi^\top)$  for each $i \in [q-1]$ retains the 
prime of whichever number ends up bumping the maximal entry from box $\cB_i$.
We can identify these primes as well as the boxes $\cB_1,\cB_2,\dots, \cB_{q}$ 
by comparing the associated recording tableaux:
the first $q-1$ boxes are the main diagonal positions where $\unprime_{\diag}(\oQSW(\hat\phi^\top))$ differs from $\unprime_{\diag}(\oQSW(\phi^\top))$,
as these positions indicate where a smaller entry would arrive at a later insertion step if the maximal entry $b_j$ were never inserted;
the primes of the bumping entries are the primes of these positions in $\oQSW(\hat\phi^\top)$, or equivalently in $\tPSW(\hat\phi^\top)$;
and   $\cB_q$ is the unique box of $\oQSW(\phi^\top)$ that is not in $\oQSW(\hat\phi^\top)$.
We conclude that the primes of the boxes in
$\tPSW(\phi^\top)$
are the same as in $\tPSW(\hat\phi^\top)$,
except box $\cB_1$  adopts the prime of $a_n$
and box $\cB_{i+1}$ adopts the prime of box $\cB_i$ in $\tPSW(\hat\phi^\top)$ for each $i \in [q-1]$.

Our hypothesis that 
 $\QHM( \hat\phi)=\oPSW( \hat\phi^\top) 
$
and
$ \PHM( \hat\phi)= \oQSW(\hat\phi^\top)$
implies   
$\hat U = \tPSW(\hat\phi^\top)$.
To show that 
 $\QHM( \phi)=\oPSW( \phi^\top) 
$
and
$ \PHM( \phi)= \oQSW(\phi^\top)$
it suffices by \eqref{par2}
to check that the negative boxes in $U$ have the same locations as the primed boxes in $\tPSW(\phi^\top)$.
Comparing our descriptions of these boxes above, we see that it is enough to show that 
$p=q$ and that the boxes $\cA_i=\cB_i$ coincide for all $i$, and this also follows by \eqref{par2}.
 
To finish the proof, suppose $\phi$ is any index-strict primed {\biword} with $n$ columns.
Form $\psi$ from $\phi$ by taking its transpose, then replacing the index by the consecutive numbers $1<2<\dots<n$,
and then taking the transpose again. For example, if 
\[
\phi= \left[\barr{lllll} 2 & 3 & 4 & 5 & 7 \\ 2' & 2 & 1 & 1' & 2' \earr\right]
 \quad\text{then}\quad
 \psi=
 \left[\barr{lllll} 1 & 2 & 3 & 4 &5\\ 4 & 5' & 2' & 3 & 7'\earr\right]^\top =
  \left[\barr{lllll} 2 & 3 & 4 & 5 & 7 \\ 3' & 4 & 1 & 2' & 5' \earr\right],
 \]
It is clear that $\oPSW(\phi^\top) = \oPSW(\psi^\top)$ and  $\QHM(\phi) = \QHM(\psi)$. 
Let $\cF : \{ 1'<1<2'<2<\dots< n'<n\} \to \{1'<1<2'<2<\dots\}$
be the unique map with $\cF(i) = j$ and $\cF(i') = j'$ if $j$ is the entry in the index of $\phi^\top$ in column $i$.
Then $\phi$ is formed by applying $\cF$ to the value of $\psi$,
and we have   $\cF(\oQSW(\psi^\top)) = \oQSW(\phi^\top)$ and $\cF(\PHM(\psi)) = \PHM(\phi)$.
As we already know that $\QHM(\psi)= \oPSW(\psi^\top) $ and 
 $ \PHM(\psi)=\oQSW(\psi^\top) $, the theorem follows.
\end{proof}

It would interesting to find a way to extend Definitions~\ref{hm-def} and \ref{hm-def2}
so that 
  Theorem~\ref{sw-hm-thm} holds for all primed {\biwords}, similar to what is done in
 \cite[\S3.4]{ShimWhite} for mixed insertion.

Recall that we identify  
$a=a_1a_2\cdots a_n$ with the {\biword} $\left[\barr{llll} 1 & 2 & \dots & n \\ a_1 & a_2 & \dots & a_n\earr\right]$.
 
\begin{corollary}
\label{hm-bijection-thm}
The map $a\mapsto (\PHM(a), \QHM(a))$
(respectively, $a\mapsto (\spPHM(a), \spQHM(a))$)
 is a bijection from the set of 
 primed words 
 with all positive letters
to the set of pairs $(P,Q)$
of shifted tableaux of the same shape, in which $P$ is semistandard,
$Q$ is standard, and $Q$ (respectively, $P$) has no primed entries on the main diagonal.
\end{corollary}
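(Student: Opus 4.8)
The plan is to deduce Corollary~\ref{hm-bijection-thm} directly from the bijectivity of orthogonal (respectively, symplectic) Sagan-Worley insertion together with the transpose relation in Theorem~\ref{sw-hm-thm}. First I would observe that when $a=a_1a_2\cdots a_n$ is a primed word with all positive letters, identified with the index-strict primed {\biword} $\phi$ having index $1<2<\cdots<n$, the transpose $\phi^\top$ is a value-strict primed {\biword} whose value is a partial signed permutation: its circled matrix is the transpose of that of $\phi$, which has at most one nonzero entry in each row, hence $\phi^\top$ has at most one nonzero entry in each column and all entries in $\{0,1\}$. Conversely, transposing is an involution on primed {\biwords} that interchanges the index-strict ones (with index $1,2,\dots,n$) and the value-strict ones whose value is a partial signed permutation of size $n$. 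So $a\mapsto \phi^\top$ is a bijection from primed words with positive letters of length $n$ onto value-strict primed {\biwords} whose value is a partial signed permutation of size $n$, and more globally onto all such {\biwords} as $n$ varies.

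Next I would apply Theorem~\ref{sw-hm-thm}, which gives $\PHM(a)=\PHM(\phi)=\oQSW(\phi^\top)$ and $\QHM(a)=\QHM(\phi)=\oPSW(\phi^\top)$ (and the analogous identities with $\spPHM,\spQHM,\spPSW,\spQSW$ in the symplectic case). Thus the map $a\mapsto(\PHM(a),\QHM(a))$ is the composition of the bijection $a\mapsto\phi^\top$ with the map $\psi\mapsto(\oQSW(\psi),\oPSW(\psi))$ restricted to value-strict {\biwords} with value a partial signed permutation, followed by swapping the two coordinates. So it remains to identify the image of $\psi\mapsto(\oPSW(\psi),\oQSW(\psi))$ on exactly this subclass of {\biwords}. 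For this I would invoke Theorem~\ref{circled-thm}: over all primed {\biwords}, this map is a bijection onto pairs $(P,Q)$ of semistandard shifted tableaux of the same shape with $P$ having no diagonal primes and matching multiplicities of $j$ or $j'$ in $P$ and $Q$, and the row (respectively, column) sums of the circled matrix record the multiplicity of $j$ or $j'$ in $P$ (respectively, $Q$). Restricting to value-strict {\biwords} whose value is a partial signed permutation means restricting to circled matrices with all entries in $\{0,1\}$ and at most one nonzero entry per column; equivalently, the column sums are all $\leq 1$, so each $j\in\ZZ$ occurs at most once as $j$ or $j'$ in $Q$, i.e. $Q$ is standard (after relabeling its strictly increasing support to $1,2,\dots,n$, which is automatic for the index $1,2,\dots,n$). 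I would check, conversely, that any standard $Q$ has column sums $\le 1$ and hence arises from such a {\biword}, and that no other constraint on $P$ is imposed beyond being semistandard with no diagonal primes and having the same shape as $Q$; the multiplicity-matching condition then reads that $j$ or $j'$ appears in $P$ exactly the number of times it appears in $Q$, which for a standard $Q$ with entries $1,\dots,n$ simply says $|P|=|Q|$ with matching shape, so it is no extra condition. Putting this together and swapping coordinates yields precisely the claimed bijection onto pairs $(P,Q)$ with $P$ semistandard, $Q$ standard, and $P$ having no diagonal primes in the orthogonal case.

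For the symplectic statement I would repeat the argument verbatim using the symplectic halves of Theorem~\ref{sw-hm-thm} and Remark~\ref{circled-thm-rmk}, which gives the same conclusion except that now it is $Q$ rather than $P$ that is required to have no diagonal primes; after the coordinate swap this becomes the condition that $\spPHM(a)$ (the first coordinate) has no diagonal primes, matching the statement. The main obstacle I anticipate is bookkeeping the restriction on the codomain carefully: I need to argue cleanly that value-strictness of $\phi^\top$ with a partial-signed-permutation value corresponds exactly to standardness of the recording tableau, and to confirm that standardness of $Q$ (with the understanding that its entries are $1,2,\dots,n$, inherited from the index $1<2<\cdots<n$ of $\phi$) is the precise image condition under Theorem~\ref{circled-thm}, including verifying that the ``equal multiplicities in $P$ and $Q$'' clause collapses to a vacuous condition once $Q$ is standard. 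Once that identification is nailed down, everything else is a formal composition of known bijections, and the proof is short.
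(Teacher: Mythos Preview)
Your proposal is correct and follows essentially the same route as the paper: identify primed words with index-strict biwords, transpose, apply Theorem~\ref{sw-hm-thm}, and then invoke Theorem~\ref{circled-thm} (and Remark~\ref{circled-thm-rmk}) to pin down the image. The paper's proof is just a terser version of this.

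One small caution: you read a ``matching multiplicities in $P$ and in $Q$'' clause into Theorem~\ref{circled-thm}, but the sentence there is simply garbled---the codomain is all pairs $(P,Q)$ of semistandard shifted tableaux of the same shape with $P$ having no diagonal primes, with no further constraint. Your attempt to argue the extra clause is vacuous (``for a standard $Q$ with entries $1,\dots,n$ simply says $|P|=|Q|$'') does not actually follow, since equal per-letter multiplicities would force $P$ to also be standard, not merely of the same size. Fortunately this clause is not part of the theorem, so you can simply drop that paragraph. Once you do, the image computation goes through cleanly: the circled matrix of $\phi^\top$ has each of columns $1,\dots,n$ summing to exactly $1$ (not merely $\leq 1$), which is precisely the condition that $\oPSW(\phi^\top)=\QHM(a)$ is standard with entries $1,\dots,n$, and no condition on $\oQSW(\phi^\top)=\PHM(a)$ beyond semistandardness.
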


\begin{proof}
Primed words with positive letters correspond to circled matrices with exactly
one nonzero entry, given by $1$ or $1'$, in each of the first $\ell(a)$ rows, and no other nonzero rows.
By Theorem~\ref{circled-thm} and Remark~\ref{circled-thm-rmk}, the maps
   $\phi \mapsto (\oPSW(\phi), \oQSW(\phi))$ and  $\phi \mapsto (\spPSW(\phi), \spQSW(\phi))$ are bijections from the set of transposes of such {\biwords}
 to the
set of pairs of
shifted tableaux with the desired properties, but in reverse order. The result thus holds by Theorems~\ref{circled-thm}
and
\ref{sw-hm-thm}.
\end{proof}

\end{document}